%% file: Arxiv.tex
\documentclass[11pt, reqno, twoside, makeidx]{amsart}
\usepackage[margin=2.7cm, marginpar=1cm]{geometry}
\usepackage{wrapfig}
\usepackage{marginnote}
\usepackage[english]{babel}
\usepackage[T1]{fontenc}
\usepackage{amsmath,amsthm,amssymb,amsfonts}
\numberwithin{equation}{section}
\usepackage{enumerate}
\usepackage{enumitem}
\usepackage{faktor}
\usepackage{dsfont}
\usepackage{scalerel,stackengine}
\usepackage{textcomp}
\usepackage{graphicx}
\usepackage{extarrows}
\usepackage{nicematrix}
\usepackage{epigraph}
\usepackage{sidecap}
\usepackage{indentfirst}
\usepackage{tikz}
\usepackage{tikz-cd}
\usepackage{tkz-euclide}
\usetikzlibrary{shapes.misc,arrows,matrix,patterns,decorations.markings,positioning}
\tikzset{cross/.style={cross out, draw=black, minimum size=2*(#1-\pgflinewidth), inner sep=0pt, outer sep=0pt},
cross/.default={4.5pt}}
\usepackage{pgfplots}
\usepackage{caption}
\usepackage{subcaption}
\usepackage{float}
\usepackage{color}
\usepackage{xcolor}
\definecolor{burgundy}{rgb}{0.5,0.0,0.13}
\usepackage{epstopdf}
\usepackage{epsfig}
\usepackage{stmaryrd}
\usepackage{multicol}
\usepackage{verbatim}
\usepackage{hyperref}

\DeclareMathOperator{\Ker}{Ker }
\DeclareMathOperator{\Imm}{Im }
\DeclareMathOperator{\tb}{tb}
\DeclareMathOperator{\rot}{rot}
\DeclareMathOperator{\self}{sl}

\DeclareMathOperator{\Spin}{Spin}

\DeclareMathOperator{\Id}{Id}

\DeclareMathOperator{\height}{height_{\mathcal F}}
\DeclareMathOperator{\xist}{\xi_{std}}
\DeclareMathOperator{\lcm}{lcm}
\renewcommand{\geq}{\geqslant}
\renewcommand{\leq}{\leqslant} 
\renewcommand{\epsilon}{\varepsilon}

\newcommand{\Z}{\mathbb{Z}}
\newcommand{\Q}{\mathbb{Q}}
\newcommand{\F}{\mathbb{F}}

\newcommand{\J}{\mathcal J}
\newcommand{\dd}{\mathrm{d}}

\newcommand{\s}{\mathfrak{s}}

\newtheorem{teo}{Theorem}[section]
\newtheorem*{teo*}{Theorem}
\newtheorem{lemma}[teo]{Lemma}
\newtheorem{prop}[teo]{Proposition}
\newtheorem*{prop*}{Proposition}
\newtheorem{defin}[teo]{Definition}
\newtheorem{cor}[teo]{Corollary}

\newtheorem{remark}[teo]{Remark}

\newcommand{\joindots}{%
  \tikz[baseline=-1.1ex,x=1ex,y=1ex]{
    \draw[line width=0.08ex]
      (0,0) -- (-1,0)   
      (0,0) -- (0,-4)     
      (0,-4) -- (-1,-4) 
      (0,-2) -- (1,-2); 
    \fill (1.35,-2) circle[radius=0.4]; 
    \fill (-1.35,0) circle[radius=0.4];
    \fill (-1.35,-4) circle[radius=0.4];
  }%
}
\newcommand{\joindotss}{%
  \tikz[baseline=-1.1ex,x=1ex,y=1ex]{
    \draw[line width=0.08ex]
      (0,0) -- (-1,0)   
      (0,0) -- (0,-8)     
      (0,-4) -- (-1,-4) 
      (0,-8) -- (-1,-8)
      (0,-2) -- (1,-2) 
      (0,-6) -- (1,-6)
      (2,-2) -- (3,-2)
      (2,-6) -- (3,-6)
      (3,-2) -- (3,-6)
      (3,-4) -- (4,-4);
    \fill (1.35,-2) circle[radius=0.4]; 
    \fill (-1.35,0) circle[radius=0.4]; \fill (1.35,-6) circle[radius=0.4];
    \fill (-1.35,-4) circle[radius=0.4]; \fill (-1.35,-8) circle[radius=0.4];
    \fill (4,-4) circle[radius=0.4];
  }%
}

\usepackage{xpatch}
\makeatletter
\xpatchcmd{\@thm}{\thm@headpunct{.}}{\thm@headpunct{}}{}{}
\makeatother

\pgfplotsset{compat=1.18}
\begin{document}
\title[Heegaard Floer homology and maximal twisting numbers]{Heegaard Floer homology and maximal twisting numbers}
\author{Alberto Cavallo and Irena Matkovi\v c}
\address{HUN-REN Alfr\'ed R\'enyi Institute of Mathematics, Budapest 1053, Hungary}
\email{acavallo@impan.pl}
\address{Uppsala Universitet, Uppsala 751 06, Sweden}
\email{irma6504@student.uu.se}
\subjclass[2020]{57K18, 57K33, 57K43, 32Q35}


\begin{abstract}
We adapt the Ozsv\'ath-Szab\'o full path algorithm to every star-shaped graph and establish a correspondence between negative-twisting tight contact structures on any Seifert fibred space over $S^2$, and its Heegaard Floer homology groups equipped with the Alexander filtration induced by the regular fibre. This provides the complete classification of negative-twisting structures on these manifolds; in particular, we distinguish them by their contact invariant $c^+$. We prove that every such structure is symplectically fillable and extend a known obstruction to Stein fillability. In addition, we show that the number of negative-twisting structures can be expressed combinatorially in terms of the Seifert coefficients of the star-shaped graph, while their $d_3$-invariant and homotopy type are determined explicitly through our correspondence. Our results also complete the classification of fillable structures on any small Seifert fibred space. 
\end{abstract}

\maketitle

\thispagestyle{empty}

\section{Introduction}
In \cite{OSz-fullpath} Ozsv\'ath and Szab\'o introduced the full path algorithm as a way to compute the Heegaard Floer group $HF^-(Y_\Gamma)$ combinatorially, when $Y_\Gamma$ is the 3-manifold presented by a suitable plumbing tree $\Gamma$. Namely, under the assumption that $\Gamma$ is negative-definite, and that there is at most one "bad vertex", they showed that every homogeneous non-torsion class $\alpha\in HF^-(Y_\Gamma)$ has even (Maslov) grading and can be written (up to multiplication by $U$) as $F^-_{P_\Gamma,\mathfrak u}(1)$ where $F^-_{P_\Gamma,\mathfrak u}:HF^-(S^3)\rightarrow HF^-(Y_\Gamma,\s)$ is the corresponding cobordism map.

Ozsv\'ath and Szab\'o defined the graded maps $F^\circ$ induced by 4-dimensional $\Spin^c$-cobordisms in \cite{OSz-negative}, and they immediately appeared as a tool of primary importance in low dimensional topology considering the number of their applications. In the full path setting, the 4-manifold is the plumbing $P_\Gamma$ given by the graph, while the $\Spin^c$-structure can be identified with the characteristic vector $V\in\text{Char}(\Gamma)$ corresponding to its first Chern class $c_1(\mathfrak u)\in H^2(P_\Gamma;\Z)$, as the manifold is simply connected, and $\s=\mathfrak u\lvert_{Y_\Gamma}$. The true value of the full path algorithm is that it gives a practical method to not only identify $HF^-(Y_\Gamma)$ up to isomorphism, but also to determine precisely when two characteristic vectors $V_1$ and $V_2$ satisfy $F^-_{P_\Gamma,\mathfrak u_1}(1)=F^-_{P_\Gamma,\mathfrak u_2}(1)$; in practical terms, the full path  defines an equivalence relation on the set of characteristic vectors, whose quotient set is precisely $HF^-(Y_\Gamma)$. Ozsv\'ath and Szab\'o showed in \cite[Theorem 1.2]{OSz-fullpath} that this correspondence holds also when we consider Heegaard Floer homology with integral coefficients, but in this paper we work with $\F$ the field with two elements.

Later on, the full path has been extensively studied by N\'emethi \cite{Nemethi}. He introduced the family of almost-rational graphs, whose precise definition is given in Section \ref{section:two}, generalising the one in \cite{OSz-fullpath}; N\'emethi proved that the full path, called computational sequence in his terminology, still recovers $HF^-(Y_\Gamma)$. The work of N\'emethi found many applications in algebraic geometry; more specifically, in the study of singularities, see \cite{Nemethi}. An example is given by the characterisation of rational singularities as the ones whose resolution is a Heegaard Floer $L$-space, a property which is read from the Heegaard Floer groups and is, in this case, combinatorial thanks to the full path.

N\'emethi \cite{Nemethi} used the full path as the starting point to define lattice cohomology: a purely combinatorial invariant of a 3-manifold, defined from any plumbing tree, which has recently been shown by Zemke \cite{Zemke} to coincide with Heegaard Floer homology. 

It is important to note that the isomorphism given in the proof is not just an extension of the one in \cite[Theorem 1.2]{OSz-fullpath}, but it required a general version of the link surgery formula in Heegaard Floer. The fact that, once the assumption of the graph being almost-rational is dropped, the equivalence classes given by the full path cannot span the whole $HF^-$ can already be seen from the simple example in Figure \ref{235}. The reason is that any graph representing $-\Sigma(2,3,5)$ gives a plumbing with positive $b_2^+$, while the manifold is an $L$-space (it is the only such Brieskorn sphere); Heegaard Floer cobordism maps have a restricted behaviour when the second positive Betti number is non-zero: the image of $F^-$ is necessarily a torsion class \cite{OSz-negative}. Note that by definition $L$-spaces have zero torsion in $HF^-$.

\begin{figure}[t] 
    \begin{tikzpicture}[scale=0.8]
    \tkzDefPoints{0/0/A, 1.5/1/B, 1.5/0/D, 1.5/-1/C}  
    \tkzDrawSegment(A,B)\tkzDrawSegment(A,D)\tkzDrawSegment(B,A)\tkzDrawSegment(A,C)
    \tkzDrawPoints[fill,black,size=5](A,B,C,D)
     \tkzLabelPoint[below left](A){$-1$} 
     \tkzLabelPoint[above right](B){$-2$}\tkzLabelPoint[below right](C){$-5$}
     \tkzLabelPoint[right](D){$-3$}
       \end{tikzpicture}
     \caption{\smaller[1]{The manifold $-\Sigma(2,3,5)$ which is an $L$-space, and thus its Heegaard Floer group $HF^-$ has vanishing $U$-torsion.}}
     \label{235}
\end{figure}
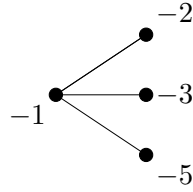

Our first result concerns precisely the homology classes in $HF^-(Y_G)$ which can be identified with full paths, in the case that $G$ is a star-shaped graph. This kind of tree in low dimensional topology corresponds exactly to Seifert fibred spaces (with a sphere as base orbifold). In \cite{CM-negative} we studied the relation between the full path and contact topology when the graph is negative-definite: note that any Seifert fibred space can be oriented in this way. In this paper we focus on the other orientation, whose graph is \emph{indefinite} (with $b_2^+=1$), and the singular case where the manifold is actually a surface bundle over the circle, see Sections \ref{section:two} and \ref{section:four}.

\subsection*{The full path algorithm for a star-shaped graph} Let $G$ be a star-shaped graph, with negative framings $m(1),...,m(|G|)$, and write $Y_G$ and $P_G$ for the corresponding 3-manifold and plumbing. Let $V\in H^2(P_G;\Z)$ be a characteristic vector, we write $[V]$ for its full path, see \cite{OSz-fullpath,CM-negative} and Section \ref{section:two} for the definition. As in \cite[Lemma 2.3]{OSz-fullpath} we define the subspace \[\mathcal B_n=\{Z=(z_1,...,z_{|G|})\in\text{Char}(G)\:|\:|z_i|\leq-m(i)+2n\}\] for any integer $n\geq0$; we say that $[V]$ \emph{ends correctly} when $Z\in\mathcal B_0$ for each $Z\in[V]$. This is equivalent to the definition we use in \cite{CM-negative}. 

There is an involution $\mathcal J:HF^\circ(M,\s)\rightarrow HF^\circ(M,\overline\s)$ on the Heegaard Floer groups of any 3-manifold $M$, which commutes with the maps induced by diffeomorphisms and cobordisms \cite[Theorem 2.4]{OSz}; moreover, if $\xi$ is a contact structure on $M$ then $\J$ acts by conjugation on the contact invariant $c^\circ(\xi)\in HF^\circ(-M,\s_\xi)$, that is $\J c^\circ(\xi)=c^\circ(\overline\xi)\in HF^\circ(-M,\overline\s_\xi)$, see \cite{G-fillability}. In addition, when $L\subset M$ is an oriented link then $\J$ identifies the filtration $\mathcal F^L$ on $HF^\circ(M,\s)$ with $\mathcal F^{-L}$ on $HF^\circ(M,\overline\s)$, given by reversing the orientation of $L$, see \cite[Lemma 3.12]{OSz-multi}. Dai and Manolescu \cite{DM} showed that in the full path setting we have $\J[V]=[-V]$ for every characteristic vector $V\in H^2(P_G;\Z)$ when $G$ is an almost-rational graph. The operation that maps characteristic vectors to their negation can be extended to the lattice cohomology group, see \cite[Section 2.2]{Lattice2}.

The Heegaard Floer group $\widehat{HF}(M,\s)$ can be decomposed as \[\widehat{HF}(M,\s)=\widehat{HF}^{\text{ev}}(M,\s)\oplus\widehat{HF}^\text{od}(M,\s)\] according to the parity of the Maslov grading; namely, when $b_1(M)=0$ given $\s\in\Spin^c(M)$ a class $\alpha$ has even parity when $d(M,\s)-M(\alpha)$ is even, and it has odd parity otherwise. Using the commutativity of the cobordism maps in Heegaard Floer \cite{OSz-negative}, we have that $\widehat F_{P_G,\mathfrak u}:\widehat{HF}(S^3)\rightarrow\widehat{HF}(M,\s)$ coincides with $\psi_*\circ F^-_{P_G,\mathfrak u}$, where $\psi_*:HF^-_*(M)\rightarrow\widehat{HF}_*(M)$ is induced by setting $U=0$ in $CF^-(M)$. The results in \cite{OSz-fullpath,Nemethi} show that, for a negative-definite $G$, when $V=c_1(\mathfrak u)$ then $[V]$ ends correctly if and only if $\widehat F_{P_G,\mathfrak u}(1)$ is non-zero; moreover, the fact that these classes form a basis of $\widehat{HF}^\text{ev}(Y_G)$ is well-known to experts, see \cite[Proposition 2.2]{CM-negative} for a complete proof.
 
\begin{teo}
 \label{teo:main}
 Let $G$ be an indefinite star-shaped graph with negative framings, and let $Y_G=\partial P_G$ be the $3$-manifold presented by the graph. Then the homology classes $[V]=F^-_{P_G,\mathfrak u}(1)\in HF^-(Y_G,\s)$, ranging among each $V\in\emph{Char}(G,\s)$ such that $V=c_1(\mathfrak u)$ and $\s=\mathfrak u\lvert_{Y_G}$, span the subgroup $\emph{Tor}HF^-(Y_G,\s)$; furthermore, they satisfy the following properties:
 \begin{itemize}
     \item the full path $[V]$ corresponds to a non-zero class if and only if there exists an $n\geq0$ such that $Z\in\mathcal B_n$ for each $Z\in[V]$;
     \item the homology classes $[V]=\widehat F_{P_G,\mathfrak u}(1)\in\widehat{HF}(Y_G,\s)$ where $[V]$ ends correctly form a basis of the subgroup $\widehat{HF}^\emph{od}(Y_G,\s)$;
     \item the involution $\J$ coincides with the natural involution in lattice cohomology, and it acts as $\J[V]=[-V]$ on each full path.
 \end{itemize}
\end{teo}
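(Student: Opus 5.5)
The strategy is to reduce the indefinite star-shaped graph $G$ to a negative-definite one, where the full path results of \cite{OSz-fullpath,Nemethi} and \cite[Proposition 2.2]{CM-negative} are already available, and then to track the full paths across the reduction using the surgery exact triangle. Since $G$ has $b_2^+=1$ and negative framings, the indefiniteness sits at the central vertex. Concretely, I would compare $G$ with the graph $G'$ obtained by lowering the central framing by one. A suitable iteration of this makes the graph negative-definite. The boundaries $Y_G$, $Y_{G'}$ and $Y_{G\setminus\{v_0\}}$ then fit into a surgery exact triangle, where the last is the connected sum of lens spaces obtained by deleting the central vertex $v_0$, and whose $HF^-$ is free and known. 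The cobordism maps in this triangle are the blow-up-type and two-handle maps, and they commute with the plumbing maps $F^-_{P_G,\mathfrak u}$ by composition of cobordisms \cite{OSz-negative}. This lets me express each $F^-_{P_G,\mathfrak u}(1)$ through the image of $F^-_{P_{G'},\mathfrak u'}(1)$ for characteristic vectors $V'$ that restrict appropriately.

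The first point is the span statement. Because $b_2^+(P_G)=1$, every $F^-_{P_G,\mathfrak u}(1)$ is $U$-torsion \cite{OSz-negative}, so the span is contained in $\text{Tor}HF^-(Y_G,\s)$. For the reverse inclusion, I would use the exact triangle together with the known description of $HF^-(Y_{G'})$ (the negative-definite case) to show that $\text{Tor}HF^-(Y_G)$ is generated by the images of the non-torsion generators $[V']$ of $HF^-(Y_{G'})$, taken modulo what survives from the lens space term. Each such image is a full path class $[V]$ for $G$, obtained by extending $V'$ with the central coordinate shifted.

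Next come the three properties. For the first bullet, I would adapt the argument of \cite[Lemma 2.3]{OSz-fullpath}. If all $Z\in[V]$ lie in $\mathcal B_n$, then $U^n$ times the class is represented by a path that ends correctly, and by the negative-definite comparison this class is non-zero. Conversely, if the path escapes every $\mathcal B_n$, the algorithm produces vectors that push the class into the image of $U^N$ for all $N$. Such a class is torsion and infinitely $U$-divisible, hence zero. For the second bullet, I would pass to $\widehat{HF}$ via $\psi_*$. The grading shift from the $b_2^+=1$ part of $P_G$ explains why these classes land in odd parity instead of even. Independence and spanning would follow by counting: compare the number of correctly ending paths with $\dim\widehat{HF}^{\text{od}}$, using the exact triangle in $\widehat{HF}$ and the corresponding count in \cite[Proposition 2.2]{CM-negative} for $G'$. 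For the third bullet, $\J$ commutes with cobordism maps and conjugates $\mathfrak u$. Hence $\J F^-_{P_G,\mathfrak u}(1)=F^-_{P_G,\overline{\mathfrak u}}(1)$, and this equals $[-V]$ because $c_1(\overline{\mathfrak u})=-V$. Compatibility with the lattice involution of \cite{Lattice2} then follows from Zemke's identification \cite{Zemke} together with \cite{DM}.

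The main obstacle will be the spanning/surjectivity step. It requires showing that the exact triangle map is surjective onto the torsion part and that distinct full path classes for $G'$ map to the expected relations among full path classes for $G$. My plan is to handle this by a grading and rank count in each $\Spin^c$ structure: check that $\text{rk}\,\text{Tor}HF^-(Y_G,\s)$ matches the number of equivalence classes of full paths satisfying the $\mathcal B_n$ condition.
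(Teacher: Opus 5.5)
Your set-up matches the paper's. It uses the same triangle relating $Y_G$, the graph $G'$ with central framing lowered by one, and the lens spaces $Y_{G\setminus\{v_0\}}$, the same identity $g_\epsilon(F^-_{P_{G'},\mathfrak u'}(1))=F^-_{P_G,\mathfrak u}(1)$, and the same torsion argument from $b_2^+(P_G)=1$. Your ``only if'' argument for the first bullet (infinite $U$-divisibility) and the computation $\J F^-_{P_G,\mathfrak u}(1)=F^-_{P_G,\overline{\mathfrak u}}(1)=[-V]$ are also fine.

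\textbf{The core is missing.} The theorem's substance is that a full path staying in some $\mathcal B_n$ gives a \emph{non-zero} class, that inequivalent full paths give independent classes, and that the correctly ending ones form a basis of $\widehat{HF}^{\text{od}}$. You defer all of this to a rank count, but nothing in your plan computes the combinatorial side of that count. The exact triangle in $\widehat{HF}$ and \cite[Proposition 2.2]{CM-negative} for $G'$ only constrain Floer groups. They say nothing about how many full-path classes of $G$ are bounded or end correctly, and ``by the negative-definite comparison this class is non-zero'' is not an argument.

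What you need is a combinatorial model with its own exact sequence, plus a comparison map compatible with both rows. The paper introduces $\mathbb H^+(G,\s)$, the finitely supported functions $\text{Char}(G,\s)\to\mathcal T^+_0$ constant on full paths, and the map $T^+(\alpha)(V)=F^+_{\overline{P_G},\mathfrak u}(\alpha)$ with $c_1(\mathfrak u)=V$. It places the dualised triangle $0\to HF_{\text{red}}(-Y_G)\to\mathcal Z_1^\bullet\to HF^+(-Y_{G\setminus\{v_0\}})$ over the purely combinatorial exact row $0\to\mathbb H^+(G)\to\mathbb H^+(G')\to\mathbb H^+(G\setminus\{v_0\})$ of \cite[Lemma 2.10]{OSz-fullpath}. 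It then proves by induction and a diagram chase that $T^+$ is an isomorphism. Injectivity of $T^+$ is the spanning statement, and surjectivity is what makes bounded paths non-zero and inequivalent paths independent. Lemma \ref{lemma:phi}, identifying $\Ker U^{n+1}$ with $\mathcal K_n$, then gives the first bullet, and for $n=0$ together with duality gives the second.

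\textbf{Your induction also fails as stated.}

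First, $G'$ is in general not negative-definite, so ``the known description of $HF^-(Y_{G'})$'' is not available. $G'$ may still be indefinite, in which case only its torsion is spanned by plumbing classes. When $e(Y_G)\in\Z$ the iteration passes through $b_1(Y_{G'})=1$, where $HF^-$ of a torsion $\Spin^c$-structure has two towers and full paths may contain loops. The paper needs Rustamov's results (Lemma \ref{lemma:1} and Proposition \ref{prop:zero}) for this step.

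Second, exactness of the top row requires $\Imm g=\Ker h=\text{Tor}HF^-(Y_G)$, not merely $\text{Tor}\subseteq\Ker h$. The inclusion is automatic since the lens-space term is torsion-free, but without equality $g$ of the tower of $HF^-(Y_{G'})$ could have non-torsion image. The paper gets equality by dualising: $\Ker h=(\Imm h^\bullet)^\perp$. The image of $h^\bullet$ is spanned by the classes $F^+_{P_{G^*},\mathfrak u}(1)$ of the negative-definite plumbing of $G^*$. These are non-zero on $HF^\infty$, so they span exactly the tower $\mathcal T^+$ of $HF^+(-Y_G)$.

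\textbf{A smaller point on the involution.} \cite{DM} only treats almost-rational graphs, so combining it with \cite{Zemke} does not by itself identify $\J$ with the lattice involution for indefinite $G$. The paper instead observes that the classes not represented by full paths lie in the tower, where the Maslov grading determines both involutions.
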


In the case that $G$ is a \emph{singular} star-shaped graph (we mean when $b_2^-(P_G)=|G|-1$ and $b_2^+(P_G)=0$), the corresponding version of Theorem \ref{teo:main} has been proved by Rustamov in \cite[Theorem 1.2]{Rustamov}, we give some details in Section \ref{section:two}.

From standard Heegaard Floer theory the following canonical duality isomorphisms hold: \[\widehat{HF}_*(-M,\s)\simeq\widehat{HF}_{-*}(M,\s)^\bullet\hspace{0.5cm}\text{ and }\hspace{0.5cm}HF^+_*(-M,\s)\simeq HF^-_{-*}(M,\s)^\bullet\:;\] in particular, the second one holds in this form only when $\s\in\Spin^c(M)$ is a torsion $\Spin^c$-structure, see Section \ref{section:two} for more details. We know from Ozsv\'ath and Szab\'o's results, and our Theorem \ref{teo:main} above, that the homology classes given by full paths always have the same parity; hence, we can never obtain a basis of the whole $\widehat{HF}(M,\s)$ only with them (except when $M$ is an $L$-space). Such a basis can be instead obtained using the duality isomorphism: say $[V_1],....,[V_t]$ is the set of all the full paths of $\text{Char}(G^*)$ that end correctly; then we can define the functionals $T_{[V_1]},...,T_{[V_t]}$, which are then homology classes in $\widehat{HF}(Y_G,\s)$, and together with the full paths $[W_1],...,[W_{t+1}]$ of $\text{Char}(G)$ they form a (canonical) basis of $\widehat{HF}(Y_G,\s)$. Note that both $Y_G$ and $-Y_G=Y_{G^*}$ can be presented by a star-shaped graph with negative framings, see Section \ref{section:two}.

In Theorem \ref{teo:main} we identify the full path of a $V\in\text{Char}(G)$ that ends correctly with both a homology class in $HF^-(Y_G)$, and its projection to $\widehat{HF}(Y_G)$. Duality allows us to identify in a similar way the functional $T_{[V]}$ with a class in $\widehat{HF}(-Y_G)$ and its inclusion in $HF^+(-Y_G)$. In our previous works we gave many details about these properties, see \cite{AC,CM-negative} and Section \ref{section:two} below. 

Say $G$ is negative-definite, and write $\gamma\in\widehat{HF}_*^\text{ev}(Y_G,\s)$ and $\delta\in\widehat{HF}_*^\text{od}(Y_G,\s)$ for two non-zero homogeneous classes. The full path gives us a combinatorial formula, in terms of the Alexander filtration $\mathcal F$ on characteristic vectors \cite{Lattice2,Nemethi,BLZ,CM-negative}, for the $\tau$-invariant \cite{OSz-four,Hedden,AC} of a regular fibre $K$ associated to these classes; note that in the case of $\gamma$ the formula has already appeared in \cite[Equation 3.1]{CM-negative}, and it comes from the results of Alfieri in \cite{Alfieri}. By Theorem \ref{teo:main} we have $\gamma=[W_1]+\cdots+[W_k]$ and $\delta=T_{[V_1]}+\cdots +T_{[V_h]}$, where $\mathcal F(W_i)<\mathcal F(W_k),\mathcal F(W_h)$ for $i<k,h$ and $\mathcal F(V_i)>\mathcal F(V_1)$ for $i>1$.

\begin{teo}
 \label{teo:taus}
 Say $\gamma,\delta\in\widehat{HF}_*(Y_G,\s)$ are as above, and $Y_G=M(e_0;r_1,...,r_n)$ is Seifert fibred with negative-definite standard graph. Then one has \[\tau_\gamma(K)=\dfrac{1}{2}\left(\dfrac{1}{-e(Y_G)}+\min_{[Z]=[W_k]}e_1^TQ^{-1}Z\right)\text{ and }\tau_\delta(K)=\dfrac{1}{2}\left(\dfrac{1}{-e(Y_G)}+\max_{[Z]=[-V_1]}e_1^TQ_*^{-1}Z\right)\] where $Q$ and $Q_*$ are the intersection matrices of $G$ and $G^*$, and $e(Y_G):=r_1+...+r_n+e_0$.
\end{teo}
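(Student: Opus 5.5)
The plan is to handle $\gamma$ and $\delta$ separately, reducing the second case to the first by duality. For $\gamma$ I will use the Alexander filtration induced by the regular fibre $K$ on the full-path model of $\widehat{HF}^{\text{ev}}(Y_G,\s)$. Following Alfieri \cite{Alfieri} and \cite[Equation 3.1]{CM-negative}, $K$ is realised as the knot given by a meridian of the central vertex, i.e. the central vertex with a $0$-framed unknot attached. Then the filtration level of the class $F_{P_G,\mathfrak u}(1)$ in $\widehat{HF}(Y_G)$ is computed by the relative $\Spin^c$ (Alexander) grading on the knot cobordism. This gives $\mathcal F(Z)=\frac12\bigl(e_1^TQ^{-1}Z+\frac{1}{-e(Y_G)}\bigr)$. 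The term $\frac{1}{-e(Y_G)}$ arises as $-e_1^TQ^{-1}e_1=\frac{1}{-e(Y_G)}$, the self-linking correction of the rationally null-homologous fibre. I will check this identity by computing the $(1,1)$ entry of $Q^{-1}$ for a star-shaped graph via continued fractions; it equals $1/e(Y_G)$ up to sign.

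Next, since each full path gives a single homology class, its filtration level is the minimum of $\mathcal F$ over all representatives $Z$ with $[Z]=[W]$. Writing $\gamma=[W_1]+\dots+[W_k]$ in the full-path basis (this uses the negative-definite analogue of Theorem \ref{teo:main}, i.e. \cite[Proposition 2.2]{CM-negative}), we have $\tau_\gamma(K)=\min\{s : \gamma\in \operatorname{Im}(\widehat{HF}(\mathcal F_s)\to\widehat{HF})\}$. Because the basis elements are filtration-homogeneous and ordered, this equals the largest filtration level among the summands. By the stated ordering, that is attained by $[W_k]$, which gives the first formula. The point needing care is that the basis is a filtered basis, i.e. compatible with the associated graded. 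I expect this follows because the cobordism maps are filtered and distinct full paths remain linearly independent on the associated graded.

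For $\delta$, I will use duality $\widehat{HF}_*(Y_G)\simeq\widehat{HF}_{-*}(-Y_G)^\bullet$ together with the identity $\tau_{T}(K)=-\tau_{\alpha}(\overline K)$ for dual classes in $-Y_G=Y_{G^*}$, as in \cite{AC,Hedden}. The functional $T_{[V_1]}$ pairs with a full path class of $G^*$. Its $\tau$ is minus the filtration level in $-Y_G$, where the fibre carries the mirrored orientation. Reversing orientation of $K$ is implemented by $\J$, which sends $[V]\mapsto[-V]$ by Theorem \ref{teo:main} and \cite{DM}. Hence the min becomes a max over $[Z]=[-V_1]$, computed with $Q_*$, and the ordering $\mathcal F(V_i)>\mathcal F(V_1)$ selects $V_1$. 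The $1/(-e(Y_G))$ term persists because $e(-Y_G)=-e(Y_G)$ and the sign flips twice.

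The main obstacle is the sign and normalisation bookkeeping in the dual case: checking that the constant matches the stated $\frac{1}{-e(Y_G)}$ and not its negative, and justifying that $\tau$ of a sum of dual functionals is governed by the minimal-level one. For the latter I would first try an explicit check on a lens space example.
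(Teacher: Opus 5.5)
\textbf{Gap in the $\delta$ half.} After dualising, you need the filtration level of a full-path class $[V]$ of the \emph{indefinite} graph $G^*$ inside $\widehat{HF}(-Y_G,\s)$:
\[\tau_{[V]}(K)=\min_{[Z]=[V]}\tfrac{1}{2}\bigl(\tfrac{1}{e(Y_G)}+e_1^TQ_*^{-1}Z\bigr).\]
You simply assert this (``computed with $Q_*$''). Alfieri's formula, which you legitimately used for $\gamma$, is only available for almost-rational, i.e.\ negative-definite, graphs. Here the classes $F^-_{P_{G^*},\mathfrak u}(1)$ come from a plumbing with $b_2^+=1$; they are torsion and odd-graded, so that formula does not cover them. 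A grading-shift argument for the cobordism map through the disk bounded by $K$ at best exhibits a representative in level $\mathcal F(Z)$. That gives only $\tau_{[V]}(K)\leq\min_{[Z]=[V]}\mathcal F(Z)$, which after dualising is a one-sided bound on $\tau_\delta$. The reverse inequality is exactly Theorem \ref{teo:tau}, which the paper proves before this statement. Its proof uses three inputs:
\begin{itemize}
\item the Borodzik--Liu--Zemke isomorphism $\mathcal{CFK}(-Y,K)\simeq\mathbb{CFL}(\Gamma^*,S_0)$, which rests on Zemke's link surgery formula;
\item Theorem \ref{teo:main}, to know that odd classes of $\widehat{HF}(-Y_G,\s)$ are sums of full paths of $G^*$;
\item the fact that $\mathcal F$ increases along full paths, so the minimum sits at the initial vector.
\end{itemize}
With that in hand, \cite[Lemma 2.2]{AC} gives $\tau_\delta(K)=-\min\{\tau_{\gamma'}(K):\langle\gamma',\delta\rangle=1\}=-\tau_{[V_1]}(K)$. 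This is the correct form of your ``dual classes'' identity, and it is the paper's route.

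\textbf{Minor points.} Your worry about sums needs no associated-graded argument. One has $\tau_{a+b}\leq\max(\tau_a,\tau_b)$, with equality whenever $\tau_a\neq\tau_b$, and the levels of distinct full paths are distinct (Remark \ref{remark:Alexander}). This settles $\gamma=[W_1]+\cdots+[W_k]$. It also settles the selection of $V_1$ for $\delta$: a homogeneous $\gamma'$ pairing nontrivially with $\delta$ has some $[V_i]$ with $i\leq h$ among its summands, so $\tau_{\gamma'}\geq\tau_{[V_1]}$. A lens-space test would be vacuous anyway, since L-spaces have $\widehat{HF}^{\text{od}}=0$.

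Finally, drop the appeal to $\J$ and to reversing the orientation of $K$. Passing from $-\min_{[Z]=[V_1]}e_1^TQ_*^{-1}Z$ to $\max_{[Z]=[-V_1]}e_1^TQ_*^{-1}Z$ is just the bijection $Z\mapsto -Z$ from $[V_1]$ onto $[-V_1]$, plus linearity. If you also replaced $[V_1]$ by $\J[V_1]=[-V_1]$, you would end up with $\max_{[Z]=[V_1]}e_1^TQ_*^{-1}Z$, which is in general a different number.
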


Note that similar formulae can be obtained when $G$ is indefinite, see Theorem \ref{teo:tau}. We show in Section \ref{section:three} that the filtration $\mathcal F^K$, induced by the regular fibre on $\widehat{HF}(Y_G)$, can be determined combinatorially.

In this paper we use the Hirzebruch-Jung convention \cite{NZM,Saveliev} on negative continued fractions; namely, for any rational number $r\in(0,1)$ we set $-\frac{1}{r}=[m_1,...,m_k]$ with $m_i\leq-2$ integer to be the number defined by the following recursive sequence: \[\left\{\begin{aligned}&m_1\hspace{1.25cm}\text{ when }k=1 \\ &m_1+r'\hspace{0.5cm}\text{ when }k>1\text{ and }-\frac{1}{r'}=[m_2,...,m_k]\:.\end{aligned}\right.\]
The reason why this convention is useful in low dimensional topology is the following: it provides a simple way to express the resulting framing of the slam-dunk of a chain of unknots in a smooth surgery presentation of a 3-manifold. In particular, the lens space $L(p,q)$ with $1\leq q< p$ coprime is diffeomorphic to $-\frac{p}{q}$-surgery on the unknot, and then to the surgery on a chain of unknots whose framings are the terms in the negative continued fraction of $-\frac{p}{q}$.

\subsection*{The classification of negative-twisting structures} The \emph{maximal twisting number} of tight contact structures on a Seifert fibred space $M=M(e_0;r_1,...,r_n)$ is an integer which captures the maximal difference between any contact and fibration framing of a regular fibre $K$. When $M$ is a rational homology sphere and its graph has at least three legs, this is \begin{equation}\text{tw}(M,\xi):=\text{TB}_\xi(K)\:-\text{the fibration framing of } K=\text{TB}_\xi(K)+\frac{1}{e(M)}\label{eq:tw}\end{equation} for a tight contact structure $\xi$ on $M$. By convention \cite{G-}, if $\text{tw}(M,\xi)\geq0$ then we say that $\xi$ is \emph{zero-twisting}; otherwise, we say it is \emph{negative-twisting}. In \cite[Section 4]{CM-negative} we specify how to define the maximal twisting number of the regular fibre in a Seifert fibration on a lens space, when the standard graph has two legs.

We recall that in \cite[Section 2]{G-} Ghiggini established a criterion that determines precisely when a certain negative integer is the twisting number of a tight structure on a Seifert fibred space. Ghiggini's result was originally stated only when the standard graphs have exactly three legs and $e_0=-1$ or $-2$. We extended this criterion to any Seifert fibred space in \cite[Proposition 4.1]{CM-negative}. An analogous result was obtained by Massot in \cite{Massot}.

The following is a standard definition in elementary number theory, see \cite{NZM}: we call a rational number $\frac{p}{q}<1$ a \emph{best upper approximation} for $r\in(0,1)\:\cap\Q$ when $\frac{p}{q}>r,$ $\gcd(p,q)=1$ and no rational number $\frac{k}{h}$ with $1\leq h\leq q$ and $1\leq k\leq p$ is in the interval $(r,\frac{p}{q})$. Note that after we fix $q\geq2$ the best upper approximation is unique (provided it exists).

\begin{teo}[Ghiggini-Massot's algorithm for negative maximal twisting numbers]
 \label{teo:Paolo}
 The Seifert fibred space $M=M(e_0;r_1,\dots,r_n)$ with $n\geq3$ and $r_i\in(0,1)$ rational admits a tight contact structure $\xi$ with twisting number equal to  $\emph{tw}(M,\xi)=-q<-1$ if and only if there exist positive integer numbers $p_1,\dots,p_n$ such that 
\begin{itemize}
 \item $\frac{p_i}{q}$ is the best upper approximation for $r_i$ when $i=1,...,n$;
 \item $p_1+...+p_n = -e_0q+n-2$. 
\end{itemize}   
Furthermore, we have that $-1$ is a twisting number if and only if $e_0\leq-2$.
\end{teo}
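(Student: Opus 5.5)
This statement is essentially a repackaging of \cite[Proposition 4.1]{CM-negative} (itself an extension of Ghiggini's criterion \cite[Section 2]{G-}, with the analogous result in \cite{Massot}). The plan is to recall that criterion and translate its conditions into the language of best upper approximations.

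\textbf{Step 1: the geometric criterion.} Decompose $M$ as $M=(\Sigma\times S^1)\cup V_1\cup\dots\cup V_n$, where $\Sigma$ is a pair of pants generalised to an $n$-holed sphere and the $V_i$ are the singular fibre neighbourhoods. A tight $\xi$ with $\text{tw}(M,\xi)=-q<0$ has, after isotopy, a Legendrian regular fibre with twisting $-q$. We thicken neighbourhoods of the singular fibres to $V_i$ whose boundary tori are convex with two dividing curves. In the coordinates of $\partial(\Sigma\times S^1)$ these curves have slope $\frac{1}{q}$-type, that is $-\frac{p_i}{q}$ up to the gluing normalisation. Tightness of $\xi|_{V_i}$ requires that this boundary slope be reachable from the meridian slope $-\frac{1}{r_i}$ through a solid torus with no further thickening, and that it cannot be thickened to a slope with smaller $|q|$. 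On the Farey graph this says that the slope $p_i/q$ is connected to $r_i$ with no intermediate vertex of denominator $\le q$. Equivalently, $\frac{p_i}{q}$ is the best upper approximation of $r_i$ with denominator $q$; this is where the number-theoretic definition from \cite{NZM} enters.

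\textbf{Step 2: the background piece.} On $\Sigma\times S^1$ with boundary slopes determined by the $p_i$, a Legendrian regular fibre of twisting $-q$ and no vertical Legendrian of larger twisting exists if and only if the sum of boundary slopes matches the Euler number. Gluing $\Sigma\times S^1$ to the $V_i$ via the Seifert invariants gives the condition $\sum p_i=-e_0q+n-2$. The $n-2$ is the Euler characteristic correction of the $n$-holed sphere, as in Ghiggini's three-leg count, where the condition reads $p_1+p_2+p_3=-e_0q+1$. The converse construction glues together the tight pieces given by Honda's classification and checks tightness via fillability or a nonvanishing invariant, as done in \cite{CM-negative}.

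\textbf{Step 3: the case $q=1$.} Here every $r_i\in(0,1)$ has best upper approximation $1/1$, so $p_i=1$. The sum condition becomes $n=-e_0+n-2$, i.e.\ $e_0=-2$. The known statement is instead $e_0\le-2$, because when $q=1$ extra bypass freedom allows the inequality. This case is stated separately, and we cite \cite[Proposition 4.1]{CM-negative}, which handles it via Wu's and Ghiggini's results.

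The main obstacle is the dictionary in Step 1: proving that the Farey-graph condition ``no thickening'' is exactly best upper approximation, including the uniqueness remark and the coprimality. I would do this by expressing $-1/r_i$ as a continued fraction and using the fact that Farey neighbours of $r_i$ with denominator $\le q$ are its convergents.
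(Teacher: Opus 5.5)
Your route is the paper's route. The paper gives no independent proof of Theorem~\ref{teo:Paolo}: it imports it from Ghiggini \cite[Section 2]{G-}, its extension \cite[Proposition 4.1]{CM-negative}, and Massot \cite{Massot}, which is exactly the citation your proposal rests on. Several heuristics in your sketch are wrong, though, and they would cause trouble if you tried to write the dictionary out.

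\emph{Step~3.} No $r_i\in(0,1)$ has a best upper approximation with $q=1$, because the definition requires $\frac{p}{q}<1$. This is precisely why the case $-1$ is stated separately. The inequality $e_0\leq-2$ is also not explained by extra bypass freedom. In the bookkeeping of \cite[Section 4]{CM-negative}, recalled in the proof of Theorem~\ref{teo:A}, one takes $p_1=-e_0-1$ and $p_i=1$ for $i\geq2$. So the slope on the leg carrying $e_0$ is not confined to $(0,1)$, the relation $\sum p_i=-e_0+n-2$ holds identically, and $p_1\geq1$ is exactly $e_0\leq-2$.

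\emph{Step~1.} The boundary slope does not constrain tightness of $\xi|_{V_i}$, since every convex boundary slope bounds tight solid tori. The best-upper-approximation condition encodes maximality of the twisting: $V_i$ must admit no thickening containing a vertical Legendrian of larger twisting.

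\emph{The planned dictionary via convergents is off.} The fraction $\frac{p}{q}$ is the smallest fraction of denominator at most $q$ exceeding $r_i$, and it must have denominator exactly $q$. Equivalently, if $-\frac{1}{r_i}=[m_1,\dots,m_{k_i}]$, then $-\frac{q}{p}=[m_1,\dots,m_{h-1},n_h]$ with either $h=k_i+1$ or $n_h>m_h$, as in Subsection~\ref{subsection:background}. These fractions are generally not convergents: the best upper approximations $\frac23,\frac35,\frac47,\dots$ of $\frac12$ are not. When $q<a_i$ they need not even be Farey neighbours of $r_i$: for $r=\frac38$ and $q=2$ one gets $\frac12$, with determinant $2$.
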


The starting point of our classification is to determine exactly which negative integers can be twisting number of a tight structure on $Y_G$. In \cite[Theorem 1.2]{CM-negative} we showed that when $G$ is negative-definite the twisting number is unique; this is no longer the case here. For this reason we call $\overline{\text{tw}}(Y_G)$ the highest and $\underline{\text{tw}}(Y_G)$ the lowest negative twisting number of $Y_G$.

In \cite[Theorem 1.1]{CM-negative} the uniqueness of the twisting number when $G$ is negative-definite allowed us to classify the negative-twisting tight structures on $Y_G$. In particular, we showed that all such structures are Stein fillable, and the filling is smoothly the same 4-manifold: this is $X_G$ the manifold obtained by blowing down the graph $G$. We prove in Section \ref{section:three} that this procedure can be done also when $G$ is either indefinite or singular; moreover, it still produces a Stein domain provided that $Y_G$ is not an $L$-space. The difference with respect to the case in \cite{CM-negative} lies in the fact that now not every negative-twisting structure is obtained in this way; namely, Ghiggini \cite{G-} already showed that oppositely oriented Brieskorn spheres may also carry non-Stein fillable structures. Nonetheless, we prove in Sections \ref{section:four}, \ref{section:five} and \ref{section:six} that the Stein fillable structures realised on $X_G$ are precisely the ones whose twisting number is $\overline{\text{tw}}(Y_G)$.
\begin{prop}
 \label{prop:main}
 Suppose that $Y_G=M(e_0;r_1,\dots,r_n)$ is a Seifert fibred space such that $G$ is not negative-definite. The tight contact structures on $M$ whose negative twisting number is $\overline{\emph{tw}}(Y_G)$ are precisely the Legendrian surgeries on all possible Legendrian realisations of the complete blow-down $X_G$ of $G$; moreover, when $G$ is singular and $Y_G$ is not a torus bundle over $S^1$ there are no other negative-twisting structures.

 Furthermore,  every such $\xi$ is Stein fillable and distinguished, up to isotopy, by its contact invariant $c^+(\xi)=T_{[V]}\in HF^+(-Y_G,\s_\xi)$ where $V\in\emph{Char}(G,\s_\xi)$.
\end{prop}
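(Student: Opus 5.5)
The argument splits into three parts: an upper bound on the number of structures with twisting $\overline{\text{tw}}(Y_G)$, a matching lower bound from Legendrian realisations of the blow-down $X_G$, and the identification of the contact invariants.

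\textbf{Upper bound.} I would apply Theorem \ref{teo:Paolo} with $q=-\overline{\text{tw}}(Y_G)$. A structure with this twisting number contains a Legendrian regular fibre with twisting $-q$. Its standard neighbourhood, together with the neighbourhoods of the singular fibres, decomposes $Y_G$ into a thickened pair of pants times $S^1$ and solid tori. The tight structures on these pieces with the prescribed boundary slopes are counted by the Honda/Ghiggini classification. The slopes are the best upper approximations $\frac{p_i}{q}$, so after gluing the count is bounded above by a product of factors, one per leg of $G$. I would check that this product equals the number of admissible rotation-number choices in the chains of $X_G$.

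\textbf{Lower bound.} Here I would use the blow-down construction of Section \ref{section:three}. Blowing down $G$ completely gives a Kirby diagram of $X_G$ whose framings are at most $\text{tb}-1$ for suitable Legendrian unknot representatives, which makes $X_G$ a Stein domain when $Y_G$ is not an $L$-space. Each choice of rotation numbers gives a Stein structure $J$ and a Legendrian surgery $\xi_J$. I would compute the twisting number of $\xi_J$ directly by exhibiting the regular fibre as a Legendrian push-off of the central unknot, which gives twisting $\overline{\text{tw}}(Y_G)$. The upper bound of the previous step then shows that no other structures have this twisting number.

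\textbf{Contact invariants.} Plamenevskaya's theorem for Stein fillings, together with the fact that $c^+(\xi_J)$ is the dual of $F^-_{X_G,\mathfrak t_J}(1)$, identifies $c^+(\xi_J)$ with $T_{[V]}$, where $V\in\text{Char}(G,\s_{\xi_J})$ is the characteristic vector obtained by pulling $c_1(J)$ back along the blow-ups. Lisca–Matić type arguments and the Chern class computation show that distinct rotation-number choices give distinct $c_1(J)$ and hence distinct $V$. By Theorem \ref{teo:main}, distinct vectors ending correctly give linearly independent classes, so the $c^+(\xi_J)$ are distinct and nonzero. This distinguishes the structures up to isotopy and proves the final sentence of Proposition \ref{prop:main}.

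\textbf{Singular case.} When $G$ is singular and $Y_G$ is not a torus bundle, I would show that $\overline{\text{tw}}=\underline{\text{tw}}$, that is, the integers $q$ satisfying the conditions of Theorem \ref{teo:Paolo} are unique. This follows from the fact that $e(Y_G)=0$ forces the equality $\sum p_i = -e_0q+n-2$ for only one value of $q$ when there are best upper approximations of different denominators. The torus bundle case, with its infinitely many twisting values, is excluded by hypothesis.

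\textbf{Main obstacle.} I expect the hardest step to be matching the two counts, in particular showing that the characteristic vectors coming from Stein structures on $X_G$ are exactly those whose full path ends correctly. I would attack this by running the full path algorithm on $V$ and using the bounds $|z_i|\leq -m(i)$ that come from the rotation-number constraints.
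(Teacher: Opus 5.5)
Your overall plan (convex-surface upper bound, Legendrian surgeries on $X_G$ as the lower bound, Plamenevskaya plus the full path for $c^+$, uniqueness of the twisting number in the singular case) is the paper's, but the twisting-number step has a genuine gap. Exhibiting a Legendrian regular fibre of twisting $-q$ in $\xi_J$ only gives $\mathrm{tw}(Y_G,\xi_J)\geq -q$. Nothing in your argument excludes that $\xi_J$ is zero-twisting. The convex-surface bound only counts structures of twisting exactly $\overline{\mathrm{tw}}(Y_G)$, so it then says nothing about the $\xi_J$.

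This is a real issue. For $M(-1;\frac{2}{3},\frac{1}{2},\frac{1}{3})\cong S^3_{-9}(T_{2,-3})$ the blow-down $X_{\Gamma^*}$ is Stein, yet every tight structure is zero-twisting. Your argument uses the non-$L$-space hypothesis only to get a Stein domain, so it cannot detect this.

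The paper gets the missing upper bound from Heegaard Floer theory. Since $b_2^+(X_G)=1$ forces $F^\infty_{\overline{X}_G,\mathfrak v}=0$, one has $c^+(\xi_J)\neq\Theta^+$, hence $c^+(\xi_J)=T_{[V]}$ for a realised $V$ (Lemma \ref{lemma:c}). Then the $\tau$-Bennequin bound $\mathrm{TB}_\xi(K)\leq\tau_\xi(K)+\tau_{\overline\xi}(K)-1$ of \cite[Theorem 1.3]{CM-negative} is applied: both $\tau$'s are read off from $[V]$ via Theorem \ref{teo:tau}, and the bound is sharp because the Legendrian fibre bounds a Lagrangian surface in $(X_G,J)$. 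This yields exactly $\mathrm{tw}(Y_G,\xi_J)=-1-\height[V]=-1-\height[V_{\mathrm{can}}]$ (Propositions \ref{prop:neg_tw} and \ref{prop:bigtw}). So your contact-invariant step must come first and feed the twisting computation.

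Two smaller points here. When $e_0=-1$ the centre is blown down, and the components coming from $G''$ are Legendrian positive torus knots $T_{d_2,d_1}$, not unknots. The regular fibre is a meridian of the centre, which becomes a copy of $T_{d_2,d_1}$ parallel to those components, not a push-off of a central unknot. Also, that $-d_1-d_2$ is the highest value allowed by Theorem \ref{teo:Paolo} needs the arithmetic of \cite[Proposition 4.2 and Theorem 4.3]{CM-negative}.

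Second, the step you single out as the main obstacle is false, and it is not needed. Realised vectors are in general a proper subset of the initial vectors whose full path ends correctly. For $-\Sigma(3,4,47)=S^3_{1/4}(T_{3,4})$, the rational surgery formula for the $L$-space knot $T_{3,4}$ gives $\dim\widehat{HF}=39$. Hence $\dim\widehat{HF}^{\mathrm{od}}=19$, and by Theorem \ref{teo:main} there are $19$ full paths ending correctly. Only the $15$ vectors $(1,0,0,-2,x,y)$ of Table \ref{Table1} are realised. The bounds $|z_i|\leq -m(i)$ along a full path characterise ending correctly, not the narrower ranges of Theorem \ref{teo:realised}, so your proposed attack would be trying to prove something untrue.

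The matching of counts is purely combinatorial. The ranges of Theorem \ref{teo:realised} give, for example, $|m(j)+d_1+d_2|$ choices on $G''$ and $|m(j)+1|$ on ordinary vertices. Their product is \eqref{eq:upper1}, or \eqref{eq:upper2} when $\overline{\mathrm{tw}}=-1$, as in \cite[Proposition 5.1]{CM-negative}. For distinctness you only need the easy direction, that each Stein-realised $V$ has a full path ending correctly. That already follows from $\widehat c(\xi_J)\neq0$, Plamenevskaya's theorem and Theorem \ref{teo:main}.

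Finally, your singular-case sentence is an assertion rather than an argument. The proof of Proposition \ref{prop:tw} needs a Farey-neighbour analysis, which reduces two distinct twisting numbers to $\sum_i 1/a_i=n-2$ and hence to the torus bundles of Figure \ref{Torus}. In that case Theorem \ref{teo:main}, which is stated for indefinite $G$, must be replaced by Rustamov's $b_1=1$ version (Lemma \ref{lemma:1} and Proposition \ref{prop:zero}). Steinness of $X_G$ also needs the separate argument of Section \ref{section:four}, since the $L$-space criterion does not apply when $b_1=1$.
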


We exclude torus bundles over the circle because for these manifolds the classification has been done by Honda \cite{Honda2}, and equivalently by Giroux \cite{Giroux1}, we give more details in Section \ref{section:four}.

Note that not every characteristic vector $V$ whose full path ends correctly is such that $T_{[V]}$ is the contact invariant of a structure $\xi$ as above. This only happens when the strict transform of $V$, see \cite[Section 5]{BP} for details, is the characteristic vector of a Stein structure on $X_G$; in this case, we say that $V$ is a \emph{realised characteristic vector}. As we discuss in Section \ref{section:five}, for a vector to be realised we require an additional technical assumption in order for $V$ to be uniquely determined in its full path: $V$ should be an "initial" vector, see \cite{CM-negative} and Section \ref{section:two} for details about this terminology.

Let $V_{\text{can}}=(m(1)+2,...,m(|G|)+2)$ be the canonical characteristic vector \cite{Nemethi}, then we denote by $\s_\text{can}$ the $\Spin^c$-structure on $Y_G$ induced by $V_\text{can}$. We know from \cite{BP} that when $G$ is negative-definite $V_\text{can}$ is realised by the Milnor fillable contact structure $\xi_\text{can}$; in fact, one has $c^+(\xi_\text{can})=T_{[V_\text{can}]}\in HF^+(-Y_G,\s_\text{can})$, and then $M(V_\text{can})=d_3(\xi_\text{can})$, and $\s_\text{can}=\s_{\xi_\text{can}}$. We keep this terminology also for a generic $G$, even though $V_\text{can}$ and $\xi_\text{can}$ are not canonical in the same sense.

In \cite[Definition 2.6]{CM-negative} we introduced the $\height$ of a full path $[V]$ as the number of central steps in $[V]$. Here, we generalise this definition to any homogeneous non-zero class in $\widehat{HF}(Y_G,\s)$, see Definition \ref{def}. In Section \ref{section:three} we also prove that the height is equivalent to the Alexander filtration induced in homology by the regular fibre. 

\begin{teo}
 \label{teo:twisting_numbers}
 If $Y_G=M(e_0;r_1,\dots,r_n)$ with $n\geq3$ is an indefinite Seifert fibred space, then a negative twisting number $\emph{tw}(Y_G,\xi)<\overline{\emph{tw}}(Y_G)$ of a tight contact structure $\xi$ on $Y_G$ is given by \[\emph{tw}(Y_G,\xi)=-1-\height([V_\emph{can}]+[V])\] for any realised characteristic vector $V\neq V_\emph{can}$ such that $[V]\in\widehat{HF}_{d_3(\xi_\emph{can})}(Y_G,\s_\emph{can})$; in addition, we have that \[\overline{\emph{tw}}(Y_G)=-1-\height[V_{\emph{can}}]\:.\] All the negative twisting numbers are obtained in this way; therefore, they are determined by the Heegaard Floer homology of $Y_G$. Furthermore, when $\s_\emph{can}$ is spin we have that \[\underline{\emph{tw}}(Y_G)=-1+e_1^TQ^{-1}V_\emph{can}\:,\] where $Q$ is the intersection matrix of the graph $G$.
\end{teo}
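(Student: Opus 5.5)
The plan is to turn twisting numbers into Alexander gradings of contact invariants with respect to the regular fibre $K$. For a tight $\xi$ with $\text{tw}(Y_G,\xi)=-q<0$, Theorem \ref{teo:Paolo} lets us realise $K$ as a Legendrian $L$ with $\text{tb}(L)=-q-\frac{1}{e(Y_G)}$. We then use the Legendrian invariant $\mathfrak L(L)\in HFK^-(-Y_G,K)$, whose image is $c^+(\xi)$. Its Alexander grading is $\frac{1}{2}(\text{tb}-\text{rot}+1)$, and the $\tau$-type bound gives $\text{TB}_\xi(K)+|\text{rot}|\leq 2\tau_{c(\xi)}(K)-1$, in the spirit of Theorem \ref{teo:taus}. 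Maximal twisting should make this bound sharp. We will argue sharpness by stabilising inside the standard neighbourhood of a singular fibre: the twisting number can only decrease by the amount that the filtration level of $c^+(\xi)$ drops.

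By the dictionary of Section \ref{section:three}, that filtration level is $\height$, up to the normalising constant $\frac{1}{-e(Y_G)}$. This gives $\text{tw}(Y_G,\xi)=-1-\height(\alpha_\xi)$, where $\alpha_\xi\in\widehat{HF}(Y_G,\s_\xi)$ is the class dual to $c^+(\xi)$.

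The next step is to identify $\alpha_\xi$ for each $\xi$.
\begin{itemize}
\item For $\overline{\text{tw}}$: by Proposition \ref{prop:main}, $\xi$ is a Legendrian surgery on $X_G$ with $c^+(\xi)=T_{[V]}$ for a realised $V$. Under blow-down the height is constant on realised vectors of $X_G$, since a central step corresponds to a $\pm1$ change on the central vertex and the rotation data only move the leg entries. So it suffices to compute $\height[V_{\text{can}}]$, giving $\overline{\text{tw}}(Y_G)=-1-\height[V_{\text{can}}]$.
\item For the remaining structures: we use the classification in Sections \ref{section:four}--\ref{section:six}. Structures with $\text{tw}<\overline{\text{tw}}$ lie in $\s_{\text{can}}$ and are not Stein on $X_G$; they are obtained by adding a Giroux-torsion-free layer, which amounts to a negative stabilisation of the central unknot in the surgery picture.
\end{itemize}

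In $\widehat{HF}$, the invariant of such a structure should then be the mixed class $[V_{\text{can}}]+[V]$. Here $V$ is the realised vector in the same Maslov grading $d_3(\xi_{\text{can}})$, because $d_3$ is unchanged by the modification. This mixing is exactly the phenomenon that occurs when $b_2^+=1$: the torsion classes pair up (Theorem \ref{teo:main}). Applying the height formula to this class gives the first displayed equation. The claim that all negative twisting numbers arise in this way follows because Theorem \ref{teo:Paolo} lists the possible $q$, and we will match them one-to-one with the realised $V$ in that grading.

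For the spin case, $\J[V_{\text{can}}]=[-V_{\text{can}}]$ by Theorem \ref{teo:main}. Since $\s_{\text{can}}$ is spin, the height of $[V_{\text{can}}]+[-V_{\text{can}}]$ is the maximal one. Evaluating it with the $\max$ formula of Theorem \ref{teo:taus}, adapted to the indefinite case, gives $\height=-e_1^TQ^{-1}V_{\text{can}}$, and hence $\underline{\text{tw}}(Y_G)=-1+e_1^TQ^{-1}V_{\text{can}}$.

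The main obstacle is the sharpness step, i.e. showing that twisting equals the filtration level rather than merely being bounded by it. My first attempt would be to compute directly through the surgery exact triangle along the central vertex. This triangle relates $Y_G$ to the negative-definite manifold obtained by changing the central framing, where \cite[Theorem 1.1]{CM-negative} already gives equality. The filtration shift by one per central step should then propagate.
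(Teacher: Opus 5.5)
There is a genuine gap. Your route tries to read each $\text{tw}(Y_G,\xi)$ off the contact invariant through a sharp Bennequin-type bound, but you have neither the sharpness nor the invariants, and your description of the structures with $\text{tw}<\overline{\text{tw}}(Y_G)$ is wrong.

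\emph{The structures are misdescribed.}
They do not all lie in $\s_{\text{can}}$: in Table \ref{Table2}, structures with twisting number $-3$ and $-5$ also occur in $\overline\s_{\text{can}}$, $\s_1$, $\s_2$ and $\overline\s_2$. They are not produced by stabilising a surgery curve, since that changes the smooth framing and hence the manifold. They come either from Massot's structures tangent to the fibres followed by Legendrian surgery on tails (type A, Proposition \ref{prop:classificationA}), or from adding Giroux torsion to $\zeta_0$ on a torus bundle and then doing Legendrian surgery (type B, Subsection \ref{subsection:B}). Their invariants are not dual to $[V_{\text{can}}]+[V]$ in general: in type B, $c^+(\xi_{ij})$ is the binomial combination of Theorem \ref{teo:final}, with up to $j-i+1$ terms.

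\emph{The sharpness step has no mechanism.}
Stabilising only produces representatives with smaller $\text{tb}$, so it cannot show that a $\tau$-bound is attained. A single-$\tau$ inequality cannot see a height anyway. A height is a difference of two extreme Alexander levels and enters only through $\tau_\xi(K)+\tau_{\overline\xi}(K)$, as in Proposition \ref{prop:neg_tw}, which yields only $\text{tw}\le-1-\height$. In the paper, equality comes from a Lagrangian surface bounded by a Legendrian regular fibre in a filling (Proposition \ref{prop:bigtw}). For this theorem it is used only to get $\overline{\text{tw}}(Y_G)=-1-\height[V_{\text{can}}]$. Your exact-triangle fallback cannot help either: on the negative-definite side the negative twisting number is unique, so nothing there can propagate to the several twisting numbers of $Y_G$.

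\emph{The missing idea.}
For the lower twisting numbers you should not compute contact invariants at all. Theorem \ref{teo:Paolo} already decides which $-q$ occur, so the statement reduces to a purely combinatorial equivalence. This is exactly the step you dispose of with ``we will match them one-to-one'', and it is the real content (Theorem \ref{teo:twisting}).
\begin{enumerate}
\item For a realised $V$, set $U=\frac12(V-V_{\text{can}})$ and $X=Q_*^{-1}(V_{\text{can}}+U)$. Integrality of $X$ means $V$ and $-V_{\text{can}}$ induce the same $\Spin^c$-structure (Proposition \ref{prop:spinc}). The condition $U^TX=0$ means $M(V)=M(V_{\text{can}})$ (Equation \eqref{eq:Maslov2}). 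Finally, $x_1=e_1^TX$ is minus the relevant height.
\item Since $U\geq0$, and positivity of the inverse leg matrices forces $X\leq0$, orthogonality becomes $u_1=0$ and $u_j(z_j-1)=0$ with $Z=\mathbf 1-X$ (Lemma \ref{lemma:orthogonal}).
\item The central row of $Q_*X=V_{\text{can}}+U$ then becomes $\sum_ip_i=-e_0q+n-2$, with $p_i=1-x^i_1$ and $q=1-x_1$.
\item The Leg lemma, proved by induction along each leg via $\Phi(x)=a_1-\frac1x$, shows the leg equations have solutions with $U$ in the realised range exactly when $\frac{p_i}{q}$ is the best upper approximation of $r_i$ (Proposition \ref{prop:twisting}). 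The cases where some $p_i=1$ are handled by Lemma \ref{lemma:twisting} and induction on deleting legs.
\end{enumerate}

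For the spin case you need more than you state. That $-1-\height([V_{\text{can}}]+[-V_{\text{can}}])$ is a twisting number comes from this same equivalence. That it is the lowest one comes from Lemma \ref{lemma:can}, which shows $\pm V_{\text{can}}$ attain the extreme values of $\mathcal F$ among full paths ending correctly.
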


For the other twisting numbers the classification is more complex, and in order to classify these "additional" structures we need to distinguish two different behaviours depending on the graph $G$. We say that an indefinite Seifert fibred space $Y_G$ is of \emph{type B} when $G$ contains one of the seven graphs in Figure \ref{Torus} as a subgraph, and of \emph{type A} otherwise. These seven special graphs are exactly the only Seifert fibred spaces, whose base orbifold is a sphere, which are torus bundles over a circle, see \cite[Subsection 2.1]{Hatcher}.

For a manifold of type A we show that there is always at most one additional structure for each $\Spin^c$-structure, and we construct them using the results of Massot \cite{Massot}. For one of type B we instead show that additional structures form \emph{pyramids}: for each homotopy class of contact structures, which consists of $\xi_1,...,\xi_k$ with twisting number $\overline{\text{tw}}(Y_G)$, then there are $\frac{k(k-1)}{2}$ structures, that we denote by $\xi_{ij}$ for $1\leq i<j\leq k$, with twisting number lower than $\overline{\text{tw}}(Y_G)$. Note that the number $k$ depends on the choice of the homotopy class which in turn is fixed by the underlying $\Spin^c$-structure, see Section \ref{section:six}. 

The only two infinite families of oppositely oriented Brieskorn spheres whose classification of tight contact structures appears in literature are of type B; namely, these are $-\Sigma(2,3,6k-1)$ for $k\geq2$, which was done by Ghiggini and Van Horn-Morris in \cite{GvHM}, and $-\Sigma(2,3,6k+1)$ for $k\geq1$, which was done by Tosun in \cite{Tosun} by an analogous argument. More generally, the count of tight structures is known also for particular surgeries on a singular fibre of $-\Sigma(2,3,6k+1)$, due to Wan \cite{Wan}. 
In Figure \ref{2323} we show the manifold $-\Sigma(2,3,23)$ which belongs to the first family; in this case there are 6 structures, they are all homotopic and belong to the same pyramid.

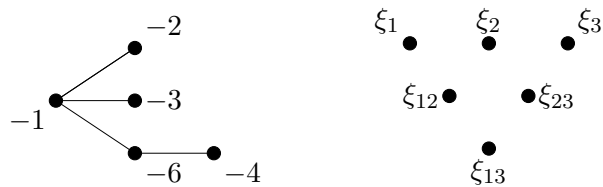
\begin{figure}[t] 
    \begin{tikzpicture}[scale=0.7]
    \tkzDefPoints{0/0/A, 1.5/1/B, 1.5/0/D, 1.5/-1/C, 3/-1/E}  
    \tkzDrawSegment(A,B)\tkzDrawSegment(A,D)\tkzDrawSegment(B,A)\tkzDrawSegment(A,C)\tkzDrawSegment(E,C)
    \tkzDrawPoints[fill,black,size=5](A,B,C,D,E)
     \tkzLabelPoint[below left](A){$-1$} 
     \tkzLabelPoint[above right](B){$-2$}\tkzLabelPoint[below right](C){$-6$}\tkzLabelPoint[below right](E){$-4$}
     \tkzLabelPoint[right](D){$-3$}
       \end{tikzpicture}
       \hspace{1cm}
       \begin{tikzpicture}[scale=0.7]
    \tkzDefPoints{0/0/A, 1.5/0/B, 3/0/C, 0.75/-1/D, 2.25/-1/E, 1.5/-2/F}  
    \tkzDrawPoints[fill,black,size=5](A,B,C,D,E,F)
     \tkzLabelPoint[above left](A){$\xi_1$} \tkzLabelPoint[above](B){$\xi_2$}\tkzLabelPoint[above right](C){$\xi_3$}
     \tkzLabelPoint[left](D){$\xi_{12}$}\tkzLabelPoint[right](E){$\xi_{23}$}\tkzLabelPoint[below](F){$\xi_{13}$}
       \end{tikzpicture}
     \caption{\smaller[1]{The oppositely oriented Brieskorn sphere $-\Sigma(2,3,23)$ is a manifold of type B. The Stein fillable structures $\xi_i$ have contact invariant $T_{[V_i]}$, where $V_i=(1,0,-1,-4,-4+2i)$ for $i=1,2,3$. The structures in each row of the pyramid have twisting number $-5,$ $-11$ and $-17$ respectively. }}
     \label{2323}
\end{figure}

One of the additional advantages of our identification between negative-twisting contact structures and the full path algorithm is the following: we can describe the classification without needing to distinguish between manifolds of type A and B. In fact, our main result can be stated in the form below.

\begin{teo}
 \label{teo:classification}
 Suppose that $Y_G=M(e_0;r_1,\dots,r_n)$ is a Seifert fibred space, and $G$ is indefinite. The tight contact structures $\xi$ on $M$ whose negative twisting number is $-q=\emph{tw}(Y_G,\xi)<\overline{\emph{tw}}(Y_G)$ are in one-to-one correspondence with the unordered pairs $(V_i,V_j)$ for $i\neq j$ of realised characteristic vectors in $\emph{Char}(G,\s_\xi)$ such that \[[V_i],[V_j]\in\widehat{HF}_{d_3(\xi)}(Y_G,\s_\xi)\hspace{0.5cm}\text{ and }\hspace{0.5cm}q=1+\height([V_i]+[V_j])\:.\] Furthermore, these structures are all symplectically fillable. 
\end{teo}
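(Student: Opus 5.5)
The proof splits into an existence count and an identification of each structure, organised around the twisting number $-q$. First, using Theorem \ref{teo:Paolo} (in the extended form of \cite[Proposition 4.1]{CM-negative}), I would fix the admissible $q$ with $-q<\overline{\text{tw}}(Y_G)$. For each such $q$, Massot's construction \cite{Massot} gives a decomposition of $Y_G$ into a neighbourhood of a Legendrian regular fibre $K$ with twisting $-q$ and the complementary pieces. The complements are solid tori around the singular fibres, glued to a product $\Sigma\times S^1$. Tight structures with twisting exactly $-q$ are then classified by the relative Euler class data on the pieces, namely the choices of signs of the bypasses/basic slices. By the upper bound argument of Ghiggini \cite{G-}, the distinct such structures correspond to choices of two distinct Stein structures on $X_G$. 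Concretely, I would realise each structure $\xi_{ij}$ by gluing, or by a Legendrian surgery on an appropriate stabilised fibre, so that it sits "between" two structures $\xi_i,\xi_j$ of maximal twisting with the same $d_3$. By Proposition \ref{prop:main} these are indexed by realised characteristic vectors $V_i,V_j$.

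Second, I would match the twisting number with the height. From Section \ref{section:three}, $\height$ computes the Alexander filtration of the regular fibre on $\widehat{HF}(Y_G)$. The structure $\xi_{ij}$ admits a Legendrian fibre $K$ whose LOSS-type invariant, or equivalently the filtration level of its contact class, is governed by the first filtration level where $[V_i]$ and $[V_j]$ become homologous in the associated graded. This is the level of the sum $[V_i]+[V_j]$. Combining this with the formula $\text{tw}=\text{TB}+1/e$ from \eqref{eq:tw} and the $\tau$-formula of Theorem \ref{teo:taus}/\ref{teo:tau} would give $q=1+\height([V_i]+[V_j])$, consistent with Theorem \ref{teo:twisting_numbers}. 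The same argument also shows that both classes must lie in the grading $d_3(\xi)$, since $\xi_{ij}$ is homotopic to $\xi_i$ and $\xi_j$; the homotopy is obtained by comparing Euler classes and $d_3$ through the surgery description.

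Third, for injectivity I would show that different unordered pairs give non-isotopic structures, and for surjectivity that every structure with twisting below $\overline{\text{tw}}$ arises this way. The plan is to use the count from the convex-surface upper bound together with the pair count $\binom{k}{2}$ in each homotopy class, giving pyramids for type B and at most one pair for type A, where the height condition forces uniqueness. Distinctness within a homotopy class follows from relative Euler classes on the decomposition, which encode the pair $(i,j)$. Fillability I would prove by exhibiting each $\xi_{ij}$ as a weak filling obtained from the indefinite plumbing with $b_2^+=1$, perturbing a symplectic form à la Gay/Ghiggini.

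The main obstacle is the second step: proving that the twisting number of $\xi_{ij}$ equals exactly $-1-\height([V_i]+[V_j])$, rather than merely being bounded by it. This requires a precise control of the Alexander filtration on sums of full-path classes. I would first try to use the chain-level description of $\mathcal F^K$ from Section \ref{section:three} to compute the filtration of $[V_i]+[V_j]$ directly via central steps. Then I would compare it with the Legendrian invariant of the fibre in the glued structure.
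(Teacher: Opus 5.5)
Your outline has the right overall shape, but the step that carries the theorem is asserted rather than proved. Convex surface theory only gives the upper bound $T(1,q)\cdots T(n,q)$ of Equation~\eqref{eq:upper1} for structures with twisting $-q$. It says nothing about pairs of Stein structures on $X_G$, and no ``upper bound argument of Ghiggini'' produces that identification.

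In the paper this is a genuinely combinatorial identity, Theorem~\ref{teo:contact}. For a pair $(V_1,V_2)$ one sets $2X=Q_*^{-1}(V_1+V_2)$, $2U=V_1-V_2$ and $Z^{(i)}=\mathbf 1-X^{(i)}$. Then integrality of $X$, $U^TX=0$ (equal Maslov grading) and $x_1=1-q$ become, on each leg, $-Q_*^{(i)}Z^{(i)}=U^{(i)}+qe_1+e_{k_i}$ with $u_j(z_j-1)=0$ and $u_j$ in the realised range. The Leg lemma and Lemma~\ref{lemma:W} force $Z^{(i)}$ to follow the continued fraction of $p_i/q$ up to the index $h_i$, and leave exactly $T(i,q)$ choices for $U^{(i)}$. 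Without this you have nothing to compare the upper bound with.

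Your substitute count of $\binom{k}{2}$ pairs per homotopy class is correct only in type B. In type A most pairs in a grading give no twisting number at all: in Table~\ref{Table1} the grading $-1$ carries three realised vectors but no structure with $\mathrm{tw}<\overline{\mathrm{tw}}$. Deciding which pair survives is exactly what the leg computation does.

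Likewise, relative Euler classes of the pieces of a non-canonical convex decomposition are not isotopy invariants; this is precisely why the count is only an upper bound. So they cannot give injectivity. The paper separates the structures by $c^+\in HF_{\mathrm{red}}$ (Corollary~\ref{cor:invariant}, Proposition~\ref{prop:map}, Theorem~\ref{teo:final}).

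Existence, fillability and your Step 2 also have gaps. You never actually construct a structure with $\mathrm{tw}<\overline{\mathrm{tw}}(Y_G)$. Legendrian surgery whose input is Stein fillable (realisations in $(S^3,\xi_{\mathrm{std}})$, or stabilised fibres in Stein fillable structures) stays Stein fillable. By Proposition~\ref{prop:obstruction}, however, many of the structures to be counted are not Stein fillable.

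The paper needs genuinely non-Stein inputs:
\begin{itemize}
\item in type A, Massot's structure tangent to the fibres on the manifold without tails (fillable by Proposition~\ref{prop:transverse}), followed by Legendrian surgery on the tails;
\item in type B, the Giroux-torsion structures $\zeta_i$ on the torus-bundle subgraph, followed by Legendrian surgery on the remaining vertices and analysed with twisted coefficients.
\end{itemize}
These constructions are also the source of fillability (Corollary~\ref{cor:fillable}). You give no reason why a perturbed symplectic form on $P_G$ would induce the specific structures $\xi_{ij}$.

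In Step 2, the $\tau$-Bennequin inequality only yields the strict bound of Proposition~\ref{prop:neg_tw}. Equality requires a Legendrian fibre bounding a Lagrangian surface in a filling, as in Propositions~\ref{prop:bigtw} and~\ref{prop:A}, which you do not have for a general $\xi_{ij}$. Moreover, ``the level where $[V_i]$ and $[V_j]$ become homologous in the associated graded'' has no meaning, since they are distinct basis elements.

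The paper avoids this Step 2 problem entirely:
\begin{itemize}
\item the twisting number of each structure is read off from its construction;
\item the set of twisting numbers is matched with heights in Theorem~\ref{teo:twisting};
\item the pair attached to $\xi$ is identified through the coordinates of $c^+(\xi)$ in Theorem~\ref{teo:final}.
\end{itemize}
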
    
 
The exact number of negative-twisting tight contact structures on $Y_G$, up to isotopy, is equal to the upper bound from convex surface theory. We write this number explicitly in Section \ref{section:six}.
 
\begin{prop}
 \label{prop:classification}
 Suppose that $Y_G=M(e_0;r_1,\dots,r_n)$ is a Seifert fibred space, and $G$ is indefinite. Then $\xi$ is a negative-twisting tight structure if and only if $c^+(\xi)\in HF_\emph{red}(-Y_G,\s_\xi)$ is non-vanishing. Furthermore, if $\emph{tw}(Y_G,\xi)<\overline{\emph{tw}}(Y_G)$ then $c^+(\xi)=T_{[V_1]}+...+T_{[V_k]}$ for some $k\geq2$, where $V_1,...,V_k$ are realised characteristic vectors in $\emph{Char}(G,\s_\xi)$ which satisfy \[\height([V_1]+[V_k])=\height([V_1]+...+[V_k])=-1-\emph{tw}(Y_G,\xi)\] and $(V_1,V_k)$ is the unordered pair corresponding to $\xi$ in Theorem \ref{teo:classification}. 
\end{prop}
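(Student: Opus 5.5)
We propose to prove Proposition \ref{prop:classification} by combining the classification in Proposition \ref{prop:main} and Theorem \ref{teo:classification} with the description of $HF^+(-Y_G,\s)$ coming from Theorem \ref{teo:main} and duality. The natural order is: first compute the contact invariants of all negative-twisting structures, then show these invariants lie in $HF_\text{red}$ and are non-zero, and finally prove the converse, that a tight structure with non-vanishing reduced invariant must be negative-twisting.

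\textbf{Step 1: structures with twisting number $\overline{\text{tw}}(Y_G)$.} By Proposition \ref{prop:main}, such a $\xi$ has $c^+(\xi)=T_{[V]}$ with $V$ realised. Since $G$ is indefinite, Theorem \ref{teo:main} says the full-path classes span $\text{Tor}HF^-(Y_G,\s)$. Under the duality $HF^+_*(-Y_G)\simeq HF^-_{-*}(Y_G)^\bullet$, the functional $T_{[V]}$ vanishes on the tower of $HF^-$ and pairs non-trivially with torsion classes. Hence it lies in $HF_\text{red}(-Y_G)$. It is non-zero because $[V]$ ends correctly, so by Theorem \ref{teo:main} it is a basis element of $\widehat{HF}^\text{od}$.

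\textbf{Step 2: structures with $\text{tw}(Y_G,\xi)<\overline{\text{tw}}(Y_G)$.} By Theorem \ref{teo:classification}, each such $\xi$ corresponds to a pair $(V_1,V_k)$. I would use the construction behind that theorem: $\xi$ is obtained from the structures $\xi_1,\xi_k$ realising $V_1,V_k$ by adding negative twists along the regular fibre $K$, i.e.\ by a sequence of bypass or stabilisation attachments.

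Using naturality of $c^+$ under Legendrian surgery and the Alexander filtration $\mathcal F^K$ from Section \ref{section:three}, I expect $c^+(\xi)$ to be the sum $T_{[V_1]}+\cdots+T_{[V_k]}$ of all realised classes in the same grading $d_3(\xi)$ lying between $V_1$ and $V_k$ in the ordering by $\mathcal F$. The condition $k\geq2$ is automatic. The equality
\[\height([V_1]+[V_k])=\height([V_1]+\cdots+[V_k])\]
should follow from Definition \ref{def}: the height of a sum is governed by its extremal filtration levels, which are attained at $V_1$ and $V_k$. Theorem \ref{teo:classification} then identifies this common height with $-1-\text{tw}(Y_G,\xi)$. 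Non-vanishing again follows from linear independence of the $T_{[V_i]}$.

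\textbf{Step 3: the converse.} A zero-twisting $\xi$ contains a Legendrian regular fibre with twisting $0$. I would show that such structures have contact invariant either zero or equal to the tower class, i.e.\ non-zero only in $HF^+/HF_\text{red}$. One route is to realise $\xi$ as coming from a Stein or Legendrian surgery diagram on the positive-definite side, or via Massot's description, and then apply the fact that $b_2^+>0$ cobordisms map into torsion in $HF^-$. Dually, the image in $HF^+(-Y_G)$ is then killed on reduced parts.

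\textbf{Main obstacle.} The hard step is the exact identification $c^+(\xi)=\sum T_{[V_i]}$ in Step 2, which requires tracking how the contact invariant changes under decreasing the twisting. I would try to realise $\xi$ as a Legendrian surgery on a Stein domain built from $X_G$ with an extra negative twist, and compute $F^+$ via the full path equivalence relation. The remaining steps depend on this computation.
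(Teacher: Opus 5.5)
Step 1 is fine, but both of the remaining steps have a genuine gap.

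\textbf{The converse (Step 3).} You propose to show that every zero-twisting structure has $c^+$ either zero or equal to the tower class. There is no description of arbitrary zero-twisting tight structures on $Y_G$ to work with. The $b_2^+>0$ mechanism you invoke also points the other way: it is exactly what forces the invariant of a Stein structure on $X_G$ \emph{into} $HF_{\text{red}}$ (Lemma \ref{lemma:c}).

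The missing idea is Proposition \ref{prop:neg_tw}, which never looks at zero-twisting structures at all.
\begin{itemize}
\item If $0\neq c^+(\xi)\in HF_{\text{red}}$, then $\widehat c(\xi)$ has odd parity.
\item By Theorem \ref{teo:main} and duality, there are full paths $[V_a]$, $[V_b]$ ending correctly that pair to $1$ with $\widehat c(\xi)$; choose $\mathcal F(V_a)$ minimal and $\mathcal F(V_b)$ maximal.
\item Use the $\tau$-Bennequin bound $\text{TB}_\xi(K)<\tau_\xi(K)+\tau_{\overline\xi}(K)$ for the regular fibre.
\item Bound $\tau_\xi(K)\leq\mathcal F(V_a)$ and $\tau_{\overline\xi}(K)\leq\mathcal F(-V_b)$, using $\J$ and Theorem \ref{teo:tau}.
\end{itemize}
Together these give $\text{tw}(Y_G,\xi)<-\height([V_a]+[V_b])\leq0$ directly (with $\height[V_a]$ if the two classes coincide).

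\textbf{Step 2.} The lower-twisting structures are not obtained from $\xi_1,\xi_k$ by bypasses or stabilisations along $K$; stabilising a Legendrian fibre does not change the contact structure. Theorem \ref{teo:classification} has no such construction behind it: it is proved by a count (Theorem \ref{teo:contact}).

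Your predicted formula is also false. In a type B pyramid the coefficient of $T_{[V_l]}$ in $c^+(\xi_{ij})$ is $\binom{j-i}{l-i}$ mod $2$. For instance, the pyramidion of $-\Sigma(2,3,23)$ has $c^+(\xi_{13})=T_{[V_1]}+T_{[V_3]}$, not $T_{[V_1]}+T_{[V_2]}+T_{[V_3]}$.

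What is needed are the actual constructions of Section \ref{section:six}, and a computation through them.
\begin{itemize}
\item In type A: Legendrian surgery on singular fibres of a tail-less model carrying Massot's fibre-tangent structure. That structure's invariant is shown to be neither $\Theta^+$ nor a single $T_{[V]}$ (Proposition \ref{prop:A}). One then uses injectivity of the surgery cobordism map on the span of realised classes (Lemma \ref{lemma:realised}, Corollary \ref{cor:red}, Proposition \ref{prop:map}).
\item In type B: Legendrian surgery on the torus bundle with $i$ layers of Giroux torsion. With twisted coefficients, $\underline{\widehat F}_{\overline W,J}$ is multiplication by $t^{1/2}+t^{-1/2}$. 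This yields the recursion $c^+(\xi_{ij})=c^+(\xi_{i,j-1})+c^+(\xi_{i+1,j})$ (Corollary \ref{cor:invariant}).
\end{itemize}

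Only once $c^+(\xi)$ is known explicitly can you read off its extremal coordinates and identify them with the pair attached to $\xi$. Because the bijection in Theorem \ref{teo:classification} comes from counting, assuming that the pair attached to $\xi$ is the extremal pair of $c^+(\xi)$ is circular.
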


In Sections \ref{section:six} and \ref{section:seven} we describe exactly what are the coordinates, with respect to the canonical basis $\{T_{[V_1]},...,T_{[V_t]}\}$, of the contact invariant $c^+(\xi)\in HF_\text{red}(-Y_G,\s_\xi)$ for each structure appearing in Theorem \ref{teo:classification} and Proposition \ref{prop:classification}.

\subsection*{Applications}
We prove the following results in Section \ref{section:eight}. Our first example concerns Brieskorn spheres; in other words, any Seifert fibred space which is an integral homology sphere, see \cite{Saveliev}. Equipped with the non-canonical orientation, a Brieskorn sphere is presented by an indefinite plumbing graph; therefore, the classification of its negative-twisting structures is a special case of Proposition \ref{prop:main} and Theorem \ref{teo:classification}.
\begin{cor}
 \label{cor:Brieskorn}
 Suppose that $Y=\Sigma(a_1,...,a_n)$ is a canonically oriented Brieskorn sphere different from $\Sigma(2,3,6k\pm1)$ with $k\geq1$. Then $-Y$ is always of type A; furthermore, any negative-twisting structure $\xi$ on $-Y$ is either as in Proposition \ref{prop:main}, when $\emph{tw}(-Y,\xi)=\overline{\emph{tw}}(-Y)$, or is isotopic to the unique structure such that $\emph{tw}(-Y,\xi)=\underline{\emph{tw}}(-Y)$.   
\end{cor}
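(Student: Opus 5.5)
The proof has two parts: first show that $-Y$ is of type A, then list the negative-twisting structures using Proposition \ref{prop:main}, Theorem \ref{teo:classification} and Theorem \ref{teo:twisting_numbers}.

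\emph{Type A.} Since $Y$ is an integral homology sphere, $-Y=Y_G$ for an indefinite star-shaped graph $G$. Its central framing $e_0$ and leg coefficients $r_i$ are obtained by reversing the Seifert invariants of $\Sigma(a_1,\dots,a_n)$. By definition, $-Y$ is of type B exactly when $G$ contains one of the seven torus-bundle graphs of Figure \ref{Torus} as a subgraph. These graphs have three legs, with multiplicities $(2,3,6)$, $(2,4,4)$ or $(3,3,3)$, or four legs of multiplicity $2$. The exponents $a_i$ of a Brieskorn sphere are pairwise coprime, which rules out $(2,4,4)$, $(3,3,3)$, $(2,2,2,2)$ and any four-legged configuration. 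So the only candidate is $(2,3,6)$, where the first two legs of $G$ are single vertices of multiplicity $2$ and $3$ and the third leg starts like the $6$-leg.

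I would then compute the negative continued fraction of the third leg of $-\Sigma(2,3,a)$, together with $e_0$, directly from $1=a_1a_2a_3\,e(Y)$. The expected outcome is that the central framing and the first vertex of the third leg match the torus-bundle pattern only when $a\equiv\pm1 \pmod 6$, giving $a=6k\pm1$. This matches the examples $-\Sigma(2,3,23)$ in Figure \ref{2323} and $-\Sigma(2,3,5)$. I expect this bookkeeping to be the main obstacle: one has to check carefully which subgraph embeddings are allowed, since the framings may exceed those of the subgraph in absolute value.

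\emph{Classification.} Since $H_1(-Y)=0$, there is a single $\Spin^c$-structure $\s_\text{can}$, and it is spin. Structures with twisting number $\overline{\text{tw}}(-Y)$ are covered by Proposition \ref{prop:main}. For the others I would use Theorem \ref{teo:classification}: they correspond to unordered pairs of realised vectors $V_i,V_j$ in the same Maslov grading. For type A manifolds we are told there is at most one additional structure per $\Spin^c$-structure (the Massot construction of Section \ref{section:six}), so with a single $\Spin^c$-structure there is at most one additional structure. Theorem \ref{teo:twisting_numbers} shows that $\underline{\text{tw}}(-Y)=-1+e_1^TQ^{-1}V_\text{can}$ is realised whenever the spin condition holds, and here it does. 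If this value differs from $\overline{\text{tw}}$, it is realised by exactly one structure, which must be the additional one; otherwise the additional structure does not exist and the statement reduces to Proposition \ref{prop:main}.

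Hence every negative-twisting structure either has twisting number $\overline{\text{tw}}$, or is the unique structure with twisting number $\underline{\text{tw}}$. Uniqueness up to isotopy follows from the one-to-one correspondence in Theorem \ref{teo:classification}.
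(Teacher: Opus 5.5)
\textbf{The type A step has a gap; the classification step is fine.} Your second half matches the paper's argument. Since $H_1(-Y)=0$ there is a single $\Spin^c$-structure, and Proposition \ref{prop:map} allows at most one structure with twisting number below $\overline{\text{tw}}(-Y)$ in it. That structure then necessarily realises $\underline{\text{tw}}(-Y)$.

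\textbf{Coprimality does not rule out the torus-bundle subgraphs.} Pairwise coprimality of $a_1,\dots,a_n$ concerns the multiplicities of the \emph{full} legs of $\Gamma^*$. A torus-bundle subgraph $G_0\subset\Gamma^*$ is a weighted subgraph whose legs are initial segments of those legs. The denominators of these truncated continued fractions need not be related to the $a_i$. Two examples:
\begin{itemize}
\item $M(-1;\frac25,\frac38,\frac13)$ has legs $[-3,-2],[-3,-3],[-3]$ and pairwise coprime multiplicities $5,8,3$, yet it contains $M(-1;\frac13,\frac13,\frac13)$.
\item $M(-2;\frac12,\frac23,\frac35,\frac47)$ has multiplicities $2,3,5,7$ and contains $M(-2;\frac12,\frac12,\frac12,\frac12)$.
\end{itemize}
So coprimality alone excludes none of $(2,4,4)$, $(3,3,3)$, $(2,2,2,2)$. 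For the same reason, in the $(2,3,6)$ case you cannot assume the first two legs are unextended and that no further legs are attached. In other words, $-Y=-\Sigma(2,3,a)$ is not justified at that point.

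\textbf{The missing ingredient is the full homology-sphere condition.} The paper packages it as Lemma \ref{lemma:Brieskorn}: since $Q_*^{-1}$ is integral, removing any terminal vertex of $\Gamma^*$ yields a Seifert space with $e\le 0$. Adding vertices to a star-shaped graph, keeping the framings of the subgraph, only increases $e$. So your concern about larger framings does not arise, since framings agree on $G_0$. Because $G_0\subsetneq\Gamma^*$, you can remove a terminal vertex outside $G_0$ and get $G\supseteq G_0$ with $0=e(M_{G_0})\le e(M_G)\le 0$. Hence $G=G_0$, and $\Gamma^*$ is $G_0$ plus a single vertex.

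\textbf{Coprimality finishes the argument after this reduction.} At least two legs of $G_0$ survive unchanged. This kills $(2,4,4)$, $(3,3,3)$ and $(2,2,2,2)$. It also kills every placement of the extra vertex except extending the last leg of $M(-1;\frac12,\frac13,\frac16)$ or of $M(-2;\frac12,\frac23,\frac56)$. These two placements give exactly $-\Sigma(2,3,6k-1)$ and $-\Sigma(2,3,6k+1)$.
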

In the two families remaining we know from \cite{GvHM,Tosun} that there is a unique pyramid of size $k-1$ and $k$ respectively; in particular, it is a famous result of Etnyre and Honda \cite{EH} that $-\Sigma(2,3,5)$ admits no tight structure. Combining our classifications in this paper and in \cite{CM-negative}, we determine exactly which Brieskorn spheres have a unique structure with a given orientation.
\begin{teo}
 \label{teo:Brieskorn}
 The only Brieskorn spheres carrying a unique tight contact structure, up to isotopy, are $\Sigma(2,3,5),$ $-\Sigma(2,3,11)$ and $-\Sigma(2,3,7)$. Such a structure is negative-twisting and Stein fillable.   
\end{teo}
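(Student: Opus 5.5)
The plan is to split the Brieskorn spheres by orientation and count tight structures on each side. We use the classification of negative-twisting structures on the canonical orientation from \cite[Theorem 1.1]{CM-negative}, and for the non-canonical orientation we use Proposition \ref{prop:main}, Theorem \ref{teo:classification} and Corollary \ref{cor:Brieskorn}.

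\emph{Canonical orientation.} For $Y=\Sigma(a_1,\dots,a_n)$ the graph $G$ is negative-definite and $Y_G$ is an integral homology sphere, so there is a single $\Spin^c$-structure. The negative-twisting structures correspond to the realised initial vectors whose full paths end correctly, and they are distinguished by $c^+$. In addition, the canonical orientation always carries the Milnor fillable structure $\xi_\text{can}$. Uniqueness therefore requires two things: $V_\text{can}$ is the only realised vector, and there is no zero-twisting tight structure. The realised vectors come from Legendrian realisations of the blow-down $X_G$; since any vertex of $X_G$ with framing $\leq -2$ admits several rotation numbers, we need every Legendrian realisation to be forced. I expect this to leave only graphs in which all non-central framings are $-2$ after blow-down, namely $\Sigma(2,3,5)$, and possibly $\Sigma(2,3,7)$ in some form. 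Zero-twisting structures must then be ruled out or counted separately. I would take the standard fact that the canonical orientation admits zero-twisting structures exactly when $e_0\geq 0$ in the normalised form; for Brieskorn spheres $e_0\leq -1$, so this case does not occur. On $\Sigma(2,3,5)$ the only Stein filling is the $E_8$ plumbing with the canonical vector, which gives uniqueness. For larger canonically oriented spheres, such as $\Sigma(2,3,7)$, I would exhibit two distinct realised vectors $V$ and $-V$ with $[V]\neq[-V]$ via $\J$.

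\emph{Non-canonical orientation.} By Corollary \ref{cor:Brieskorn}, when $Y\neq\Sigma(2,3,6k\pm1)$, the space $-Y$ carries the Stein fillable structures of Proposition \ref{prop:main} together with the unique structure at $\underline{\text{tw}}$. That lowest structure is distinct from the Stein fillable ones as soon as $\overline{\text{tw}}\neq\underline{\text{tw}}$. Moreover, the number of Legendrian realisations of $X_G$ is at least $2$ whenever some vertex of $X_G$ has framing $\leq -3$, or whenever the conjugation $\J$ acts non-trivially. One must also account for zero-twisting structures, which by Theorem \ref{teo:Paolo} and Ghiggini's criterion \cite{G-} exist on $-Y$ when $e_0\leq -2$ allows twisting $-1$. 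For the two families $-\Sigma(2,3,6k\pm1)$, the pyramid counts from \cite{GvHM,Tosun} give $\frac{(k-1)k}{2}+(k-1)$ and $\frac{k(k+1)}{2}$ structures respectively. Setting each count equal to $1$ gives $k=2$ in the first family, which is $-\Sigma(2,3,11)$, and $k=1$ in the second, which is $-\Sigma(2,3,7)$. For $-\Sigma(2,3,5)$ the count is $0$ by \cite{EH}. Finally, we check that in the remaining type A cases the count is at least $2$.

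\emph{Main obstacle.} The hardest step is the uniform lower bound of $2$ on the count for all type A cases and for canonically oriented spheres other than $\Sigma(2,3,5)$. This requires a combinatorial argument on the blown-down graph $X_G$ showing that it has either a vertex of framing $\leq -3$ or a non-self-conjugate realised vector. A few small exceptions would be handled by hand using Theorem \ref{teo:twisting_numbers}.
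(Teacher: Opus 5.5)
Your framework matches the paper's. A unique tight structure forces a unique Stein structure on the blow-down $X_G$, since distinct Legendrian realisations give distinct $c^+$. So every component of $X_G$ must need no stabilisation, and the theorem reduces to classifying the Brieskorn spheres with this property.

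\textbf{The missing step.} That classification \emph{is} the proof, and you only announce it (``I expect this to leave only\dots'', ``we check that in the remaining type A cases the count is at least $2$''). Without it the ``only'' part is not established. The tool you lack is Lemma \ref{lemma:Brieskorn}: for $-\Sigma(a_1,\dots,a_n)$, deleting any vertex of $\Gamma^*$ gives $e\leq0$. Hence once $\Gamma^*$ contains a subgraph with $e>0$, it must equal that subgraph. Combined with the forced-realisation condition, this leaves only $-2$-framed unknots and $0$-framed positive trefoils in $X_G$. The paper then argues case by case.
\begin{itemize}
\item In the canonical orientation one needs $e_0=-2$, three legs and all framings $-2$. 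So $-Y=M(-1;\frac{1}{a},\frac{1}{b},\frac{1}{c})$ with $\frac{1}{a}+\frac{1}{b}+\frac{1}{c}>1$ and $a,b,c$ pairwise coprime, i.e.\ $(2,3,5)$.
\item In the opposite orientation, type B reduces to $-\Sigma(2,3,6k\pm1)$ by Corollary \ref{cor:Brieskorn}.
\item Type A with $e_0\leq-2$ forces $n=3$ (else $M(-2;\frac{1}{2},\frac{1}{2},\frac{1}{2},\frac{1}{2})$ is a subgraph) and all framings $-2$. Then $\frac{1}{a}+\frac{1}{b}+\frac{1}{c}-1=-\frac{1}{abc}$, i.e.\ $(2,3,7)$, which is type B, a contradiction.
\item Type A with $e_0=-1$ is excluded by running through the possible $S^3$-subgraphs. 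Each either contains a torus-bundle graph of Figure \ref{Torus}, or, via Lemma \ref{lemma:Brieskorn}, pins $\Gamma^*$ down to $S^3_{d_1d_2-d_1-d_2-1}(T_{d_2,d_1})$ or $S^3_{-2,-2}(T_{2,2n+2})$, neither of which is a Brieskorn sphere.
\end{itemize}

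\textbf{Errors in the ingredients you do state.}
\begin{itemize}
\item A component of $X_G$ has a unique Legendrian realisation iff its framing is $\text{TB}-1$. For an unknot that is $-2$, so your first claim that framing $\leq-2$ already gives several rotation numbers is false. For $T_{d_2,d_1}$ it is $d_1d_2-d_1-d_2-1$, so ``framing $\leq-3$'' is the wrong test for torus-knot components. For example, the $0$-framed trefoil in the blow-down of $-\Sigma(2,3,11)$ is forced, while the $(-1)$-framed trefoil of $\Sigma(2,3,7)=S^3_{-1}(T_{2,3})$ has rotation numbers $\pm1$.
\item Your count $\frac{(k-1)k}{2}+(k-1)$ for $-\Sigma(2,3,6k-1)$ is incorrect. \cite{GvHM} gives $\frac{k(k-1)}{2}$, a single pyramid of size $k-1$. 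Your formula equals $2$ at $k=2$ and never equals $1$, so as written it would exclude $-\Sigma(2,3,11)$.
\item It is not true that zero-twisting structures exist exactly when $e_0\geq0$; they can occur for $e_0=-1$. Also, twisting number $-1$ is negative twisting, not zero twisting. What you actually need is that $e_0\leq-2$ excludes zero-twisting structures \cite{Wu}. Together with $V_\text{can}$ being the only realised vector, this settles $\Sigma(2,3,5)=M(-2;\frac{1}{2},\frac{2}{3},\frac{4}{5})$; uniqueness of the Stein filling alone does not.
\item For the ``only'' direction, zero-twisting structures are irrelevant, since they can only increase the count.
\end{itemize}
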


Combining a previous result of Matkovi\v c \cite{Irena(f)} with the ones in this paper, we complete the classification of fillable structures on any small Seifert fibred space. In particular, every negative-twisting structure on these manifolds is fillable, confirming a conjecture in \cite{Irena(f)}.

\begin{teo}
 Every symplectically fillable structure on $M(e_0;r_1,r_2,r_3)$ is either included in our classifications (either above or in \cite[Theorem 1.1]{CM-negative} or \cite[Theorem 1.1]{Irena(f)}) when $e_0\leq-1$, or is included in the classification in \cite[Theorem 1.1]{GLS} when $e_0\geq0$. 
\end{teo}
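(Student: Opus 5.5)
The claim is a case split on the sign of $e_0$. For $e_0\geq0$ the statement is simply the citation of \cite[Theorem 1.1]{GLS}, which classifies all tight structures on small Seifert fibred spaces with $e_0\geq0$ and identifies the fillable ones. The real work is for $e_0\leq-1$. There I would first split tight structures $\xi$ on $M=M(e_0;r_1,r_2,r_3)$ by maximal twisting number into zero-twisting and negative-twisting. For zero-twisting structures, \cite[Theorem 1.1]{Irena(f)} classifies the fillable ones, so they are covered. It therefore remains to show two things: every negative-twisting tight structure is symplectically fillable, and all of them appear in our classifications. This second point is also the conjecture of \cite{Irena(f)}.

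For the negative-twisting case, I would turn $M$ into a plumbing $Y_G$ for the standard star-shaped graph $G$ with three legs and central weight $e_0$. By the trichotomy on $G$ there are three subcases.
\begin{itemize}
\item \emph{$G$ negative-definite.} Here \cite[Theorem 1.1]{CM-negative} lists all negative-twisting structures, and all of them are Stein fillable on $X_G$.
\item \emph{$G$ singular.} Proposition~\ref{prop:main} says every negative-twisting structure is a Legendrian surgery on $X_G$, hence Stein fillable, unless $M$ is a torus bundle. With three legs the torus-bundle exceptions are among the graphs in Figure~\ref{Torus}, and for these I would invoke Honda and Giroux \cite{Honda2,Giroux1}. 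Those results classify all tight structures, so the fillable ones are known; fillability of the negative-twisting ones follows, for instance, from their realisation as perturbations of taut foliations or as universally tight structures arising from Stein fillings.
\item \emph{$G$ indefinite.} Proposition~\ref{prop:main} handles twisting number $\overline{\text{tw}}(Y_G)$, giving Stein fillable structures. Theorem~\ref{teo:classification} handles the structures with lower twisting number and asserts that they are symplectically fillable.
\end{itemize}
Combining the three subcases, every negative-twisting structure is fillable and listed.

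The one point that needs care is that the trichotomy covers all cases with $e_0\leq-1$. The quadratic form of $G$ has $b_2^+\leq1$: the legs are negative-definite chains, so after removing the central vertex the form is negative-definite, which forces $b_2^+\leq1$. The form is negative-definite, singular or indefinite according to whether $e(M)=e_0+\sum r_i$ is negative, zero or positive. So every case is covered, and we get the theorem.

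The main obstacle I expect is the fillability of the non-maximal structures in Theorem~\ref{teo:classification}. Those, however, are asserted in that theorem, which we may assume. The remaining care is only to check that the torus-bundle cases lie among the small Seifert fibred spaces handled by Honda's classification.
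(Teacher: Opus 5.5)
Your overall structure matches the paper's. Every fillable structure is tight, hence either negative- or zero-twisting. The negative-twisting ones are exhausted by \cite[Theorem 1.1]{CM-negative}, Proposition~\ref{prop:main} and Theorem~\ref{teo:classification}; your explicit trichotomy on the sign of $e(M)$ is left implicit in the paper. The zero-twisting ones come from \cite{GLS,Irena(f)}. Your separate treatment of the three-legged torus bundles via \cite{Honda2,Giroux1} is fine, and if anything more careful than the paper.

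Where you are too quick is the zero-twisting branch, which is the only place the paper's proof adds content beyond citations. Asserting that \cite[Theorem 1.1]{Irena(f)} ``classifies the fillable ones'' for every $e_0\leq-1$ needs two further facts, both of which the paper states:
\begin{enumerate}
\item When $e_0\leq-2$ there are no zero-twisting tight structures at all, by the convex surface arguments of \cite{Wu}. So only $e_0=-1$ (and $e_0\geq0$, via \cite{GLS}) has to be covered.
\item The zero-twisting structures treated in \cite{GLS,Irena(f)} are all the zero-twisting structures on $M$, because with three singular fibres a layer of half convex Giroux torsion would make the structure overtwisted.
\end{enumerate}
Without these two remarks, your citation does not obviously account for every fillable zero-twisting structure.

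You also do not need to show that every negative-twisting structure is fillable. The statement only asks that fillable structures appear in a classification, and that follows because the cited results list all negative-twisting tight structures. Your justification in the torus-bundle case is also not right as stated. The structures with Giroux torsion are weakly but not strongly fillable \cite[Theorem 1]{DG}, so they do not arise from Stein fillings. Either drop that claim or replace it with the citation to \cite{DG}.
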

\begin{proof}
 Any fillable structure is tight and then either negative- or zero-twisting. Then the claim follows from Proposition \ref{prop:main}, Theorem \ref{teo:classification} and \cite{CM-negative} in the first case, and from \cite{GLS,Irena(f)} in the second one. Note that when $n=3$ the structures considered in the latter papers are the only zero-twisting structures on $M$, as half convex Giroux torsion makes the structure overtwisted; moreover, when $e_0\leq-2$ there are no zero-twisting structures by convex surface theory arguments \cite{Wu}. 
\end{proof}

This includes rational surgeries on the torus knot $T_{d_2,\pm d_1}$, except for the slope $\pm d_1d_2$ when fillable structures are easily determined anyway. In general, for surgeries on torus links we have the following result.

\begin{cor}
 \label{cor:surgeries}
 Every negative-twisting structure on $S^3_{r}(T_{kd_2,\pm kd_1})$ with $r\in\Q,$ $k\geq1$ and $1\leq d_2\leq d_1$ coprime is either included in our classifications (either above or in \cite[Corollary 5.3]{CM-negative}), or it is the connected sum of some tight structures on lens spaces.    
\end{cor}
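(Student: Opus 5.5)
The plan is to identify $S^3_r(T_{kd_2,\pm kd_1})$ concretely as a graph manifold and treat each case by earlier results. The complement of $T_{kd_2,\pm kd_1}$ is Seifert fibred over a disk with at most two singular fibres, of multiplicities $d_1,d_2$, and with $k$ boundary components coming from the $k$ parallel copies of $T_{d_2,\pm d_1}$. Surgery with coefficient $r$ on each component therefore Dehn fills the $k$ boundary tori. The first step is to write down the resulting Seifert invariants as a function of $r$ and of the fibre slope $kd_1d_2$ (or $-kd_1d_2$).

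Two cases arise. When the surgery slope differs from the fibre slope, every filling creates a further exceptional fibre, and the result is a Seifert fibred space $M(e_0;1/d_1\text{-type},1/d_2\text{-type},r'_1,\dots,r'_k)$ over $S^2$. I would normalise it so that every $r_i\in(0,1)$, and then find its plumbing graph $G$ with negative framings. When the slope equals the fibre slope, each filling caps off a fibre, and the manifold splits as a connected sum of lens spaces, namely $L(d_1,\cdot)\#L(d_2,\cdot)$ together with $k-1$ copies of $S^1\times S^2$. Degenerate subcases, such as $d_2=1$ (the link of $k$ unknot-type fibres) or too few singular fibres, give a lens space or a sum of lens spaces in the same way.

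In the Seifert case, after normalisation $G$ is either negative-definite, indefinite, or singular. If it is negative-definite, every negative-twisting structure is classified by \cite[Corollary 5.3]{CM-negative}. If it is indefinite or singular, Proposition \ref{prop:main} and Theorem \ref{teo:classification} cover it. The only manifolds excluded there are torus bundles, and those are handled by the remark following Proposition \ref{prop:main}, which defers to Honda and Giroux. I would check that with $n\le 2$ legs and a two-legged graph the space is a lens space, whose tight structures are all classified; this covers $k=1$ with small $d_i$.

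In the reducible case I would use Colin's theorem that tight structures on a connected sum decompose as connected sums of tight structures on the summands, and then invoke Honda's classification on lens spaces and on $S^1\times S^2$.

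The main obstacle is the bookkeeping of the Seifert invariants and sign conventions. One has to verify that the normalised $e_0$ and $r_i$ place every non-exceptional slope in the regime where the cited classifications apply. In particular, when $\pm$ and the sign of $r-(\pm kd_1d_2)$ vary, one must check that the notion of negative twisting defined for the fibre of the torus knot agrees with the one in (\ref{eq:tw}), possibly after swapping the roles of the fibrations when $G$ has only two or three legs.
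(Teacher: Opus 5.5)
Your argument follows the same route as the paper's proof. Away from the exceptional slope, the surgery is Seifert fibred over $S^2$, and its standard graph is either negative-definite (covered by \cite[Corollary 5.3]{CM-negative}), or singular or indefinite (covered by Proposition \ref{prop:main} and Theorem \ref{teo:classification}, with torus bundles deferred to Honda and Giroux). At the exceptional slope, the graph acquires a one-vertex $0$-framed leg, the centre cancels, and Colin's decomposition theorem together with the classification on lens spaces finishes the proof.

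There is one concrete error in your setup: the exceptional (fibre) slope on each component is $\pm d_1d_2$, not $\pm kd_1d_2$.
\begin{itemize}
\item Surgery coefficients on a link component are measured against that component's own Seifert framing.
\item Each component is a copy of $T_{d_2,\pm d_1}$ on the Heegaard torus, and its torus framing is $\pm d_1d_2$ independently of $k$. The other components only enter through the off-diagonal linking numbers.
\item Check with $T_{2,2}$, so $k=2$ and $d_1=d_2=1$. Both framings equal to $1$ give $S^1\times S^2$, exactly as your formula $L(d_1,\cdot)\#L(d_2,\cdot)\#(k-1)\,S^1\times S^2$ predicts. Both framings equal to $2$ give a lens space of order $3$.
\item Similarly, the paper's identification $S^3_{-2,-2}(T_{2,2n+2})\simeq M(-1;\frac{n}{n+1},\frac{1}{n+3},\frac{1}{n+3})$ in the proof of Theorem \ref{teo:Brieskorn} uses fibre slope $n+1=d_1d_2$.
\end{itemize}
With your value, the case split is in the wrong place for $k\geq2$: the manifold at $r=\pm kd_1d_2$ is still Seifert fibred, while the one at $r=\pm d_1d_2$ is the reducible one. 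Once each filling is recorded as a leg coming from $r\mp d_1d_2$ (before normalisation), everything else you describe goes through.

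A minor point: the link complement is Seifert fibred over a sphere with $k$ holes, not over a disk.
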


For surgeries on torus knots we can explicitly determine the surgery interval which admit any given twisting number. Note that $S^3_{r}(T_{d_2,d_1})$ is of type B only when $(d_2,d_1)=(2,\pm3)$.

\begin{prop}\label{prop:Tsurgeries}
 The manifold $S^3_{r}(T_{d_2,d_1})$ with $r\in\Q$ carries a negative-twisting tight structure if and only if $r<d_1d_2-d_2-d_1$ or $r>d_1d_2$. When $(d_2,d_1)\neq(2,3)$ there are at most two possible negative twisting numbers: one is always equal to either $-d_2-d_1$ or $-1$, while the other is
 \[-d_2-d_1-sd_1d_2\hspace{0.5cm}\text{ for an }s>0\hspace{0.5cm}\text{ when }\hspace{0.5cm}-d_1d_2+r\in\big[[n_1,...,n_{h-1}],[n_1,...,n_h]\big)\:,\] where $[n_1,...,n_h]=-\frac{d_1+d_2+sd_1d_2}{s+1}$ is reduced. Furthermore, when $(d_2,d_1)=(2,3)$ one twisting number is always equal to $-5$, while the other ones are
 \[-5-6s\hspace{0.5cm}\text{ for }\hspace{0.5cm}\left\{\begin{aligned}
    &s>0\hspace{2.05cm}\text{ when }\hspace{0.5cm}r=0 \\ &s=1,...,n-2\hspace{0.5cm}\text{ when }\hspace{0.5cm}r\in\left[\frac{1}{n},\frac{1}{n-1}\right)\:.
 \end{aligned}\right.\]
 The manifold $S^3_{r}(T_{d_2,-d_1})$ carries a tight structure with twisting number $-1$ if and only if $r>-d_2d_1$; moreover, when $(d_2,d_1)\neq(2,3)$ another negative twisting number appears if and only if it exists for $S^3_{r}(T_{d_2,d_1})$, and its value is lowered by $2(d_1d_2-d_1-d_2)$. When $(d_2,d_1)=(2,3)$ the other twisting numbers are the ones on $S^3_{r}(T_{2,3})$ lowered by $2$.
\end{prop}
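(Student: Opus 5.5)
We will reduce the statement to Theorem \ref{teo:Paolo} applied to the Seifert presentation of $S^3_r(T_{d_2,\pm d_1})$. Recall that for $r=p/q$ the manifold $S^3_r(T_{d_2,d_1})$ is Seifert fibred over $S^2$ with three singular fibres of multiplicities $d_2$, $d_1$ and $|d_1d_2q-p|$. Concretely, in the normal form $M(e_0;r_1,r_2,r_3)$ we can take $r_1,r_2$ to be the fractional parts coming from $-d_1/d_2$-type coefficients (independent of $r$), and $r_3$ together with $e_0$ to be determined by the continued fraction of $d_1d_2-r$. The first step is therefore to write this normal form explicitly, in both cases $r>d_1d_2$ and $r<d_1d_2$ (the case $r=d_1d_2$ gives a connected sum of lens spaces and is excluded). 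We then compare $e_0$ with $-1$. We expect $e_0\geq 0$ exactly when $d_1d_2-d_1-d_2\leq r< d_1d_2$; on that range Theorem \ref{teo:Paolo} excludes negative twisting, since $\sum p_i\geq n$ while $-e_0q+n-2<n$. This gives the "if and only if" interval. For $T_{d_2,-d_1}$ the analogous computation shows $e_0\leq -2$ precisely when $r>-d_1d_2$, which yields the twisting number $-1$ via the last clause of Theorem \ref{teo:Paolo}.

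Next we enumerate the twisting numbers $-q<-1$ by solving the conditions of Theorem \ref{teo:Paolo}. For $r_1,r_2$ with denominators $d_2,d_1$, the best upper approximations $p_1/q$ and $p_2/q$ exist and combine well only when $q\equiv$ something mod $d_1d_2$. Arithmetic of $\lceil q r_i\rceil$ then forces $q=d_1+d_2+sd_1d_2$ with $s\geq0$ when $e_0=-1$. If $e_0\leq-2$ the sum condition can only hold at $q=1$, so the value is $-1$. With $q$ fixed, the remaining condition is that $p_3/q$ is the best upper approximation of $r_3$ with $p_3=q-1-p_1-p_2$ (using $n=3$, $e_0=-1$). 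The best upper approximations of $r_3$ with denominator $q$ are governed by the continued fraction expansion. Hence $p_3/q$ works exactly when $r_3$ lies in the half-open interval between the two consecutive convergents $[n_1,\dots,n_{h-1}]$ and $[n_1,\dots,n_h]$ of $-q/(s+1)$, after translating $r_3$ back to $-d_1d_2+r$. The case $s=0$ always works, which gives $-d_1-d_2$. For $s>0$ these intervals are nested, so at most one $s$ survives when $(d_2,d_1)\neq(2,3)$. For $(2,3)$ the intervals degenerate: the best upper approximations accumulate, giving $s=1,\dots,n-2$ on $[1/n,1/(n-1))$ and all $s$ at $r=0$. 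For the mirror knot, the Seifert invariants of the first two fibres are replaced by $1-r_i$ and $e_0$ shifts. Redoing the count of $p_1+p_2$ lowers $q$'s solution by $2(d_1d_2-d_1-d_2)$ (respectively by $2$ for the trefoil).

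The main obstacle is the precise number-theoretic step: showing that best upper approximations of $r_3$ with denominator $q$ exist exactly on the stated continued-fraction intervals, and that for $(d_2,d_1)\neq(2,3)$ two different $s>0$ cannot both occur. The plan is to use the Farey/Stern–Brocot characterisation of best upper approximations as the right-hand intermediate fractions in the negative continued fraction of $r_3$, and then check the $(2,3)$ exceptional degeneration by hand.
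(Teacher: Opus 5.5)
Your strategy of working purely arithmetically through Theorem \ref{teo:Paolo}, instead of using the $L$-space results and the type A structure theorem as the paper does, is viable. However, several load-bearing steps are wrong as written.

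\textbf{The excluded interval.} The normal form is $S^3_r(T_{d_2,d_1})=M(-1;r_1,r_2,\frac{1}{d_1d_2-r})$ for \emph{every} $r<d_1d_2-1$. So $e_0=-1$ on $[d_1d_2-d_1-d_2,\,d_1d_2-1)$, and $e_0\geq0$ only on $[d_1d_2-1,\,d_1d_2)$. The inequality $\sum p_i\geq n>-e_0q+n-2$ therefore does not exclude negative twisting on most of the forbidden interval.

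There you need either the paper's input (these manifolds are $L$-spaces with $e_0\geq-1$, Remark \ref{remark:L}) or your $q$-analysis done correctly:
\begin{enumerate}
\item For $q\geq d_1$, the best upper approximations of $r_1=\beta_1/d_1$ and $r_2=\beta_2/d_2$ must be their Farey neighbours. This, not the floor arithmetic alone, forces $q\equiv d_1+d_2 \pmod{d_1d_2}$.
\item That gives $q=q_s:=d_1+d_2+sd_1d_2$, $p_1+p_2=q-s$, and $p_3=q+1-p_1-p_2=s+1$. Your $q-1-p_1-p_2$ is off by two.
\item Then $r_3<\frac{s+1}{q_s}\leq\frac{1}{d_1+d_2}$ forces $r<d_1d_2-d_1-d_2$.
\item The values $1<q<d_1$ must still be ruled out separately.
\end{enumerate}

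\textbf{The claim that $e_0\leq-2$ forces $q=1$.} This is false in general: the mirror half of the statement you are proving has twisting numbers below $-1$ with $e_0=-2$. For $T_{d_2,d_1}$ with $r>d_1d_2$ the conclusion does hold, but for another reason. There $e=-\frac{1}{d_1d_2}-\frac{1}{r-d_1d_2}<0$, so the graph is negative-definite and the twisting number is unique by \cite{CM-negative}.

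\textbf{The ``at most two'' step.} This is a non sequitur. If the intervals for $s>0$ were nested, every point of the innermost one would lie in all of them, so several $s$ would survive. That is exactly the trefoil case: $\frac16$ is the left neighbour of every $\frac{s+1}{5+6s}$ in the Farey sequence of order $5+6s$. The intervals $[\frac16,\frac{s+1}{5+6s})$ are nested, which is why all $-5-6s$ with $1\leq s\leq n-2$ occur for $r\in[\frac1n,\frac1{n-1})$.

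For $(d_2,d_1)\neq(2,3)$ you must prove that the intervals with $s\geq1$ are pairwise \emph{disjoint}. This follows because consecutive fractions $\frac{s+1}{q_s}$ and $\frac{s+2}{q_{s+1}}$ have determinant $d_1d_2-d_1-d_2\geq2$. Hence $\frac{s+2}{q_{s+1}}$ cannot lie strictly between $\frac{s+1}{q_s}$ and its left neighbour in the Farey sequence of order $q_s$. The paper gets this instead from Theorem \ref{teo:A}, Lemma \ref{lemma:tails} and Remark \ref{remark:A}.

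\textbf{The mirror case.} For $T_{d_2,-d_1}$, ``redoing the count'' is not an argument. You must show that an extra twisting number exists exactly when it does for $T_{d_2,d_1}$. The paper uses $\begin{pmatrix}2d_1d_2-1&-1\\2d_1d_2&-1\end{pmatrix}\in\mathrm{SL}_2(\Z)$. It sends $\frac{1}{d_1d_2-r}$ to $1-\frac{1}{d_1d_2+r}$ and $\frac{s+1}{q_s}$ to $1-\frac{s+1}{q_s+2(d_1d_2-d_1-d_2)}$, while preserving best upper approximations. It then separately excludes extra twisting numbers in two ranges:
\begin{itemize}
\item $-d_1d_2<r<-d_1d_2+1$, where the graph is negative-definite;
\item $r>d_1d_2-1$, where a lower bound on $q$ from Theorem \ref{teo:Paolo} contradicts the bound $q<\frac{n-2}{e}$ of Lemma \ref{lemma:A}.
\end{itemize}
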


Combining Proposition \ref{prop:Tsurgeries} with the elementary number theory in Section \ref{section:six}, we can determine the exact number of negative-twisting structures on either $S^3_r(T_{d_2,d_1})$ or $S^3_r(T_{d_2,-d_1})$. Take $[m_1,...,m_k]$ to be $-d_1d_2+r$ and $-\frac{d_1d_2+r}{d_1d_2+r-1}$ respectively, and let $L$ be the number of tight structures on the lens space obtained by removing the corresponding leg; then we have:
\begin{itemize}[leftmargin=0.7cm]
    \item either $|m_1+d_1+d_2|\cdot|m_2+1|\cdots|m_k+1|$ or $L\cdot|m_1+1|\cdot|m_2+1|\cdots|m_k+1|$ structures with twisting number $\overline{\text{tw}}$ equal to either $-d_1-d_2$ or $-1$ (if it existed);
    \item either $|m_3+1|\cdots|m_k+1|$ pyramids of size $|m_2+1|$ or $|m_7+1|\cdots|m_k+1|$ pyramids of size $|m_6+1|$ when $(d_2,d_1)=(2,\pm3)$ and $0<r<1$, at most one in each $\Spin^c$-structure;
    \item $\max\{|m_h-n_h|\cdot|m_{h+1}+1|\cdots|m_k+1|,\:1\}$ structures with twisting number smaller than $\overline{\text{tw}}$ (if it existed) when $(d_2,d_1)\neq(2,\pm3)$, at most one in each $\Spin^c$-structure, where $[n_1,...,n_h]$ is either the reduced fraction $-\frac{d_1+d_2+sd_1d_2}{s+1}$ or $-\frac{d_1+d_2-(s+2)d_1d_2}{s+1+d_1+d_2-(s+2)d_1d_2}$ for an $s>0$.
\end{itemize}

\begin{figure}[h] 
       \begin{tikzpicture}[scale=0.6]
    \tkzDefPoints{0/0/A, 1.5/0/B, 3/0/C, 4.5/0/G, 0.75/-1/D, 2.25/-1/E, 3.75/-1/H, 1.5/-2/F, 3/-2/I, 2.25/-3/L}  
    \tkzDrawPoints[fill,black,size=5](A,B,C,E,G)\tkzDrawPoints[fill,gray,size=5](D,F,H,I)\tkzDrawPoints[fill,burgundy,size=5](L)
     \tkzLabelPoint[above left](A){$\xi_1$} \tkzLabelPoint[above left](B){$\xi_2$}\tkzLabelPoint[above right](C){$\xi_3$}\tkzLabelPoint[above right](G){$\xi_4$}
     \tkzLabelPoint[left](D){$\xi_{12}$}\tkzLabelPoint[right](H){$\xi_{34}$}\tkzLabelPoint[above](E){$\xi_{23}$}
     \tkzLabelPoint[left](F){$\xi_{13}$}\tkzLabelPoint[right](I){$\xi_{24}$}
     \tkzLabelPoint[below](L){$\xi_{14}$}
       \end{tikzpicture}
     \caption{\smaller[1]{We call the grey structures casing stones, and the red one pyramidion. If the pyramid appeared in a spin structure then $c^+(\xi_{23})$ and $c^+(\xi_{14})$ would both be self-conjugate.}}
     \label{Pyramid}
\end{figure}
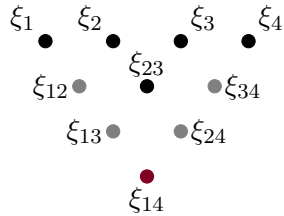

In Theorem \ref{teo:classification} we show that every negative-twisting structure on a Seifert fibred space is symplectically fillable; moreover, Proposition \ref{prop:main} tells us that when the twisting number is equal to $\overline{\text{tw}}(Y_G)$ we can actually produce a Stein filling. In some cases it is possible to show that Stein fillings do not exist.

We introduce some terminology for negative-twisting structures on Seifert fibred spaces $Y_G$ of type B, based on Figures \ref{2323} and \ref{Pyramid}. We say that a contact structure $\xi$ with $\text{tw}(Y_G,\xi)<\overline{\text{tw}}(Y_G)$ is a \emph{pyramidion} if it appeared as the apex of its pyramid; in other words, when it has the lowest twisting number in its homotopy class. In the same way, we say that $\xi$ is a \emph{casing stone} if it appeared in a pyramid with base $\xi_1,...,\xi_k$ as either $\xi_{1j}$ for $1<j<k$ or $\xi_{lk}$ for $1<l<k$.

\begin{prop}
 \label{prop:obstruction}
 Let $Y_G$ be a Seifert fibred space with $G$ indefinite, and $\xi$ be a negative-twisting structure on $Y_G$. If $\emph{tw}(Y_G,\xi)<\overline{\emph{tw}}(Y_G)$ and $c^+(\xi)$ is self-conjugate under $\J$, that is $\J c^+(\xi)=c^+(\xi)$, then $\xi$ is not Stein fillable.   
\end{prop}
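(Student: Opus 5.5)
We argue by contradiction: assume $\xi$ is Stein fillable, with $\text{tw}(Y_G,\xi)<\overline{\text{tw}}(Y_G)$ and $\J c^+(\xi)=c^+(\xi)$, and reach a contradiction in one of two ways. The first is the known obstruction for self-conjugate contact invariants, which we want to extend. The second is to use the explicit shape of $c^+(\xi)$ from Proposition \ref{prop:classification}.

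\textbf{Step 1 (real structure).} Suppose $(W,J)$ is a Stein filling of $(Y_G,\xi)$. Since $\J c^+(\xi)=c^+(\overline\xi)$, the hypothesis says $c^+(\xi)=c^+(\overline\xi)$. Following Ghiggini's argument for $-\Sigma(2,3,6k-1)$, we pass to $\overline J$ on $W$, which is a Stein filling of $\overline\xi$. By Plamenevskaya's theorem, the Stein structures $J$ and $\overline J$ have Chern classes $c_1(J)$ and $-c_1(J)$, and their mixed cobordism maps can be compared. If $c_1(J)\neq -c_1(J)$, the classes $c^+(\xi)$ and $c^+(\overline\xi)$ must differ, which contradicts the hypothesis. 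Hence $c_1(J)$ is $2$-torsion, and after rationalising we get $c_1(J)=0\in H^2(W;\Q)$.

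\textbf{Step 2 (numerics).} With $c_1(J)$ rationally zero, the $d_3$-invariant of $\xi$ is $d_3(\xi)=\tfrac14\left(-2\chi(W)-3\sigma(W)\right)$, up to the usual normalisation. We then compare this with the grading of $c^+(\xi)=T_{[V_1]}+\dots+T_{[V_k]}$, using Theorem \ref{teo:classification} and Proposition \ref{prop:classification}. Each $T_{[V_i]}$ sits in degree $-M(V_i)$, where $M(V)=\tfrac14(V^TQ^{-1}V+|G|)$ up to sign conventions. Because $\J c^+(\xi)=c^+(\xi)$ and $\J T_{[V]}=T_{[-V]}$ (Theorem \ref{teo:main}), the set $\{V_1,\dots,V_k\}$ is closed under $V\mapsto -V$ up to full path equivalence.

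The aim is to show that $W$ must then contain enough homology to realise these vectors. Concretely, we use that $\xi$ is obtained from a structure with twisting number $\overline{\text{tw}}$ by adding twisting, namely a sequence of bypasses or negative stabilisations along the regular fibre. This should force $b_2^+(W)>0$, or else force $W$ to be negative definite with $H_2(W)$ injecting into the lattice of $G$ in a way incompatible with $\height([V_1]+[V_k])>\height[V_\text{can}]$.

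\textbf{Step 3 (alternative via filtration).} A cleaner route is to use the fibre $K$. For a Stein filling, Theorem \ref{teo:taus} and Theorem \ref{teo:tau} let us compute the $\tau$-type invariant $\tau_{c^+(\xi)}(K)$ combinatorially. The adjunction and slice-Bennequin inequality in $W$ then bound $\text{tb}(K)$, hence $\text{tw}(Y_G,\xi)$, from below by $-1-\height[V]$ for a single realised vector $V$. This is because Stein fillings give contact invariants that are detected by a single vector: the image of $F^+_{W}$ is dual to a cobordism class. On the other hand, $c^+(\xi)$ is a sum with $k\geq 2$ terms. Self-conjugacy forces the extreme terms to be $T_{[V_1]}$ and $T_{[-V_1]}$, so that $V_k=-V_1$. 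The filtration level of $c^+(\xi)$ is then strictly below that of any single $T_{[V]}$ that could be realised by $W$.

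\textbf{Main obstacle.} I expect the hardest step to be the last comparison. It requires showing that a Stein-fillable contact invariant must have the Alexander filtration level of some realised vector, whereas the paper's pairs with $V_k=-V_1$ strictly lower that level. I would first try to prove this through the Plamenevskaya $c_1$-argument of Step 1 combined with $\J$-equivariance.
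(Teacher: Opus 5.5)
There is a genuine gap. Your Step 1 is correct, and it is exactly the first half of the paper's argument. If $(W,J)$ is a Stein filling, Plamenevskaya's theorem says $\s_J$ is the only $\Spin^c$-structure on $W$ with $F^+_{\overline W,\s_J}(c^+(\xi))\neq0$, and in fact this value is $1$. Applying the same theorem to $(W,\overline J)$, together with $c^+(\overline\xi)=\J c^+(\xi)=c^+(\xi)$, forces $\s_J=\overline{\s}_J$. But you never exploit this, and Steps 2 and 3 do not produce a contradiction.

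\textbf{Step 2} is only a hope (``should force $b_2^+(W)>0$''). Knowing $c_1(J)=0$ rationally and computing $d_3(\xi)$ gives no constraint that conflicts with anything.

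\textbf{Step 3} rests on the principle that the invariant of a Stein fillable structure is a single functional $T_{[V]}$. This is only known for the specific filling $X_{\Gamma^*}$ (Lemma \ref{lemma:c}), not for an arbitrary $W$. Indeed, the paper explicitly leaves open that the casing stones on $-\Sigma(2,3,6k\pm1)$, whose invariants have at least two terms, are Stein fillable. Moreover, the slice-Bennequin inequality only bounds $\text{TB}_\xi(K)$ from above. The equality $\text{TB}_\xi(K)=\tau_\xi(K)+\tau_{\overline\xi}(K)-1$ used in Propositions \ref{prop:bigtw} and \ref{prop:A} needs a Legendrian fibre bounding a Lagrangian or symplectic surface in the filling, which a general Stein filling need not provide.

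The missing step is a mod $2$ cancellation that uses $\s_J=\overline{\s}_J$ directly. Cobordism maps are $\J$-equivariant and $\J$ is the identity on $HF^+(S^3)$, so for every $\alpha\in HF^+(-Y_G,\s_\xi)$ one has
\[F^+_{\overline W,\s_J}(\J\alpha)=F^+_{\overline W,\overline{\s}_J}(\alpha)=F^+_{\overline W,\s_J}(\alpha)\:,\]
and in particular $F^+_{\overline W,\s_J}(T_{[V]})=F^+_{\overline W,\s_J}(T_{[-V]})$.

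The hypothesis $\text{tw}(Y_G,\xi)<\overline{\text{tw}}(Y_G)$ enters only through the shape of $c^+(\xi)$ given by Theorem \ref{teo:final}. In type A it is $T_{[V_1]}+T_{[V_k]}$, and in type B it is $\sum_l\binom{j-i}{l-i}T_{[V_l]}$. Since $\J$ permutes the realised vectors of $\s_\xi$ and reverses their $\mathcal F$-ordering, self-conjugacy means the terms come in pairs $T_{[V]},T_{[-V]}$ with $[V]\neq[-V]$. A possible middle term has coefficient $\binom{2m}{m}\equiv0$ mod $2$, so it does not appear. Over $\F$ the pairs cancel, so $F^+_{\overline W,\s_J}(c^+(\xi))=0$, contradicting the value $1$.

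Without the twisting hypothesis the statement fails. The structure with invariant $T_{[V]}$, where $V=(0,\dots,0)$ in the spin structure $\s_1$ of Table \ref{Table2}, is self-conjugate and Stein fillable.
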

\begin{proof}
 We describe the argument for the obstruction, which comes from Ghiggini's work \cite{G-notStein}. By the proof of Proposition \ref{prop:classification} we have that $c^+(\xi)=T_{[V_{-h}]}+\cdots +T_{[V_h]}\in HF^+(-Y_G,\s_\xi)$ for some $V_{\pm1},...,V_{\pm h}\in\text{Char}(G,\s_\xi)$ realised; since $c^+(\xi)$ is self-conjugate, we can assume that $[V_{-i}]=[-V_i]=\J[V_i]$ for every $i$. Suppose that $Y_G$ admits a Stein filling $(X,J)$; from \cite{OSz-contact,G-fillability} we know that $F^+_{\overline X,J}(c^+(\xi))=1$ and $F^+_{\overline X,-J}(c^+(\xi))=F^+_{\overline X,-J}(c^+(\overline\xi))=1$, while from Plamenevskaya \cite{OlgaP} we obtain that $J$ coincides with $-J$ as a $\Spin^c$-structure. Hence, for each $i$ we have $F^+_{\overline X,J}(T_{[V_i]})=F^+_{\overline X,J}(T_{[V_{-i}]})$ which implies $F^+_{\overline X,J}(c^+(\xi))=0$, and this is a contradiction. 
\end{proof}

Note that $c^+(\xi)$ is self-conjugate for any additional structure on a type A manifold provided that $\s_\xi$ is spin. When the structure is neither a casing stone nor a pyramidion a different obstruction, coming from a result of Christian and Menke in \cite{CM}, has been given in \cite{Min,ES} for a certain choice of the coefficients. 

For Brieskorn spheres Corollary \ref{cor:Brieskorn} and Proposition \ref{prop:obstruction} obstruct Stein fillability for every negative-twisting structure $\xi$ with $\text{tw}(Y_G,\xi)<\overline{\text{tw}}(Y_G)$ except when $Y_G=-\Sigma(2,3,6k\pm1)$. In the latter case, from the results mentioned above, only the casing stones can possibly be Stein fillable.

\subsection*{Sample computations} We now illustrate our method by writing the classification explicitly in two examples: the first one is $-\Sigma(3,4,47)=S^3_{1/4}(T_{3,4})$, while the second one is $M(-2;\frac{1}{2},\frac{1}{2},\frac{4}{7},\frac{6}{11})$, see Figure \ref{Examples}. We want to highlight that our method, which involves computing the realised characteristic vectors through the full path algorithm, behaves in the same way independently of the fact that the first manifold is a surgery on a torus knot. We also observe that the number of structures and pyramids (when the manifold is of type B), and the maximal twisting number, can be determined immediately using the formulae in Subsection \ref{subsection:background} and Ghiggini-Massot's algorithm in Theorem \ref{teo:Paolo}, while by using the full path we can see the gradings of each contact structure.

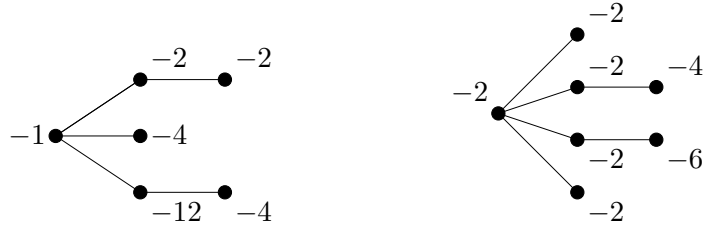
\begin{figure}[ht]
 \begin{tikzpicture}[scale=0.75]
    \tkzDefPoints{0/0/A, 1.5/1/B, 3/1/E, 1.5/-1/D, 3/-1/F, 1.5/0/C}  
    \tkzDrawSegment(A,B)\tkzDrawSegment(A,D)\tkzDrawSegment(B,A)\tkzDrawSegment(A,C)\tkzDrawSegment(B,E)\tkzDrawSegment(F,D)
    \tkzDrawPoints[fill,black,size=5](A,B,C,D,E,F)
     \tkzLabelPoint[left](A){$-1$} 
     \tkzLabelPoint[above right](B){$-2$}\tkzLabelPoint[right](C){$-4$}\tkzLabelPoint[above right](E){$-2$}
     \tkzLabelPoint[below right](D){$-12$}\tkzLabelPoint[below right](F){$-4$}
       \end{tikzpicture}\hspace{2cm}
      \begin{tikzpicture}[scale=0.7] 
      \tkzDefPoints{0/0/A, 1.5/1.5/B, 1.5/-1.5/D, 1.5/0.5/C, 3/0.5/E, 1.5/-0.5/G, 3/-0.5/F}      
       \tkzDrawSegment(A,B)\tkzDrawSegment(A,D)\tkzDrawSegment(A,C)\tkzDrawSegment(A,G)\tkzDrawSegment(C,E)\tkzDrawSegment(G,F)
    \tkzDrawPoints[fill,black,size=5](A,B,C,D,E,F,G)
     \tkzLabelPoint[above left](A){$-2$} \tkzLabelPoint[above right](B){$-2$}\tkzLabelPoint[above right](C){$-2$}
     \tkzLabelPoint[below right](D){$-2$}\tkzLabelPoint[below right](G){$-2$}\tkzLabelPoint[below right](F){$-6$}\tkzLabelPoint[above right](E){$-4$}
\end{tikzpicture}
       \caption{\smaller[1]{A plumbing graph representing $-\Sigma(3,4,47)=M(-1;\frac{2}{3},\frac{1}{4},\frac{4}{47})$ (left), and one representing $M(-2;\frac{1}{2},\frac{1}{2},\frac{4}{7},\frac{6}{11})$ (right).}}
     \label{Examples}\end{figure}

\begin{table}[t]
\centering
\begin{NiceTabular}{|c|c|c c|c|c|}[
  cell-space-top-limit=4pt,
  cell-space-bottom-limit=4pt
]
\hline
\Block{2-1}{$\s_\xi$}
& \Block{2-1}{$d_3(\xi)$}
& $V\in\mathrm{Char}(M)$ &
& Combined
& \Block{2-1}{$\mathrm{tw}(M,\xi)$}
\\
& & realised & & height & \\
\hline\hline\hline
\Block{15-1}{$\s_\text{can}=\overline \s_\text{can}$} & \Block{2-1}{$19$} & $(1,0,0,-2,-10,-2)$ & \Block{2-1}{\joindots} & \Block{2-1}{222} & \Block{2-1}{$-7$ and $-223$} \\
 &  & $(1,0,0,-2,-2,2)$ & &  &  \\
\cline{2-6}
 & \Block{2-1}{15} & $(1,0,0,-2,-10,0)$ & $\bullet$ & \Block{2-1}{198} & \Block{13-1}{$-7$}  \\
 &  & $(1,0,0,-2,-2,0)$ & $\bullet$ &  &  \\
\cline{2-5}
 & \Block{2-1}{11} & $(1,0,0,-2,-10,2)$ & $\bullet$ & \Block{2-1}{174} &  \\
 &  & $(1,0,0,-2,-2,-2)$ & $\bullet$ &  &  \\
\cline{2-5}
 & \Block{2-1}{5} & $(1,0,0,-2,-8,-2)$  & $\bullet$ & \Block{2-1}{126} &  \\
 &  & $(1,0,0,-2,-4,2)$  & $\bullet$ &  &  \\
\cline{2-5}
 & \Block{2-1}{3} & $(1,0,0,-2,-8,0)$  & $\bullet$ & \Block{2-1}{102} & \\
 &                &  $(1,0,0,-2,-4,0)$   & $\bullet$ &                     &         \\
\cline{2-5}
 & \Block{2-1}{1} & $(1,0,0,-2,-8,2)$  & $\bullet$ & \Block{2-1}{78} & \\
 &                & $(1,0,0,-2,-4,-2)$    & $\bullet$ &                     &         \\ 
\cline{2-5}
 & \Block{3-1}{$-1$} & $(1,0,0,-2,-6,-2)$  & $\bullet$ & \Block{3-1}{30} & \\
 &                &  $(1,0,0,-2,-6,0)$  & $\bullet$ &                     &         \\ 
 &                &  $(1,0,0,-2,-6,2)$   & $\bullet$ &                     &         \\
\hline
\end{NiceTabular}
\caption{\smaller[1]{There are $16$ negative-twisting contact structures on $M=-\Sigma(3,4,47)$ which is of type A. One of them has twisting number $-223$, while for the other fifteen this is $-7$.}}
\label{Table1}
\end{table}      

\begin{table}[t]
\centering
\begin{NiceTabular}{|c|c|c c|c|c|}[
  cell-space-top-limit=4pt,
  cell-space-bottom-limit=4pt
]
\hline
\Block{2-1}{$\s_\xi$}
& \Block{2-1}{$d_3(\xi)$}
& $V\in\mathrm{Char}(M)$ & 
& Combined
& \Block{2-1}{$\mathrm{tw}(M,\xi)$}
\\
& & realised  & & height & \\
\hline\hline\hline
\Block{3-1}{$\s_\text{can}$} & \Block{3-1}{$\frac{5}{36}$} & $(0,0,0,-2,0,-4,0)$ & \Block{3-1}{\joindotss} & \Block{3-1}{4} & \Block{3-1}{$-1,$ $-3$ and $-5$} \\
 &  & $(0,0,0,0,0,-2,0)$ &  &  &  \\
 &  & $(0,0,0,2,0,0,0)$  & &  &  \\
\hline\hline
\Block{3-1}{$\overline\s_\text{can}$} & \Block{3-1}{$\frac{5}{36}$} & $(0,0,0,-2,0,0,0)$ &  \Block{3-1}{\joindotss} & \Block{3-1}{4} & \Block{3-1}{$-1,$ $-3$ and $-5$}  \\
 &  & $(0,0,0,0,0,2,0)$ & &  &  \\
 &  & $(0,0,0,2,0,4,0)$ & &  &  \\
\hline\hline
\Block{3-1}{$\s_1=\overline\s_1$} & \Block{3-1}{$\frac{1}{4}$} & $(0,0,0,-2,0,-2,0)$ & \Block{3-1}{\joindotss} & \Block{3-1}{4} & \Block{3-1}{$-1,$ $-3$ and $-5$} \\
 &  & $(0,0,0,0,0,0,0)$ &  &  &  \\
 &  & $(0,0,0,2,0,2,0)$ & &    &  \\
\hline\hline
\Block{2-1}{$\s_2$} & \Block{2-1}{$-\frac{7}{36}$} & $(0,0,0,-2,0,2,0)$  & \Block{2-1}{\joindots} & \Block{2-1}{2} & \Block{2-1}{$-1$ and $-3$} \\
 &  & $(0,0,0,0,0,4,0)$  &  &  &  \\
\hline\hline
\Block{2-1}{$\overline\s_2$} & \Block{2-1}{$-\frac{7}{36}$} & $(0,0,0,0,0,-4,0)$  & \Block{2-1}{\joindots}  & \Block{2-1}{2} & \Block{2-1}{$-1$ and $-3$} \\
 &                &  $(0,0,0,2,0,-2,0)$ & &                     &         \\
\hline\hline
$\s_3$ & $-\frac{3}{4}$ & $(0,0,0,-2,0,4,0)$ & $\bullet$ & 0 & $-1$ \\
\hline\hline
$\overline\s_3$ & $-\frac{3}{4}$ & $(0,0,0,2,0,-4,0)$ & $\bullet$ & 0 & $-1$ \\
\hline
\end{NiceTabular}
\caption{\smaller[1]{There are $26$ negative-twisting contact structures on $M=M(-2;\frac{1}{2},\frac{1}{2},\frac{4}{7},\frac{6}{11})$ which is of type B. Three of them have twisting number $-5$, eight of them have twisting number $-3$, and for the other fifteen this is $-1$. The structures are distributed into seven pyramids: three of size $3$, two of size $2$, and two of size $1$. Note that $|H_1(M;\Z)|=36$, but only seven $\Spin^c$-structures support realised characteristic vectors.}}
\label{Table2}
\end{table}      

Let us assume that $M=-\Sigma(3,4,47)=M(-1;\frac{2}{3},\frac{1}{4},\frac{4}{47})$. Then the possible negative twisting numbers are $-7$ and $-223$; in fact, we have that \[(p_1,p_2,p_3)=(5,2,1)\hspace{0.5cm}\text{ and }\hspace{0.5cm}(P_1,P_2,P_3)=(149,56,19)\:;\] hence, comparing each $r_i$ with $\frac{p_i}{7}$ and $\frac{P_i}{223}$ for $i=1,2,3$, from Equation \eqref{eq:upper1} the number of contact structures is \[1\cdot1\cdot15=15\hspace{0.5cm}\text{ and }\hspace{0.5cm}1\cdot1\cdot1=1\] matching the computation in Table \ref{Table1} and in Proposition \ref{prop:Tsurgeries}.

The realised characteristic vectors are obtained as described in Theorem \ref{teo:realised}: \[(1,0,0,-2,x,y)\hspace{0.5cm}\text{ where }\hspace{0.5cm}x=-10,-8,...,-2\hspace{0.5cm}\text{ and }\hspace{0.5cm}y=-2,0,2\:.\] The combined height in Tables \ref{Table1} and \ref{Table2} is the maximum height of any homology class $[V_i]+[V_j]$ among realised vectors with the given gradings (provided that there are at least two of them). Note that the unique structure on $M$ with twisting number $-223$ appears in grading 19 because, according to Theorems \ref{teo:twisting_numbers} and \ref{teo:classification}, we need $\height([V_i]+[V_j])=-1-\underline{\text{tw}}(M)=222$.

We now assume that $M=M(-2;\frac{1}{2},\frac{1}{2},\frac{4}{7},\frac{6}{11})$; we use the same procedure as above. The possible negative twisting numbers are $-1,$ $-3$ and $-5$; in fact, we have that $e_0\leq-2$ and \[(p_1,p_2,p_3,p_4)=(2,2,2,2)\hspace{0.5cm}\text{ and }\hspace{0.5cm}(P_1,P_2,P_3,P_4)=(3,3,3,3)\:;\] hence, from Equation \eqref{eq:upper2} and comparing each $r_i$ with $\frac{p_i}{3}$ and $\frac{P_i}{5}$ for $i=1,...,4$, the number of contact structures is \[1\cdot1\cdot3\cdot5\cdot1=15\:,\hspace{0.5cm}1\cdot1\cdot2\cdot4\cdot1=8\hspace{0.5cm}\text{ and }\hspace{0.5cm}1\cdot1\cdot1\cdot3\cdot1=3\] matching the computation in Table \ref{Table2}.

The realised characteristic vectors are again obtained from Theorem \ref{teo:realised}: \[(0,0,0,x,0,y,0)\hspace{0.5cm}\text{ where }\hspace{0.5cm}x=-2,0,2\hspace{0.5cm}\text{ and }\hspace{0.5cm}y=-4,-2,...,4\:.\] 

\begin{remark}
  We prove in Theorem \ref{teo:twisting_numbers} that for any $V\in\emph{Char}(G)$ realised one has \[\height[V]=-1-\overline{\emph{tw}}(Y_G)=\height[V_\emph{can}]\:.\] This means that for every $[V']\in\widehat{HF}_{M(V)}(Y_G,\mathfrak u\lvert_{Y_G})$, where $c_1(\mathfrak u)=V$, such that $[V']\neq[V]$ we have $\height([V]+[V'])>\height[V_\emph{can}]$. Hence, the combined height in Tables \ref{Table1} and \ref{Table2} can only equal $\height[V_\emph{can}]$ if the contact structure with invariant $T_{[V]}$ is alone in its homotopy class.
\end{remark}

\subsection*{Acknowledgements} {\smaller[1] We are indebted to Paolo Ghiggini for his mathematical insight and invaluable help. We thank Hyunki Min for noticing a minor oversight in Proposition \ref{prop:Tsurgeries}. We thank the Matematiska institutionen at Uppsala universitet for their friendly hospitality. A.C. has been partially supported by the HORIZON-ERC-2023-ADG 101141468 KnotSurf4d project. }

\section{On the full path algorithm}
\label{section:two}
We assume that the reader is familiar with \cite[Section 2]{CM-negative}. Every Seifert fibred space whose base orbifold is a sphere is presented by a star-shaped graph that we call the \emph{standard graph}. Unless the manifold is either $S^3,$ $S^1\times S^2$ or a lens space, the standard graph has at least three legs and it is unique once we fix our convention \cite{Saveliev}. 

We denote the standard graph of a Seifert fibred space $M$ by $G$, or by $\Gamma$ when we assume $M$ to be oriented in the way its graph is negative-definite. We take $M=M(e_0;r_1,...,r_n)$ where $e_0\in\Z$ and $r_i\in(0,1)\:\cap\Q$; in addition, for the reason we explained above, in this paper we always assume that $n\geq3$. As usual the framings on the vertices in the legs of $G$ are determined by the negative continued fraction expansion of the $r_i$'s; hence, except for the central vertex whose framing is $e_0$, all the others are smaller or equal than $-2$. 

The dual graph $G^*$ is then the standard graph of \[-M=M(-e_0-n;1-r_1,...,1-r_n)\:;\] in particular, we have that $e(-M)=-e(M)$ where $e(M)=e_0+r_1+...+r_n$. We know from the literature \cite{Saveliev} that \[e(M)\left\{\begin{aligned}&<0\hspace{0.5cm}\text{ when }G\text{ is negative-definite} \\ &=0\hspace{0.5cm}\text{ when }b_1(M)=1 \\ &>0\hspace{0.5cm}\text{ when }G\text{ is indefinite}\:.\end{aligned}\right.\] Since removing the central vertex always produces a negative-definite graph, the plumbing $P_{G}$ is negative-definite if and only if $b_2^+(P_{G^*})=1$, or vice-versa $b_2^+(P_G)=1$ if and only if $P_{G^*}$ is negative-definite.

We call $G'$ any \emph{maximal $S^3$-subgraph} of $G$; in other words, a maximal subgraph that can be cancelled by a sequence of blow-downs, starting from $G$. The graph $G'$ always corresponds to the Seifert fibration of a torus knot $T_{d_2,\pm d_1}$ in $S^3$, including the degenerate cases $T_{1,d_1}$ and $T_{1,1}$; the differences appearing in these two cases are highlighted in \cite[Sections 4 and 5]{CM-negative}.

Let us denote by $Y$ any 3-manifold presented by an \emph{almost-rational graph}, that is the union of some negative-definite plumbing trees such that lowering the framing of one vertex makes $Y$ an $L$-space. We always denote by $-Y$ the manifold obtained by reversing the orientation. Every Seifert fibred space $M$ as above with $e(M)\neq0$ has standard graph which is almost-rational with exactly one orientation. 

We recall the notation about Heegaard Floer maps relating the three main flavours of the homology groups: \[HF^-(-Y,\s)\overset{\psi^*}{\longrightarrow}\widehat{HF}(-Y,\s)\overset{\rho^*}{\longrightarrow}HF^+(-Y,\s)\] and \[HF^-(Y,\s)\overset{\psi_*}{\longrightarrow}\widehat{HF}(Y,\s)\overset{\rho_*}{\longrightarrow}HF^+(Y,\s)\:,\] induced by the projection $CF^-\rightarrow\widehat{CF}$ and inclusion $\widehat{CF}\rightarrow CF^+$ respectively. For rational homology 3-spheres we have the splitting \[\widehat{HF}(Y,\s)\simeq\widehat{HF}^\text{ev}(Y,\s)\oplus\widehat{HF}^\text{od}(Y,\s)\:,\] where the first is the subgroup of the classes with Maslov grading $i$ with the same parity of the correction term $d(Y,\s)$, that is $i-d(Y,\s)\equiv0$ $\text{mod }2$, while the second one is generated by the classes with Maslov grading $j$ such that $j-d(Y,\s)\equiv1$ $\text{mod }2$.

It follows from \cite{OSz-fullpath,Nemethi}, see also \cite[Theorem 2.1]{CM-negative}, that if $Y$ is almost-rational then \begin{equation}\widehat{HF}^\text{ev}(-Y,\s)\simeq_{\rho^*}\Ker U\subset HF^+(-Y,\s)\label{eq:ev}\end{equation} and \begin{equation}\widehat{HF}^\text{od}(-Y,\s)=\Ker\rho^*\simeq_{\psi^*}\text{Tor}HF^-(-Y,\s)\:/\: U\cdot HF^-(-Y,\s)\:.\label{eq:od}\end{equation}
We know that the full path algorithm from \cite{OSz-fullpath} provides a basis as an $\F$-vector space of \begin{equation}HF^-(Y,\s)/ U\cdot HF^-(Y,\s)\simeq\widehat{HF}^\text{ev}(Y,\s)=\left(\widehat{HF}^\text{od}(-Y,\s)\right)^\perp\label{eq:even}\end{equation} through the canonical duality $\widehat{HF}_*(-Y,\s)\simeq\widehat{HF}_{-*}(Y,\s)^\bullet$. We want to show that a relation similar to Equation \eqref{eq:even} holds for $-M$ when $M$ is a negative-definite Seifert fibred space. 

We recall that a 4-manifold $X$, together with a $\Spin^c$-structure $\mathfrak u$, induces maps in Heegaard Floer homology between $S^3$ and $(Y=\partial X,\s=\mathfrak u\lvert_Y)$; moreover, when $c_1(\mathfrak u)$ is torsion the image has Maslov grading equal to \begin{equation}M(F^\circ_{X,\mathfrak u}(1))=\dfrac{c_1^2(\mathfrak u)[X]-3\sigma(X)-2\chi(X)+2}{4}\:,\label{eq:Maslov}\end{equation} and when $b_1(X)=0$ the homology class $F^-_{X,\mathfrak u}(1)$ is non-torsion if and only if $b_2^+(X)=0$, see \cite{OSz-negative}.
\begin{teo}
 \label{teo:fullpath}
 Let $Y$ be a negative-definite Seifert fibred space, and denote by $\Gamma$ its standard graph and by $P_{\Gamma^*}$ the plumbing associated to its dual $\Gamma^*$ so that $\partial P_{\Gamma^*}=-Y$. Then assuming that the central vertex of $\Gamma^*$ has negative framing, the full paths of $\Gamma^*$ ending correctly provide an $\F$-basis of $\widehat{HF}^\emph{od}(-Y,\s)$. More specifically, for every characteristic vector $V\in\Z^{|\Gamma^*|}\simeq H^2(P_{\Gamma^*};\Z)$ one has $F^-_{P_{\Gamma^*},\mathfrak u}(1)=[V]\in\emph{Tor}HF^-(-Y,\s)$ where $c_1(\mathfrak u)=V$.     
\end{teo}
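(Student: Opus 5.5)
The plan is to reduce the statement to the known almost-rational case by passing through duality and a surgery exact triangle along the central vertex. Since $Y$ is negative-definite, $\Gamma$ is almost-rational, and by \cite{OSz-fullpath,Nemethi} together with Equation \eqref{eq:ev} and Equation \eqref{eq:od} we already know
\[\widehat{HF}^\text{od}(-Y,\s)\simeq \text{Tor}HF^-(-Y,\s)/U\cdot HF^-(-Y,\s).\]
So it suffices to prove two things: the classes $F^-_{P_{\Gamma^*},\mathfrak u}(1)$ span $\text{Tor}HF^-(-Y,\s)$ modulo $U$, and those whose full path ends correctly are linearly independent there.

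\textbf{Step 1: the classes are torsion.} Because $b_2^+(P_{\Gamma^*})=1$, the cobordism map $F^-_{P_{\Gamma^*},\mathfrak u}$ lands in $\text{Tor}HF^-$ \cite{OSz-negative}. Equation \eqref{eq:Maslov} computes its grading. Since $b_2^+$ is now odd rather than $0$, the parity shifts by one compared with the negative-definite case, so the images lie in odd parity.

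\textbf{Step 2: the image depends only on the full path.} We need $F^-_{P_{\Gamma^*},\mathfrak u}(1)$ to depend only on $[V]$, and to vanish unless $[V]$ stays in some $\mathcal B_n$. Here I would repeat the argument of \cite[Lemma 2.3 and Section 2]{OSz-fullpath} verbatim. Each step of the full path, $V\mapsto V+2PD[v]$ when $\langle V,v\rangle=-m(v)$, is realised by a blow-up identity and the composition law. This argument does not use negative-definiteness; it only uses the adjunction relation for spheres of self-intersection $m(v)$. The hypothesis that the central framing is negative ensures every vertex has negative framing, which is what the local move requires.

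\textbf{Step 3: spanning and independence.} Remove the central vertex $v_0$ of $\Gamma^*$. The result is a disjoint union of linear chains, so its boundary is a connected sum of lens spaces $L$, an $L$-space. Consider the surgery exact triangle relating $L$, $-Y$, and the manifold $-Y'$ obtained by lowering the framing of $v_0$ (or by $\infty$-surgery), with the 2-handle cobordism $W$ attached along $v_0$. The composite map $P_{\Gamma^*\setminus v_0}\cup W$ is exactly $P_{\Gamma^*}$. Summing over $\Spin^c$-structures extending a fixed one on the lens-space part then shows that the image of $HF^-(L)$ in $HF^-(-Y)$ is spanned by the classes $F^-_{P_{\Gamma^*},\mathfrak u}(1)$.

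The torsion part of $HF^-(-Y)$ should come from this map. The third term of the triangle contributes only the free tower, and here I would use that $-Y$ with the lowered framing, or the complementary term, is controlled by the negative-definite side. Independence of the correctly ending full paths then follows by counting. Their number equals $\dim\widehat{HF}^\text{od}(-Y,\s)$, which by duality equals $\dim\widehat{HF}^\text{ev}(Y,\s)-1$, and the latter is known from the full path on $\Gamma$ \cite[Proposition 2.2]{CM-negative}. Alternatively, independence can be checked by the grading-plus-filtration argument of \cite{OSz-fullpath}.

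The main obstacle is the exact triangle step. One must show that the map from the lens-space term surjects onto $\text{Tor}HF^-(-Y)$, and one must control the kernel well enough to match the count of correctly ending paths. The first thing I would try is to run the triangle with the framing of $v_0$ made very negative. Then $-Y'$ becomes negative-definite and almost-rational, and its $HF^-$ is known from the full path. This pins down the ranks, and the rank count forces the desired isomorphism.
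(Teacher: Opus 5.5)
Steps 1 and 2 are fine, but Step 3 has a genuine gap in both halves: spanning and independence.

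\textbf{Spanning.} The triangle is set up inconsistently. If $W$ is the handle attached along $v_0$ with its own framing, so that $P_{\Gamma^*\setminus v_0}\cup W=P_{\Gamma^*}$, then the third manifold is $-Y_+$, with the central framing \emph{raised} by one, not lowered. Exactness then only gives $\Imm\bigl(HF^-(L)\to HF^-(-Y)\bigr)=\Ker\bigl(HF^-(-Y)\to HF^-(-Y_+)\bigr)$.

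That last map is a blow-up type cobordism. By the blow-up formula it carries full-path classes of $-Y$ to full-path classes of $-Y_+$. By the argument of Lemma \ref{lemma:2} applied to $-Y_+$, these images generate $\text{Tor}HF^-(-Y_+)$. So as soon as $-Y_+$ is not an $L$-space, the map is nonzero on $\text{Tor}HF^-(-Y)$, and the image from $L$ is a proper subspace of the torsion. Your claim that ``the third term contributes only the free tower'' is exactly what fails.

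In the paper the torsion comes from the \emph{lowered} framing. In the triangle \eqref{eq:triangle} the map out of $-Y$ is $h$, the handle on the meridian into $-Y_0$. Its dual $h^\bullet$ has components $F^+_{P_\Gamma,\mathfrak u}(1)$ with $b_2^+(P_\Gamma)=0$, so $\Imm h^\bullet=\mathcal T^+$. Hence $\Imm g=\Ker h=(\mathcal T^+)^\perp=\text{Tor}HF^-(-Y)$, and the torsion is generated by the classes $g_\epsilon(F^-_{P_1,\mathfrak u'}(1))=F^-_{P_{\Gamma^*},\mathfrak u}(1)$.

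This only works if $HF^-(-Y_1)$ is already described by full paths. That requires an induction on the central framing which runs through indefinite graphs, the $b_1=1$ case (Rustamov's theorem), and finally negative-definite $L$-spaces. Your proposal has no such induction.

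\textbf{Independence.} This step is circular. That the number of correctly ending full paths of $\Gamma^*$ in $\s$ equals $\dim\widehat{HF}^\text{od}(-Y,\s)=\dim\widehat{HF}^\text{ev}(Y,\s)-1$ is not known beforehand; it is the content of the theorem. A very negative central framing is not one triangle away from $-Y$, so there is no rank count to run there. Ranks alone would not tell you which classes form a basis anyway.

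The missing ingredient is the comparison with the combinatorial side. The paper uses the map $T^+:HF_\text{red}(Y)\to\mathbb H^+(\Gamma^*)$, $T^+(\alpha)(V)=F^+_{\overline{P_{\Gamma^*}},\mathfrak u}(\alpha)$, which is well defined by your Step 2. It sits in a commutative ladder:
\begin{itemize}
\item the top row $0\to HF_\text{red}(Y)\to\mathcal Z_1^\bullet\to HF^+(Y_0)$ comes from the triangle;
\item the bottom row $0\to\mathbb H^+(\Gamma^*)\to\mathbb H^+(\Gamma^*_1)\to\mathbb H^+(\Gamma^*_0)$ is Ozsv\'ath--Szab\'o's purely combinatorial exact sequence.
\end{itemize}
With $T_1^+$ and $T_0^+$ isomorphisms by induction, a diagram chase makes $T^+$ an isomorphism. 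Lemma \ref{lemma:phi} then identifies $\Ker U\cap HF_\text{red}(Y,\s)$ with the functions on full paths of $\Gamma^*$ ending correctly. By duality this gives the claimed basis of $\widehat{HF}^\text{od}(-Y,\s)$.
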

Note that this implies that we have a basis $\{[V_1],...,[V_{t}],T_{[W_1]},...,T_{[W_{t+1}]}\}$ of $\widehat{HF}(-Y,\s)$, obtained by identifying the full path of each characteristic vector $V_i=c_1(\mathfrak u_i)$ with $\widehat F_{P_{\Gamma^*},\mathfrak u_i}(1)$ and of each $W_j=c_1(\mathfrak v_j)$ with $\widehat F_{P_{\Gamma},\mathfrak v_j}(1)$ where $\mathfrak v_j\in\Spin^c(P_\Gamma)$, assuming they end correctly. Then the functional $T_{[W_j]}:HF^-(Y,\s)\rightarrow\F$ defined by \[T_{[W_j]}[W_j]=1\hspace{0.5cm}\text{ and }\hspace{0.5cm}T_{[W_j]}[W_k]=0\hspace{0.5cm}\text{ when }\hspace{0.5cm}j\neq k\] is an element of $\Ker U\subset HF^+(-Y,\s)$, and then it is identified with one of $\widehat{HF}^\text{ev}(-Y,\s)$ by Equation \eqref{eq:ev}. In the same way, we have that $\{T_{[V_1]},...,T_{[V_{t}]},[W_1],...,[W_{t+1}]\}$ is a basis of $\widehat{HF}(Y,\s)$, see \cite[Subsection 2.4]{CM-negative} for more details.

We postpone the proof of Theorem \ref{teo:fullpath} to Subsection \ref{subsection:proof}. We do not give all the possible details about characteristic vectors and the full path, as we refer to the original paper of Ozsv\'ath and Szab\'o \cite{OSz-fullpath} for these; nonetheless, we will highlight the differences between the indefinite and negative-definite case. Note that even though the original proof works for every almost-rational graph, here we rely heavily on the fact that graph is star-shaped. We do not think that our results can be generalised easily to a larger family of 3-manifolds.

\subsection{The surgery exact triangle}
\label{subsection:triangle}
We start by describing the strategy of the proof of Theorem \ref{teo:fullpath}. Say $Y$ is represented by the negative-definite graph $\Gamma$, and let $P=P_{\Gamma^*}$ be the plumbing of $\Gamma^*$; we consider the following exact surgery triangle 
\begin{equation}
    \begin{tikzcd}
 HF^\circ(-Y) \arrow[dr,"h" swap] & & HF^\circ(-Y_1) \arrow[ll,"g" swap] \\
 & HF^\circ(-Y_0) \arrow[ur,"f" swap] &  
\end{tikzcd}\label{eq:triangle}
\end{equation}
where $-Y_0$ is the connected sum of lens spaces obtained by removing the centre of $\Gamma^*$, and $-Y_1$ is the manifold whose graph has framing on the centre lowered by one with respect to $\Gamma^*$. Clearly, we have that $-Y_0$ is presented by a negative-definite tree, but $-Y_1$ may not be. Since lowering the framing on the central vertex enough times yields a tree with no bad vertices, and then a negative-definite $L$-space, we can use an inductive argument to prove that the set $\{F^-_{P,\mathfrak u}(1)\}$ where $c_1(\mathfrak u)=V\in H^2(P;\Z)$ is a system of generators of $\text{Tor}HF^-(-Y)$.

A main problem immediately arises: since $-Y$ has indefinite graph, while the manifold we use in the initial step has negative-definite graph, this means that at some point in the argument we may be forced to consider an intermediate case where the manifold is not a rational homology sphere. The full path algorithm works also in this case, with the due modifications, as shown by Rustamov \cite[Theorem 1.2]{Rustamov}. 

We devote this subsection to describe the maps involved in the exact triangle in Equation \eqref{eq:triangle}. We recall that each map is induced by a 4-dimensional cobordism $W$ between two vertices of the triangle, and it is given by summing the maps induced in Heegaard Floer homology by each pair $(W,\mathfrak v)$ with $\mathfrak v\in\Spin^c(W)$; Ozsv\'ath and Szab\'o showed that such maps are all vanishing except for a finite number of $\mathfrak v$'s.

Let $\Gamma^*_0$ be the graph of $-Y_0$, and $\Gamma^*_1$ be the one of $-Y_1$; denote by $P_0$ and $P_1$ the corresponding plumbings $P_{\Gamma^*_0}$ and $P_{\Gamma^*_1}$. By construction the plumbing $P_0$ is negative-definite and $P_1=P_0\cup_{-Y_0}W$, where $W$ is the cobordism from $-Y_0$ to $-Y_1$ given by the 2-handle attachment on the central vertex. A $\Spin^c$-structure on $W$ can then be determined by the first coordinate of a characteristic vector $V\in H^2(P_1;\Z)$, and the components of $f$ are precisely the maps $F^\circ_{W,\mathfrak v_1}:HF^\circ(-Y_0,\s_0)\rightarrow HF^\circ(-Y_1,\s_1)$ where $\mathfrak v_1\lvert_{-Y_0}=\s_0$ and $\mathfrak v_1\lvert_{-Y_1}=\s_1$. In order to give a complete notation, for any $\mathfrak u_0\in\Spin^c(P_0)$ such that $\mathfrak u_0\lvert_{-Y_0}=\s_0$ the vector $V=(v_1,...,v_{|\Gamma^*_1|})$ is identified with $c_1(\mathfrak u_1)$ where $\mathfrak u_1=\mathfrak u_0\cup_{-Y_0,\s_0}\mathfrak v_1$, while $V'=(v_2,...,v_{|\Gamma^*_1|})$ is identified with $c_1(\mathfrak u_0)\in H^2(P_0;\Z)$. Since $Y_0$ is an $L$-space, for every $\s_0\in\Spin^c(Y_0)$ there is an $\mathfrak u_0$ as above such that $F^\circ_{P_0,\mathfrak u_0}(1)=\theta_{\s_0}$, the generator of $HF^\circ(-Y_0,\s_0)$; hence, we have that \[f_{\mathfrak v_1}(\theta_{\s_0})=F^\circ_{W,\mathfrak v_1}(F^\circ_{P_0,\mathfrak u_0}(1))=F^\circ_{P_1,\mathfrak u_1}(1)\:.\]
The cobordism between $-Y_1$ and $-Y$ is instead given by a blow-up on the central vertex. Therefore, the map $g$ has two components that we denote by $g_\epsilon$ with $\epsilon=\pm 1$. 
Since we are using the inductive argument mentioned above, we can assume that Theorem \ref{teo:fullpath} holds for $P_1$, thus if $b_2^+(P_1)=1$ (resp. $b_2^+(P_1)=0$) then each non-zero class $\gamma\in \widehat{HF}(-Y_1,\s_1)$ with odd (resp. even) grading can be written as $\widehat F_{P_1,\mathfrak u_1}(1)$ for a certain $\mathfrak u_1\in\Spin^c(P_1)$. We conclude that \[g_\epsilon(\gamma)=g_\epsilon(\widehat F_{P_1,\mathfrak u_1}(1))=g_\epsilon(f_{\mathfrak v_1}(\theta_{\s_0}))=\widehat F_{P,\mathfrak u_1\cup_{-Y_1}\epsilon}(1)\] where $\mathfrak v_1=\mathfrak u_1\lvert_{W}$ and $\s_0=\mathfrak u_1\lvert_{-Y_0}$. Note that exactness implies that $g_\epsilon(\gamma)=0$ if $f_{\mathfrak v_i}(\theta_{\s_0})\neq0$ only for an odd number of $\mathfrak v_i\in\Spin^c(W)$ for which $f_{\mathfrak v_i}(\theta_{\s_0})=f_{\mathfrak v_1}(\theta_{\s_0})$.

Finally, the map $h$ is gotten by adding a 0-framed 2-handle along the regular fibre of $-Y$ (a meridian of central vertex). We observe that then its dual $h^\bullet:HF^\star(Y_0)\rightarrow HF^\star(Y)$ corresponds to attaching the 2-handle on the central vertex of $Y_0$ (the symbol $\star$ denotes the flavour dual to $\circ$); in other words, the map $h^\bullet$ would play the same role as $f$ if we took the exact triangle with the oppositely oriented manifolds instead. For this reason we can just write \[h^\bullet_\mathfrak v(\eta_{\s_0})=F^\star_{W',\mathfrak v}(F^\star_{P_{\Gamma_0},\mathfrak u_0}(1))=F^\star_{P_{\Gamma},\mathfrak u}(1)\] where $\eta_{\s_0}$ is the generator of $HF^\star(Y_0,\s_0)$, the $\Spin^c$-structure $\mathfrak v\in\Spin^c(W')$ is such that $\s_0=\mathfrak v\lvert_{Y_0}$, and $\s=\mathfrak v\lvert_{Y}$, while $\mathfrak u=\mathfrak u_0\cup_{Y_0,\s_0}\mathfrak v$. Note that one always has that $b_2^+(P_\Gamma)=0$.

\subsection{Heegaard Floer homology and plumbings of star-shaped graphs}
Let us use the notation $(Z,\s)$ for a Seifert fibred space with $b_1(Z)=1$. We recall that since $H^2(Z;\Z)\simeq\Z\oplus\mathfrak G$ where $\mathfrak G$ is a finite abelian group, we have infinitely many $\Spin^c$-structures: we denote by $\mathfrak t$ the $|\mathfrak G|$ ones with torsion first Chern class, and with $\mathfrak j$ the other ones which are indexed by $\{i\}\times\mathfrak G$ with $i\in\Z\setminus\{0\}$. 

Taking a look at its Heegaard Floer homology \cite{OSz}, since $b_1(Z)=1$ we have that $HF^-(Z,\mathfrak t)$ has rank two as an $\F[U]$-module; therefore, we have two correction terms that we denote by $d_\text{od}(\mathfrak t)$ and $d_\text{ev}(\mathfrak t)$, where $d_\text{ev}(\mathfrak t)$ is the one with same parity as the twisted coefficient correction term. This is consistent with the notation in \cite{Rustamov} because of the results in \cite{LR}; moreover, the difference $d_\text{od}(\mathfrak t)-d_\text{ev}(\mathfrak t)$ is odd. In addition, the group $HF^+(Z,\mathfrak t)$ possesses two distinguished $\F[U]$-submodules isomorphic to $\mathcal T:=\F[U,U^{-1}]/ U\cdot\F[U]$ with degree given by the correction terms.

On the other hand, for non-torsion $\Spin^c$-structures we need to be careful as the version of $HF^-$ considered here and in \cite{Rustamov} is not the original one defined by Ozsv\'ath and Szab\'o: we set 
\vspace{-0.1cm}
\[HF^-(Z,\mathfrak j):=HF^+(-Z,\mathfrak j)^\bullet\] identifying it with its completion as an $\F[[U]]$-module. In this way, we have that \[\dim_\F HF^-(Z,\mathfrak j)=\dim_\F HF^+(Z,\mathfrak j)\] 
and $HF^-(Z,\mathfrak j)$ has only $U$-torsion elements. For these $\Spin^c$-structures we cannot define the Maslov grading as an absolute $\Z$-grading, but just as an $\F$-grading.  

If $\mathfrak t$ is torsion then we say that a homology class is in $\widehat{HF}^\text{ev}(Z,\mathfrak t)$ when its grading has the same parity of $d_\text{ev}(\mathfrak t)$. This means that the image under $\psi_*$ of generators of distinct $\F[U]$-towers always live in subgroups that have opposite parity. \vspace{-0.1cm}

\begin{prop}
 \label{prop:sign}
 Suppose that $Y$ and $Z$ are closed oriented $3$-manifolds such that $b_1(Y)=0$ and $b_1(Z)=1$. Then \[\widehat{HF}^\emph{ev}(Y,\s)\simeq\widehat{HF}^\emph{ev}(-Y,\s)\hspace{0.5cm}\text{ and }\hspace{0.5cm}\widehat{HF}^\emph{ev}(Z,\mathfrak t)\simeq\widehat{HF}^\emph{od}(-Z,\mathfrak t)\] for every $\s\in\Spin^c(Y)$ and every $\mathfrak t\in\Spin^c(Z)$ torsion.   
\end{prop}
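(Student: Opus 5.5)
The plan is to derive both isomorphisms from the canonical duality $\widehat{HF}_*(-M,\s)\simeq\widehat{HF}_{-*}(M,\s)^\bullet$, which holds for every closed oriented $3$-manifold $M$ and every $\Spin^c$-structure. The duality is a degree-reversing isomorphism, so the whole task is to track how the reference parity used to define the ``ev'' part changes under orientation reversal. Since $\widehat{HF}$ is finite dimensional over $\F$, a graded vector space and its dual have the same dimension in each degree. Hence $\widehat{HF}_j(-M,\s)\simeq\widehat{HF}_{-j}(M,\s)$, and it is enough to compare, degree by degree, which of these groups are declared even.

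\textbf{Case $b_1(Y)=0$.} Here the parity is measured against the correction term. By \cite{OSz-four} we have $d(-Y,\s)=-d(Y,\s)$. A class of $\widehat{HF}_j(-Y,\s)$ is even exactly when $j-d(-Y,\s)=j+d(Y,\s)$ is even. Its dual partner lives in degree $-j$ on $Y$, and it is even exactly when $-j-d(Y,\s)$ is even. These two conditions agree, because the two numbers are negatives of each other. Duality therefore carries $\widehat{HF}^\text{ev}(-Y,\s)$ onto the dual of $\widehat{HF}^\text{ev}(Y,\s)$, and equality of dimensions gives the first isomorphism.

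\textbf{Case $b_1(Z)=1$, $\mathfrak t$ torsion.} Now ``ev'' is defined by the parity of $d_\text{ev}(\mathfrak t)$, and the key step is to identify $d_\text{ev}(-Z,\mathfrak t)$. For $b_1=1$ the standard relation (Ozsv\'ath--Szab\'o, see also \cite{LR}) is that the two towers of $-Z$ have correction terms $-d_\text{ev}(Z,\mathfrak t)$ and $-d_\text{od}(Z,\mathfrak t)$. One sees this from $HF^+_*(-Z,\mathfrak t)\simeq HF^-_{-*}(Z,\mathfrak t)^\bullet$ together with the shift relating the bottoms of $HF^-$ towers to the tops of $HF^+$ towers; because $b_1=1$ this shift is by an odd amount, namely $2-1$ rather than $2$.

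The \textbf{main obstacle} is to decide which of these two values is the even correction term of $-Z$, i.e.\ the one with the parity of the twisted-coefficient correction term. I would proceed via twisted coefficients. With fully twisted coefficients $\underline{HF}^+(Z,\mathfrak t)$ has a single tower, and under the analogous duality its bottom degree is negated up to a shift. Combining this with the comparison in \cite{LR} between the twisted tower and the standard ones, I expect to obtain $d_\text{ev}(-Z,\mathfrak t)\equiv -d_\text{od}(Z,\mathfrak t)$ mod $2$. Equivalently, passing to $-Z$ swaps the labels on the two towers, since the duality degree shift in the $b_1=1$ setting is odd.

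Granting this, a degree $j$ on $-Z$ is even exactly when $j+d_\text{od}(Z,\mathfrak t)$ is even. Its dual degree $-j$ on $Z$ then differs in parity from $d_\text{ev}(Z,\mathfrak t)$, because $d_\text{od}(Z,\mathfrak t)-d_\text{ev}(Z,\mathfrak t)$ is odd. Hence $\widehat{HF}^\text{ev}(-Z,\mathfrak t)\simeq\widehat{HF}^\text{od}(Z,\mathfrak t)$, and taking duals gives the second isomorphism. As a sanity check I would test the parity bookkeeping on $S^1\times S^2$ (the $0$-surgery on the unknot), where everything is explicit.
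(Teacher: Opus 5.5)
Your treatment of $Y$ is the paper's: the canonical duality negates degrees and $d(-Y,\s)=-d(Y,\s)$, so parity relative to the correction term is preserved. For $Z$ your skeleton also matches the paper: the two correction terms of $-Z$ are $-d_{\text{ev}}(Z,\mathfrak t)$ and $-d_{\text{od}}(Z,\mathfrak t)$, and the labels swap. The difference is that the paper does not prove the swap. It simply adopts Rustamov's convention $d_{\text{od}}(-Z,\mathfrak t):=-d_{\text{ev}}(Z,\mathfrak t)$ and vice versa, so the step you call the main obstacle is settled there by definition. Your twisted-coefficient route would instead verify that this convention agrees with the earlier definition of $d_{\text{ev}}$ as the correction term with the parity of the twisted one. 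That is a genuine strengthening, but you only announce it (``I expect''), and the sketch needs two repairs.

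First, drop the parenthetical claim that the untwisted duality involves an odd shift (``$2-1$ rather than $2$''). It is false, and it would contradict your own conclusion: an odd shift would make the correction terms of $-Z$ equal to $-d_{\text{ev}}(Z,\mathfrak t)\pm1$ and $-d_{\text{od}}(Z,\mathfrak t)\pm1$. The exact sequence $HF^-\to HF^\infty\to HF^+$ relates tower tops and bottoms exactly as for rational homology spheres, so the duality just negates the pair of correction terms; for example, $S^1\times S^2$ has $\{1/2,-1/2\}$ in either orientation.

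Second, the odd shift lives only in the twisted setting, and this is how to prove your expected congruence. For $b_1=1$ the twisted groups are torsion over $R=\F[T^{\pm1}]$. Work degreewise on $\underline{CF}^\infty$, where each graded piece is finitely generated free over $R$. The universal coefficient theorem over the PID $R$ then says that the cohomology of the dual complex in degree $k$ is $\mathrm{Ext}^1_R$ of the twisted homology in degree $k-1$. Combined with the orientation-reversal duality this gives $\underline d(-Z,\mathfrak t)\equiv 1-\underline d(Z,\mathfrak t)\pmod 2$. As a check, $\underline d=-1/2$ for $S^1\times S^2$ in both orientations.

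Since $d_{\text{ev}}$ is defined by the parity of $\underline d$, and the two correction terms of $-Z$ have opposite parities, this forces $d_{\text{ev}}(-Z,\mathfrak t)=-d_{\text{od}}(Z,\mathfrak t)$. No separate appeal to the Levine--Ruberman comparison is needed, and your final degree-by-degree parity bookkeeping then goes through.
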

\begin{proof}
    For $Y$ it follows by the canonical duality $\widehat{HF}_*(-Y,\s)\simeq\widehat{HF}_{-*}(Y,\s)^\bullet$ which holds for every 3-manifold and preserves the parity of the Maslov grading. For $Z$ we need to observe that the correction terms of $-Z$ are $-d_\text{od}(\mathfrak t)$ and $-d_\text{ev}(\mathfrak t)$; since as in \cite{Rustamov} we set $d_\text{od}(-Z,\mathfrak t):=-d_\text{ev}(Z,\mathfrak t)$ and vice-versa, the parity of the homology classes is swapped under the duality identification.
\end{proof}
From now on we assume that $Z$ is a Seifert fibred space with $b_1(Z)=1$; we exclude $S^1\times S^2$ as this case is not relevant in the paper. From \cite[Theorem 1.2]{Rustamov} we know that the full path algorithm provides bases of $\widehat{HF}^\text{ev}(Z,\mathfrak \s)$ and the even subgroup of $HF^-(Z,\mathfrak \s)$. 
\vspace{-0.1cm}

\begin{lemma}[Rustamov]
 \label{lemma:1}
 Let $Z$ be as above, and denote by $G$ its standard graph. Then for every $\s\in\Spin^c(Z)$ we have that there are $\mathfrak u_1,...,\mathfrak u_t\in\Spin^c(P_G)$ extending $\s$ such that the maps $F^-_{P_G,\mathfrak u_j}:HF^-(S^3)\rightarrow HF^-(Z,\s)$ altogether give an $\F[U]$-basis $\{F^-_{P_G,\mathfrak u_1}(1),...,F^-_{P_G,\mathfrak u_t}(1)\}$ of $\emph{Tor}HF^-(Z,\s)$ when $\s$ is non-torsion, and an $\F[U]$-basis of $\F[U]_{d_\emph{ev}}\oplus\emph{Tor}HF^-(Z,\s)$ when $\s$ is torsion. 
\end{lemma}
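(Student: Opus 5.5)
The lemma is attributed to Rustamov, so the plan is to reduce it to \cite[Theorem 1.2]{Rustamov}. The only real work is to translate that theorem, which is phrased in lattice-cohomology and computational-sequence language, into the cobordism-map statement used here.

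\textbf{Step 1: the setting.} The standard graph $G$ of $Z$ is singular: $b_2^+(P_G)=0$ and $b_2^-(P_G)=|G|-1$, since $e(Z)=0$. Removing the central vertex gives a negative-definite union of linear chains, that is, a connected sum of lens spaces $Z_0$. So we are in the situation of the surgery exact triangle of Subsection \ref{subsection:triangle}, with $Z$ playing the role of the manifold whose central framing has been shifted.

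\textbf{Step 2: the comparison.} I would follow Rustamov's argument, which is the Ozsv\'ath--Szab\'o argument of \cite{OSz-fullpath}. Let $Z_{k}$ be the manifold obtained by lowering the central framing by $k$. For $k\gg0$ the graph has no bad vertex and is negative-definite, so $Z_k$ is an $L$-space and the full path statement holds there trivially. Inductively, in each exact triangle $Z_0\to Z_{k+1}\to Z_k$, the map $f$ is a sum of the maps $F_{W,\mathfrak v}$. Composing with $F_{P_0,\mathfrak u_0}(1)=\theta_{\s_0}$ expresses every image as $F^-_{P,\mathfrak u}(1)$, and the blow-up map $g_\epsilon$ carries these classes to the plumbing classes of the next manifold. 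Exactness, together with the counting of full paths (equivalence classes of characteristic vectors), shows two things:
\begin{itemize}
\item the classes $F^-_{P_G,\mathfrak u}(1)$ generate the relevant submodule;
\item two such classes coincide exactly when the vectors lie in the same full path.
\end{itemize}
Choosing one representative $\mathfrak u_j$ from each full path that ends correctly then gives the basis.

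\textbf{Step 3: torsion versus non-torsion.} For non-torsion $\s=\mathfrak j$, the convention $HF^-(Z,\mathfrak j):=HF^+(-Z,\mathfrak j)^\bullet$ makes the group entirely $U$-torsion, so the basis lies in $\text{Tor}HF^-$. For torsion $\mathfrak t$ there are two towers. Since $b_2^+(P_G)=0$, the image of $F^-_{P_G,\mathfrak u}$ can be non-torsion. I would use the grading formula \eqref{eq:Maslov}, which gives an even class, to identify the unique tower that is hit with $\F[U]_{d_\text{ev}}$, in agreement with Proposition \ref{prop:sign} and the conventions of \cite{LR}. The odd tower is not in the image, because all plumbing classes have the same parity.

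\textbf{Main obstacle.} The main difficulty is Step 2 when $b_1=1$. Infinitely many $\Spin^c$-structures on $W$ contribute, and the absolute grading is lost for non-torsion $\s$. I would handle this exactly as Rustamov does: use the completed $\F[[U]]$ version of $HF^-$, and restrict to the finitely many $\mathfrak v$ with non-vanishing contribution by an adjunction-type bound on $|v_1|$.
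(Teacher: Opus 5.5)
This is essentially the paper's route: the paper gives no independent proof and takes the lemma directly from \cite[Theorem 1.2]{Rustamov}, and the exact-triangle induction you sketch in Step 2 is the argument the paper runs for the indefinite analogue, Lemma \ref{lemma:2}, where the submodule that is hit is identified as $\Imm g=\Ker h=(\Imm h^\bullet)^\perp$ rather than by a parity count. One correction to your last paragraph: for the negative-definite $2$-handle cobordism $W$ defining $f$, every $\mathfrak v$ gives a non-zero map on $HF^-$ (since $F^\infty_{W,\mathfrak v}$ is an isomorphism), so there is no finite set of non-vanishing terms there, and it is the $U$-adic convergence in the completion that makes the sum well defined.
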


We now need to prove an equivalent version of the lemma above in the case that $\Gamma^*$ is indefinite. 

\begin{lemma}
 \label{lemma:2}   
  Let $Y$ be a negative-definite Seifert fibred space, and denote by $\Gamma$ its standard graph. Then for every $\s\in\Spin^c(-Y)$ we have that there are $\mathfrak u_1,...,\mathfrak u_t\in\Spin^c(P_{\Gamma^*})$ extending $\s$ such that the maps $F^-_{P_{\Gamma^*},\mathfrak u_j}:HF^-(S^3)\rightarrow HF^-(-Y,\s)$ altogether give $\{F^-_{P_{\Gamma^*},\mathfrak u_1}(1),...,F^-_{P_{\Gamma^*},\mathfrak u_t}(1)\}$, an $\F[U]$-basis of $\emph{Tor}HF^-(-Y,\s)$. 
\end{lemma}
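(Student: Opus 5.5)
We argue by induction on the framing of the central vertex of $\Gamma^*$, using the surgery exact triangle \eqref{eq:triangle} as described in Subsection \ref{subsection:triangle}. Write $e$ for the central framing of $\Gamma^*$. Lowering $e$ enough times produces a star-shaped graph with no bad vertices. That graph is negative-definite and presents an $L$-space. As $e$ decreases, the quantity $e(-Y)$ decreases strictly, so along the way we pass through three regimes. In the first, the graph is negative-definite, and the full path algorithm of \cite{OSz-fullpath,Nemethi} holds. In the second, the graph is singular ($b_1=1$), and Lemma \ref{lemma:1} (Rustamov) applies. In the third, the graph is indefinite with $b_2^+=1$, and this is the statement to be proved. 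We run the induction upward in $e$ from the singular step, treating $-Y_1$ (central framing $e-1$) as the inductive input and $-Y$ as the output. In each case we use that $-Y_0$, the connected sum of lens spaces, is an $L$-space whose generators $\theta_{\s_0}$ are $F^-_{P_0,\mathfrak u_0}(1)$.

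\textbf{Step 1: generation.} Take a torsion class $\alpha\in\text{Tor}HF^-(-Y,\s)$. Since $b_2^+(P_{\Gamma^*})=1$, every $F^-_{P_{\Gamma^*},\mathfrak u}(1)$ is torsion, so the full path classes land in $\text{Tor}HF^-$. For the converse, apply exactness at $HF^-(-Y)$: either $\alpha$ lies in the image of $g$, or $h(\alpha)\neq0$ in $HF^-(-Y_0)$.

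In the first case, write $\alpha=g(\beta)$. By the inductive hypothesis, $\beta$ is a combination of classes $F^-_{P_1,\mathfrak u_1}(1)$. If $-Y_1$ is singular, we also allow the tower generator $\F[U]_{d_\text{ev}}$ from Lemma \ref{lemma:1}, which is itself a full path class. By the computation in Subsection \ref{subsection:triangle}, $g_\epsilon(F^-_{P_1,\mathfrak u_1}(1))=F^-_{P,\mathfrak u_1\cup\epsilon}(1)$, and composing the blow-up with $P_1$ gives exactly $P_{\Gamma^*}$ up to blow-up. Hence $\alpha$ is a sum of full path classes.

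In the second case, $h(\alpha)$ lies in $HF^-(-Y_0)$, which is free, so $h(\alpha)$ is non-torsion. Since $h$ is $U$-equivariant, this is impossible for a torsion $\alpha$. So torsion classes always come from the image of $g$.

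\textbf{Step 2: independence.} We must show that the distinct full path classes, one per equivalence class $[V]$, are $\F[U]$-linearly independent. We compare ranks. The number of full paths of $\Gamma^*$ ending correctly should equal $\dim\widehat{HF}^\text{od}(-Y,\s)$, which by \eqref{eq:od} is the number of $\F[U]$-generators of $\text{Tor}HF^-(-Y,\s)$. We obtain this equality via the duality $\widehat{HF}_*(-Y)\simeq\widehat{HF}_{-*}(Y)^\bullet$ together with the known count $\dim\widehat{HF}^\text{ev}(Y)$ given by full paths of $\Gamma$. We also use that $\dim\widehat{HF}=\dim\widehat{HF}^\text{ev}+\dim\widehat{HF}^\text{od}$ differs between the two parities by $1$ (the $d$-invariant tower), and that the counts of full paths for $\Gamma$ and $\Gamma^*$ differ by one, as asserted after Theorem \ref{teo:main}.

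Given generation and this matching count, the minimal-generator argument over $\F[U]$ yields a basis. Specifically, a torsion $\F[U]$-module generated by $t$ elements, with exactly $t$-dimensional quotient by $U$, has those elements as a minimal generating set. Here "basis" means minimal generators with the $U$-actions prescribed by the full path lengths.

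\textbf{Main obstacle.} The hardest step is the transition through the singular manifold, where $-Y_1$ has $b_1=1$. There the exact triangle must be used with twisted or completed coefficients and the modified $HF^-(Z,\mathfrak j)=HF^+(-Z,\mathfrak j)^\bullet$, and $g$ must be shown to map the tower generator $\F[U]_{d_\text{ev}}$ onto torsion classes with the correct parity. We would handle it by working with one $\Spin^c$-structure $\s$ on $-Y$ at a time, summing over the lifts $\mathfrak t$ and $\mathfrak j$ on $-Y_1$ (only finitely many contribute by adjunction-type finiteness). We would then use Proposition \ref{prop:sign} to track that even classes on $-Y_1$ become odd classes on $-Y$.
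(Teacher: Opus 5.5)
Step 1 follows the paper's proof: the triangle \eqref{eq:triangle}, induction on the central framing, and $g_\epsilon(F^-_{P_1,\mathfrak u_1}(1))=F^-_{P_{\Gamma^*},\mathfrak u_1\cup\epsilon}(1)$. The one real difference is how you obtain $\text{Tor}HF^-(-Y,\s)\subseteq\Imm g$.
\begin{itemize}
\item You use that $h$ is $U$-equivariant and $HF^-(-Y_0)$ is torsion-free. This is valid, more elementary, and enough for generation, because full path classes of $P_{\Gamma^*}$ are torsion when $b_2^+=1$.
\item The paper dualises, $\Ker h=(\Imm h^\bullet)^\perp=(\mathcal T^+)^\perp$, using $b_2^+(P_\Gamma)=0$. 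This gives the equality $\Imm g=\text{Tor}HF^-(-Y)$, which the paper reuses as the first row of the diagram in the proof of Theorem \ref{teo:fullpath}.
\end{itemize}

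\textbf{Step 2 is circular.} The claim that the number of full paths of $\Gamma^*$ ending correctly equals $\dim\widehat{HF}^{\text{od}}(-Y,\s)$ is an output of Theorem \ref{teo:fullpath}. Equivalently, the counts for $\Gamma$ and $\Gamma^*$ differ by one. The remark after Theorem \ref{teo:main} that you cite is a consequence of that theorem, and its proof uses the present lemma.

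Step 2 is also unnecessary. The lemma only asserts the existence of \emph{some} $\mathfrak u_1,\dots,\mathfrak u_t$. Once Step 1 gives a homogeneous generating set of the finitely generated graded module $\text{Tor}HF^-(-Y,\s)$, choose a subset whose images form an $\F$-basis of $\text{Tor}/U\cdot\text{Tor}$ (graded Nakayama). That is exactly the paper's ``extract a basis''. Independence of all distinct full path classes, with orders given by path lengths, is proved only later, via $T^+$.

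\textbf{Step 1 has a lifting gap.} You write that by the inductive hypothesis $\beta$ is a combination of classes $F^-_{P_1,\mathfrak u_1}(1)$. This is false as stated in two cases.
\begin{itemize}
\item When $-Y_1$ is indefinite, the hypothesis only covers $\text{Tor}HF^-(-Y_1)$. Full path classes of $P_1$ are torsion, so the tower of $HF^-(-Y_1)$ is not in their span.
\item When $-Y_1$ is singular, Lemma \ref{lemma:1} does not reach the $d_{\text{od}}$-tower.
\end{itemize}

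The fix is a parity argument. All components of $g$ shift the absolute $\Z/2$-grading by the same amount, since the shift depends only on the topology of the blow-up cobordism. For example, when $b_2^+(P_1)=1$ that cobordism is negative-definite, so $g$ preserves parity. Moreover, $g$ sends full path classes of $P_1$ to full path classes of $P_{\Gamma^*}$, which are odd.

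Take $\alpha$ homogeneous; it is odd by \eqref{eq:od}. You may then replace $\beta$ by its component of the same parity as the full path classes of $P_1$. That component lies in their span:
\begin{itemize}
\item $\text{Tor}HF^-(-Y_1)$ if $-Y_1$ is indefinite;
\item $\F[U]_{d_{\text{ev}}}\oplus\text{Tor}$ if $-Y_1$ is singular;
\item all of $HF^-(-Y_1)$ if $-Y_1$ is negative-definite.
\end{itemize}
Your ``main obstacle'' paragraph raises this only for the singular step, but it is needed at every indefinite step too. The paper also uses this point only implicitly, in its ``it follows easily'' and in the parity remark of Subsection \ref{subsection:triangle}.
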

\begin{proof}
 We assume that the framing on the central vertex of $\Gamma^*$ is negative (that is $e_0>-n$), otherwise $Y$ would be an $L$-space and there is nothing to prove as $\text{Tor}HF^-(-Y,\s)\simeq\{0\}$. We keep the notation of Subsection \ref{subsection:triangle}; hence, we set $P=P_{\Gamma^*}$ (with $b_2^+(P)=1$) and $\partial P=-Y$ while $\partial P_{\Gamma}=Y$ (with $b_2^+(P_{\Gamma})=0$). We can use the minus flavour of the triangle.

 We take the exact triangle in Equation \eqref{eq:triangle}. We take the map $h^\bullet:HF^+(Y_0)\rightarrow HF^+(Y)$ defined by fixing $\Spin^c$-structures $\mathfrak u_0$ on $P_{\Gamma_0}$ such that $\s_0=\mathfrak u_0\lvert_{Y_0}$ and by \vspace{-0.1cm}
 \[h^\bullet(\eta_{\s_0})=\bigoplus_{\substack{\mathfrak u\in\Spin^c(P_{\Gamma}) \\ \mathfrak u\text{ extends }\mathfrak u_0}} F^+_{P_{\Gamma},\mathfrak u}(1)\:.\] 
 Since $b_2^+(P_{\Gamma})=0$ the map $F_{P_{\Gamma},\mathfrak u}$ is non-zero in $HF^\infty$ \cite{OSz-negative}, then the commutativity of the cobordism maps implies $F^+_{P_{\Gamma},\mathfrak u}(1)=\pi_*(F^\infty_{P_{\Gamma},\mathfrak u}(1))$ where $\pi_*:HF^\infty\rightarrow HF^+$. This means that $\Imm F^+_{P_{\Gamma},\mathfrak u}$ is the subgroup $\mathcal T^+$ of $HF^+(Y)$; therefore, by duality we obtain that 
 \[\Imm g=\Ker h=(\Imm h^\bullet)^\perp=(\mathcal T^+)^\perp=\text{Tor}HF^-(-Y)\:.\] 
 To find a system of generators of $\text{Tor}HF^-(-Y)$ we proceed as follows.
 Since the components of $g$ lie in different $\Spin^c$-structures, it follows easily that we can find $\mathfrak u_1',...,\mathfrak u_t'\in\Spin^c(P)$ extending $\mathfrak s$ such that the set $\{F^-_{P,\mathfrak u_i'}(1)=g_\epsilon(F^-_{P_1,\mathfrak u_i'\lvert_{P_1}}(1))\}_{1\leq i\leq t}$, where $\epsilon$ denotes the restriction of $\mathfrak u_i'$ to the cobordism induced by the blow-up, is a system of generators of $\text{Tor}HF^-(-Y,\mathfrak s)$ as an $\F[U]$-module. We can then extract a basis from it. 
\end{proof}

As we mentioned in the previous subsections, if $Y$ is negative-definite then one of the results coming from Ozsv\'ath-Szab\'o's full path algorithm is that there is a basis of $\widehat{HF}^\text{ev}(Y,\s)$ which consists of homology classes of the form $\widehat F_{P_\Gamma,\mathfrak u}(1)$ with $\mathfrak u\in\Spin^c(P_\Gamma)$. We can now show an equivalent version for $-Y$; for $Z$ this follows in the same way using the results in \cite{Rustamov}.

\begin{cor}
 \label{cor:even}
 Suppose that $M$ is a Seifert fibred space. If $b_1(M)=1$ then there is a basis of $\widehat{HF}^\emph{ev}(M,\s)$ which consists of homology classes of the form $\widehat F_{P_{G},\mathfrak u}(1)$ with $\mathfrak u\in\Spin^c(P_{G})$.

 Furthermore, if $M$ is indefinite then there is a basis of $\widehat{HF}^\emph{od}(M,\s)$ which consists of homology classes of the form $\widehat F_{P_{G},\mathfrak u}(1)$ with $\mathfrak u\in\Spin^c(P_{G})$.
\end{cor}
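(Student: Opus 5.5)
The plan is to get both statements from the $\F[U]$-bases already produced by Lemma \ref{lemma:1} and Lemma \ref{lemma:2}, by applying $\psi_*$. Two tools are needed. First, $\psi_*\circ F^-_{P_G,\mathfrak u}=\widehat F_{P_G,\mathfrak u}$ by commutativity of the cobordism maps, so $\psi_*$ sends each basis element $F^-_{P_G,\mathfrak u_j}(1)$ to $\widehat F_{P_G,\mathfrak u_j}(1)$. Second, the short exact sequence coming from $0\to CF^-\xrightarrow{U}CF^-\to\widehat{CF}\to0$,
\[0\longrightarrow HF^-(M,\s)/U\cdot HF^-(M,\s)\xrightarrow{\ \psi_*\ }\widehat{HF}(M,\s)\longrightarrow \Ker\big(U\colon HF^-(M,\s)\to HF^-(M,\s)\big)\longrightarrow 0,\]
in which the connecting map changes the Maslov grading by one. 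So $\psi_*$ maps $HF^-/U$ injectively, and the complementary part of $\widehat{HF}$ has parity opposite to that of $\Ker U$. An $\F[U]$-basis of a summand $A$ of $HF^-$ then maps to an $\F$-basis of $\psi_*(A/UA)$. The corollary thus reduces to checking, in each case, that this image is exactly the even (resp.\ odd) subgroup.

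\textbf{Indefinite case.} Write $M=-Y$ with $Y$ negative-definite. By Lemma \ref{lemma:2}, the classes $F^-_{P_G,\mathfrak u_j}(1)$ form an $\F[U]$-basis of $\text{Tor}HF^-(-Y,\s)$. Since $HF^-(-Y,\s)=\F[U]\oplus\text{Tor}$, we have $U\cdot HF^-\cap\text{Tor}=U\cdot\text{Tor}$. Hence the reductions of these classes form an $\F$-basis of $\text{Tor}HF^-(-Y,\s)/U\cdot HF^-(-Y,\s)$. By Equation \eqref{eq:od}, $\psi^*$ identifies this quotient with $\widehat{HF}^\text{od}(-Y,\s)$. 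Therefore the classes $\widehat F_{P_G,\mathfrak u_j}(1)=\psi_*F^-_{P_G,\mathfrak u_j}(1)$ form a basis of $\widehat{HF}^\text{od}(M,\s)$.

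\textbf{Case $b_1(M)=1$.} Lemma \ref{lemma:1} gives an $\F[U]$-basis of $\F[U]_{d_\text{ev}}\oplus\text{Tor}HF^-(M,\s)$ when $\s$ is torsion, and of $\text{Tor}HF^-(M,\s)$ otherwise. Reducing modulo $U$ and applying $\psi_*$ gives linearly independent classes $\widehat F_{P_G,\mathfrak u_j}(1)$, all of which are images of even-graded elements. The remaining task is to show they span $\widehat{HF}^\text{ev}(M,\s)$. The rest of $\widehat{HF}$ consists of two pieces: the image of the $d_\text{od}$-tower, which is odd by definition, and the lift of $\Ker U$ through the connecting map, which is odd because $\Ker U$ lies in the even part and the connecting map shifts the grading by one. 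Here I would use Rustamov's statement that the full-path classes of $\text{Tor}HF^-$ are even. This parity bookkeeping is also where the main obstacle lies. One must know that all of $\text{Tor}HF^-(M,\s)$ sits in the even parity, which is exactly the content of \cite[Theorem 1.2]{Rustamov}, quoted above as giving bases of $\widehat{HF}^\text{ev}$ and of the even subgroup of $HF^-$. For non-torsion $\s$, where only a relative $\Z/2$-grading exists, the same argument goes through with the parity fixed by that of the full-path classes.
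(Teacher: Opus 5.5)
Your proposal is correct and follows the same route as the paper: apply $\psi_*$ (via $\widehat F=\psi_*\circ F^-$) to the $\F[U]$-generators from Lemma \ref{lemma:2} and Lemma \ref{lemma:1}. You then identify the image with the odd (resp.\ even) part of $\widehat{HF}$ using the known parity structure, namely Equation \eqref{eq:od} (coming from \cite{OSz-fullpath} together with Proposition \ref{prop:sign}) in the indefinite case and \cite{Rustamov} when $b_1=1$; you simply make explicit the $U$-exact-sequence bookkeeping that the paper's two-line proof leaves implicit.
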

\begin{proof}
 From the commutativity of the cobordism maps we have that $\widehat F=\psi_*\circ F^-$, thus implying that if $\widehat F_{P_G,\mathfrak u}(1)$ is non-zero then $F^-_{P_G,\mathfrak u}(1)$ is also non-zero.
 When $M$ is indefinite the claim follows from \cite{OSz-fullpath}, Proposition \ref{prop:sign}, and Lemma \ref{lemma:2} and its proof. 
\end{proof}
In general, when the manifold is not a rational homology sphere we cannot determine a priori whether $d_\text{ev}(Z,\mathfrak t)$ is the maximal or the minimal between the two correction terms. However, if we apply the same results to $(-Z,\mathfrak t)$, then by duality $d_\text{od}(Z,\mathfrak t)=-d_\text{ev}(-Z,\mathfrak t)$; hence, the full path allows us to find both correction terms combinatorially.    

\subsection{The full path of any standard graph}
\label{subsection:proof}
In this section we consider a star-shaped graph $G$ whose central vertex $S_1$ has negative framing $m(1)$, and each other vertex $S_i$ has framing $m(i)\leq-2$ for $1\leq i\leq|G|$. We denote by $Q$ the intersection matrix of the graph, so that $m(1),...,m(|G|)$ are the elements on the main diagonal of $Q$. 

Following the methods in \cite[Section 2]{OSz-fullpath} and the notation in \cite[Section 2]{CM-negative}, we fix a $\Spin^c$-structure $\s$ on the 3-manifold presented by $G$, and we introduce the set $\Z^{\geq0}\times\text{Char}(G,\s)$ where we identify a characteristic vector $V$ with $c_1(\mathfrak u)\in H^2(P_G;\Z)$ for every $\Spin^c$-structure $\mathfrak u$ extending $\s$ on the plumbing $P_G$. We recall that $V=(v_1,...,v_{|G|})$ is characteristic when $v_i\equiv m(i)\text{ mod }2$ for $1\leq i\leq|G|$.
We denote each pair $(k,V)$ by $U^k\cdot V$ to highlight the relation with Heegaard Floer homology.

Ozsv\'ath and Szab\'o define the following equivalence relation on the set $\Z^{\geq0}\times\text{Char}(G,\s)$. Whenever the equality $v_i=-m(i)+2n$ holds for some integers $i$ and $n$ we have that \[U^{n+k}\cdot(V+2Qe_i)\sim U^k\cdot V \hspace{1cm}\text{if }n\geq0\] while \[U^k\cdot(V+2Qe_i)\sim U^{k-n}\cdot V\hspace{1cm} \text{ if }n\leq0\] for every $k\geq0$.

\begin{defin}
 The equivalence classes of $\sim$ are the full paths, and we denote them by $[V]$ where $V=(v_1,...,v_{|G|})$ is a characteristic vector. Let $\mathcal B_n$ be the subset of characteristic vectors such that $|v_i|\leq-m(i)+2n$ for every $1\leq i\leq|G|$; if $V\in\mathcal B_0$ then we say that $[V]$ \emph{ends correctly} if every other $Z\in[V]$ is also in $\mathcal B_0$. Furthermore, the vector $V$ is called \emph{initial} when \[m(i)+2\leq v_i\leq-m(i)\:,\] while it is called \emph{terminal} when \[m(i)\leq v_i\leq-m(i)-2\] for $i=1,...,|G|$, see \cite{OSz-fullpath,CM-negative}.
\end{defin}

It is proved in \cite[Proposition 3.2]{OSz-fullpath} that if $Q$ is negative-definite then the full paths ending correctly have unique initial and terminal characteristic vectors; moreover, as we mentioned in the previous subsections their number equals the dimension of $\widehat{HF}^\text{ev}$. Regarding the uniqueness, the proof in \cite{OSz-fullpath} holds in the same way when $Q$ is indefinite; this is no longer true if $\det(Q)=0$: in this case a full path that ends correctly may have loops in it, see \cite[Section 3]{Rustamov} for some examples. 
Note that, when this happens, every vector $Z$ in $[V]$ possesses coordinates $w_i$ and $w_j$ that are equal to $m(i)$ and $-m(j)$. 

We now define the space $\mathbb H^+(G,\s)$. Given a function $\phi:\text{Char}(G,\s)\rightarrow\mathcal T^+_0$ with finite support, there is an induced map \[\widetilde\phi:\Z^{\geq0}\times\text{Char}(G,\s)\longrightarrow\mathcal T^+_0\hspace{0.5cm}\text{ defined by }\hspace{0.5cm}
\widetilde\phi(U^n\cdot V) = U^n\cdot\phi(V)\:;\] then we set $\mathbb H^+(G,\s)$ to be the set of functions $\phi$ whose induced map $\widetilde\phi$ is constant among each full path. Clearly, the space $\mathbb H^+(G,\s)$ inherits the structure of an infinitely generated $\F[U]$-module, or is zero in the case every full path is unbounded and then no non-zero $\phi$ has finite support. 

\begin{lemma}
 \label{lemma:phi}
 There is an identification between the subspace $\Ker U^{n+1}\subset\mathbb H^+(G,\s)$ for every $n\geq0$ and the space $\mathcal K_n$ of the maps from $\Z^{\geq0}\times\emph{Char}(G,\s)/\sim$ to $\F$ which vanish on the full paths containing a representative of the form $U^k\cdot V$ with $k>n$.
\end{lemma}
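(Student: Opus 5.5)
The plan is to write down the two maps explicitly and check that they are mutually inverse. This follows \cite[Lemma 2.3]{OSz-fullpath} and its analogue in \cite{CM-negative}.

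\textbf{From $\phi$ to a functional.} Take $\phi\in\Ker U^{n+1}\subset\mathbb H^+(G,\s)$. Since $\mathcal T^+_0$ is concentrated in non-positive degree and $U^{n+1}\phi=0$, each value $\phi(V)$ lies in the span of $U^0,U^{-1},\dots,U^{-n}$. Define the functional
\[T_\phi(U^k\cdot V)=\text{coefficient of }U^0\text{ in }\widetilde\phi(U^k\cdot V)=U^k\cdot\phi(V)\:.\]
This is well defined on the quotient by $\sim$, because $\widetilde\phi$ is constant along each full path. If a full path contains a representative $U^k\cdot V$ with $k>n$, then $\widetilde\phi(U^k\cdot V)=U^k\phi(V)=0$, since $\phi(V)$ has no term below $U^{-n}$. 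So $T_\phi$ vanishes on that path, and $T_\phi\in\mathcal K_n$.

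\textbf{From a functional to $\phi$.} Conversely, given $T\in\mathcal K_n$, define
\[\phi_T(V)=\sum_{k=0}^{n}T([U^k\cdot V])\,U^{-k}\:.\]
This gives $U^k\phi_T(V)$ with $U^0$-coefficient $T([U^k\cdot V])$. Four things need checking.
\begin{itemize}
\item $\widetilde\phi_T$ is constant on full paths: if $U^a\cdot V\sim U^b\cdot W$, then $U^{a+j}\cdot V\sim U^{b+j}\cdot W$ for all $j\geq0$, because the relation is compatible with multiplication by $U$ (it is generated by relations holding for every $k$). Hence all coefficients of $U^a\phi_T(V)$ and $U^b\phi_T(W)$ agree.
\item $U^{n+1}\phi_T=0$: this holds by construction.
\item Finite support: this is where $T\in\mathcal K_n$ is genuinely used. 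Outside $\mathcal B_n$, some coordinate satisfies $|v_i|>-m(i)+2n$. One applies the relation at vertex $i$ (with $v_i=-m(i)+2n'$, $n'>n$, or the symmetric case) to find a representative with $U$-power exceeding $n$, so $T$ vanishes on $[U^k\cdot V]$ for all $k$. This is exactly the mechanism of \cite[Lemma 2.3]{OSz-fullpath}, and the support is therefore contained in the finite set $\mathcal B_n$.
\item The two constructions are mutually inverse: this is immediate from the formulas.
\end{itemize}

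\textbf{Main obstacle.} The delicate step is the finite-support argument in the indefinite (or singular) case. In the negative-definite setting one also uses that full paths are bounded, whereas here some classes are unbounded. The point to verify is that the vanishing condition in $\mathcal K_n$ replaces boundedness. If $V\notin\mathcal B_n$, the single relation at an offending coordinate already produces $U^{k+n'}$ with $n'>n$, and no global property of $Q$ is needed, so the argument should go through unchanged. I would also double-check the sign case $v_i=m(i)-2n'$. There the relation is applied in reverse, from $V$ to $V-2Qe_i$, and I would confirm that it likewise raises the $U$-power.
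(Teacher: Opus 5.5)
Your proposal is correct and is essentially the paper's proof: the paper uses the same pairing $(\phi,U^k\cdot V)\mapsto(U^k\cdot\phi(V))_0$ and the same explicit inverse $\phi(V)=\tau(V)+\sum_{h=1}^nU^{-h}\cdot\tau(U^h\cdot V)$. The only difference is that the paper phrases injectivity as a degree argument, whereas you check directly that the two maps are mutually inverse.

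Your finite-support check, which the paper dismisses with ``clearly'', does go through, including the sign case you wanted to confirm. If $v_i=m(i)-2n'$ with $n'>n$, then $W=V-2Qe_i$ has $w_i=-m(i)-2n'$, so the second relation gives $U^k\cdot V\sim U^{k+n'}\cdot W$. Hence the support of $\phi_T$ lies in the finite box $\mathcal B_n$ with no assumption on $Q$.
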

\begin{proof}
 The proof follows the one of the second part of \cite[Lemma 2.3]{OSz-fullpath}. The homomorphism $\Ker U^{n+1}\rightarrow\mathcal K_n$ is induced by the duality map \[\begin{aligned}\mathbb H^+(G,\:&\s)\times\left(\Z^{\geq0}\times\text{Char}(G,\s)\right)\longrightarrow\F \\ &(\phi,\:U^k\cdot V)\mapsto\left(U^k\cdot\phi(V)\right)_0\end{aligned}\] where $(\cdot)_0$ denotes the component of $\mathcal T^+_0$ in degree zero. This map is injective, because if $U^k\cdot\phi(V)\notin\Ker U\subset\mathcal T^+_0$ and $U^{n+1}\cdot\phi(V)=0$ for any $k\leq n$ and $V\in\text{Char}(G,\s)$, then $\phi(V)$ has at the same time degree at least $2(n+1)$ and at most $2n$; thus $\phi$ is zero. The map is also surjective, because for each $\tau\in\mathcal K_n$ we can define \[\phi(V)=\tau(V)+\sum_{h=1}^n U^{-h}\cdot\tau(U^h\cdot V)\:.\] Clearly, we have that $\phi\in\mathbb H^+(G,\s)$ and $U^{n+1}\cdot\phi=0$.
\end{proof}

We define a map $T^+$ inducing an $\F[U]$-equivariant $\Spin^c$-preserving map from an appropriate subspace of $HF^+$ to \[\mathbb H^+(G):=\bigoplus_{\s\in\Spin^c(M)}\mathbb H^+(G,\s)\:.\] Let $M$ be any Seifert fibred space whose standard graph $G$ has $e_0\leq-1$: we define $\mathcal Z\subset HF^-(M)$ to be equal to $\Imm g$ as in the exact triangle in Subsection \ref{subsection:triangle}. 

When $G$ is negative-definite the map $g$ is surjective \cite{OSz-fullpath}. When $G$ is indefinite then $\mathcal Z$ consists of the classes with odd degree; in other words, the ones that generate $\text{Tor}HF^-(M)$ because of Lemma \ref{lemma:2}. Finally, when $b_1(M)=1$ the subspace $\mathcal Z$ is spanned by the torsion and the $\F[U]$-towers whose degrees have even parity, see Lemma \ref{lemma:1}. 

We set $T^+:\mathcal Z^\bullet\rightarrow\mathbb H^+(G)$ as follows: the plumbing $P_{G}$, turned upside-down, can be viewed as giving a cobordism from $-M$ to $S^3$; now fix $\alpha\in\mathcal Z^\bullet$ the subgroup such that $\mathcal Z^\bullet\oplus\mathcal Z^\perp= HF^+(-M)$ determined by the parity of the Maslov grading, and let \[T^+(\alpha):\text{Char}(G)\longrightarrow\mathcal T^+_0\] be the map given by \[T^+(\alpha)(V)=F^+_{\overline{P_G},\mathfrak u}(\alpha)\in\mathcal T^+_0=HF^+(S^3)\:,\] where $\mathfrak u\in\Spin^c(P_G)$ is the $\Spin^c$-structure whose first Chern class is $V$. The fact that $T^+$ is well-defined, equivariant and $\Spin^c$-preserving follows from Lemmate \ref{lemma:1} and \ref{lemma:2} and the properties of the maps induced by cobordisms, see \cite[Theorem 7.1]{OSz-four}, while that it is constant among full paths from \cite[Theorem 3.1]{OSz-symplectic}. In addition, it is also easy to check that if $U^{n+1}\cdot\alpha=0$ then \[\tau_\alpha:U^k\cdot V\mapsto \left(U^k\cdot T^+(\alpha)(V)\right)_0\] is in $\mathcal K_n$ for $n\geq0$.

Note that when $G$ is negative-definite then $\mathcal Z^\bullet$ is the whole $HF^+(-M)$. This is the case studied by Ozsv\'ath and Szab\'o in \cite{OSz-fullpath} and by N\'emethi in \cite{Nemethi}. We prove that $T^+$ is an isomorphism; note that when $b_1(M)=1$ we have the same result from \cite[Theorem 1.2]{Rustamov}.

\begin{prop}[Rustamov]
 \label{prop:zero}
 Suppose that $Z$ is a Seifert fibred space such that $b_1(Z)=1$, and let $G$ be its standard graph. Then $T^+:\mathcal Z^\bullet\rightarrow\mathbb H^+(G)$ is a $\Spin^c$-preserving isomorphism of $\F[U]$-modules, where here $\mathcal Z^\bullet$ is the odd subgroup of $HF^+(-Z)$.
\end{prop}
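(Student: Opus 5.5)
The plan is to follow the original argument of Ozsv\'ath and Szab\'o \cite[Section 2]{OSz-fullpath} and prove the three properties of $T^+$ in turn: well-defined, injective and surjective. Throughout, $\mathcal Z^\bullet$ is taken to be the odd subgroup of $HF^+(-Z)$. The point is that, by Proposition \ref{prop:sign}, this is dual to the even part of $HF^-(Z)$, which Lemma \ref{lemma:1} describes via full paths. We already observed before the statement that $T^+$ is $\F[U]$-equivariant, $\Spin^c$-preserving and constant along full paths. It therefore remains to show that $T^+$ is a bijection onto $\mathbb H^+(G,\s)$ for each fixed $\s$. By equivariance, and since every element of $\mathcal Z^\bullet$ is annihilated by some power of $U$ (the towers in $HF^+$ are of the form $\mathcal T^+$), it suffices to prove that $T^+$ restricts to an isomorphism $\Ker U^{n+1}\cap\mathcal Z^\bullet\to\Ker U^{n+1}\subset\mathbb H^+(G,\s)$ for every $n\geq0$.

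\textbf{Step 1: reduction to a pairing.} By Lemma \ref{lemma:phi}, the target $\Ker U^{n+1}\subset\mathbb H^+(G,\s)$ is identified with $\mathcal K_n$. Under this identification, $T^+(\alpha)$ corresponds to $\tau_\alpha$, which is the evaluation $U^k\cdot V\mapsto\langle\alpha,U^k F^-_{P_G,\mathfrak u}(1)\rangle$ through the duality $HF^+(-Z,\s)\simeq HF^-(Z,\s)^\bullet$. To make this evaluation meaningful for non-torsion $\s$, I will use the completed version of $HF^-$ fixed above.

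\textbf{Step 2: injectivity.} Suppose $\tau_\alpha=0$. Then $\alpha$ pairs trivially with every $U^kF^-_{P_G,\mathfrak u}(1)$. By Lemma \ref{lemma:1}, these classes span the even part of $HF^-(Z,\s)$: the torsion together with the tower $\F[U]_{d_{\mathrm{ev}}}$ when $\s$ is torsion, and just the torsion otherwise. The odd subgroup $\mathcal Z^\bullet$ is exactly the dual of this span, so $\alpha=0$.

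\textbf{Step 3: surjectivity.} This is the part I expect to be the main obstacle. We must show that every $\tau\in\mathcal K_n$ arises as some $\tau_\alpha$. Equivalently, each linear relation among the classes $U^kF^-_{P_G,\mathfrak u}(1)$ in $HF^-(Z,\s)$ must already be a consequence of the relation $\sim$, so that the span is precisely $\bigl(\Z^{\geq0}\times\mathrm{Char}(G,\s)/\!\sim\bigr)$ truncated at level $n$.

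I would argue this with the basis in Lemma \ref{lemma:1}, which consists of the classes of the full paths that end correctly (or lie in $\mathcal B_n$). The approach is a dimension count. Since $\Ker U^{n+1}$ of the odd part of $HF^+(-Z)$ is finite dimensional, it suffices to show that $\dim\mathcal K_n$ is bounded above by the number of independent classes $U^kF^-(1)$ with $k\leq n$. To do this, I will show that every full path $[V]$ not killed in $\mathcal K_n$ contains a representative in $\mathcal B_n$, via \cite[Lemma 2.3]{OSz-fullpath}.

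The extra difficulty when $\det Q=0$ is the loops in full paths noted after the definition. To handle it, I would follow Rustamov \cite[Theorem 1.2]{Rustamov} and use the vectors having coordinates $m(i)$ and $-m(j)$ to match each loop with the extra tower generator in $\F[U]_{d_{\mathrm{ev}}}$, so that the counts agree.
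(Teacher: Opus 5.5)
The reduction to $\Ker U^{n+1}$ and Steps 1--2 are fine. Via the duality pairing, Lemma \ref{lemma:1} says the classes $U^kF^-_{P_G,\mathfrak u}(1)$ span $\mathcal Z$, and $\mathcal Z^\bullet$ pairs perfectly with $\mathcal Z$, so $T^+$ is injective.

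\textbf{The gap is in Step 3.} Showing that every class surviving in $\mathcal K_n$ has a representative in $\mathcal B_n$ only proves that $\mathcal K_n$ is finite dimensional, bounded by a count of full paths. Your dimension argument needs that count to be at most $\dim(\Ker U^{n+1}\cap\mathcal Z^\bullet)=\dim_\F \mathcal Z/U^{n+1}\mathcal Z$. In other words, you need distinct surviving full paths to give linearly independent classes in $HF^-(Z,\s)/U^{n+1}$. That is exactly the ``no extra relations'' statement you are trying to prove, so as set up the count is circular.

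Lemma \ref{lemma:1} does not supply this bound. It produces some $\mathfrak u_1,\dots,\mathfrak u_t$ giving a basis, but says nothing about how $t$ compares with the number of full paths. If you instead read it as ``the full paths ending correctly form a basis'', you are using the full strength of \cite[Theorem 1.2]{Rustamov}, of which this proposition is a restatement. That is in fact how the paper treats it: it gives no independent proof and simply attributes the statement to Rustamov.

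\textbf{What is missing} is an independent handle on the Heegaard Floer side. The standard one is the inductive argument the paper uses for Theorem \ref{teo:fullpath}: compare the surgery exact triangle on the central vertex with the combinatorial sequence $0\to\mathbb H^+(G)\xrightarrow{\mathbb G^+}\mathbb H^+(G_1)\xrightarrow{\mathbb F^+}\mathbb H^+(G_0)$ of \cite[Lemma 2.10]{OSz-fullpath}. For $b_1(Z)=1$ this is clean:
\begin{itemize}
\item Lowering the central framing gives $e(Z_1)=e(Z)-1<0$, so $G_1$ is negative-definite and $T_1^+$ is an isomorphism by \cite{OSz-fullpath,Nemethi}.
\item Deleting the centre leaves linear chains, i.e.\ lens spaces, so $T_0^+$ is an isomorphism as well.
\item Since $\mathcal Z=\Imm g$, one has $\Ker g^\bullet=\mathcal Z^\perp$ and $\Imm g^\bullet=\Ker f^\bullet$. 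Hence the row $0\to\mathcal Z^\bullet\xrightarrow{g^\bullet}HF^+(-Z_1)\xrightarrow{f^\bullet}HF^+(-Z_0)$ is exact, and Lemma \ref{lemma:1} with Proposition \ref{prop:sign} identifies $\mathcal Z^\bullet$ with the odd part.
\end{itemize}
Surjectivity of $T^+$ then follows from the same diagram chase, using injectivity of $\mathbb G^+$.

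Your closing sentence on loops cannot substitute for this. It is too vague to check, and the proposed matching of loops with the extra tower is not correct in general, because towers need not come from loops. For example, for $M(-1;\frac13,\frac13,\frac13)$ in Table \ref{Homology}, the even tower in the spin structure is represented by the loop-free full path of $W_{\text{can}}$.
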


We are now ready to prove the relation between the full path and the Heegaard Floer homology groups of Seifert fibred spaces stated in Theorem \ref{teo:fullpath}.

\begin{proof}[Proof of Theorem \ref{teo:fullpath}]
 We are going to prove that $HF_\text{red}(Y)\simeq\mathbb H^+(\Gamma^*)$ where $Y$ is the negative-definite Seifert fibred space represented by $\Gamma$. Note that because of Equation \eqref{eq:even} we know that $\mathcal T_{d(Y,\s)}^+$ contains precisely all the classes with even parity of $HF^+(Y,\s)$; hence, for these manifolds we can identify the quotient $HF_\text{red}$ with the subgroup of the classes in $HF^+$ with odd parity. Theorem \ref{teo:fullpath} will then follow by duality, Lemma \ref{lemma:2} and Corollary \ref{cor:even}.  

 We prove that $T^+:\mathcal Z^\bullet=HF_\text{red}(Y)\rightarrow\mathbb H^+(\Gamma^*)$ is an isomorphism. We use the same notation as in Subsection \ref{subsection:triangle}; hence, we call $Y_1$ the manifold obtained by lowering the central vertex of $G$ by one, and $Y_0$ the one by removing it. The manifold $Y_0$ has negative-definite standard graph. We have the following commutative diagram with exact lines,

 \begin{center}
\begin{tikzcd}
0 \arrow[r] & HF_\text{red}(Y) \arrow[r, "g^\bullet"] \arrow[d, "T^+"] & \mathcal Z_1^\bullet \arrow[d, "T^+_1" ] \arrow[r, "f^\bullet"] & HF^+(Y_0)  \arrow[d, "T^+_0"]  \\
0 \arrow[r] &  \mathbb H^+(\Gamma^*)  \arrow[r, "\mathbb G^+"] & \mathbb H^+(\Gamma^*_1) \arrow[r, "\mathbb F^+"]  &  \mathbb H^+(\Gamma^*_0)
\end{tikzcd} 
\end{center} 
 where the first line comes from Lemma \ref{lemma:2}, while the second one is purely combinatorial and follows from \cite[Lemma 2.10]{OSz-fullpath}. We recall that $T^+_1$ and $T_0^+$ are isomorphisms by induction. Note that since $e(-Y)>0$, lowering the framing on the central vertex may yield a manifold $-Y_1$ with $b_1(Y_1)=1$.

 We prove that $T^+$ is injective: suppose that $\alpha\in HF_\text{red}(Y)$ is such that $T^+(\alpha)=0$, thus $T^+_1(g^\bullet(\alpha))=0$; this implies $g^\bullet(\alpha)=0$ and then $\alpha=0$, because $T_1^+$ is an isomorphism and $g^\bullet$ is injective. We prove that $T^+$ is surjective: take $\phi\in\mathbb H^+(G)$, thus $\mathbb F^+(\mathbb G^+(\phi))=0$; hence, there exists a class $\beta\in HF^+(-Z_1)$ such that $T_1^+(\beta)=\mathbb G^+(\phi)$ and $f^\bullet(\beta)=0$. By exactness we find a class $\alpha\in\mathcal Z^\bullet$ such that $g^\bullet(\alpha)=\beta$, and then by commutativity we have \[\mathbb G^+(T^+(\alpha))=T_1^+(g^\bullet(\alpha))=T_1^+(\beta)=\mathbb G^+(\phi)\:;\] we conclude that $T^+(\alpha)=\phi$ using the injectivity of $\mathbb G^+$.
  
 From Lemma \ref{lemma:phi} the subgroup $\Ker U\cap\:HF_\text{red}(Y,\s)$ coincides with the space $\mathcal K_0$ of maps from $\Z^{\geq0}\times\text{Char}(\Gamma^*,\s)/\sim$ to $\F$ which vanish on the full paths containing a representative of the form $U^k\cdot V$ with $k\geq1$. In addition, by duality we have an identification between $\widehat{HF}^\text{od}(-Y)$ and the full paths of $\Gamma^*$ which end correctly.
\end{proof}

We conclude this section by completing the proof of Theorem \ref{teo:main}, our first main result in the introduction. All the claims are already proven in Theorem \ref{teo:fullpath}, we just need to show that the conjugation involution $\J$ commutes with the isomorphism $T^+$.

\begin{proof}[Proof of Theorem \ref{teo:main}]
 As we mentioned above the claim and the first two items follow from Theorem \ref{teo:fullpath}. The fact that $\mathcal J$ coincides with the reflection $[V]\mapsto[-V]$, on the set of the full paths for every standard graph $G$, can be observed for any manifold $M$, presented by a star-shaped graph with negative framings, from the commutativity of the cobordism maps under $\J$ \cite{G-fillability}; in fact, set $[V]=c_1(\mathfrak u)$ then we have that $\J[V]=\J\cdot F^-_{P_G,\mathfrak u}(1)=F^-_{P_G,-\mathfrak u}(1)=[-V]$. In addition, if $\alpha\in HF^-(M,\s)$ is not represented by a full path then by our results in this section it belongs to the canonical $\F[U]$-tower whose parity is either even (when $b_1(M)=0$) or odd (when $b_1(M)=1$); hence, both the involutions are the identity on this subgroup because they preserve the Maslov grading.
\end{proof}

\section{Invariants from the standard graph}
\label{section:three}
\subsection{Gradings}
\label{subsection:gradings}
The set of the characteristic vectors of a plumbing tree $G$, that here we assume to be either negative-definite or indefinite, corresponds to $\Spin^c(P_G)$. Therefore, each vector induce a $\Spin^c$-structure on the manifold $Y_G=\partial P_G$. Using linear algebra, we describe an easy criterion to determine when the same structure is induced by different vectors. 
\begin{prop}
 \label{prop:spinc}
 Suppose that $G$ is a plumbing tree such that $b_1(Y_G)=0$, and let $Q_G$ be its intersection matrix. Two characteristic vectors $V,W\in H^2(P_G;\Z)$ extends the same $\Spin^c$-structure on $Y_G$ if and only if $Q_G^{-1}(V-W)\in2\Z^{|G|}$. Furthermore, the vector $V$ extends a spin structure if and only if $Q_G^{-1}V\in\Z^{|G|}$.  
\end{prop}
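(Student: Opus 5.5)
The argument is pure linear algebra on the long exact sequence of the pair $(P_G,Y_G)$. Since $P_G$ is a simply connected plumbing of spheres, $H^2(P_G;\Z)\simeq\Z^{|G|}$ with basis dual to the spheres $S_i$, and $H^2(P_G,Y_G;\Z)\simeq H_2(P_G;\Z)\simeq\Z^{|G|}$. The map $H^2(P_G,Y_G)\to H^2(P_G)$ is given by the intersection matrix $Q_G$. The sequence
\[H^2(P_G,Y_G;\Z)\xrightarrow{\;Q_G\;}H^2(P_G;\Z)\longrightarrow H^2(Y_G;\Z)\longrightarrow H^3(P_G,Y_G;\Z)\simeq H_1(P_G;\Z)=0\]
therefore gives $H^2(Y_G;\Z)\simeq\Z^{|G|}/Q_G\Z^{|G|}$. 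Because $b_1(Y_G)=0$, the matrix $Q_G$ is nondegenerate, so this quotient is finite of order $|\det Q_G|$.

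First I show that restriction $\Spin^c(P_G)\to\Spin^c(Y_G)$ is equivariant with respect to $H^2(P_G)\to H^2(Y_G)$, and that it is surjective since $H^3(P_G,Y_G)=0$. Take $\mathfrak u,\mathfrak u'$ with $c_1(\mathfrak u)=V$ and $c_1(\mathfrak u')=W$. Since $H^2(P_G;\Z)$ has no $2$-torsion, $\mathfrak u'=\mathfrak u+a$ where $2a=W-V$, so $a=(W-V)/2\in\Z^{|G|}$; this is an integer vector because both vectors are characteristic and hence congruent mod $2$. The restrictions agree if and only if $a$ maps to zero in $H^2(Y_G)$, that is $a\in Q_G\Z^{|G|}$. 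This is equivalent to $Q_G^{-1}(V-W)/2\in\Z^{|G|}$, i.e. $Q_G^{-1}(V-W)\in2\Z^{|G|}$, which is the first claim.

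For the spin statement, the restriction $\s=\mathfrak u|_{Y_G}$ is spin exactly when $\s=\overline\s$, which in turn holds exactly when $\mathfrak u|_{Y_G}=\overline{\mathfrak u}|_{Y_G}$. Here we use that $H^1(Y_G;\Z_2)$ acts through spin structures, and that for a rational homology sphere self-conjugate $\Spin^c$-structures correspond to spin structures because $H^2(Y_G)$ contains no extra $2$-torsion ambiguity. Since $c_1(\overline{\mathfrak u})=-V$, the first part applied to $V$ and $-V$ gives $Q_G^{-1}(2V)\in2\Z^{|G|}$, i.e. $Q_G^{-1}V\in\Z^{|G|}$.

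The main point needing care is this identification of spin structures with self-conjugate $\Spin^c$-structures. When $|H_1(Y_G)|$ is even, several spin structures may exist, and conjugation-invariance must still be equivalent to $c_1(\s)=0$. This follows from $c_1(\s)=\s-\overline\s$ in $H^2(Y_G;\Z)$ together with the standard fact that $\s$ comes from a spin structure if and only if $c_1(\s)=0$, which I would cite. The identification of the coboundary map with $Q_G$ is standard for plumbings.
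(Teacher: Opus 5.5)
Your proof is correct and follows the same route as the paper. You identify $H^2(Y_G;\Z)$ with $\Z^{|G|}/Q_G\Z^{|G|}$, so that $V$ and $W$ restrict to the same $\Spin^c$-structure exactly when $V-W\in 2Q_G\Z^{|G|}$; you then characterise spin structures as the self-conjugate ones and apply the first part to $V$ and $-V$. You supply more detail than the paper does, namely the exact sequence of the pair and the justification via $c_1(\s)=\s-\overline\s$. Your aside about ``no extra $2$-torsion ambiguity'' is unnecessary, because $\s=\overline\s\Leftrightarrow c_1(\s)=0$ holds on every closed oriented $3$-manifold.
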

\begin{proof}
 As explained in \cite{OSz-fullpath}, the vectors $V$ and $W$ correspond to $c_1(\mathfrak u_1)$ and $c_1(\mathfrak u_2)$ where $\mathfrak u_i\in\Spin^c(P_G)$. The matrix $Q_G$ is the intersection form of $P_G$, which is simply connected; therefore, the $\Spin^c$-structure on the boundary is the same if and only if the Chern classes are in the same equivalence class of $H^2(P_G;\Z)/\:2Q_G\cdot H^2(P_G;\Z)$, which means precisely that the vector $V-W$ is in the image of $2Q_G$ over the integers.

 We have that $\s\in\Spin^c(Y_G)$ is spin if and only if $\s=\overline\s$; hence, in terms of vectors if and only if $V$ and $-V$ restrict to the same structure on $Y_G$. Using the previous part of the statement, we then have that $V$ extends a spin structure if and only if $Q_G^{-1}(V-(-V))\in2\Z^{|G|}$ which implies the claim.
\end{proof}
We now consider the standard graph $\Gamma$ of a negative-definite Seifert fibred space $Y$. We know from \cite{OSz-fullpath,Nemethi} that each characteristic vector $W$ whose full path ends correctly has a well-defined grading $M(W)=\frac{W^TQ^{-1}W+|\Gamma|}{4}$, which coincides with the Maslov grading. From Theorem \ref{teo:fullpath} we know that if $V=c_1(\mathfrak u)\in H^2(P_{\Gamma^*};\Z)$ ends correctly then $\widehat F_{P_{\Gamma^*},\mathfrak u}(1)\in\widehat{HF}(-Y)$ can be identified with its full path $[V]$ through the isomorphism \[\text{Tor}HF^-(-Y)\:/\:U\cdot HF^-(-Y)\simeq_{\psi^*}\:\widehat{HF}^\text{od}(-Y);\] since $b_2^+(P_{\Gamma^*})=1$ we can define the Maslov grading of $V$ as \begin{equation}M(V)=\dfrac{c_1^2(\mathfrak u)[P_{\Gamma^*}]+|\Gamma^*|-6}{4}=\dfrac{V^TQ_*^{-1}V+|\Gamma^*|-6}{4}\label{eq:Maslov2}\end{equation} using Equation \eqref{eq:Maslov}. It is easy to check that, with this definition in place, the isomorphism $T^+$ used in the proof of Theorem \ref{teo:fullpath} is degree-preserving; in particular, if $V_1,V_2\in\text{Char}(\Gamma^*,\s)$ end correctly then $M(V_1)-M(V_2)$ is even.

Let us consider the knot $K\subset Y$ isotopic to a regular fibre of the Seifert fibration. As in \cite[Sections 2 and 3]{CM-negative} we have that $K$ is presented as an unmarked leaf of $\Gamma$, attached to the central vertex. The knot $K$ allows us to define the Alexander filtration on $\widehat{HF}^\text{od}(-Y)$, in the same way as we did in \cite{CM-negative} for $\widehat{HF}^\text{ev}(Y)$:
\begin{equation}
 \mathcal F(V)=-\dfrac{[D]\cdot[D]}{2}+\dfrac{c_1(\mathfrak u)[D]}{2}=\dfrac{1}{2e(Y)}+\dfrac{e_1^TQ_*^{-1}V}{2}   \label{eq:Alexander}
\end{equation}
for every characteristic vector $V\in\text{Char}(\Gamma^*)$, where $D\subset P_{\Gamma^*}$ is the disk bounded by $K$. We recall that we are indexing the central vertex as $S_1$; moreover, if $V_1,V_2\in\text{Char}(\Gamma^*,\s)$ end correctly then $\mathcal F(V_1)-\mathcal F(V_2)$ is an integer: \[\mathcal F(V_1)-\mathcal F(V_2)=\dfrac{e_1^TQ_*^{-1}(V_1-V_2)}{2}=b_1\in\Z\] where $B=(b_1,...,b_{\Gamma^*})=Q_*^{-1}\left(\frac{V_1-V_2}{2}\right)$ from Proposition \ref{prop:spinc}. 

\begin{remark}
 \label{remark:Alexander}
 We proved in \cite[Lemma 2.4]{CM-negative} that if $W_1,W_2\in\emph{Char}(\Gamma,\s)$ end correctly and $\mathcal F(W_1)=\mathcal F(W_2)$ then they are in the same full path. The same proof works also for $-Y$: if $V_1,V_2\in\emph{Char}(\Gamma^*,\s)$ end correctly and $\mathcal F(V_1)=\mathcal F(V_2)$ then they are in the same full path. Say $\{[V_1],...,[V_t]\}$ is the basis of $\widehat{HF}^\emph{od}(-Y,\s)$ given by the full paths ending correctly, this means that there are intervals $[a_{V_i},b_{V_i}]\subset\Q$ such that $b_{V_i}<a_{V_{i+1}}$ for $i=1,...,t$, and the $i$-th interval contains only the values of $\mathcal F$ taken by the vectors in the full path $[V_i]$. 
\end{remark}

From what we said in Section \ref{section:two} we have a basis $\{[W_1],...,[W_{t+1}],T_{[V_1]},...,T_{[V_{t}]}\}$ of $\widehat{HF}(Y,\s)$; the first part is given by the full paths $\{[W_1],...,[W_{t+1}]\}$ that end correctly, which is a basis of $\widehat{HF}^\text{ev}(Y,\s)$ from \cite{OSz-fullpath,Nemethi}, while the second part is given by $\widehat{HF}^\text{od}(Y,\s)\simeq\widehat{HF}^\text{ev}(-Y,\s)^\perp$, because $\{[V_1],...,[V_{t}]\}$ is a basis of $\widehat{HF}^\text{od}(-Y,\s)$ by Theorem \ref{teo:fullpath} and Corollary \ref{cor:even}.

In \cite[Section 2]{CM-negative} we defined the filtration $\mathcal F$ on $\widehat{HF}^\text{ev}(Y,\s)$, while Equation \eqref{eq:Alexander} does the same for $\widehat{HF}^\text{od}(-Y,\s)$. We can extend $\mathcal F$ on $\widehat{HF}^\text{od}(Y,\s)$, and then on the whole $\widehat{HF}(Y,\s)$, in the same way as the Alexander filtration in knot Floer homology, see \cite{OSz-fourgenus}; this means the subspace of the functionals $T_{[V]}$ such that $\mathcal F(T_{[V]})\leq m$ is the orthogonal of the one of the vectors $V$ such that $\mathcal F(V)\leq-m-1$. Explicitly, we write \begin{equation}\mathcal F(T_{[V]})=-\mathcal F(V)=\dfrac{1}{-2e(Y)}-\dfrac{e_1^TQ_*^{-1}V}{2}\:.\label{eq:minus}\end{equation}

\subsection{Tau-invariants}
Let $\mathcal F_m^K$ be the level $m$ of the filtration on $\widehat{CF}(M)$ induced by the regular fibre $K\subset M$, that is the leaf attached to the first vertex of its standard graph. Then for $\gamma\in\widehat{HF}(M)$ non-zero the invariant $\tau_\gamma(K)$ is defined as the minimal $m$ such that $\mathcal F_m^K$ contains a cycle representing $\gamma$, see \cite{OSz-four,AC}.

In \cite{Alfieri}, and more explicitly in \cite[Equation (3.1)]{CM-negative} is shown how to compute the $\tau$-invariant of $K$ in an almost-rational graph $Y$, with respect to a homology class represented by a full path $[W]$ that ends correctly: \begin{equation}\tau_{[W]}(K)=\dfrac{1}{2}\left(-e_1^TQ^{-1}e_1+\min_{[Z]=[W]}e_1^TQ^{-1}Z\right)\:.\label{eq:tau}\end{equation} Here, we prove that the same formula holds also for any star-shaped graph whose corresponding 3-manifold is a rational homology sphere. 
In the proof we use some terminology from the lattice cohomology setting, but in this paper we do not introduce this object explicitly. We refer to the existing literature \cite{Nemethi,Lattice2,BLZ,Zemke,Alfieri} for details.

\begin{teo}
 \label{teo:tau}
 Suppose that $\Gamma$ is the negative-definite standard graph of a Seifert fibred space $Y$, and let $\gamma\in\widehat{HF}_*(-Y,\s)$ be a non-zero homogeneous class. Then \begin{equation*}
    \tau_\gamma(K)=\dfrac{1}{2}\left(\dfrac{1}{e(Y)}+e_1^TQ_*^{-1} V_k\right)=\dfrac{1}{2}\left(\dfrac{1}{e(Y)}+\min_{[Z]=[V_k]}e_1^TQ_*^{-1}Z\right)\:,
\end{equation*} 
where $\gamma=[V_1]+\cdots+[V_k]$ for some $V_i\in\emph{Char}(\Gamma^*)$ initial such that $[V_i]$ ends correctly, and $\mathcal F(V_1)<\cdots<\mathcal F(V_k)$.
\end{teo}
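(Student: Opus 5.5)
The plan is to follow the strategy of \cite{Alfieri} and \cite[Section 3]{CM-negative}, which gives Equation \eqref{eq:tau}. We transport it to $-Y=\partial P_{\Gamma^*}$ through Theorem \ref{teo:fullpath}, and we treat the regular fibre $K$ as an unmarked leaf attached to the central vertex $S_1$ of $\Gamma^*$. The argument has four steps: an upper bound, a reduction of sums to a single full path, a lower bound, and a computation locating the minimum.

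\emph{Upper bound.} Let $Z\in[V]$ with $Z=c_1(\mathfrak u)$, so that $\widehat F_{P_{\Gamma^*},\mathfrak u}(1)=[V]$. The plumbing $P_{\Gamma^*}$, together with the disk $D$ bounded by $K$, gives a knot cobordism from the unknot in $S^3$ to $(-Y,K)$. By the grading formula for knot cobordism maps (Hedden, Zemke), the induced filtered map sends the generator of $\widehat{HFK}$ of the unknot to a cycle of filtration level $-\frac{[D]^2}{2}+\frac{c_1(\mathfrak u)[D]}{2}=\mathcal F(Z)$, as in Equation \eqref{eq:Alexander}. Since $[D]^2=e_1^TQ_*^{-1}e_1=-1/e(Y)$, this level equals $\frac12(\frac1{e(Y)}+e_1^TQ_*^{-1}Z)$. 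Taking the minimum over the full path gives $\tau_{[V]}(K)\leq\min_{[Z]=[V]}\mathcal F(Z)$.

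\emph{Reduction to one full path.} Write $\gamma=[V_1]+\cdots+[V_k]$. By Remark \ref{remark:Alexander}, the filtration values of the distinct full paths occupy disjoint intervals ordered as $\mathcal F(V_1)<\cdots<\mathcal F(V_k)$. Once each $\tau_{[V_i]}$ is known to equal the minimum over its own path, the standard property of $\tau$ for sums of classes with separated filtration supports gives $\tau_\gamma=\max_i\tau_{[V_i]}=\tau_{[V_k]}$. The point is that no cancellation can push the top summand to a lower level, because those levels are occupied only by $[V_k]$.

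\emph{Lower bound.} This is the main obstacle. For the indefinite plumbing the classes $[V]$ are $U$-torsion, so Alfieri's argument via the non-torsion tower does not apply. I would first try to make the proof of Theorem \ref{teo:fullpath} filtered. Add $K$ as an extra leaf and run the same exact triangle with the filtration induced on all three vertices. The filtration on $-Y_0$ (a connected sum of lens spaces, with $K$ now a rational unknot) and the inductive filtration on $-Y_1$ are known by induction and by \cite{Rustamov}. Then show that the isomorphism $T^+$ intertwines the Alexander filtration with the combinatorial filtration $\min\mathcal F$ on full paths. As an alternative, work dually: use the known formula for $T_{[V]}$ on $Y$ from \cite{CM-negative} together with Equation \eqref{eq:minus} and the duality of $\tau$ under orientation reversal. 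As a last fallback, invoke Zemke's filtered identification of knot lattice homology with knot Floer homology \cite{Zemke,BLZ}, where the minimum over a path is exactly the filtration level of the lattice generator.

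\emph{Location of the minimum.} To show the minimum is attained at the initial vector $V_k$, observe that a step $Z\mapsto Z+2Qe_i$ changes $e_1^TQ_*^{-1}Z$ by $2e_1^Te_i$. This change is nonzero only for central steps $i=1$, where it equals $+2$. Starting from an initial vector, the full path is traversed by such steps, so $e_1^TQ_*^{-1}Z$ is non-decreasing along it. Hence $\min_{[Z]=[V_k]}e_1^TQ_*^{-1}Z=e_1^TQ_*^{-1}V_k$, which gives the first equality in the statement.
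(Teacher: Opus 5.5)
\textbf{The gap is the lower bound.} Your third step carries the whole content of the theorem, and you do not prove $\tau_{[V]}(K)\geq\min_{[Z]=[V]}\mathcal F(Z)$. Instead you list three strategies, and only the last one works.
\begin{itemize}
\item The dual route is circular. The duality pairing preserves parity, so the $\tau$-values of the odd classes $[V]$ of $-Y$ are determined by those of the odd classes $T_{[V]}$ of $Y$. \cite{CM-negative} does not compute these: it treats the even classes $[W]$ of $\Gamma$. In this paper their values, Equation \eqref{eq:taumirror}, are deduced \emph{from} the theorem you are proving. Equation \eqref{eq:minus} only defines a combinatorial filtration; it says nothing about $\tau$.
\item The filtered exact triangle is an unexecuted programme, not a step. $K$ is a meridian of the surgery curve, so the Alexander gradings shift by $\Spin^c$-dependent amounts across the triangle. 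You would have to prove exactness at each filtration level and build filtered versions of $\mathbb G^+$, $\mathbb F^+$ and $T^+$. Nothing in the proposal supplies this.
\end{itemize}

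\textbf{Your last fallback is exactly the paper's proof.} Once you adopt it, it gives both inequalities at once, so your first step becomes unnecessary. The paper proceeds as follows.
\begin{itemize}
\item It uses the isomorphism $\mathcal{CFK}(-Y,K)\simeq\mathbb{CFL}(\Gamma^*,S_0)$ of \cite[Theorem 5.1]{BLZ}. This needs only that $-Y$ be a rational homology sphere, not that the graph be definite.
\item By \cite[Lemma 5.9]{BLZ}, this isomorphism matches $\mathrm{gr}_w$ and $A$ with $M$ and $\mathcal F$ on characteristic vectors, which are the cube-grading-zero generators.
\item Steps of a full path are boundaries \cite[Lemma 3.1]{Lattice2}, and cycles representing $\gamma$ are combinations of vectors in $[V_1],\dots,[V_k]$.
\item Hence the minimal filtration level of a representative of $\gamma$ is $\min_{[Z]=[V_k]}\mathcal F(Z)$.
\end{itemize}

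\textbf{Your remaining steps are fine.}
\begin{itemize}
\item Your fourth step (each central step adds $2$ to $e_1^TQ_*^{-1}Z$, so the minimum sits at the initial vector) is precisely the paper's remark that $\mathcal F$ increases along full paths.
\item Your second step is correct and makes explicit what the paper leaves implicit. It uses $\tau_{\alpha+\beta}=\max(\tau_\alpha,\tau_\beta)$ when the two values differ, and the values here are distinct by Remark \ref{remark:Alexander}.
\item Your first step, via the Alexander grading formula for link cobordisms applied to the punctured cocore disk, is a legitimate independent proof of the upper bound. It needs the version of that formula for rationally null-homologous knots. But neither this nor the second step touches the lower bound.
\end{itemize}
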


\begin{proof}
 In \cite{BLZ} Borodzik, Liu and Zemke prove that there is an isomorphism between the full completed versions of the link Floer and the lattice cohomology chain complex of a link given by attaching leaves to a plumbing tree, with the assumption that the latter one represents a rational homology sphere. 
 
 In our setting, where we are interested in the regular fibre $K$ of $\Gamma^*$, we just consider the knot case; hence, we refer to \cite[Theorem 5.1]{BLZ} which gives an isomorphism $\mathcal{CFK}(-Y,K)\simeq\mathbb{CFL}(\Gamma^*,S_0)$ of $A_\infty$-modules over the ring $\F[U,V]$, where $S_0$ denotes the leaf attached to the centre and $\mathbb{CFL}(\Gamma^*,S_0)$ is the generalised knot lattice complex. As explained in \cite[Lemma 5.9]{BLZ} this isomorphism respects both the Maslov and the Alexander grading; in particular, the gradings $\text{gr}_w$ and $A$ in \cite{BLZ} corresponds precisely to our definition of $M$ and $\mathcal F$ for any $V\in\text{Char}(\Gamma^*)$. For more details about the algebraic setting we refer to \cite{BLZ}.

 It is important to observe that the isomorphism in \cite[Theorem 5.1]{BLZ} requires the full version of the surgery formula in Heegaard Floer, see \cite{Zemke}; nonetheless, it restricts to the characteristic vectors of $\Gamma^*$, which are the generators in cube grading zero of the complex \cite{Lattice2}. It is possible to check directly, see \cite[Lemma 3.1]{Lattice2}, that if $V_1$ and $V_2$ are related by a step in the full path then their sum is in fact in the image of the differential. Since we now know from Theorem \ref{teo:main} that any $\gamma$ as in the statement can be identified with a linear combination of full paths (ending correctly), we can compute the invariant $\tau_\gamma(K)$ using the isomorphism in \cite{BLZ}: \[\begin{aligned}\tau_\gamma(K)&=\min\big\{m\in\Z\:|\:\gamma\in\Imm i_*:H_*(\mathcal F^K_m\widehat{CF}(-Y))\rightarrow\widehat{HF}_*(-Y,\s)\big\}=\\ &=\min_{[Z]=[V_k]}\mathcal F(Z)=\dfrac{1}{2}\left(-e_1^TQ_*^{-1}e_1+e_1^TQ_*^{-1} V_k\right)\end{aligned}\] because $V_k$ is initial, and $\mathcal F$ increases along the full paths; note that the cycles whose homology class is $\gamma$ are precisely linear combinations of characteristic vectors in the $[V_i]$'s, see \cite{Lattice2,Zemke}. We conclude by observing that $\frac{1}{e(Y)}=-\frac{1}{e(-Y)}=-e_1^TQ_*^{-1}e_1$.
\end{proof}

We now prove Theorem \ref{teo:taus} by showing that there is a combinatorial formula for any non-zero homogeneous homology class in $\widehat{HF}_*(Y,\s)$ when $Y$ has negative-definite standard graph. The formula depends on the parity of the Maslov grading of the class; hence, we need to distinguish the case of $\gamma\in\widehat{HF}_*^\text{ev}(Y,\s)$ and $\delta\in\widehat{HF}_*^\text{od}(Y,\s)$.

\begin{proof}[Proof of Theorem \ref{teo:taus}]
 The first case follows by Equation \eqref{eq:tau}. For the second case, using the formula in \cite[Lemma 2.2]{AC} and Theorem \ref{teo:tau}, we can compute the invariant $\tau_\delta(K)$ where $\delta\in\widehat{HF}_*(Y,\s)$ is the class identified with $T_{[V_1]}+\cdots+T_{[V_k]}$ and $\mathcal F(V_1)<\cdots<\mathcal F(V_K)$:
 \begin{equation}\begin{aligned}
 \tau_\delta(K)&=-\min\{\tau_\gamma(K)\:|\:\gamma\in\widehat{HF}_*(-Y,\s)\text{ and }\langle\gamma,\delta\rangle=1\}=\\ &=-\dfrac{1}{2}\left(\dfrac{1}{e(Y)}+e_1^TQ_*^{-1} V_1\right)=\dfrac{1}{2}\left(\dfrac{1}{-e(Y)}+\max_{[Z]=[-V_1]}e_1^TQ_*^{-1}Z\right)\:. \end{aligned}
    \label{eq:taumirror}
 \end{equation}
\end{proof}

We can show that the Alexander filtration $\mathcal F$ corresponds precisely to the filtration $\mathcal F^K$ in knot Floer homology. 
\begin{cor}
 \label{cor:filtration}
  For every integer $m$ the subgroup of $\widehat{HF}(Y)$ generated by the vectors $[W_i]$ for $i=1,...,t+1$ and the functionals $T_{[V_j]}$ for $j=1,...,t$ such that $\mathcal F(\cdot)\leq m$ coincides with the image of $i_*:H_*(\mathcal F^K_m\widehat{CF}(Y))\rightarrow\widehat{HF}(Y)$, the $m$-level of the filtration induced by $K$.  
\end{cor}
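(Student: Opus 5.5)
The plan is to treat the two parities of $\widehat{HF}(Y,\s)$ separately. Since the filtration $\mathcal F^K$ is determined by the $\tau$-invariants of homogeneous classes, we use the fact that $\gamma\in\Imm(i_*:H_*(\mathcal F^K_m\widehat{CF}(Y))\to\widehat{HF}(Y))$ if and only if $\tau_\gamma(K)\leq m$. Because $\mathcal F^K_m$ is a subcomplex, this image is a subgroup. Moreover, $K$ is nullhomologous rationally and the filtration respects the splitting by Maslov grading. Hence the image at level $m$ is the direct sum of its intersections with $\widehat{HF}^\text{ev}(Y,\s)$ and $\widehat{HF}^\text{od}(Y,\s)$. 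It therefore suffices to show, for each parity, that the span of the basis elements with $\mathcal F(\cdot)\leq m$ coincides with the set of classes $\gamma$ with $\tau_\gamma(K)\leq m$.

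\emph{Even part.} Take a homogeneous class $\gamma=[W_1]+\cdots+[W_k]$ with $\mathcal F(W_1)<\cdots<\mathcal F(W_k)$, where the $W_i$ are the initial representatives. By Remark \ref{remark:Alexander} the intervals of $\mathcal F$-values of distinct full paths are disjoint and ordered. Theorem \ref{teo:taus} (equivalently Equation \eqref{eq:tau}) then gives $\tau_\gamma(K)=\mathcal F(W_k)$, the minimum of $\mathcal F$ along $[W_k]$. Here $\mathcal F([W])$ is understood as the $\mathcal F$-value of the initial vector, consistent with \cite{CM-negative}. So $\tau_\gamma(K)\leq m$ holds exactly when every summand has $\mathcal F(W_i)\leq m$, i.e.\ when $\gamma$ lies in the span of the $[W_i]$ with $\mathcal F\leq m$.

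\emph{Odd part.} Write $\delta=T_{[V_1]}+\cdots+T_{[V_h]}$ with $\mathcal F(V_1)<\cdots<\mathcal F(V_h)$. Equation \eqref{eq:taumirror} gives $\tau_\delta(K)=-\tau_{[V_1]}(K)=-\mathcal F(V_1)=\mathcal F(T_{[V_1]})$, using Equation \eqref{eq:minus} and Theorem \ref{teo:tau}. Since $\mathcal F(T_{[V_i]})=-\mathcal F(V_i)$ is maximal at $i=1$, we have $\tau_\delta(K)=\max_i\mathcal F(T_{[V_i]})$. Thus $\tau_\delta(K)\leq m$ if and only if every summand satisfies $\mathcal F(T_{[V_i]})\leq m$. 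Equivalently, $\delta$ lies in the orthogonal of the span of the $[V]$ with $\mathcal F(V)\leq -m-1$, which matches the definition of the extended filtration.

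The main obstacle is the mixed-parity case. We must check that $\tau$ of a sum $\gamma+\delta$ of an even and an odd class equals $\max(\tau_\gamma,\tau_\delta)$, so that no cancellation occurs across parities. The plan is to handle this by the Maslov-grading argument above: since the chain complex $\widehat{CF}$ splits by $\Z/2$-grading compatibly with $\mathcal F^K$, the image of $i_*$ splits as well. A secondary point is to justify that, within one parity, the minimum over cycles representing a sum is attained on the top full path. For the odd part this comes from \cite[Lemma 2.2]{AC} applied to $-Y$; for the even part it follows from the disjointness in Remark \ref{remark:Alexander} together with the lattice description in \cite{BLZ}.
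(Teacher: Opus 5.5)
Your proposal is correct and follows essentially the same route as the paper, which simply observes that Theorem \ref{teo:tau} and Equation \eqref{eq:taumirror} compute $\tau_\alpha(K)$ for every homogeneous class $\alpha$, and that by definition $\alpha$ lies in the image of $\mathcal F^K_m$ exactly when $\tau_\alpha(K)\leq m$. Your reduction to homogeneous classes via the grading splitting of $\mathcal F^K_m\widehat{CF}(Y)$, and your check that in each parity $\tau$ of a sum is governed by its extremal summand, spell out details that the paper leaves implicit.
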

\begin{proof}
 It follows by Theorem \ref{teo:tau} and Equation \eqref{eq:taumirror} because now we can compute the $\tau$-invariant of $K$ with respect to every homogeneous class $\alpha$ in $\widehat{HF}(Y)$, and by definition $\tau_\alpha(K)$ is the minimal $m$ such that $\mathcal F^K_m\widehat{HF}(Y)$ contains $\alpha$. 
\end{proof}

\subsection{The height function}
\label{subsection:height}
In \cite[Definition 2.6]{CM-negative} we introduced an invariant called the \emph{height function} of a full path $[W]$ which ends correctly as $-e_1^TQ_G^{-1}\frac{W+W'}{2}$, where $W$ and $W'$ are the initial vectors of $[W]$ and $[-W]$ respectively; moreover, from \cite[Subsection 2.2]{CM-negative} the height of $[W]$ also coincides with the number of central steps, that are steps in a full path taken when $w_1=-m(1)$. We now proceed to define a general version of this invariant.
\begin{defin}
 \label{def}
  Assume $M$ is a Seifert fibred space with $b_1(M)=0$. Let $\gamma\in\widehat{HF}(M,\s)$ be a homology class which can be written as a linear combination of full paths $[Z_1],...,[Z_k]$ ending correctly, and let the $Z_i$'s be the initial vectors. Then the height of $\gamma$ is \begin{equation*}
 \height(\gamma)=\max_{A\in[Z_1]\cup\dots\cup[Z_k]}\mathcal F(A)\:-\min_{B\in [Z_1]\cup\dots\cup[Z_k]}\mathcal F(B) \:,
 \label{eq:height}
\end{equation*}
 that is the difference between the maximal and the minimal value of the Alexander filtration, induced by the leaf $K$, on all the full paths which are components of $\gamma$. 
 
 In other words, if $Z_1',...,Z_k'$ are the initial vectors of $[-Z_1],...,[-Z_k]$ while the $Z_i$'s are ordered in the way that $\mathcal F(Z_1)<\cdots<\mathcal F(Z_k)$, then \[\height(\gamma)=\mathcal F(-Z_k')-\mathcal F(Z_1)=-e_1^TQ_G^{-1}\left(\dfrac{Z_1+Z_k'}{2}\right)\:.\]
\end{defin}
Note that the initial vector $V'$ of the full path $[-V]$ is equal to $-\overline V$, where $\overline V$ is the terminal vector of $[V]$.
Let us consider $Z_1$ and $-Z_k'=\overline Z_k$; since they restrict to the same $\Spin^c$-structure on $M$, we know from Proposition \ref{prop:spinc} that there exists an integral vector $B$ such that $2Q_GB=Z_1+Z_k'$; therefore, we have the following corollary.
\begin{cor}
 The height of $\gamma$ can be expressed without involving the Alexander filtration as $\height(\gamma)=|b_1|$, where $B=(b_1,...,b_{|G|})$ is the vector defined above.  
\end{cor}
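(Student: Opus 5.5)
We start from the closed form in Definition \ref{def}, namely
\[\height(\gamma)=\mathcal F(-Z_k')-\mathcal F(Z_1)=-e_1^TQ_G^{-1}\left(\dfrac{Z_1+Z_k'}{2}\right).\]
We then substitute the integral vector $B$ from Proposition \ref{prop:spinc}, for which $2Q_GB=Z_1+Z_k'$. This gives $Q_G^{-1}\frac{Z_1+Z_k'}{2}=B$, so $\height(\gamma)=-e_1^TB=-b_1$. The whole argument therefore reduces to two checks: that $B$ is well defined and integral, and that the sign of $-b_1$ is controlled, so that the height equals $|b_1|$.

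\textbf{Integrality of $B$.} We recall that $Z_1$ and $\overline Z_k=-Z_k'$ lie in $\text{Char}(G,\s)$. Both components of $\gamma$ live in $\widehat{HF}(M,\s)$, and every vector of a full path induces the same $\Spin^c$-structure, since steps add $2Qe_i$. Proposition \ref{prop:spinc} then says $Q_G^{-1}(Z_1-(-Z_k'))\in2\Z^{|G|}$, which is exactly $B\in\Z^{|G|}$. Because $b_1(M)=0$, $Q_G$ is invertible and $B$ is unique.

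\textbf{Sign.} The height, defined as $\max\mathcal F-\min\mathcal F$ over the union of the full paths, is manifestly nonnegative. Hence $-b_1=\height(\gamma)\geq0$ and $\height(\gamma)=|b_1|$. One point needs care here: the identity $\max_{A}\mathcal F(A)=\mathcal F(\overline Z_k)$ and $\min_B\mathcal F(B)=\mathcal F(Z_1)$. We justify it using the facts quoted in the proof of Theorem \ref{teo:tau} and Remark \ref{remark:Alexander}. First, $\mathcal F$ increases along a full path, so it is minimised at the initial vector and maximised at the terminal vector. Second, the full paths ending correctly occupy disjoint ordered intervals of $\mathcal F$-values, so ordering by $\mathcal F(Z_i)$ puts the global minimum in $[Z_1]$ and the global maximum in $[Z_k]$. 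Finally, we use $Z_k'=-\overline Z_k$, noted just before the corollary.

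The expected subtlety is the orientation and sign convention. On the indefinite side, $\mathcal F$ is defined via $Q_*^{-1}$ with a $+\tfrac12 e_1^TQ^{-1}V$ term, so whether the height equals $b_1$ or $-b_1$ depends on the graph's orientation. Taking the absolute value sidesteps this, and the nonnegativity argument above is what we will rely on rather than tracking the sign convention in each case.
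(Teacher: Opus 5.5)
Your proposal is correct and is essentially the paper's proof. Substituting $2Q_GB=Z_1+Z_k'$ into the closed form of Definition~\ref{def} gives $\height(\gamma)=-e_1^TB=-b_1$, and since a maximum minus a minimum is nonnegative this equals $|b_1|$; your extra checks (integrality of $B$ via Proposition~\ref{prop:spinc}, and where the extremal $\mathcal F$-values occur) are fine. One small correction to your closing remark: the sign does not depend on the orientation, because in both the negative-definite and the indefinite case the $V$-dependent part of $\mathcal F$ is $+\tfrac12 e_1^TQ_G^{-1}V$, so the height equals $-b_1$ in every case.
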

\begin{proof}
 We have that \[\height(\gamma)=-e_1^TQ_G^{-1}\left(\dfrac{Z_1+Z_k'}{2}\right)=-e_1^TB=-b_1\:,\] and this number is non-negative by definition.  
\end{proof}

\subsection{Blowing down the graph}
When $\Gamma$ is negative-definite there is a unique way to blow-down the graph. This is no longer true for $\Gamma^*$, but in this paper we will often consider a special 4-manifold $X$ obtained in such a way. We recall that if $e_0\geq0$ then the corresponding Seifert fibred manifold is an $L$-space. 
\begin{lemma}
 \label{lemma:blowdown}
 Suppose that $\Gamma$ is the negative-definite standard graph of a Seifert fibred space, and every maximal $S^3$-subgraph $(\Gamma^*)'$ represents the fibration of $T_{d_2,d_1}$ for $1\leq d_2\leq d_1$ fixed coprime integers. Then there is a canonical way to blow-down $\Gamma^*$ completely so that $(\Gamma^*)'$ disappears; furthermore, the resulting $4$-manifold $X_{\Gamma^*}$ satisfies $b_2^+(X_{\Gamma^*})=1$. 
\end{lemma}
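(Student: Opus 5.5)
We argue directly on the Kirby diagram of $\Gamma^*$, the standard graph of $-Y=M(-e_0-n;1-r_1,\dots,1-r_n)$. Since the central framing of $\Gamma^*$ is negative, we may assume there is a maximal $S^3$-subgraph $(\Gamma^*)'$ containing the centre. Such a subgraph is obtained from the central vertex together with initial portions of at most two legs; blowing these down reduces the chain to a $(-1)$-framed unknot and then removes it.

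The first step is to identify $(\Gamma^*)'$ arithmetically. Blowing down a leg vertex of framing $-2,\dots$ adjacent to a $(-1)$-centre changes the central framing by $+1$ and shortens the negative continued fraction of that leg. Hence $(\Gamma^*)'$ is determined by the Seifert invariants of two legs, say $1-r_1$ and $1-r_2$, up to the point where the central vertex becomes a $(\pm1)$-framed unknot with two adjacent chains. As in \cite[Sections 4 and 5]{CM-negative}, the two truncated legs then have the torus knot structure $d_1/a$ and $d_2/b$, with $ad_2+bd_1=1$. The hypothesis that every maximal $S^3$-subgraph gives the same $T_{d_2,d_1}$ means that all admissible choices of legs to blow down produce the same pair $\{d_1,d_2\}$. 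These choices differ only by a permutation of legs with equal truncated continued fractions, or by the symmetric roles of the two chains in a $T_{d_2,d_1}$ fibration. We then fix the canonical sequence by a deterministic rule: at each step, blow down the $(-1)$-vertex, which is unique at each stage because the graph stays a tree in which the only possible $-1$ sits at the centre. When a choice between two adjacent $-2$'s on different legs appears, the hypothesis forces the corresponding legs to agree up to that point, so the resulting diagrams are isotopic.

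After $(\Gamma^*)'$ disappears, the remaining legs become knots in $S^3$. These are parallel copies of regular fibres of $T_{d_2,d_1}$, so they are torus knots $T_{d_2,d_1}$ linked with each other and with framings shifted by the blow-downs, each with the rest of its leg still attached. We then define $X_{\Gamma^*}$ as the trace of this surgery diagram: a $4$-manifold with $\partial X_{\Gamma^*}=-Y$ and $P_{\Gamma^*}\cong X_{\Gamma^*}\#\,k\overline{\mathbb{CP}}^2$, where $k=|(\Gamma^*)'|$. Since blow-downs remove only negative-definite $\overline{\mathbb{CP}}^2$ summands, $b_2^+(X_{\Gamma^*})=b_2^+(P_{\Gamma^*})=1$, as recalled in Section \ref{section:two}. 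We also check that $b_2^-$ drops by exactly $k$ and that $\sigma$ changes accordingly.

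The main obstacle is the canonicity claim, namely showing that different maximal $S^3$-subgraphs yield diffeomorphic, indeed identical up to handle isotopy, blown-down diagrams. This needs a careful case analysis of when two legs can both serve as the second chain. I would first try to show that the set of vertices of $(\Gamma^*)'$ is independent of the choice, except in the degenerate cases $T_{1,d_1}$ and $T_{1,1}$ flagged in \cite{CM-negative}. Those degenerate cases I would handle separately by noting that the leftover leg is then attached to an unknot, where any choice gives the same framed link.
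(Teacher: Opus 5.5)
Your $b_2^+$ computation is correct and is the paper's argument: every step blows down a $(-1)$-framed unknot, so $P_{\Gamma^*}\cong X_{\Gamma^*}\#k\overline{\mathbb{CP}}^2$ and $b_2^+(X_{\Gamma^*})=b_2^+(P_{\Gamma^*})=1$. One small correction: a negative central framing does not give you a nonempty $(\Gamma^*)'$. If that framing is $\leq-2$, then $(\Gamma^*)'=\emptyset$ and there is nothing to blow down, which is how the paper disposes of that case.

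The gap is in the canonicity part, where two of your claims are false.

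\emph{The rule is not well defined.} You propose to blow down the unique $(-1)$-vertex, claiming it sits at the centre because the graph stays a tree. But the centre has valence $n\geq3$. Once you blow it down, its neighbours become pairwise linked unknots (Hopf fibres), so the diagram is no longer a plumbing tree. The later $(-1)$-framed components are former leg vertices, and they need not be unique. For example, take $\Gamma^*$ with centre $-1$ and legs $[-2]$, $[-3,-4]$, $[-3,-5]$. Its dual is negative-definite, since $e=-1+\frac12+\frac4{11}+\frac5{14}>0$. After blowing down the centre and the $(-2)$-vertex, the first vertices of the other two legs are $(-1)$-framed unknots with linking number $2$. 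You can blow down either one, after which no $(-1)$ remains. Both resulting maximal $S^3$-subgraphs are the $T_{2,3}$ fibration, so the hypothesis of the lemma holds.

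\emph{The same example defeats your fallback.} The two legs do agree up to the vertex you blow down, but the resulting framed links are not isotopic. One choice gives a $(+3)$-framed $T_{2,3}$ with a $(-3)$-framed singular fibre and a $(-5)$-framed meridian. The other gives a $(+3)$-framed $T_{2,3}$ with a $(-4)$-framed singular fibre and a $(-4)$-framed meridian. So the vertex set of $(\Gamma^*)'$ and the blown-down diagram depend on the choice, even with no degenerate $T_{1,d_1}$ or $T_{1,1}$ case involved.

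Canonicity therefore cannot come from showing that all choices agree. The paper makes the procedure canonical by convention. When the central framing is $-1$, it blows down the centre first and then always the $(-1)$-circle in the leg of $(\Gamma^*)'$ with the highest coefficient at that step. This removes $(\Gamma^*)'$ completely because $(\Gamma^*)'$ is the fibration of a torus knot in $S^3$. You should replace your uniqueness and isotopy claims with such an explicit tie-breaking rule.

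Also, after the blow-downs only the vertices attached to the centre become copies of $T_{d_2,d_1}$. The first vertex beyond $(\Gamma^*)'$ on each of its two legs becomes a singular fibre, i.e.\ an unknot, with framing raised as in the proof of Theorem \ref{teo:realised}.
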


\begin{proof}
 The framings on each leg of $\Gamma^*$ are given by the continued fraction expansion of $r_i$, which means that they are all not bigger than $-2$. If $e_0<-1$ then the maximal $S^3$-subgraph is empty, and there is nothing to prove, if $e_0=-1$ then we start precisely by blowing down the central vertex and we continue by always blowing down the $-1$-circle in the leg of $(\Gamma^*)'$ with the highest coefficient (at that step). This is possible because $(\Gamma^*)'$ represents a torus knot.

 As described in \cite[Section 5]{CM-negative} the manifold $X_{\Gamma^*}$ is obtained by attaching 2-handles on an appropriate cable of a torus link. Since we are always blowing down a circle with framing $-1$, we have that $b_2^+$ is preserved. The subgraph $(\Gamma^*)'$ is negative-definite thus $b_2^+(X_{\Gamma^*})$ is the same as the one of $\Gamma^*$ without the legs containing $(\Gamma^*)'$. 
\end{proof}

When a star-shaped graph $G$ is negative-definite the manifold $X_{G}$ is a Stein domain, see \cite[Section 5]{CM-negative}. When $G$ is indefinite and it either contains the standard graph of $T_{d_2,-d_1}$ or $G'$ is not unique, we are going to show in Corollary \ref{cor:negative} that $Y_G$ is an $L$-space.

\begin{remark}
 \label{remark:blowdown}
 It follows by a result of Lisca and Mati\'c \cite{LM} that if $-Y$, the Seifert fibred space whose standard graph is $\Gamma^*$, is not an $L$-space then the manifold $X_{\Gamma^*}$ described by the procedure in Lemma \ref{lemma:blowdown} is obtained by attaching Stein $2$-handles to $D^4$. 
\end{remark}

Note that the converse of this remark does not hold: take the standard graph of $M(-1;\frac{2}{3},\frac{1}{2},\frac{1}{3})\cong S^3_{-9}(T_{2,-3})$, we have that $X_{\Gamma^*}$ is a Stein domain even though this 3-manifold is an $L$-space. However, all of its structures are zero-twisting and thus they do not concern the topic of this paper.

\begin{remark}
 \label{remark:L}
 By \cite{LS} a Seifert fibred $L$-space whose standard graph has central vertex with framing $e_0\geq-1$ admits no negative-twisting structures.
\end{remark}

Using the same notation as in \cite{CM-negative}, we recall that given a plumbing tree $G$ the characteristic vector $V_\text{can}=(m(1)+2,...,m(|G|)+2)\in\text{Char}(G)$ is called \emph{canonical vector}.

\begin{teo}[$L$-space criterion]
 \label{teo:criterion}
 A Seifert fibred space with $b_1=0$ is an $L$-space if and only if, when it is oriented in the way that its standard graph is indefinite, the canonical vector $V_\emph{can}$ does not end correctly. Furthermore, in this case one has $e_0\geq-1$.
\end{teo}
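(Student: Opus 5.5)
The plan is to reduce the $L$-space property to the vanishing of $\widehat{HF}^\text{od}$, and then to read off that vanishing from the canonical full path. Let $M$ be oriented so that its standard graph $G$ is indefinite, so $e(M)>0$. By Theorem \ref{teo:main} and Corollary \ref{cor:even}, the full paths of $G$ that end correctly form a basis of $\widehat{HF}^\text{od}(M,\s)$ for every $\s$. Moreover, the rank of the $\F[U]$-tower is one and it sits in even parity. Hence $M$ is an $L$-space if and only if no characteristic vector of $G$ has a full path ending correctly. It therefore suffices to prove the following: some full path ends correctly if and only if $[V_\text{can}]$ ends correctly.

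One direction is immediate. If $[V_\text{can}]$ ends correctly, it gives a nonzero class, so $M$ is not an $L$-space.

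For the converse, I would work through the blow-down structure.
\begin{itemize}
\item Suppose $e_0\leq-2$. Then no vertex of $G$ has framing $\geq-1$. The vector $V_\text{can}=(m(i)+2)$ satisfies $m(i)+2\leq v_i\leq-m(i)$, so it is initial. A full-path step at vertex $i$ can only be triggered when $v_i=-m(i)$, which requires $m(i)=-1$. So no step is ever triggered, $[V_\text{can}]=\{V_\text{can}\}\subset\mathcal B_0$, and it ends correctly. This already proves the ``furthermore'' part: if $M$ is an $L$-space then $e_0\geq-1$.
\item Suppose $e_0=-1$. I would run the full path from $V_\text{can}$ explicitly. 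Its first coordinate is $1=-m(1)$, so the central step is forced: $V\mapsto V+2Qe_1$. This changes $v_1$ to $-1$ and raises each neighbouring coordinate by $2$. Steps then propagate along the legs. The path ends correctly exactly when the process terminates inside $\mathcal B_0$, and it fails to end correctly exactly when some leg coordinate overshoots $-m(i)+2$, or a second central step pushes some $|v_j|$ beyond the bound.
\end{itemize}
The main task in the second case is to identify this combinatorial failure with the $L$-space condition. I would compare it with the blow-down of Lemma \ref{lemma:blowdown}. The sequence of steps along the full path of $V_\text{can}$ should mirror the successive blow-downs of the $(-1)$-vertex in the maximal $S^3$-subgraph. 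Failure should occur exactly when the graph contains the standard graph of $T_{d_2,-d_1}$, or when the maximal $S^3$-subgraph is not unique. These are the cases in which, as announced in the text, $Y_G$ is an $L$-space.

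Conversely, assume $[V_\text{can}]$ fails to end correctly. I would show that then every characteristic vector fails, which gives $\widehat{HF}^\text{od}=0$. The tool is a monotonicity argument: any vector in $\mathcal B_0$ whose path ends correctly would dominate $V_\text{can}$ coordinatewise in the direction of the steps. This is a ``canonical vector is extremal'' principle, analogous to Némethi's result that $V_\text{can}$ detects rationality in the negative-definite case. I would try to prove it by induction on the steps, using the triangle of Subsection \ref{subsection:triangle}. Lowering the central framing makes $V_\text{can}$ end correctly, and the map $g$ carries full paths to full paths.

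I expect this converse to be the main obstacle: showing that the failure of $[V_\text{can}]$ forces the failure of all full paths. As a fallback, I would use Lisca–Matić (Remark \ref{remark:blowdown}) together with Remark \ref{remark:L} to argue the following. If $M$ is not an $L$-space, then $X_G$ is Stein with canonical vector realising a nonzero $c^+$. By Theorem \ref{teo:main}, this nonzero class is $T_{[V_\text{can}]}$, which forces $[V_\text{can}]$ to end correctly.
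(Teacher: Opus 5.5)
Your reduction ($L$-space if and only if no full path of the indefinite graph ends correctly), the easy direction, and the $e_0\leq-2$ computation are all fine. The last one in fact supplies the ``furthermore'' clause, which the paper's proof leaves implicit.

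The gap is in the hard direction. The paper's argument there is exactly the monotonicity you name, stated in one line: every initial $V$ satisfies $V\geq V_\text{can}$ coordinatewise, and if the path of a larger initial vector ends correctly, then so does the path of a smaller one. You leave this unproved. It is also unclear how the exact triangle would give it, since the triangle relates different graphs, while the claim compares two starting vectors on the same graph.

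Your $e_0=-1$ plan rests on a false identification, because Corollary~\ref{cor:negative} gives only a sufficient condition. The $L$-space $-\Sigma(2,3,5)$ of Figure~\ref{235} has $e_0=-1$ and a unique maximal $S^3$-subgraph, namely the fibration of the positive trefoil. Yet $[V_\text{can}]=[(1,0,-1,-3)]$ fails: the first nine steps are forced and lead to $(-1,2,3,-3)$, after which any two further steps produce a coordinate exceeding $-m(i)$.

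The monotonicity has a short Laufer-type proof. For characteristic $K$ put $\chi_K(x)=-\frac12(K^Tx+x^TQx)$, so that $\chi_K(x+e_i)-\chi_K(x)=-\frac12\big((K+2Qx)_i+m(i)\big)$. Call the increase $x\mapsto x+e_i$ admissible when $(K+2Qx)_i\geq-m(i)$. Admissible increases never raise $\chi_K$, and they lower it exactly when $(K+2Qx)_i\geq-m(i)+2$, which is where the full path fails.

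Let $T(K)=\{x\geq0:(K+2Qx)_i\leq-m(i)-2\text{ for all }i\}$. This set is closed under coordinatewise minima, because the off-diagonal entries of $Q$ are non-negative. So when $T(K)$ is non-empty, Laufer's argument shows that every admissible sequence from $0$ stays below its minimal element $x_{\min}(K)$ and can be continued until it reaches it. Hence $\chi_K(x_{\min}(K))\leq0$, with equality if and only if $[K]$ ends correctly.

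Now suppose $V$ is initial and $[V]$ ends correctly. Then $T(V)\neq\emptyset$, and since $V\geq V_\text{can}$, every increase admissible for $V_\text{can}$ is also admissible for $V$. The path of $V_\text{can}$ cannot run forever, since its steps are then admissible for $V$ and so bounded by $x_{\min}(V)$. If it failed at $x$ in direction $j$, its neutral steps followed by $x\mapsto x+e_j$ would form an admissible sequence for $V$ that strictly lowers $\chi_V$. This forces $\chi_V(x_{\min}(V))<0$, a contradiction.

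Your fallback is a genuinely different route and can be made to work, but it is far heavier and not ``by Theorem~\ref{teo:main}''. Identifying $c^+(\xi)$ with $T_{[V_\text{can}]}$ needs the Bodn\'ar--Plamenevskaya strict transform and Plamenevskaya's theorem, i.e.\ the content of Lemma~\ref{lemma:c} and Theorem~\ref{teo:realised}. Moreover, Remark~\ref{remark:blowdown} presupposes that the blow-down of Lemma~\ref{lemma:blowdown} applies. For a non-$L$-space the paper obtains this from Corollary~\ref{cor:negative}, which is itself proved from the criterion, so you would have to invoke \cite{LM-L} directly to avoid circularity. The combinatorial route keeps the criterion independent of contact topology, which is how the paper uses it afterwards.
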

\begin{proof}
 From Theorem \ref{teo:fullpath} we know that a Seifert fibred space with indefinite standard graph $\Gamma^*$ is an $L$-space if and only if $\Gamma^*$ has no characteristic vector whose full path ends correctly. Since $V_\text{can}$ is the vector with minimal coordinates among the ones that initiate their full path, if there exists a $V\in\text{Char}(\Gamma^*)$ whose full path ends correctly then the same necessarily holds for $V_\text{can}$ too.      
\end{proof}

The following result follows from \cite[Theorem 1.3]{LM-L}, but it can also be seen as a consequence of the $L$-space criterion.

\begin{cor}
 \label{cor:negative}
 If the maximal $S^3$-subgraph of $G$ either corresponds to the fibration of a negative-torus knot or is not unique, then the underlying Seifert fibred space is an $L$-space.   
\end{cor}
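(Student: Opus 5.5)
By Theorem \ref{teo:criterion}, the plan is to show that under either hypothesis the canonical vector $V_{\text{can}}$ of the indefinite standard graph $G$ does not end correctly. Since the statement concerns an $L$-space, I assume $b_1=0$ and that $G$ is the indefinite orientation. The case $b_1=1$ is not relevant here, and if the negative-definite orientation were meant we would pass to $G$ through $G^*$. A maximal $S^3$-subgraph is nonempty only when $e_0=-1$, so I work with central framing $m(1)=-1$. Then $V_{\text{can}}$ has first coordinate $v_1=1=-m(1)$, while every leg coordinate equals $m(i)+2\leq 0$.

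The argument is to run the full path from $V_{\text{can}}$ explicitly, in the spirit of the blow-down order of Lemma \ref{lemma:blowdown}.
\begin{enumerate}
\item The first step is central: add $2Qe_1$. This sets $v_1=-1$ and raises the first vertex of every leg by $2$.
\item A leg vertex whose framing is $-2$ now has coordinate $2=-m(i)$, so we push along it, and the change propagates down the leg.
\item Each time the value $-m(\cdot)$ reappears at the centre, we take another central step.
\end{enumerate}
The legs that keep feeding the centre are exactly those lying in a maximal $S^3$-subgraph $G'$. This matches the blow-down picture: a $-1$ vertex adjacent to $-2$ chains becomes $-1$ again after a blow-down.

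Case of a non-unique $G'$. There are at least two distinct choices of leg chains that each blow down to a $-1$ central vertex. I expect the full path from $V_{\text{can}}$ then to reach a vector whose central coordinate exceeds $-m(1)$, which puts it outside $\mathcal B_0$. The mechanism is that two independent leg contributions return to the centre simultaneously, so $v_1$ jumps to $3$ (or larger) instead of returning to $1$.

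Case where $G'$ is the fibration of $T_{d_2,-d_1}$. Here the blow-down sequence ends with a curve of framing $+1$, reflecting the negative torus knot, rather than eliminating the centre with $-1$. I will track the Chern numbers through the blow-downs: each blow-down of a $-1$ sphere $E$ changes the evaluation by $\langle V,E\rangle=\pm1$. Following $V_{\text{can}}$, I expect the evaluation on the final sphere, of square $+1$, to have absolute value $3$. In the full path this becomes a vector with some $|v_i|>-m(i)$, so again $V_{\text{can}}$ fails to end correctly.

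An alternative route would use Theorem \ref{teo:fullpath} directly. One would need to show that the corresponding $X_G$ contains an embedded sphere of positive square with the wrong adjunction behaviour, and deduce $\mathrm{Tor}\,HF^-=0$. I find this less concrete, so the combinatorial route is the one I would pursue.

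The main obstacle is the bookkeeping in the negative torus knot case. I have to prove that the central steps propagating through the legs produce an out-of-range coordinate, and not merely that they leave the full path unbounded in an ambiguous way. To control this, I would first treat the model graphs of $T_{2,-3}$ (legs $-2$, $-3$, $-6$ at a $-1$ centre, all with explicit small full paths) and then reduce the general case to them. The reduction would be by induction on the length of the continued fractions, using the slam-dunk/blow-down compatibility of full paths. If that induction breaks down, I would fall back on citing \cite[Theorem 1.3]{LM-L}, as the text before the statement allows.
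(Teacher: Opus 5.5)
Your reduction is the same as the paper's: show that $V_{\text{can}}$ does not end correctly, then conclude with Theorem \ref{teo:criterion}. The paper simply calls that check easy, so all the content is in the check, and your plan for it has two concrete problems.

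\textbf{The model graph is wrong.} Legs $-2,-3,-6$ at a $-1$ centre give $M(-1;\frac12,\frac13,\frac16)\cong S^3_0(T_{2,3})$, the first torus bundle of Figure \ref{Torus}. Its maximal $S^3$-subgraph (the centre with the $-2$ and $-3$ legs) is the fibration of the \emph{positive} trefoil. Moreover, by Table \ref{Homology} its $V_{\text{can}}=(1,0,-1,-4)$ actually ends correctly, so a base case there points the wrong way. The fibration of $T_{2,-3}$ is the chain $(-2)-(-1)-(-2)-(-2)$, i.e.\ $M(-1;\frac12,\frac23)$, with $e=\frac16>0$.

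\textbf{The blow-down step is unjustified.} You pass from ``the final $+1$ sphere has evaluation $3$'' to ``some vector of $[V_{\text{can}}]$ has $|v_i|>-m(i)$''. That needs a compatibility between full paths of $G$ and the blown-down configuration, including the last blow-down into a $+1$ curve, which you neither prove nor cite. So the negative torus knot case is not established. The non-unique case, which you only say you ``expect'', is not established either.

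\textbf{How to make the check concrete.} A move at $S_i$ requires only $v_i=-m(i)$. It changes only $v_i$ and the coordinates of the neighbours of $S_i$. Also, $V_{\text{can}}$ restricts to the canonical vector of any subgraph $H$. Hence any run of moves inside $H$ starting from $V_{\text{can}}(H)$ can be performed verbatim in $G$, and the $H$-coordinates evolve identically. These moves do not change the $U$-power, so leaving $\mathcal B_0$ inside $H$ already shows that $[V_{\text{can}}]$ does not end correctly in $G$.

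Taking $H=G'$ in the first case and $H=G'_1\cup G'_2$ in the second reduces everything to these subgraphs. For example:
\begin{itemize}
\item on the chain $(-2)-(-1)-(-2)-(-2)$ one gets $(0,1,0,0)\to(2,-1,2,0)\to(-2,1,2,0)\to(-2,3,-2,2)$;
\item for three $-2$ legs at a $-1$ centre one gets $(1;0,0,0)\to(-1;2,2,2)\to(1;-2,2,2)\to(3;-2,-2,2)$.
\end{itemize}

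For arbitrary $G'$ (respectively $G'_1\cup G'_2$) you still need a genuine argument. For $T_{d_2,-d_1}$, one option is to compare, after $N$ central steps, the number of $+2$'s the two legs have returned to the centre with $N$, using $r_1+r_2=1+\frac{1}{d_1d_2}>1$. The induction on continued fractions that you sketch is not yet such an argument.
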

\begin{proof}
 It is easy to check that under the given assumptions $V_\text{can}$ cannot end correctly; thus we just conclude by applying Theorem \ref{teo:criterion}.
\end{proof}

We prove the following property of $V_\text{can}$ which we are going to apply in Sections \ref{section:four} and \ref{section:seven}.

\begin{lemma}
 \label{lemma:can}
 Suppose that $Y$ is a Seifert fibred space with standard graph $\Gamma$ and $b_1(Y)=0$. If $\Gamma$ is negative-definite then $\mathcal F(W_\emph{can})>\mathcal F(W)$ for any initial $W\in\emph{Char}(\Gamma)$ different from $W_\emph{can}$. Similarly, for its dual $\Gamma^*$ one has $\mathcal F(V_\emph{can})<\mathcal F(V)$ for any $V\in\emph{Char}(\Gamma^*)$ with the same properties.    
\end{lemma}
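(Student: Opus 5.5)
Since $\mathcal F(W)=\frac{1}{2e}+\frac12 e_1^TQ^{-1}W$ on $\mathrm{Char}(\Gamma)$ and $\mathcal F(V)=\frac{1}{2e(Y)}+\frac12 e_1^TQ_*^{-1}V$ on $\mathrm{Char}(\Gamma^*)$ by Equation \eqref{eq:Alexander}, both inequalities reduce to sign statements about the linear functional $e_1^TQ^{-1}$ (resp. $e_1^TQ_*^{-1}$) applied to the difference $W-W_\text{can}$ (resp. $V-V_\text{can}$). The plan is to show that the vector $Q^{-1}e_1$ has all coordinates of one sign, strictly negative when $\Gamma$ is negative-definite and strictly positive when the graph is indefinite. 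Then we observe that an initial vector different from the canonical one satisfies $W-W_\text{can}\geq0$ coordinatewise and is non-zero.

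The first step is the initial-vector observation. By definition $W$ is initial when $m(i)+2\leq w_i\leq -m(i)$, so $W-W_\text{can}$ has non-negative coordinates, and it is not zero unless $W=W_\text{can}$. The second step is the sign of $Q^{-1}e_1$. Write $x=Q^{-1}e_1$, so that $Qx=e_1$. On each leg, the equations $m(j)x_j+x_{j-1}+x_{j+1}=0$ (with $x_{j+1}=0$ at the end) form a tridiagonal negative-definite chain system with framings $\leq-2$. Solving from the tip shows that the leg coordinates are positive multiples of $x_1$: along the leg, $x_j=x_1\cdot(\text{ratio of numerators of continued fractions})$, and these ratios are positive. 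The central equation then reads
\[x_1\Big(e_0+\sum_i r_i\Big)=x_1\,e(\cdot)=1\]
up to the standard identification $e_1^TQ^{-1}e_1=1/e$. This gives $x_1=1/e$, which is negative for $\Gamma$ and positive for $\Gamma^*$ since $e(-Y)=-e(Y)>0$. Consequently every $x_j$ has the sign of $1/e$, and none vanishes.

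Combining the two steps, for $\Gamma$ we get $e_1^TQ^{-1}(W-W_\text{can})=\sum_j x_j(w_j-w_{\text{can},j})<0$, because all $x_j<0$ and the difference is non-negative and non-zero. This gives $\mathcal F(W_\text{can})>\mathcal F(W)$. For $\Gamma^*$ the same computation with all $x_j>0$ gives $\mathcal F(V_\text{can})<\mathcal F(V)$. Note that the central framing of $\Gamma^*$ may be $\geq-1$, or even non-negative; the initial-vector condition then needs care, since $m(1)+2\leq -m(1)$ requires $m(1)\leq-1$. If the central framing were non-negative there would be no initial vectors except possibly in degenerate cases, so the statement would be vacuous or trivial there, and I would dispose of that case separately.

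The main obstacle is the sign claim for the leg coordinates. I would verify it by the standard recursion. On a leg with framings $m_1,\dots,m_k$ and $-1/r=[m_1,\dots,m_k]$, the solution satisfies $x_{\text{leg},1}=r\,x_1$, and successive ratios are positive rationals in $(0,1)$, because each $m_j\leq-2$. This yields strict positivity of the ratios. It also confirms the identity $e_1^TQ^{-1}e_1=1/e$, with $e=e_0+\sum r_i$, which is already used in the proof of Theorem \ref{teo:tau}.
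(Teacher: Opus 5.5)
Your proposal is correct and follows essentially the same route as the paper. The paper computes $e_1^TQ^{-1}=\Delta^{-1}(1,-\epsilon^TA^{-1})$ via the block inverse, with Schur complement $\Delta=e_0-\epsilon^TA^{-1}\epsilon$; this $\Delta$ is exactly your $e=e_0+\sum_i r_i$, and the paper then concludes with the same observation that $W-W_\text{can}$ is non-negative and non-zero for initial $W\neq W_\text{can}$. Where you use the leg recursion, the paper uses positivity of the entries of $-A_i^{-1}$, and where you quote the sign of $e$, it reads off the sign of $\Delta$ from Sylvester's law of inertia. Your worry about a non-negative central framing on $\Gamma^*$ is harmless: in that case no initial vectors exist at all, so the statement is simply vacuous.
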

\begin{proof}
 According to the definition of $\mathcal F$, we just have to prove that $e_1^TQ^{-1}W_\text{can}>e_1^TQ^{-1}W$ and $e_1^TQ_*^{-1}V_\text{can}<e_1^TQ_*^{-1}V$. Consider the generic intersection matrix \[
Q_G=
\begin{pmatrix}
e_0 & \epsilon^T\\
\epsilon & A
\end{pmatrix}\]
for any standard graph $G$, where $A$ is the matrix made by the tridiagonal blocks $A_1,...,A_n$ corresponding to the legs of $G$, and $\epsilon$ is the incidence vector of the centre whose entries are either zero or one. Since each $A_i$ is the intersection matrix of a chain of unknots, it is an irreducible negative-definite $M$-matrix; hence, each $A_i^{-1}$ has negative entries.

Now take the Schur complement of $A$ expressed by $\Delta:=e_0-\epsilon^T A^{-1}\epsilon$: since $\det Q_G\neq0$, one has $\Delta\neq 0$ and the block inverse formula gives
\[e_1^TQ_G^{-1}=\Delta^{-1}(1,\:-\epsilon^T A^{-1})=\Delta^{-1}(1,-e_1^TA_1^{-1},...,-e_1^TA_n^{-1})\:.\] Then the row vector $-\epsilon^T A^{-1}$ has positive entries, thus every entry of $e_1^TQ_G^{-1}$ has the same sign as $\Delta$. By standard linear algebra, we have the congruence
\[Q_G\sim
\begin{pmatrix}
\Delta & 0\\
0 & A
\end{pmatrix}\] and then \[(b_2^+(P_G),b_2^-(P_G),0)=(0,|G|-1,0)+\Big(\frac{1+\text{sgn}(\Delta)}{2},\frac{1-\text{sgn}(\Delta)}{2},0\Big)\:.\]
This implies that
\begin{itemize}
    \item if $G$ is negative-definite then $\Delta<0$, and then every entry of $e_1^TQ_G^{-1}$ is negative;
    \item if $G$ is indefinite then $\Delta>0$, and then every entry of $e_1^TQ_G^{-1}$ is positive.
\end{itemize}
Now let $Z=(z_1,...,z_{|G|})$ be any initial characteristic vector in $G$; since $z_i\geq m(i)+2$ for each $i$, we have that $Z-Z_\text{can}\neq0$ has non-negative coordinates. Therefore, we conclude that in the case of $\Gamma$ one has $e_1^TQ^{-1}(W-W_\text{can})<0$, while in the case of $\Gamma^*$ one has $e_1^TQ_*^{-1}(V-V_{\text{can}})>0$. This proves the claim. 
\end{proof}

\section{The classification when \texorpdfstring{$b_1=1$}{e(Y)=0}}
\label{section:four}
Let $Z$ be a Seifert fibred space whose standard graph $G$ has at least three legs, and such that $b_1(Z)=1$; hence, we have $e(Z)=e_0+r_1+\cdots +r_n=0$. These manifolds are surface bundles, obtained by capping off a regular fibre in a Seifert fibred space by performing $0$-surgery on it. We are going to classify all the negative-twisting structures on $Z$.

We recall that for torus bundles the classification follows by the results of Honda \cite{Honda2} and Giroux \cite{Giroux1}; namely, there is a unique Stein fillable structure, together with infinitely many structures obtained by adding (non-vertical) Giroux torsion along the torus. These structures are weakly but not strongly fillable, see \cite[Theorem 1]{DG}.

\begin{teo}
 \label{teo:classification0}
 Let $Z=M(e_0;r_1,\dots,r_n)$ be a genus $g>1$ surface bundle obtained by $0$-surgery on a Seifert fibred knot in a rational homology sphere. The negative-twisting tight contact structures on $Z$ are precisely the Legendrian surgeries on all the Legendrian realisations of the blow-down of the standard graph; moreover, they are all Stein fillable and distinguished, up to isotopy, by their contact invariant $c^+$ in $HF^+(-Z)$.  
\end{teo}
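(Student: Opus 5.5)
Our approach has two halves: bound the number of negative-twisting tight structures from above by convex surface theory, and realise that many pairwise distinct Stein fillable structures from below. First we fix the maximal twisting number. Since $e(Z)=0$, the fibration framing of the regular fibre $K$ is still well defined, because $K$ is nullhomologous up to the fibre class. We then apply the Ghiggini--Massot criterion (Theorem \ref{teo:Paolo}, in the extended form of \cite[Proposition 4.1]{CM-negative}) with $e(Z)=0$. The goal is to show that there is a unique negative twisting number $-q$, and that it is attained exactly when the best upper approximations $\frac{p_i}{q}$ satisfy $\sum p_i=-e_0q+n-2$. Uniqueness should follow as in \cite[Theorem 1.2]{CM-negative}. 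Indeed, a second solution $q'>q$ would give a vertical annulus between two Legendrian fibres of twisting $-q'$ and $-q$ whose dividing curves, after the bypass argument, would force $e(Z)\neq0$; for a genus $g>1$ bundle this situation is excluded. The same convex decomposition (cut along vertical annuli and tori around singular fibres, then count tight structures on the solid tori and on the $(n$-punctured sphere$)\times S^1$ piece) gives the upper bound
\[\prod_{i=1}^n|m_i\text{-coefficient count}|\]
in the usual form of Equation \eqref{eq:upper1}, namely the number of tight structures on the solid tori with boundary slope determined by $\frac{p_i}{q}$.

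Next we produce the structures. Blowing down the standard graph $G$ as in Lemma \ref{lemma:blowdown} gives a Kirby diagram of $X_G$, built by attaching $2$-handles along cables of a torus knot $T_{d_2,d_1}$. We Legendrian realise it by stabilising the framings in all possible ways, as in \cite[Section 5]{CM-negative}. Each realisation gives a Stein filling, hence a tight structure $\xi_J$, which we show has twisting number $-q$. This is done by exhibiting a Legendrian push-off of the regular fibre with the correct Thurston--Bennequin invariant, computed from the maximal $tb$ of the torus knot plus the cabling correction. The count of realisations then matches the upper bound from convex surface theory, by the same arithmetic as in the rational homology sphere case.

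It remains to distinguish the $\xi_J$. We use Lemma \ref{lemma:1} together with Proposition \ref{prop:zero}: the full paths of $G$ give an $\F[U]$-basis of the relevant part of $HF^-(Z)$, and $T^+$ is an isomorphism. Exactly as in \cite{BP} and \cite{CM-negative}, the contact invariant of a Stein filling $(X_G,J)$ satisfies $F^+_{\overline{X_G},J}(c^+(\xi_J))\neq0$ while the other $\Spin^c$ maps vanish by Plamenevskaya. Hence $c^+(\xi_J)=T_{[V_J]}$, where $V_J\in\mathrm{Char}(G)$ is the strict transform of $c_1(J)$. Different Chern classes on $X_G$ give different initial vectors. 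By the uniqueness of initial vectors in full paths that end correctly (valid here modulo Rustamov's loops) these initial vectors give distinct full paths, so the contact invariants are distinct and the structures are non-isotopic.

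We expect the main obstacle to be this last step in the singular setting. For non-torsion $\Spin^c$-structures $HF^-$ is the completed dual and the Maslov grading is only relative. Moreover, Rustamov's loops could in principle identify two realised initial vectors. We would first show that realised vectors $V_J$ never have coordinates equal to both $m(i)$ and $-m(j)$, so that their full paths are loop-free. For the $c^+$ identification itself we would fall back on comparing the maps $F^+_{\overline{X_G},\mathfrak u}$, with twisted coefficients if necessary.
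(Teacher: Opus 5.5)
Your overall architecture is the paper's: show the negative twisting number is unique (Proposition \ref{prop:tw}, Theorem \ref{teo:tw}), then run the argument of \cite[Theorem 1.1 and Proposition 5.1]{CM-negative}, matching the convex-surface upper bound with the Legendrian surgeries on $X_G$ and separating these by $c^+$. The gap is in uniqueness, which is the one step that is genuinely new when $e(Z)=0$.

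Your claim that a second twisting number would force $e(Z)\neq0$ cannot be right as stated. The torus bundles of Figure \ref{Torus} have $e=0$ and infinitely many negative twisting numbers, so $g>1$ must enter essentially, and you only assert that it does. The vertical-annulus picture is also misplaced: two distinct maximal twisting numbers belong to different contact structures, and which values occur is decided arithmetically by Theorem \ref{teo:Paolo}. Moreover, the summation argument of \cite[Proposition 4.2]{CM-negative} only yields $e(Z)\geq0$. That is a contradiction when $e<0$, but here it is exactly the borderline case.

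What is missing is the equality analysis.
\begin{itemize}
\item If $e_0\leq-2$ and $-q<-1$ is a twisting number, then $\frac{p_i-1}{q-1}<\frac{p_i}{q}$ has smaller denominator, so $\frac{p_i-1}{q-1}\leq r_i$.
\item If $e_0=-1$ and $-Q<-q$ are two twisting numbers, then likewise $\frac{P_i-p_i}{Q-q}\leq r_i$.
\item Summing against $\sum p_i=-e_0q+n-2$ gives $\sum r_i\geq-e_0$, so $e(Z)=0$ forces equality on every leg, and $e_0=-2$ in the first case.
\item Each $r_i=\frac{b_i}{a_i}$ is then a Farey neighbour of its approximations, and $e(Z)=0$ becomes $\frac{1}{a_1}+\cdots+\frac{1}{a_n}=n-2$.
\item The only solutions with $a_i\geq2$ are $(3,3,3)$, $(2,4,4)$, $(2,3,6)$, $(2,2,2,2)$, which are precisely the genus-one graphs.
\end{itemize}

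Several further steps need adjusting.
\begin{itemize}
\item The Stein property of $X_G$ does not follow from Lemma \ref{lemma:blowdown} and Remark \ref{remark:blowdown}, which concern rational homology spheres. The paper lowers the framing of a vertex outside $G'$ to get a negative-definite $\widetilde G$ with $X_{\widetilde G}$ Stein. It then checks $k<\mathrm{TB}_{\xist}(K_i)$ for the single knot whose framing changes, taking care when $G'$ is the fibration of $T_{1,d_1}$.
\item That same argument shows $V_\text{can}$ ends correctly. By Corollary \ref{cor:negative}, $G'$ is then unique and not a negative torus knot; your cabling description silently assumes this.
\item A Legendrian regular fibre of twisting $-q$ only gives $\mathrm{tw}(Z,\xi_J)\geq-q$. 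You also need $\xi_J$ to be negative-twisting. The $\tau$-Bennequin route of Propositions \ref{prop:neg_tw} and \ref{prop:bigtw} does not apply directly, because $K$ meets the fibre surface and is not rationally null-homologous in $Z$.
\item When $G'=\emptyset$ the count to match is Equation \eqref{eq:upper2}, not \eqref{eq:upper1}.
\item The loop issue you flag is harmless for non-isotopy. Plamenevskaya's uniqueness of the $\Spin^c$-structure $\mathfrak v$ with $F^+_{\overline{X_G},\mathfrak v}(c^+(\xi_J))\neq0$ already gives $c^+(\xi_{J_1})\neq c^+(\xi_{J_2})$ whenever $c_1(J_1)\neq c_1(J_2)$, since $X_G$ is simply connected.
\end{itemize}
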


We briefly describe why the 4-manifold $X_G$ is a Stein domain; in particular, these manifolds always have at least one Stein fillable structure. We use Lemma \ref{lemma:blowdown} after we modify $G$ by lowering the framing on a vertex $S_i$ outside of an $S^3$-subgraph $G'$ (we have $n\geq3$); in this way, we obtain a negative-definite graph $\widetilde G$ such that $X_{\widetilde G}$ is a Stein domain, which differs from $X_G$ by having framing (say $k$) lowered by one on a single Legendrian knot $\mathcal K_i\subset(S^3,\xist)$. If either $S_i$ can be chosen to be not connected to the centre of $G$, or $G'$ represents $T_{d_2,d_1}$ with $d_2>1$, then $k<\text{TB}_{\xist}(K_i)$ because either the knot type $K_i$ is an unknot, thus $k\leq-3$, or $K_i$ is a positive torus knot, thus $\text{TB}_{\xist}(K_i)>0$. In the remaining case when $G'$ represents $T_{1,d_1}$, the framing on $S_i$ should be low enough for this vertex not to be blown down together with $G'$, see Figure \ref{Graph} for an example.

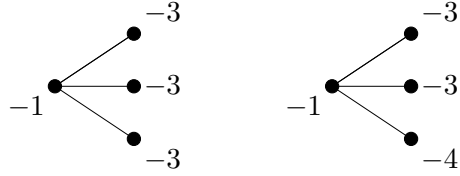
\begin{figure}[ht] 
     \begin{tikzpicture}[scale=0.7]
    \tkzDefPoints{0/0/A, 1.5/1/B, 1.5/-1/D, 1.5/0/C}  
    \tkzDrawSegment(A,B)\tkzDrawSegment(A,D)\tkzDrawSegment(B,A)\tkzDrawSegment(A,C)
    \tkzDrawPoints[fill,black,size=5](A,B,C,D)
     \tkzLabelPoint[below left](A){$-1$} 
     \tkzLabelPoint[above right](B){$-3$}\tkzLabelPoint[right](C){$-3$}
     \tkzLabelPoint[below right](D){$-3$}
       \end{tikzpicture}\hspace{1cm} 
       \begin{tikzpicture}[scale=0.7]
    \tkzDefPoints{0/0/A, 1.5/1/B, 1.5/-1/D, 1.5/0/C}  
    \tkzDrawSegment(A,B)\tkzDrawSegment(A,D)\tkzDrawSegment(B,A)\tkzDrawSegment(A,C)
    \tkzDrawPoints[fill,black,size=5](A,B,C,D)
     \tkzLabelPoint[below left](A){$-1$} 
     \tkzLabelPoint[above right](B){$-3$}\tkzLabelPoint[right](C){$-3$}\tkzLabelPoint[below right](D){$-4$}
       \end{tikzpicture}
     \caption{\smaller[1]{The standard graph $G$ of $Z=M(-1;\frac{1}{3},\frac{1}{3},\frac{1}{3})$ (left), and the one of $Y_{\widetilde G}=M(-1;\frac{1}{3},\frac{1}{3},\frac{1}{4})$ (right). The framing on the lower vertex can be at most $-3$; otherwise, the vertex would be inside $G'$.}}
     \label{Graph}
\end{figure} 

The above argument shows that when $e(Z)=0$ (and $n\geq3$) the canonical vector $V_\text{can}$ ends correctly. Therefore, the maximal $S^3$-subgraph of $G$ is unique, because as said in Corollary \ref{cor:negative} this would not be possible otherwise.

We mentioned that a manifold $Z=M(e_0;r_1,...,r_n)$ as above is a genus $g\geq1$ surface bundle when $n\geq3$. Let $K_i$ be a regular fibre of the Seifert fibration presented by any graph $\Gamma_i$, obtained by removing the $i$-th leg from $G$. It is a classical result \cite[Subsection 2.1]{Hatcher} that a surface bundle which is Seifert fibred over a sphere has fibre of genus $g=1$
if and only if its standard graph is one of the seven in Figure \ref{Torus}, and then the structures with Giroux torsion from \cite[Theorem 1]{DG} appear only in these cases.

\begin{figure}[b] 
       \begin{tikzpicture}[scale=0.8]
    \tkzDefPoints{0/0/A, 1.5/1/B, 1.5/-1/D, 1.5/0/C, 3/1/E, 3/0/F, 3/-1/G}  
    \tkzDrawSegment(A,B)\tkzDrawSegment(A,D)\tkzDrawSegment(B,A)\tkzDrawSegment(A,C)\tkzDrawSegment(B,E)\tkzDrawSegment(F,C)\tkzDrawSegment(D,G)
    \tkzDrawPoints[fill,black,size=5](A,B,C,D,E,F,G)
     \tkzLabelPoint[below left](A){$-2$} 
     \tkzLabelPoint[above right](B){$-2$}\tkzLabelPoint[above right](C){$-2$}\tkzLabelPoint[above right](E){$-2$}\tkzLabelPoint[above right](F){$-2$}
     \tkzLabelPoint[below right](D){$-2$}\tkzLabelPoint[below right](G){$-2$}
       \end{tikzpicture}
        \begin{tikzpicture}[scale=0.8]
    \tkzDefPoints{0/0/A, 1.5/1/B, 1.5/-1/D, 1.5/0/C, 4.5/0/E, 3/0/F, 3/-1/G, 4.5/-1/H}  
    \tkzDrawSegment(A,B)\tkzDrawSegment(A,D)\tkzDrawSegment(B,A)\tkzDrawSegment(A,C)\tkzDrawSegment(F,E)\tkzDrawSegment(F,C)\tkzDrawSegment(D,G)\tkzDrawSegment(H,G)
    \tkzDrawPoints[fill,black,size=5](A,B,C,D,E,F,G,H)
     \tkzLabelPoint[below left](A){$-2$} 
     \tkzLabelPoint[above right](B){$-2$}\tkzLabelPoint[above right](C){$-2$}\tkzLabelPoint[above right](E){$-2$}\tkzLabelPoint[above right](F){$-2$}
     \tkzLabelPoint[below right](D){$-2$}\tkzLabelPoint[below right](G){$-2$}\tkzLabelPoint[below right](H){$-2$}
       \end{tikzpicture}
       \begin{tikzpicture}[scale=0.7]
    \tkzDefPoints{0/0/A, 1.5/1/B, 1.5/-1/D, 1.5/0/C, 3/-1/E, 4.5/-1/F, 6/-1/G, 7.5/-1/H, 3/0/I}  
    \tkzDrawSegment(A,B)\tkzDrawSegment(A,D)\tkzDrawSegment(B,A)\tkzDrawSegment(A,C)\tkzDrawSegment(D,E)\tkzDrawSegment(E,F)\tkzDrawSegment(F,G)\tkzDrawSegment(G,H)\tkzDrawSegment(C,I)
    \tkzDrawPoints[fill,black,size=5](A,B,C,D,E,F,G,H,I)
     \tkzLabelPoint[below left](A){$-2$} 
     \tkzLabelPoint[above right](B){$-2$}\tkzLabelPoint[above right](C){$-2$}\tkzLabelPoint[above right](I){$-2$}
     \tkzLabelPoint[below right](D){$-2$}\tkzLabelPoint[below right](E){$-2$}\tkzLabelPoint[below right](F){$-2$}\tkzLabelPoint[below right](G){$-2$}\tkzLabelPoint[below right](H){$-2$}
       \end{tikzpicture}       

     \begin{tikzpicture}[scale=0.9]
    \tkzDefPoints{0/0/A, 1.5/1/B, 1.5/-1/D, 1.5/0/C}  
    \tkzDrawSegment(A,B)\tkzDrawSegment(A,D)\tkzDrawSegment(B,A)\tkzDrawSegment(A,C)
    \tkzDrawPoints[fill,black,size=5](A,B,C,D)
     \tkzLabelPoint[below left](A){$-1$} 
     \tkzLabelPoint[above right](B){$-3$}\tkzLabelPoint[right](C){$-3$}
     \tkzLabelPoint[below right](D){$-3$}
       \end{tikzpicture}\hspace{1cm}
       \begin{tikzpicture}[scale=0.9]
    \tkzDefPoints{0/0/A, 1.5/1/B, 1.5/-1/D, 1.5/0/C}  
    \tkzDrawSegment(A,B)\tkzDrawSegment(A,D)\tkzDrawSegment(B,A)\tkzDrawSegment(A,C)
    \tkzDrawPoints[fill,black,size=5](A,B,C,D)
     \tkzLabelPoint[below left](A){$-1$} 
     \tkzLabelPoint[above right](B){$-2$}\tkzLabelPoint[right](C){$-4$}
     \tkzLabelPoint[below right](D){$-4$}
       \end{tikzpicture}\hspace{2cm}
       \begin{tikzpicture}[scale=0.9]
    \tkzDefPoints{0/0/A, 1.5/1/B, 1.5/-1/D, 1.5/0/C}  
    \tkzDrawSegment(A,B)\tkzDrawSegment(A,D)\tkzDrawSegment(B,A)\tkzDrawSegment(A,C)
    \tkzDrawPoints[fill,black,size=5](A,B,C,D)
     \tkzLabelPoint[below left](A){$-1$} 
     \tkzLabelPoint[above right](B){$-2$}\tkzLabelPoint[right](C){$-3$}
     \tkzLabelPoint[below right](D){$-6$}
       \end{tikzpicture}\hspace{1cm}
      \begin{tikzpicture}[scale=0.8] 
      \tkzDefPoints{0/0/A, 1.5/1.5/B, 1.5/-1.5/D, 1.5/0.5/C, 1.5/-0.5/G}      
       \tkzDrawSegment(A,B)\tkzDrawSegment(A,D)\tkzDrawSegment(A,C)\tkzDrawSegment(A,G)
    \tkzDrawPoints[fill,black,size=5](A,B,C,D,G)
     \tkzLabelPoint[above left](A){$-2$} \tkzLabelPoint[below right](B){$-2$}\tkzLabelPoint[below right](C){$-2$}
     \tkzLabelPoint[below right](D){$-2$}\tkzLabelPoint[below right](G){$-2$}
\end{tikzpicture}
       \caption{\smaller[1]{All the possible standard graphs corresponding to torus bundles over $S^1$.}}
     \label{Torus}\end{figure}
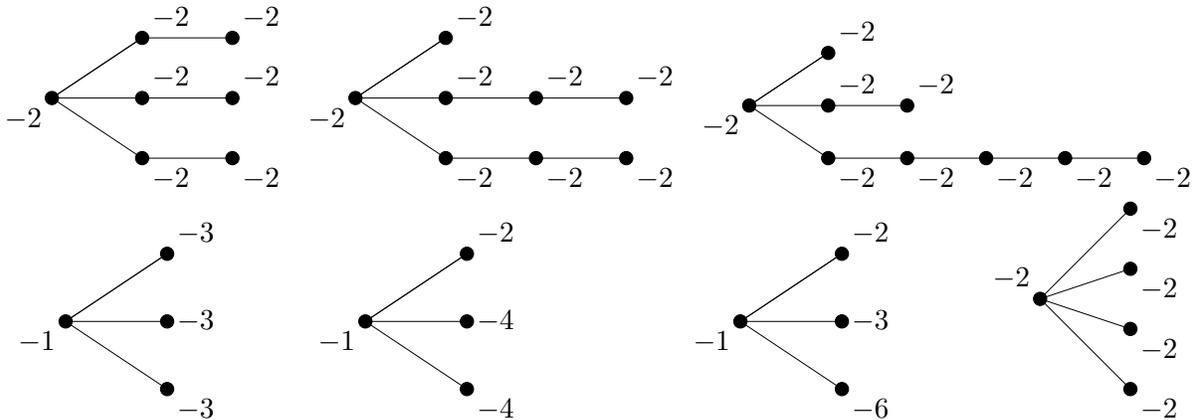

We can show that $g$ is also determined combinatorially. Note that since $e(Z)=e_0+r_1+\cdots+r_n=0$, we have that $r_i=-e(\Gamma_i)$ and $\Gamma_i$ is negative-definite; therefore, the negative rational number $\frac{1}{e(\Gamma_i)}$ coincides with $[D_i]\cdot[D_i]$, the self-intersection of the disk $D_i\subset P_{\Gamma_i}$ whose boundary is $K_i$. 
\begin{prop}
 \label{prop:regular}
 Let $\Gamma$ be the negative-definite standard graph of a Seifert fibred space, and $K$ be the regular fibre. Then we have that $g_3(Y_\Gamma,K)=\tau_{\xi_{\emph{can}}}(K)$ where $\xi_{\emph{can}}$ is the contact structure on $Y_\Gamma$ induced by a Milnor fibre. In particular, one has \[g_3(Y_\Gamma,K)=\dfrac{1}{2}\left(\dfrac{1}{-e(\Gamma)}+e_1^TQ_\Gamma^{-1}W_{\emph{can}}\right)\] where $Q_\Gamma$ is the intersection matrix of $\Gamma$.
\end{prop}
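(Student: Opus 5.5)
The proof has three steps. First we identify the contact invariant of $\xi_\text{can}$ with a full path class and compute its $\tau$-invariant from Equation \eqref{eq:tau}. Then we show that $\tau_{\xi_\text{can}}(K)$ bounds the Seifert genus from below. Finally we exhibit a Seifert surface realising that bound.

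\textbf{Step 1: the $\tau$-invariant of $\xi_\text{can}$.} By \cite{BP}, $c^+(\xi_\text{can})=T_{[W_\text{can}]}$, and dually the class relevant to $\tau_{\xi_\text{can}}(K)$ is represented by the full path $[W_\text{can}]$, which ends correctly. We apply Equation \eqref{eq:tau}, or equivalently the first formula of Theorem \ref{teo:taus}, with $-e_1^TQ^{-1}e_1=\frac{1}{-e(\Gamma)}$. This gives
\[\tau_{\xi_\text{can}}(K)=\frac12\Big(\frac{1}{-e(\Gamma)}+\min_{[Z]=[W_\text{can}]}e_1^TQ^{-1}Z\Big).\]
The minimum over the full path should be attained at the initial vector $W_\text{can}$ itself, since $\mathcal F$ increases along full paths (as in the proof of Theorem \ref{teo:tau}). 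This yields the displayed formula.

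\textbf{Step 2: the lower bound.} We use the standard genus bound $\tau_\gamma(K)\leq g_3(Y_\Gamma,K)$. This holds because the Alexander filtration induced by a Seifert surface $F$ is supported in $[-g(F),g(F)]$, so every nonzero class has $|\tau_\gamma|\leq g(F)$. Hence $\tau_{\xi_\text{can}}(K)\leq g_3$.

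\textbf{Step 3: the upper bound, which is the main obstacle.} We must show that the regular fibre bounds a rational Seifert surface whose genus equals the formula. The plan is to use the Seifert fibration structure. The regular fibre $K$ is fibred in a suitable sense: removing a neighbourhood of $K$ gives an orbifold circle bundle over a disk with cone points, so the rational genus of $K$ is computable by Euler characteristic. The fibre surface is a branched cover of the base disk of degree equal to the order of $[K]$ in $H_1$. Its Euler characteristic is given by Riemann–Hurwitz in terms of the $r_i$ and $e(\Gamma)$.

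We then compare this Euler characteristic with the right-hand side, which we write out using the block inverse formula from Lemma \ref{lemma:can}:
\[e_1^TQ^{-1}W_\text{can}=\Delta^{-1}\big(e_0+2-\textstyle\sum_i\epsilon^TA_i^{-1}(m+2)_i\big).\]
Each leg term should evaluate to a simple expression in $r_i$ and the reciprocal of the leg's numerator, via the continued fraction identities. After this, the comparison with Riemann–Hurwitz is a direct algebraic check.

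Since $K$ is fibred, the Seifert genus equals the fibre genus. Together with Step 2 and the fact that $\tau$ of a fibred knot for the supported contact class attains the genus (Hedden's argument), this gives equality. Concretely, we would verify that $\xi_\text{can}$ is supported by the open book, or rational open book, whose binding is $K$. This holds because the Milnor fibre contact structure is transverse to the Seifert fibres, so $K$ is the binding of a rational open book supporting $\xi_\text{can}$. Hedden's result then gives $\tau_{\xi_\text{can}}(K)=g_3$ directly, which makes the explicit Euler characteristic computation a consistency check rather than a necessity.
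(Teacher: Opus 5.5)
\textbf{The gap is in Step~3.} Steps~1 and~2 match the paper and are fine. Step~3 rests on the sentence ``$\xi_{\text{can}}$ is transverse to the Seifert fibres, so $K$ is the binding of a rational open book supporting $\xi_{\text{can}}$,'' and this does not follow. Transversality to the fibration is a property that possibly several non-isotopic contact structures share. It does not tell you which structure the horizontal rational open book with binding $K$ supports. The implication that does hold runs the other way: the structure supported by that open book is transverse.

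Consequently, Hedden's argument gives $\tau_\xi(K)=g_3$ only for the supported structure $\xi$. Steps~1--2 give only $\tau_{\xi_{\text{can}}}(K)\leq g_3$. What is missing is the identification $\xi=\xi_{\text{can}}$, and the paper supplies it with Heegaard Floer input:
\begin{enumerate}
\item $\xi$ is obtained from the multi-cable $K_N$ via Pichon--Seade and Baker--Etnyre--Van Horn-Morris, and is transverse.
\item Hence $\xi$ is fillable by Lisca--Mati\'c, so $c^+(\xi)=T_{[W]}$ for some initial $W$ by the negative-definite classification.
\item The squeeze $\self_\xi(K)=-\chi(S)/o\leq 2\tau_\xi(K)-1\leq 2g_3-1$ shows that $\tau_{[W]}(K)=g_3$ is the top Alexander level. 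You need this rational version, not Hedden's null-homologous statement.
\item Therefore $\mathcal F(W)$ is maximal among initial vectors ending correctly, so $W=W_{\text{can}}$ by Lemma~\ref{lemma:can}, and $\xi=\xi_{\text{can}}$ by Bodn\'ar--Plamenevskaya.
\end{enumerate}

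\textbf{Your Euler characteristic route would close the gap on its own.} The computation you demoted to a consistency check suffices by itself, and is more elementary than the paper's argument. A fibre of the exterior is Thurston-norm minimising, so it computes $g_3$. Step~1 already identifies $\tau_{\xi_{\text{can}}}(K)$ with the right-hand side, so no contact identification is needed at all.

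As sketched, however, it has an error: the degree $d$ of the branched cover $S\to D$ is not the order $o$ of $[K]$. For $\Sigma(2,3,5)$ one has $o=1$ but $d=30$. For a connected fibre, $o$ is the degree of $\partial S\to K$, and $d=o/|e(\Gamma)|$, because $d/o$ is the fibration framing $-1/e(\Gamma)$.

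With this correction the check goes through. Write $r_i=b_i/a_i$. Then $\chi(S)/o=\chi^{\mathrm{orb}}(D)/|e(\Gamma)|$ with $\chi^{\mathrm{orb}}(D)=1-n+\sum_i a_i^{-1}$. On the other side, $W_{\text{can}}$ restricted to the $i$-th leg is $m^{(i)}+2\mathbf 1=A_i\mathbf 1+e_1+e_{k_i}$. Together with $(A_i^{-1})_{11}=-r_i$ and $(A_i^{-1})_{1k_i}=-1/a_i$, this gives $e_1^TQ^{-1}W_{\text{can}}=1+\bigl(2-n+\sum_i a_i^{-1}\bigr)/e(\Gamma)$. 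Both sides of the proposition then equal $\frac12\bigl(1+\bigl(n-1-\sum_i a_i^{-1}\bigr)/(-e(\Gamma))\bigr)$.
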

\begin{proof}
 A result of Pichon and Seade \cite[Theorem 4.1]{PS} says, in particular, that the link $K_N$ given as $N$ leaves attached to the centre of $\Gamma$ is fibred when $N$ is big enough; therefore, as $K_N$ is a positive cable of the regular fibre $K=K_1$, we have that the latter knot is rationally fibred in $Y_\Gamma$. We know from \cite[Theorem 5.1]{BEvHM} that $K$ supports the same structure $\xi$ of $K_N$; moreover, since $K_N$ consists of parallel copies of $K$, then $\xi$ is transverse to the Seifert fibration of $Y_G$. From a result of Lisca and Mati\'c \cite[Corollary 2.2]{LM-L} we can argue that $\xi$ is fillable, and according to our classification \cite[Theorem 1.1 and Proposition 5.2]{CM-negative}, that is actually Stein fillable and its contact invariant is $T_{[W]}\in\widehat{HF}(-Y_\Gamma)$ for some initial $W\in\text{Char}(Y_\Gamma)$ whose full path ends correctly. 
 
 Consider $o$ the order of $[K]\in H_1(Y_\Gamma;\Z)$, and $S$ a page of the corresponding open book. The value $\frac{1-\chi(S)/o}{2}$ is equal to the (rational) Seifert genus $g_3(Y_\Gamma,K)$, and this can be computed combining the definition of self-linking number (of a rationally null-homologous knot) with \cite[Theorem 2.1]{Cavallo-B}, and using the Hedden-Plamenevskaya bound in \cite[Theorem 3.2]{CM}. In fact, we have that the rational self-linking formula gives
 \[-\chi(S)/o=\self_\xi(K)\leq2\tau_\xi(K)-1\leq2g_3(Y_\Gamma,K)-1=-\chi(S)/o\:,\]
 forcing the inequalities to be sharp. More specifically, we can write $\tau_{\xi}(K)=\frac{1-\chi(S)/o}{2}=g_3(Y_\Gamma,K)$ which can be determined using Alfieri's formula for the $\tau$-invariant, that is $\tau_{\xi}(K)=\frac{1}{2}(\frac{1}{-e(\Gamma)}+e_1^TQ_\Gamma^{-1}W)$, see Equation \eqref{eq:tau}.
 
 Since the rational Seifert genus is the maximal non-zero value for the Alexander filtration induced by $K$, we have that $\mathcal F(W)$ should be maximal among initial vectors that end correctly. Using Lemma \ref{lemma:can}, we conclude that $W=W_\text{can}$ and then $\xi=\xi_\text{can}$ by \cite[Corollary 5.5]{BP}. 
\end{proof}

We can now give a condition that allows us to determine the genus $g$ of a fibre of any surface bundle $Z$. 
We have that $g_3(Y_{\Gamma_i},K_i)$ is equal to $\frac{1}{2}-\frac{\chi(\Sigma)}{2o}$, where $o$ is the order of $[K_i]\in H_1(Y_{\Gamma_i};\Z)$ and $\Sigma$ is the rational Seifert surface which realises its Thurston norm, see \cite{Cavallo-B} for more details. 
If $\Sigma$ has genus $g$ then $\chi(\Sigma)=2-\frac{o}{b}-2g$, where we set $-e(\Gamma_i)=r_i=\frac{b}{a}\in\Q_{>0}$ and then $\frac{o}{b}$ is precisely $|\Sigma|$, and this happens if and only if any graph $\Gamma_i$ satisfies \begin{equation}e_1^TQ_{\Gamma_i}^{-1}W_{\text{can}}=\dfrac{1+b-a}{b}+\dfrac{2}{o}(g-1)\end{equation} by Proposition \ref{prop:regular}. A posteriori the value of $g$ in this equation does not depend on the leg $i$. 

\subsection{The Maslov grading: a special case}
In the previous section we computed the Maslov grading of a non-zero homology class $[V]\in HF^-(M,\s)$, represented by a characteristic vector, when $M$ is a Seifert fibred space with $b_1(M)=0$; namely, we have that $M(V)=\frac{V^TQ^{-1}V+|G|-\epsilon}{4}$ where $\epsilon$ is zero if $G$ is negative-definite, while it is $6$ if $G$ is indefinite.

We know from Heegaard Floer theory that the Maslov grading is a well-defined absolute $\Z$-grading also when $b_1(M)=1$ and $c_1(\s)$ is torsion. We show that there exists an analogous formula for $M$ also in this case.

\begin{prop}
 Let $(Z,\mathfrak t)$ be a Seifert fibred space with $b_1(Z)=1$ and $\mathfrak t$ a torsion $\Spin^c$-structure; moreover, take a $V\in\emph{Char}(G,\mathfrak t)$ such that $[V]\in HF^-(Z,\mathfrak t)$ is non-zero. Then \[M(V)=\frac{B^TQB+|G|-3}{4}\] where $B\in\Q^{|G|}$ is any vector such that $QB=V$.   
\end{prop}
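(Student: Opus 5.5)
Since $[V]=F^-_{P_G,\mathfrak u}(1)$ with $c_1(\mathfrak u)=V$ (Lemma \ref{lemma:1}), I would compute the grading with the general degree-shift formula for cobordism maps. For a cobordism $W$ from $S^3$ to $(Z,\mathfrak t)$ and a $\Spin^c$-structure whose restriction to the boundary is torsion, Ozsv\'ath--Szab\'o \cite{OSz-four} give
\[\frac{c_1^2(\mathfrak u)-2\chi(W)-3\sigma(W)}{4}.\]
Here $c_1^2$ is defined through the pairing on $H^2(W,\partial W;\Q)$. This is the only step where Equation \eqref{eq:Maslov} has to be adapted to $b_1(Z)=1$. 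I would take $W=P_G\setminus B^4$ and then substitute the three terms.

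First, $\chi(P_G)=1+|G|$, so $\chi(W)=|G|$.

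Second, since $e(Z)=0$, the proof of Lemma \ref{lemma:can} shows that the Schur complement $\Delta$ vanishes. Then $Q$ is congruent over $\Q$ to $\mathrm{diag}(0,A)$ with $A$ negative-definite of size $|G|-1$, so $\sigma(P_G)=-(|G|-1)$ and $b_2^0=1$.

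Substituting, $-2\chi-3\sigma=-2|G|+3|G|-3=|G|-3$, which accounts for the constant in the statement.

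The remaining step is to identify $c_1^2(\mathfrak u)$ with $B^TQB$. Because $\mathfrak t$ is torsion, $c_1(\mathfrak t)$ maps to zero in $H^2(Z;\Q)$. By the exact sequence $H^2(P_G,Z;\Q)\to H^2(P_G;\Q)\to H^2(Z;\Q)$, the class $V$ then lifts rationally to $H^2(P_G,Z;\Q)\cong H_2(P_G;\Q)$. In the basis given by the vertices, the map from $H_2$ to $H^2$ is $Q$. Hence $V$ lies in the rational image of $Q$, so some $B$ with $QB=V$ exists, and $c_1^2=B^TQB$.

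The choice of $B$ does not matter. Two solutions differ by some $N\in\Ker Q$, and
\[(B+N)^TQ(B+N)=B^TQB+2N^TQB+N^TQN=B^TQB+2N^TV,\]
using $QN=0$. Finally $N^TV=N^TQB=(QN)^TB=0$ by symmetry of $Q$.

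Combining the three terms gives $M(V)=\frac{B^TQB+|G|-3}{4}$.

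The main obstacle is justifying the grading formula itself when $b_1$ of the boundary is positive and $b_2^0(W)=1$. It has to be the rational square defined via a lift, and the result has to be the absolute grading of \cite{OSz} on the torsion summand. I would cite \cite[Theorem 7.1]{OSz-four} together with the absolute grading for torsion $\Spin^c$-structures.

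As a consistency check, I would verify that the result matches the grading $d_\text{ev}(\mathfrak t)$ for the tower generator given by Lemma \ref{lemma:1}. I would also check that it agrees with Rustamov's grading conventions \cite{Rustamov}, for instance on a torus bundle from Figure \ref{Torus}.
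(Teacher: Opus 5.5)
Your proposal is correct and follows essentially the same route as the paper. Both arguments apply Equation \eqref{eq:Maslov} to $F^-_{P_G,\mathfrak u}(1)$, produce $B$ from the torsion hypothesis via a rational lift to $H_2(P_G;\Q)$ so that $c_1^2(\mathfrak u)[P_G]=B^TQB$, and check independence of $B$ using $\Ker Q$ and the symmetry of $Q$. You also spell out $-2\chi(W)-3\sigma(W)=|G|-3$, which the paper leaves implicit. One small correction: the vanishing of $\Delta$ follows from $\det Q=\Delta\det A=0$, not from the proof of Lemma \ref{lemma:can}, which assumes $\det Q\neq0$.
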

\begin{proof}
 The fact that at least one vector $B$ exists is equivalent to the fact that $\mathfrak t$ is torsion; in fact, as a class in $H_1(Z;\Q)$ one has that $\mathfrak t$ is equivalent to a spin structure, and we conclude as in the proof of Proposition \ref{prop:spinc}. It is also easy to check that if $B_1$ and $B_2$ are as in the statement then the expression for $M$ is well-defined: \[0=(B_1+B_2)^TQ(B_1-B_2)=B_1^TQB_1-B_2^TQB_2+(B_2^TQB_1-B_1^TQB_2)=B_1^TQB_1-B_2^TQB_2\] because $B_1-B_2\in\Ker Q$ and $Q$ is symmetric.

 Using Equation \eqref{eq:Maslov} and Corollary \ref{cor:even}, we only have to check that $c_1^2(\mathfrak u)[P_G]=B^TQB$ where $V=c_1(\mathfrak u)$. This follows from the standard fact that $c_1^2(\mathfrak u)[P_G]=A^TQA$ where $A$ is a vector that represents $c_1(\mathfrak u)$ in the basis $\{S_1,...,S_{|G|}\}$ of $H_2(P_G;\Z)$ given by the vertices; hence, we have that $A$ satisfies $QA=V$ because $V$ represents the Chern class in the dual basis $\{D_1,...,D_{|G|}\}$ of $H_2(P_G,Z;\Z)$ given by the dual of the vertices. 
\end{proof}

\subsection{The maximal twisting number}
We prove the equivalent of \cite[Proposition 4.2]{CM-negative}, showing that only one negative twisting number of a tight structure exists on $Z$ unless it is a torus bundle. 

\begin{prop}
 \label{prop:tw}
 If $Z=M(e_0;r_1,\dots,r_n)$ satisfies $b_1(Z)=1$, then all the negative-twisting tight contact structures on $Z$ without Giroux torsion have the same twisting number.
\end{prop}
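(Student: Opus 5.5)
We prove Proposition \ref{prop:tw} by adapting the convex surface argument behind \cite[Proposition 4.2]{CM-negative} and Theorem \ref{teo:Paolo}. The plan is to compare any two negative twisting numbers $-q<-q'$ realised by tight structures without Giroux torsion, and show that the region between them forces a torus with half Giroux torsion, or else an overtwisted disk.

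Let $\xi$ be a tight structure on $Z$ with $\text{tw}(Z,\xi)=-q<0$. We take a Legendrian regular fibre $K$ of twisting $-q$ and a standard neighbourhood of $K\cup F_1\cup\dots\cup F_n$, where the $F_i$ are the singular fibres. Its complement is $\Sigma\times S^1$, with $\Sigma$ a pair of pants or, for $n>3$, a sphere with $n$ holes. Following Ghiggini's criterion in the form of \cite[Proposition 4.1]{CM-negative}, we thicken the neighbourhoods $V_i$ of the singular fibres. Their boundary slopes are then the best upper approximations $\frac{p_i}{q}$ of $r_i$.

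The balancing condition becomes
\[p_1+\dots+p_n=-e_0q+n-2.\]
Since $e_0=-(r_1+\dots+r_n)$, this reads
\[\sum_i\Big(\frac{p_i}{q}-r_i\Big)=\frac{n-2}{q}.\]

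Suppose two values $q<q'$ both satisfy this condition. We would then find a thickened torus around a regular fibre neighbourhood whose two boundary slopes correspond to the two twisting numbers. The key claim is that in the case $e(Z)=0$ we have $\sum_i(\frac{P_i}{q'}-r_i)\to0$ linearly in $1/q'$. This should force every best upper approximation relation to hold simultaneously for all $q$ in a whole arithmetic progression. That is exactly the situation in which the vertical annuli between $\partial V_i$ and $K$ have no boundary parallel dividing curves. The resulting $T^2\times I$ then has rotation at least $\pi$, which is half Giroux torsion along a vertical torus.

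I would make this precise with Massot's description \cite{Massot} of tight structures on $\Sigma\times S^1$ with non-maximal twisting, in the spirit of how the torsion phenomenon is identified in the torus bundle case \cite{Honda2,Giroux1,DG}. There are two alternatives. Either the extra twisting number comes from adding $\pi$-torsion, which is excluded by hypothesis. Or the Legendrian fibre can be destabilised to the larger twisting, contradicting maximality.

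The main obstacle is this dichotomy in the step above. We must show that any structure realising the smaller twisting number actually contains a vertical torus with half Giroux torsion, not merely a torsion-free structure built from different dividing sets. For that I would first try the arithmetic route. With $e(Z)=0$, the equality
\[\sum_i\frac{p_i}{q}=\sum_i r_i+\frac{n-2}{q}\]
should only be satisfied for $q$ in a single residue pattern. I would then check that different admissible $q$ differ by a full turn of the slopes on each $\partial V_i$. Finally, I would use Giroux flexibility to locate the torsion layer around the regular fibre.
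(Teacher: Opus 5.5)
There is a genuine gap, and the mechanism you propose is not the one at work. Your argument rests on two claims you do not prove. The first is that for $e(Z)=0$ the admissible values of $q$ in Theorem~\ref{teo:Paolo} come in an arithmetic progression. The identity $\sum_i(\frac{p_i}{q}-r_i)=\frac{n-2}{q}$ is just the balancing condition restated and does not force this. The second is that a structure realising a non-maximal admissible value must contain half Giroux torsion or admit a destabilisation. That second claim is exactly the content of a classification theorem, and you flag it yourself as the main obstacle. The first claim is false whenever the fibre has genus $g>1$: then there is no second admissible $q$ at all, so there is no torsion to locate. The geometric set-up also does not work. The twisting number of a structure is a single maximum, so two distinct values are realised by different structures. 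There is therefore no thickened torus inside one structure whose boundary slopes correspond to both values.

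The missing idea is a purely arithmetic uniqueness argument, which is how the paper proceeds. Suppose $q<Q$ both satisfy Theorem~\ref{teo:Paolo} with numerators $p_i,P_i$. Then $\frac{P_i}{Q}<\frac{p_i}{q}$. The fraction $\frac{P_i-p_i}{Q-q}$ is smaller than $\frac{P_i}{Q}$ and has smaller numerator and denominator, so the best-upper-approximation property gives $\frac{P_i-p_i}{Q-q}\le r_i$. Summing and subtracting the two balancing relations yields $\sum_i r_i\ge -e_0$, and this is an equality because $e(Z)=0$. Hence $r_i=\frac{P_i-p_i}{Q-q}$ for all $i$. Since $a_i\le Q-q<Q$, $r_i$ is a Farey neighbour of $\frac{P_i}{Q}$, and then also of $\frac{p_i}{q}$, i.e.\ $r_i=\frac{p_i}{q}-\frac{1}{a_iq}$. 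Summing gives $\frac{1}{a_1}+\cdots+\frac{1}{a_n}=n-2$. With $a_i\ge 2$ this only allows $(3,3,3)$, $(2,4,4)$, $(2,3,6)$ and $(2,2,2,2)$, i.e.\ the torus bundles of Figure~\ref{Torus}. When $e_0\le-2$, so that $-1$ is a twisting number, the same trick with $\frac{p_i-1}{q-1}\le r_i$ gives $\sum_i r_i\ge\frac{-e_0q-2}{q-1}\ge -e_0$ and the same conclusion. The torus-bundle case $g=1$ is then settled directly by Honda's classification \cite{Honda2}, which gives a unique torsion-free negative-twisting structure. No analysis of $T^2\times I$ layers or Giroux flexibility is needed.
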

\begin{proof}
 When $g=1$ the claim is obvious from Honda's classification \cite{Honda2}, because there is only one (Stein fillable) structure without Giroux torsion. Below we show that for $g>1$ there is a unique negative twisting number. 

 We proceed as in the proof of \cite[Proposition 4.2]{CM-negative}. We know from Ghiggini-Massot's algorithm that $-q<-1$ is a twisting number if and only if there exist positive integers $p_1,...,p_n$ such that $\frac{p_i}{q}$ is the best upper approximation for $r_i$ and $p_1+\cdots+p_n=-e_0q+n-2$.

 Suppose first that $e_0\leq-2$, then $-1$ is a twisting number and we are proving that no other $-q$ can be. Since $\frac{p_i-1}{q-1}< \frac{p_i}{q}<1$, we have that $\frac{p_i-1}{q-1}\leq r_i < \frac{p_i}{q}$ and hence \[r_1+\cdots+r_n\geq\frac{p_1-1}{q-1}+\frac{p_2-1}{q-1}+\cdots+\frac{p_n-1}{q-1}=\frac{-e_0q-2}{q-1}\geq -e_0\:.\] Since we have $e(Z)=0$, the above inequalities can be fulfilled only for $e_0=-2$ and $r_i=\frac{p_i-1}{q-1}$; as usual we write $r_i=\frac{b_i}{a_i}$ for the corresponding reduced fraction. This means in particular that $a_i<q$ and hence $r_i$ and its best upper approximations $\frac{p_i}{q}$ are Farey neighbours which gives the relation: \[1=a_ip_i-b_iq=b_i(q-1)+a_i-b_iq=a_i-b_i\:.\] This now leads to $r_i=1-\frac{1}{a_i}$ for $i=1,...,n$ and after summing up to
 \[\dfrac{1}{a_1}+\cdots+\dfrac{1}{a_n}=n-2\:.\] Since $a_i\geq2$, we have 
 $\frac{1}{a_i}+\cdots\frac{1}{a_n}\leq\frac{n}{2}$; so $n-2\leq\frac{n}{2}$ implies $n\leq4$. 
 Clearly, when $n=4$ we need to have $r_i=\frac{1}{2}$ for each $i$, while when $n=3$ the only possibilities for $(a_1,a_2,a_3)$ are $(3,3,3),$ $(2,4,4)$ and $(2,3,6)$. A quick computation of the $b_i$'s allows us to conclude that a $q>1$ can only exist in the case of torus bundles, where the other twisting numbers are obtained adding Giroux torsion.

 Now suppose that $e_0=-1$ and let $-q$ be the highest negative twisting number on $Z$. To prove that no other $-Q<-q$ works, we follow the same steps as above. To the contrary we assume that $Q>q$ gives best upper approximations $\frac{P_i}{Q}$ of $r_i$ and satisfy $P_1+\cdots+P_n=Q+n-2$. Then since $\frac{P_i}{Q}<\frac{p_i}{q}$ we have $\frac{P_i-p_i}{Q-q}\leq r_i < \frac{P_i}{Q}$ which implies \[r_1+\cdots+r_n\geq\frac{P_1-p_1}{Q-q}+\frac{P_2-p_2}{Q-q}+\cdots+\frac{P_n-p_n}{Q-q}=\frac{(Q+n-2)-(q+n-2)}{Q-q}=1.\] Again, as $e(Z)=0$, this forces $r_i=\frac{b_i}{a_i}=\frac{P_i-p_i}{Q-q}$ and because $a_i<Q$, the pairs $r_i$ and $\frac{P_i}{Q}$ are Farey neighbours. From \[1=a_iP_i-b_iQ=b_i(Q-q)+a_ip_i-b_iQ=a_ip_i-b_iq\:\] we see that then also $r_i$ and $\frac{p_i}{q}$ are Farey neighbours and we can write $r_i=\frac{p_i}{q}-\frac{1}{a_iq}$. Inserting into $e(Z)=0$ and knowing that $p_1+\cdots+p_n=q+n-2$, we get exactly the equality $\frac{1}{a_1}+\cdots+\frac{1}{a_n}=n-2$ as above. When $n=4$ we cannot at the same time satisfy $\frac{1}{a_1}+\frac{1}{a_2}+\frac{1}{a_3}+\frac{1}{a_4}=2$ and $\frac{b_1}{a_1}+\frac{b_2}{a_2}+\frac{b_3}{a_3}+\frac{b_4}{a_4}=1$, while when $n=3$ the only possibilities for $(a_1,a_2,a_3)$ are $(3,3,3),$ $(2,4,4)$ and $(2,3,6)$ as before with all $b_i$'s equal to $1$. The latter ones again correspond to torus bundles where the other twisting numbers are obtained adding Giroux torsion.
\end{proof}

The twisting number is determined by the height function, as in the negative-definite case.
Note that since blowing down $G$ yields the Stein domain $X_G$, for every $Z$ we have that $T_{[V_\text{can}]}$ is the contact invariant of a fillable structure on $Z$.

For this reason $G$ necessarily has full paths which have an initial and a terminal vector, as if all of them were having loops in them then some coordinates would satisfy $v_i=\pm m(i)$ at every step, while $V_\text{can}$ ends correctly and is not of this form. Note that this is not the case with $S^1\times S^2$, whose unique full path ending correctly is always a loop; in fact, the manifold $X_G\simeq S^2\times D^2$, obtained by blowing down any of its standard graphs (all of them have two legs), is not a Stein filling of $S^1\times S^2$.

\begin{teo}
 \label{teo:tw}
 We have that $\emph{tw}(Z,\xi)=-1-\height[V_\emph{can}]$ for every negative-twisting structure $\xi$ on $Z$ without Giroux torsion. This number is either equal to $-d_1-d_2$ when the $S^3$-subgraph $G'$ of $G$ is the Seifert fibration of $T_{d_2,d_1}$ with $1\leq d_2\leq d_1$ coprime, or to $-1$ when $G'$ is empty. 
\end{teo}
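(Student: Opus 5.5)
By Proposition \ref{prop:tw}, all negative-twisting structures on $Z$ without Giroux torsion share a single twisting number $\text{tw}(Z)$. It therefore suffices to compute it once, by two independent routes, and check that they agree. For $g=1$ the manifold is one of the seven torus bundles in Figure \ref{Torus}, and the unique Stein fillable structure can be handled by a direct check. So assume $g>1$.

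The first route is the arithmetic one, via Theorem \ref{teo:Paolo}. If $G'$ is empty then $e_0\leq-2$. The proof of Proposition \ref{prop:tw} shows that $-1$ is a twisting number, and that no $-q<-1$ occurs. Hence $\text{tw}(Z)=-1$.

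If $G'$ is nonempty then $e_0=-1$, and $G'$ is the fibration of $T_{d_2,d_1}$, consisting of the centre and two legs with $r_1,r_2$. The candidate is $q=d_1+d_2$. Since $r_1+r_2+\dots+r_n=1$ and $G'$ blows down to $S^3$, I expect $r_1<\frac{a}{d_1}$ and $r_2<\frac{b}{d_2}$ with $\frac{a}{d_1}+\frac{b}{d_2}=1+\frac{1}{d_1d_2}$. This is the Farey relation characterising $T_{d_2,d_1}$.

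The idea is then to choose $p_1$ and $p_2$ so that $\frac{p_1}{d_1+d_2}$ and $\frac{p_2}{d_1+d_2}$ are the best upper approximations of $r_1$ and $r_2$, and to take $p_i=1$ for $i\geq3$. For $i\geq3$ the fraction $\frac{1}{q}$ is a best upper approximation because $r_i$ is small. Indeed, $r_1+r_2>1-\sum_{i\geq3}r_i$ forces the remaining $r_i$ below $\frac{1}{d_1+d_2}$; this is the delicate inequality. The sum condition then reads $p_1+p_2=d_1+d_2-1$, since $p_1+\cdots+p_n=q+n-2$.

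That $-q$ is the highest twisting number (and hence the only one, by Proposition \ref{prop:tw}) follows by the Farey-neighbour argument in that proof. Any smaller $q'$ would make $T_{d_2,d_1}$ blow down with a different slope. This is how the analogous statement \cite[Proposition 4.2]{CM-negative} was handled, and I would adapt it verbatim.

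The second route is Floer-theoretic, through the height. The aim is $\height[V_\text{can}]=-1-\text{tw}(Z)$. I would compute the height combinatorially using Definition \ref{def}. Since $V_\text{can}$ ends correctly, as shown above via the Stein domain $X_G$, its height is the number of central steps in the full path $[V_\text{can}]$, as in \cite[Subsection 2.2]{CM-negative}.

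Following the proof of \cite[Theorem 1.2]{CM-negative}, the full path of $V_\text{can}$ is determined by the blow-down sequence of $G'$. Each blow-down of a $(-1)$-vertex adjacent to the centre contributes one central step. Blowing down $T_{d_2,d_1}$ completely gives $d_1+d_2-1$ such steps, while an empty $G'$ gives $0$ steps. This matches $-1-\text{tw}$ in both cases.

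Because $b_1(Z)=1$, the formula $\height=|b_1|$ with $2QB=V+V'$ is unavailable: $Q$ is singular. I would therefore count central steps directly along the path, invoking Rustamov's description (Lemma \ref{lemma:1} and Proposition \ref{prop:zero}) to guarantee that $[V_\text{can}]$ has genuine initial and terminal vectors rather than a loop.

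The main obstacle is this central-step count in the singular case. The first thing to try is to reduce it to the rational homology sphere $Y_{\widetilde G}$ obtained by lowering a framing outside $G'$, as in the discussion preceding Figure \ref{Graph}. There $Q$ is invertible and $\height[V_\text{can}]$ is computable by \cite{CM-negative} and Lemma \ref{lemma:can}. Then I would argue that the full path of $V_\text{can}$ never uses the modified vertex in a central step, so the count is unchanged when passing back to $G$.
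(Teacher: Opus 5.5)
Your overall architecture matches the paper's. Uniqueness of the negative twisting number comes from the proof of Proposition \ref{prop:tw}, its value from adapting \cite[Proposition 4.2 and Theorem 4.3]{CM-negative}, and the height from comparing with a graph where \cite{CM-negative} applies. The arithmetic details you sketch need repair, though:
\begin{itemize}
\item With $p_i=1$ for $i\geq3$, the condition $p_1+\cdots+p_n=q+n-2$ gives $p_1+p_2=q=d_1+d_2$, not $d_1+d_2-1$ (for the trefoil, $p_1/q=\frac25$ and $p_2/q=\frac35$).
\item Your expected Farey relation is false as written. For the trefoil the leg of order $d_2=2$ has $r\geq\frac12=\frac{b}{d_2}$.
\item The strict inequalities $r_i<\frac{p_i}{q}$ come from the maximality of $G'$, not from $e(Z)=0$ alone. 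The latter only gives $\sum_{i\geq3}r_i\leq\frac{1}{d_1d_2}$, which is not enough when $d_2=1$.
\item In general $G'$ is the centre plus initial segments of legs.
\end{itemize}

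The genuine gap is the height half, which you flag as the main obstacle and never close. First, your count ``one central step per blow-down, $d_1+d_2-1$ in total'' is not a derivation. The trefoil subgraph $M(-1;\frac12,\frac13)$ has three vertices, hence three blow-downs. Yet the full path of its canonical vector,
\[(1,0,-1)\to(-1,2,1)\to(1,-2,1)\to(-1,0,3)\to(1,0,-3)\to(-1,2,-1)\to(1,-2,-1)\to(-1,0,1)\:,\]
has four central steps.

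Second, and more seriously, you only assert that the number of central steps is unchanged when passing to the comparison graph. ``Never uses the modified vertex in a central step'' is vacuous; what you need is that the modified vertex never steps at all.

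The paper compares directly with $G'$ rather than with $\widetilde G$. The value $\height[W_\text{can}]=d_1+d_2-1$ for the canonical vector of $G'$ comes from the proof of \cite[Theorem 4.3]{CM-negative}. The equality $\height[V_\text{can}]=\height[W_\text{can}]$ is proved as in \cite[Remark 4.4]{CM-negative}. What must be checked is that along $[V_\text{can}]$ no vertex outside $G'$ ever reaches its threshold $-m(j)$. Then $[V_\text{can}]$ restricted to $G'$ is exactly $[W_\text{can}]$, and in particular is not a loop, so Rustamov's results are not needed for that.

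For example, a vertex adjacent to the centre but outside $G'$ starts at $m(j)+2$ and gains $2$ per central step, ending at $m(j)+2(d_1+d_2)\leq-m(j)-2$. This holds because $e(Z)=0$ forces $-m(j)\geq d_1d_2$ when $d_2\geq2$, while maximality of $G'$ handles $d_2=1$. Vertices hanging off the ends of the legs of $G'$ are controlled similarly, and vertices further out are never incremented.

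Your detour through $\widetilde G$ needs this same no-step statement for the modified vertex. It also needs a check that $\widetilde G$ has the same maximal $S^3$-subgraph, so it buys nothing.

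For $G'$ empty the argument is one line: $e_0\leq-2$ makes every framing at most $-2$. Then $V_\text{can}$ is both initial and terminal, so $[V_\text{can}]$ has no steps and $\height[V_\text{can}]=0$.
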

\begin{proof}
 From the proof of Proposition \ref{prop:tw} and \cite[Proposition 4.2]{CM-negative} we have that $\text{tw}(Z,\xi)$ is equal to the maximal twisting number of the fibration of $T_{d_2,d_1}$ in $S^3$, and the latter one is $-d_1-d_2$ as we know from \cite[Theorem 4.3]{CM-negative}. In addition, in the proof of the latter result is explained that $1-d_1-d_2=\height[W_\text{can}]$ where $W_\text{can}$ is the canonical vector of $G'$; the fact that $\height[V_\text{can}]=\height[W_\text{can}]$ can be proved in the same way as \cite[Remark 4.4]{CM-negative}.

 Similarly, we know that $\text{tw}(Z,\xi)=-1$ when $G'$ is empty; since in this case $e_0\leq-2$, we also have that $V_\text{can}$ is both initial and terminal and this means $\height[V_\text{can}]=0$.
\end{proof}

Completing the classification of negative-twisting structures on a Seifert fibred space $Z$ with $e(Z)=0$ now only requires a simple observation.
\begin{proof}[Proof of Theorem \ref{teo:classification0}]
 After determining the unique twisting number in Theorem \ref{teo:tw}, the proof is exactly the same as the ones of \cite[Theorem 1.1 and Proposition 5.1]{CM-negative}.
\end{proof}

\section{The highest negative twisting number}
\label{section:five}
\subsection{The contact invariant}\label{subsection:invariant}
Let $(M,\xi)$ be a contact 3-manifold, we are going to use the contact invariant $c^\circ(\xi)\in HF^\circ(-M,\s_\xi)$ defined in \cite{OSz-contact}, where $\circ$ stands either for the hat or the plus flavour. 

From this section on we take $Y$ as a negative-definite Seifert fibred space with $b_1(Y)=0$, and we set as $-Y$ the manifold obtained by swapping orientation. In order to be consistent with the previous sections, we call $\Gamma^*$ the standard graph of $-Y=M(e_0;r_1,...,r_n)$ with $n\geq3$ and $P_{\Gamma^*}$ its plumbing; recall that $b_2^+(P_{\Gamma^*})=1$. In addition, we only consider the case where $Y$ (and then also $-Y$) is not an $L$-space. 

Let us take the basis $\{[W_1],...,[W_{t+1}],T_{[V_1]},...,T_{[V_t]}\}$ of $\widehat{HF}(Y,\s)$ given by characteristic vectors, whose full path ends correctly, for a fixed $\Spin^c$-structure on $Y$. We recall that, using Equation \eqref{eq:even}, in the proof of Theorem \ref{teo:fullpath} we showed that $\Ker U\cap\:HF_\text{red}(Y,\s)$ can be canonically identified with $\widehat{HF}^\text{od}(Y,\s)$ through the inclusion $\rho_*$; hence, we refer to $\mathcal B=\{T_{[V_1]},...,T_{[V_t]}\}$ as a basis of both $\Ker U\cap\;HF_\text{red}(Y,\s)$ (from Theorem \ref{teo:fullpath}) and the odd subgroup of $\widehat{HF}(Y,\s)$.

Let us consider a contact structure $\xi$ on $-Y$. Its contact invariant $c^+(\xi)$ lives in $HF^+(Y,\s_\xi)$; moreover, if $c^+(\xi)\in HF_\text{red}$ is non-zero then from what we said above it can be written as a linear combination of elements in $\mathcal B$, and identified with $\widehat c(\xi)\in\widehat{HF}^\text{od}(Y,\s)$.

\begin{prop}
 \label{prop:neg_tw}
 Let $-Y$ be as above and equip it with a $\xi$ such that $\widehat c(\xi)$ is non-zero. If $d_3(\xi)+d(Y,\s_\xi)$ is odd, or equivalently if $c^+(\xi)\in HF_\emph{red}(Y,\s_\xi)$, then $\xi$ is negative-twisting.
\end{prop}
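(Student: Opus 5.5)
We argue by contraposition: if $\xi$ is tight on $-Y$ but zero-twisting, then $c^+(\xi)$ must land in the tower, so $d_3(\xi)+d(Y,\s_\xi)$ is even. The equivalence in the hypothesis comes from the structure of $HF^+(Y,\s)$ recalled in the proof of Theorem \ref{teo:fullpath}. By Equation \eqref{eq:even}, the tower $\mathcal T^+_{d(Y,\s)}$ contains exactly the classes of $HF^+(Y,\s)$ whose grading has the parity of $d(Y,\s)$. The classes of odd parity form $HF_\text{red}(Y,\s)$, and its kernel of $U$ is identified with $\widehat{HF}^\text{od}(Y,\s)$. Since $c^+(\xi)$ is homogeneous of degree $-d_3(\xi)$, the parity condition is equivalent to $c^+(\xi)\in HF_\text{red}$ once $\widehat c(\xi)\neq0$. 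So it suffices to show that a zero-twisting structure has contact invariant in the even part, i.e.\ $\rho^*\widehat c(\xi)\in\Ker U\cap\mathcal T^+_{d(Y,\s)}$.

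Suppose $\text{tw}(-Y,\xi)\geq0$. We would first use the standard convex surface picture (Wu, Ghiggini, and the zero-twisting analyses in \cite{Irena(f)}). A Legendrian regular fibre with twisting $0$ gives a decomposition of $-Y$ into a vertical part $\Sigma\times S^1$ and solid tori, with the contact structure isotopic to a transverse or $S^1$-invariant model. In particular, $\xi$ is obtained by Legendrian surgery (or contact $(+1)$-surgery) on the regular fibre from a contact structure on a simpler manifold. The cleanest route is to realise $\xi$ as the result of contact surgery along a Legendrian regular fibre $L$ with $\text{tw}(L)=0$. Here $0$-surgery relative to the fibration framing corresponds to contact $(+1)$-surgery when the contact framing equals the fibre framing. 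The result is a contact structure on the manifold $Z$ with $b_1(Z)=1$, or on a connected sum of lens spaces after the appropriate surgery.

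Naturality of $c^+$ under contact $(+1)$-surgery (Lisca–Stipsicz) then gives that $c^+(\xi)$ maps to $c^+(\xi')$ under the cobordism map. In the exact triangle \eqref{eq:triangle}, that cobordism map is exactly the map $h$: the $0$-framed $2$-handle along the regular fibre of $-Y$. Dually, in the language of Subsection \ref{subsection:triangle}, the relevant component is the map to $HF^+(Y_0)$ or $HF^+(Z)$. By Lemma \ref{lemma:2}, $\Ker h=\text{Tor}HF^-$, i.e.\ $h$ kills exactly the odd (reduced) part, and $\Imm h^\bullet=\mathcal T^+$. The strategy is therefore to show that $\xi$ factors through the regular fibre surgery in the opposite direction. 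That is, $\xi$ is obtained from a tight structure on $-Y_0$ (a connected sum of lens spaces, or equivalently the vertical part capped off) by Legendrian surgery on the fibre. Then $c^+(\xi)$ lies in the image of the corresponding cobordism map from $HF^+$ of an $L$-space, hence in $\Imm h^\bullet=\mathcal T^+_{d(Y,\s_\xi)}$, which has even parity. This contradicts the hypothesis.

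The main obstacle is the contact-topological step: showing that every zero-twisting tight structure on $-Y$ is, up to isotopy, obtained by Legendrian surgery along a Legendrian regular fibre in a structure on the manifold corresponding to $Y_0$ (or through the relevant $2$-handle), so that naturality places $c^+(\xi)$ in $\Imm h^\bullet$. I would first try the direct approach: take a Legendrian fibre with twisting $0$, remove a standard neighbourhood, and identify the complement plus the $2$-handle using Wu's and Ghiggini's normal forms. If the neighbourhood-gluing bookkeeping fails, the fallback is a grading argument. For a zero-twisting $\xi$, compute $d_3(\xi)$ from the transverse/$S^1$-invariant model and check directly that it has the parity of $d(Y,\s_\xi)$, using Equation \eqref{eq:Maslov2} for the odd classes.
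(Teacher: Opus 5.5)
Your reduction of the parity hypothesis to $c^+(\xi)\in HF_\text{red}(Y,\s_\xi)$ is fine. The contrapositive, however, breaks at its decisive step, because naturality of the contact invariant runs opposite to the direction you need. If $(-Y,\xi)$ is obtained from $(-Y_0,\xi_0)$ by Legendrian surgery, the reversed Stein cobordism induces a map $HF^+(Y)\rightarrow HF^+(Y_0)$ with $c^+(\xi)\mapsto c^+(\xi_0)$. This does not exhibit $c^+(\xi)$ as the image of a class of $HF^+(Y_0)$, so nothing places it in $\Imm h^\bullet$. The implication ``Legendrian surgery on a structure on an $L$-space forces $c^+$ into the tower'' is in fact false, and the paper contains counterexamples. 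The structures realised on $X_{\Gamma^*}$ in Proposition \ref{prop:main} are Legendrian surgeries on links in $(S^3,\xist)$, yet Lemma \ref{lemma:c} gives $c^+(\xi)=T_{[V]}\in HF_\text{red}(Y)$.

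The surgery bookkeeping is also off. A Legendrian regular fibre with twisting $0$ has contact framing equal to the fibration framing. The fibration-framed $0$-surgery that produces $-Y_0$ would therefore be contact $0$-surgery, which is not defined. Contact $(+1)$-surgery on that fibre instead lowers the central framing by one and produces $-Y_1$. The induced map $HF^+(Y)\rightarrow HF^+(Y_1)$ is the dual $g^\bullet$ of the map in \eqref{eq:triangle}; by exactness its kernel is $\Imm h^\bullet=\mathcal T^+$, and it is injective on $HF_\text{red}(Y)$. So this surgery cannot push $c^+(\xi)$ into the tower either. Note also that $h$ itself acts on $HF^\circ(-Y)$, not on $HF^+(Y)$ where $c^+(\xi)$ lives. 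Finally, the plan presupposes a normal form for every zero-twisting tight structure on an arbitrary indefinite Seifert fibred space, which you do not supply. Your fallback computation of the parity of $d_3(\xi)$ depends on the same missing input.

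The missing idea is to bound the twisting number directly with Heegaard Floer invariants, never classifying zero-twisting structures. The paper starts from the $\tau$-Bennequin inequality \cite[Theorem 1.3]{CM-negative} for the regular fibre $K$:
\[\text{tw}(-Y,\xi)=\text{TB}_\xi(K)-\frac{1}{e(Y)}<-\frac{1}{e(Y)}+\tau_\xi(K)+\tau_{\overline\xi}(K)\:.\]
It then uses the odd parity of $\widehat c(\xi)$. Write $\widehat c(\xi)$ in the basis $\{T_{[V]}\}$ of $\widehat{HF}^\text{od}(Y,\s_\xi)$ coming from full paths of $\Gamma^*$, and let $V_a$ and $V_b$ be the coordinates with minimal and maximal $\mathcal F$. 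Using $\J$, Theorem \ref{teo:tau} and \cite[Lemma 2.2]{AC}, one gets $\tau_\xi(K)+\tau_{\overline\xi}(K)\leq\mathcal F(V_a)+\mathcal F(-V_b)=\frac{1}{e(Y)}+e_1^TQ_*^{-1}\frac{V_a-V_b}{2}$. This yields $\text{tw}(-Y,\xi)<-\height(\gamma_a+\gamma_b)\leq0$, or $-\height(\gamma_a)$ when $\gamma_a=\gamma_b$. It is a quantitative bound, not just a sign, and the later computations of twisting numbers rely on exactly this estimate.
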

\begin{proof}
 From standard Heegaard Floer theory and Theorem \ref{teo:fullpath} we have that \[d_3(\xi)+d(Y,\s_\xi)=-M(\widehat c(\xi))+d(Y,\s_\xi)=M(V_a)-d(-Y,\s_\xi)\:,\] where $V_a\in\text{Char}(\Gamma^*,\s_\xi)$ is such that: $\gamma_a:=[V_a]\in\widehat{HF}(-Y,\s_\xi)$ satisfies $\langle\gamma_a,\widehat c(\xi)\rangle=1$, with respect to the duality identification $\widehat{HF}_*(-Y,\s)\simeq\widehat{HF}_{-*}(Y,\s)^\bullet$, and has minimal Alexander filtration $\mathcal F(V_a)$. In particular, the invariant $\widehat c(\xi)$ is in the odd subgroup of $\widehat{HF}(Y,\s_\xi)$ and $T_{[V_a]}\in\mathcal B$ is one of its coordinates.

 We mimic the second part of the proof of \cite[Theorem 1.1]{CM-negative}: using the definition of twisting number in Equation \eqref{eq:tw} and the $\tau$-Bennequin inequality for the $tb$-number in \cite[Theorem 1.3]{CM-negative}, we can write \[\text{tw}(-Y,\xi)=\text{TB}_\xi(K)-\dfrac{1}{e(Y)}<-\dfrac{1}{e(Y)}+\tau_\xi(K)+\tau_{\overline\xi}(K)\] where $K$ is the regular fibre of $-Y$. Now let $V_b$ be as $V_a$ but with maximal Alexander filtration, implying that if $\J\gamma_b:=[-V_b]\in\widehat{HF}(-Y,\overline\s_\xi)$ then $\langle\J\gamma_b,\widehat c(\overline\xi)\rangle=\langle\J\gamma_b,\J\widehat c(\xi)\rangle=\langle\gamma_b,\widehat c(\xi)\rangle=1$. From Equation \eqref{eq:Alexander}, Theorem \ref{teo:tau} and \cite[Lemma 2.2]{AC} we write \[\tau_\xi(K)+\tau_{\overline\xi}(K)\leq\tau_{\gamma_a}(K)+\tau_{\J\gamma_b}(K)=\mathcal F(V_a)+\mathcal F(-V_b)=\dfrac{1}{e(Y)}+e_1^TQ_*^{-1}\left(\dfrac{V_a-V_b}{2}\right)\]
 which by the definition of height leads to \[\text{tw}(-Y,\xi)<e_1^TQ_*^{-1}\left(\dfrac{V_a-V_b}{2}\right)=\left\{\begin{aligned}&-\height(\gamma_a+\gamma_b)\hspace{2.5cm}\text{ when }\gamma_a\neq\gamma_b \\ &-\height(\gamma_a)=-\height(\gamma_b)\hspace{0.5cm}\text{ when }\gamma_a=\gamma_b\end{aligned}\right.\:\leq0\:.\]
\end{proof}

\subsection{Realised characteristic vectors}
Since $-Y$ is not an $L$-space, from Remark \ref{remark:blowdown} we know that the 4-manifold $X:=X_{\Gamma^*}$, obtained by blowing down the graph $\Gamma^*$, is a Stein domain with $b_2^+(X)=1$. A famous result of Plamenevskaya \cite{OlgaP} then implies that there exists a unique $\Spin^c$-structure $\mathfrak v$ on $X$ such that $F^+_{\overline X,\mathfrak v}(c^+(\xi))=1$, where $\xi$ is induced by any Stein structure on $X$ corresponding to a given sequence of stabilisations. 

More specifically, let $(\Gamma^*)'$ be the $S^3$-subgraph of $\Gamma^*$ and let $\ell$ be equal to $|\Gamma^*|-|(\Gamma^*)'|$, we have that $(X,J)$ is determined by $(-1)$-contact (Legendrian) surgery on a Legendrian realisation $\mathcal K_1\cup...\cup\mathcal K_\ell$ of the blown down graph; moreover, we have that $c_1(\mathfrak v)=\mathbf V\in\text{Char}(X)$ and $\mathbf V=(\rot_\xi(\mathcal K_1),...,\rot_\xi(\mathcal K_\ell))$
where $\xi=J\lvert_{-Y}$. Let $s_j$ be the number of stabilisations on $\mathcal K_j$ (which is a Legendrian positive torus knot by Corollary \ref{cor:negative}) in the surgery presentation, then the smooth framing of the knot $K_j$ is given by \[\text{TB}_{\xist}(K_j)-s_j-1\] while \[\rot_\xi(\mathcal K_j)\in\{-s_j,-s_j+2,...,s_j-2,s_j\}\hspace{0.5cm}\text{ for }\hspace{0.5cm}j=1,...,\ell\:.\] The vector $\mathbf V$ is characteristic because $\text{TB}_{\xist}(T_{d_2,d_1})=\text{SL}_{\xist}(T_{d_2,d_1})$ is always odd. 

\begin{remark}
 We recall that here we denote a positive torus knot by $T_{d_2,d_1}$ with $d_2\leq d_1$. This convention is different with respect to the one we used in \cite[Section 4]{CM-negative}; in fact, there we take $d_1$ to be the denominator of $r_1$ where $M(-1;r_1,r_2)$ is the Seifert fibration of $T_{d_2,d_1}$ and $r_1>r_2$.
\end{remark}

A natural generalisation of \cite[Proposition 5.2]{CM-negative} can be shown for manifolds as $-Y$. 
\begin{lemma}
 \label{lemma:c}
 Let $-Y$ be an indefinite Seifert fibred space which is not an $L$-space, and $(X,J)$ a Stein filling obtained by blowing down the standard graph. Then there exists a full path $[V]$ which ends correctly such that $T_{[V]}=c^+(J\lvert_{-Y})\in HF_\emph{red}(Y,J\lvert_{-Y})$ where $V\in\emph{Char}(\Gamma^*)$ is the corresponding initial vector. 
\end{lemma}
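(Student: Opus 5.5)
We follow the strategy of \cite[Proposition 5.2]{CM-negative}, with the roles of even and odd parity exchanged. Let $\xi=J\lvert_{-Y}$. The aim is to identify $c^+(\xi)$ as the functional dual to a single full path.

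The first step uses Plamenevskaya's theorem \cite{OlgaP}. The cobordism $\overline X$ from $Y$ to $S^3$ satisfies $F^+_{\overline X,\mathfrak v}(c^+(\xi))=1$ for $c_1(\mathfrak v)=\mathbf V$, and $F^+_{\overline X,\mathfrak v'}(c^+(\xi))=0$ for every other $\mathfrak v'$ with $c_1(\mathfrak v')\neq\mathbf V$.

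Next we pass back to the plumbing. Since $P_{\Gamma^*}\cong X\#\,k\overline{\mathbb{CP}}^2$, where $k$ is the number of blow-downs, the blow-up formula lets us write
\[F^+_{\overline{P_{\Gamma^*}},\mathfrak u}=F^+_{\overline X,\mathfrak u\lvert_X}\]
for $\Spin^c$-structures $\mathfrak u$ whose Chern class evaluates to $\pm1$ on each exceptional sphere. This holds up to a shift in $U$-power when the evaluations are larger in absolute value. We take $V\in\text{Char}(\Gamma^*)$ to be the characteristic vector whose strict transform is $\mathbf V$, with $\pm1$ on the exceptional classes chosen so that $V$ is initial. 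The rotation numbers satisfy $|\rot|\leq s_j$, and the framings are $\text{TB}-s_j-1$. Tracking the coordinates through the blow-downs should then give $m(i)+2\leq v_i\leq -m(i)$, which means $V$ is initial.

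The third step uses the isomorphism $T^+$ from the proof of Theorem \ref{teo:fullpath}. The class $c^+(\xi)$ lies in $\Ker U$. It lies in $HF_\text{red}(Y,\s_\xi)$ because Proposition \ref{prop:neg_tw} and the parity computation via Equation \eqref{eq:Maslov} show that $c^+(\xi)$ has odd parity. Its image $T^+(c^+(\xi))$ is the function $W\mapsto F^+_{\overline{P_{\Gamma^*}},W}(c^+(\xi))$ on $\text{Char}(\Gamma^*,\s_\xi)$. By Lemma \ref{lemma:phi} with $n=0$, this element corresponds to a map on full paths that vanishes on paths containing $U^k\cdot W$ with $k\geq1$. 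By step one, this map equals $1$ on exactly the full paths containing a characteristic vector $W$ such that $F^+_{\overline{P_{\Gamma^*}},W}(c^+(\xi))=1$. Since $T^+$ is constant along full paths, the value $1$ occurs only on $[V]$. Hence $[V]$ contains no $U^k$ shift, so it ends correctly, and $T^+(c^+(\xi))$ is the indicator of $[V]$. We conclude that $c^+(\xi)=T_{[V]}$.

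The main obstacle is the second step: showing that no other characteristic vector of $\Gamma^*$ maps to a nonzero value. Another vector could restrict to $\mathbf V$ on $X$ but carry evaluations $\pm3,\pm5,\ldots$ on the exceptional spheres, and such vectors must lie in the same full path as $V$, or else carry a positive $U$-power. The plan is to check this directly: each blow-down step corresponds to a sequence of full-path moves at the $-1$ vertex, where $v_i=\pm1=-m(i)+2n$ with $n=0$. We then use the uniqueness of the initial vector in a full path that ends correctly, which holds for indefinite $Q$ by the remark after the definition of full paths, to conclude that $V$ is determined.
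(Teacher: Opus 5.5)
Your strategy is to compute $T^+(c^+(\xi))$ directly as $W\mapsto F^+_{\overline{P_{\Gamma^*}},W}(c^+(\xi))$, using Plamenevskaya's theorem and the blow-up formula for $P_{\Gamma^*}\cong X\#k\overline{\mathbb{CP}}^2$. This is essentially an attempt to reprove by hand the Bodn\'ar--Plamenevskaya strict-transform statement that the paper invokes. As written, it has two genuine gaps.

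\textbf{Odd parity.} $T^+$ is only defined on $\mathcal Z^\bullet=HF_\text{red}(Y)$, the odd part. So before step three you must know that $c^+(\xi)\neq\Theta^+$. Proposition \ref{prop:neg_tw} cannot give you this: odd parity is its \emph{hypothesis}, and negative twisting is its conclusion. This is exactly the point the paper proves explicitly:
\begin{itemize}
\item $\Theta^+\in\Imm U^n$ for every $n$, so $\Theta^+=\pi_*(x)$ for some $x\in HF^\infty$.
\item Hence $F^+_{\overline X,\mathfrak v}(\Theta^+)=\pi_*(F^\infty_{\overline X,\mathfrak v}(x))=0$, because $b_2^+(X)=1$ by Lemma \ref{lemma:blowdown}.
\item This contradicts $F^+_{\overline X,\mathfrak v}(c^+(\xi))=1$.
\end{itemize}
In your set-up the same remark applies to every $F^+_{\overline{P_{\Gamma^*}},W}$, since $b_2^+(P_{\Gamma^*})=1$, so it would be a one-line fix. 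Equation \eqref{eq:Maslov} by itself says nothing about the parity of $c^+(\xi)$. A grading computation via Gompf's formula could be made to work, but you have not carried it out.

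\textbf{Single full path.} The sentence ``since $T^+$ is constant along full paths, the value $1$ occurs only on $[V]$'' is a non sequitur. The obstacle you single out is also not the real one. Lifts with evaluations $\pm3,\pm5,\dots$ on the exceptional spheres are harmless: for evaluation $2l+1$ with $l\neq0,-1$, the blow-up formula multiplies by $U^{l(l+1)/2}$, and $U\cdot 1=0$ in $HF^+(S^3)$.

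The actual problem is this. All $2^k$ $\Spin^c$-structures on $P_{\Gamma^*}$ that restrict to $\mathfrak v$ on $X$ and evaluate to $\pm1$ on every exceptional sphere take the value exactly $1$. So a priori $c^+(\xi)=T_{[W_1]}+\cdots+T_{[W_r]}$, with one term for each distinct full path containing such a lift, and you must prove $r=1$.

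Your proposed mechanism, moves at ``the $-1$ vertex'' with $v_i=\pm1$, only flips the sign on the first exceptional sphere. The later exceptional classes are sums of vertices of $\Gamma^*$. For instance, $E_2=S_1+S_2$ when a $-2$ vertex $S_2$ is adjacent to the central $-1$ vertex $S_1$. Flipping such signs requires chains of $n=0$ moves that also involve the other vertices of $(\Gamma^*)'$, e.g.\ a move at $S_2$ when $v_2=2$.

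Showing that these moves connect all $2^k$ lifts is precisely the content of the Bodn\'ar--Plamenevskaya argument, which the paper extends to indefinite graphs through Theorem \ref{teo:main}. Without it you only obtain a sum of functionals.

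Finally, your step two (choosing $V$ initial by hand, where the coordinates ``should then'' work out) is unnecessary. Once the support is a single full path ending correctly, its initial vector is unique.
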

\begin{proof}
 In light of Theorems \ref{teo:main} and \ref{teo:fullpath} we just have to show that $\widehat c(\xi)$ has odd parity, in other words that $c^+(\xi)\neq\Theta^+$, and then conclude as in the proof of \cite[Proposition 5.2]{CM-negative}. We recall that the latter proof is based on the result of Bodn\'ar-Plamenevskaya in \cite[Section 5]{BP} which says that the strict transform induces (through the blow-downs) an automorphism of $HF^+$ which commutes with the cobordism maps. In \cite{BP} this is done for negative-definite graphs, but by Theorem \ref{teo:main} the proof can be repeated in the same way when the graph is indefinite.
 
 Suppose that $c^+(\xi)=\Theta^+\in\Ker U\:\cap\Imm U^n$ for $n\geq1$, then the map $F^+_{\overline X,\mathfrak v}$ would send $\Theta^+$ to 1. Since we know from Lemma \ref{lemma:blowdown} that $b_2^+(X)=1$, we obtain a contradiction with $F^\infty_{\overline X,\mathfrak v}$ being vanishing \cite{OSz-negative}; namely, by commutation we would have $F^+_{\overline X,\mathfrak v}(\Theta^+)=\pi_*(F^\infty_{\overline X,\mathfrak v}(1))=\pi_*(0)=0$.
\end{proof}
In our case we can track down $V$ starting from $\mathbf V$ and going backwards along the blow-down procedure, and we denote the set of these vectors as the \emph{realised} characteristic vectors in $\text{Char}(\Gamma^*)$.

Before we give a description of which vectors are realised, we recall that there are four different types of an $S^3$-subgraph $G'$ of a standard graph $G$. Other than the empty set and the one given by the fibration of $T_{d_2,d_1}$ when $d_2>1$, we have the following two where the first one is just the centre of $\Gamma^*$ (the fibration of $T_{1,1}$) while the second one consists also of a string of $d_1-1$ vertices with framing $-2$ (the fibration of $T_{1,d_1}$ when $d_1>1$).

\begin{figure}[ht]
\begin{tikzpicture}[scale=0.9]
    \tkzDefPoints{0/0/A}  
    \tkzDrawPoints[fill,black,size=5](A)
     \tkzLabelPoint[below right](A){$-1$} 
 \end{tikzpicture}\hspace{3cm} 
 \begin{tikzpicture}[scale=0.9]
    \tkzDefPoints{1.5/-1/D, 3/-1/E, 5/-1/F}  
    \tkzDefPoints{3.5/-1/X, 4.5/-1/Y}\tkzDefPoints{3.9/-1/P, 4/-1/Q, 4.1/-1/R}
    \tkzDrawPoints[fill,black,size=1](P,Q,R)
    \tkzDrawSegment(E,D)\tkzDrawSegment(E,X)\tkzDrawSegment(Y,F)
    \tkzDrawPoints[fill,black,size=5](D,E,F)
     \tkzLabelPoint[below left](D){$-1$}\tkzLabelPoint[below left](E){$-2$}\tkzLabelPoint[below left](F){$-2$}
\end{tikzpicture}
\end{figure}

When $e_0=-1$ we call $G''$ the subgraph of $G$ made by the vertices that are not in $G'$, but are connected to the centre; in addition, in this case we also call $G''_i$ for $i=1,2$ the (eventual) unique vertex in the $i$-th leg that is not in $G'\cup G''$, but is connected to $G'$. Let us denote by $T$ the number of consecutive vertices with framing $-2$ at the end of the first leg of $G'$.

\begin{teo}
 \label{teo:realised}
 Let $G$ be the standard graph of any Seifert fibred space. An initial vector $V\in\emph{Char}(G)$ is realised if and only if its coordinates $(v_1,...,v_{|G|})$ are as follows: \[v_j=\left\{\begin{aligned} & m(j)+2\hspace{4.5cm}\text{if }j\in G' \\
 & m(j)+2,...,-m(j)-4\hspace{1.9cm}\text{ if }j\in G''_1 \\
 & m(j)+2,...,-m(j)-6-2T\hspace{0.95cm}\text{ if }j\in G''_2 \\
 & m(j)+2,...,-m(j)-2d_1-2d_2\hspace{0.5cm}\text{ if }j\in G'' \\
 & m(j)+2,...,-m(j)-2\hspace{1.9cm}\text{ otherwise}
 \end{aligned}\right.\] where $m(j)$ is the framing of the $j$-th vertex of $G$.  
\end{teo}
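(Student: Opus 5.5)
Starting from a Stein structure $J$ on $X=X_G$, given by Legendrian surgery on a realisation $\mathcal K_1\cup\dots\cup\mathcal K_\ell$ of the blown-down graph with rotation vector $\mathbf V=(\rot(\mathcal K_1),\dots,\rot(\mathcal K_\ell))$, we pull $\mathbf V$ back through the blow-down sequence of Lemma \ref{lemma:blowdown} and read off the coordinates of the resulting initial vector. We use the strict transform of Bodn\'ar--Plamenevskaya \cite[Section 5]{BP}, as in Lemma \ref{lemma:c}. Each blow-down of a $(-1)$-circle changes the framings of its neighbours by $+1$, and the strict transform inserts the coordinate of the exceptional sphere. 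Since the blown-down spheres are exactly the vertices of $G'$, the vertices of $G'$ get coordinate $m(j)+2$; this is the canonical choice forced by requiring $V$ to be initial and to lie in the same full path as the vector obtained by pulling back. Checking that pulling back an exceptional class gives exactly $v_j=m(j)+2$ on $G'$ is the routine part. Combined with Lemma \ref{lemma:c}, this shows that any realised vector restricts to $m(j)+2$ on $G'$.

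For vertices $j\notin G'$ the allowed range of $v_j$ comes from the number of stabilisations $s_j$ on the corresponding knot. The smooth framing equals $\text{TB}_{\xist}(K_j)-s_j-1$ and $\rot\in\{-s_j,\dots,s_j\}$. We compute the framing of $K_j$ after the blow-downs in terms of $m(j)$ and identify the knot type in each case, using the convention recalled in Section \ref{section:five}.
\begin{itemize}
\item For a vertex not adjacent to $G'$, $K_j$ is an unknot with $\text{TB}=-1$ and framing $m(j)$. Then $s_j=-m(j)-2$, and after the shift $v_j=\rot+(\text{correction})$ the range is $m(j)+2,\dots,-m(j)-2$.
\item For $j\in G''$ (adjacent to the centre), $K_j$ becomes a $(d_2,d_1)$-torus knot. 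Its framing is increased by $d_1d_2$ through blowing down, and $\text{TB}=d_1d_2-d_1-d_2$. This reduces the number of stabilisations by $d_1+d_2-1$, which gives the upper bound $-m(j)-2d_1-2d_2$.
\item For $G''_1$ and $G''_2$, the knots are cables or meridian-type unknots linked once or several times with $G'$. Their framings increase by $1$ and by $2+T$ respectively, where $T$ counts the trailing $-2$ chain that is blown down through, and this yields the bounds $-m(j)-4$ and $-m(j)-6-2T$.
\end{itemize}

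Converting the rotation numbers into the coordinates $v_j$ requires care. The strict transform of a vector on $X$ adds multiples of the exceptional classes to the neighbouring coordinates. We would normalise by taking the initial representative of the full path, and check that the translation is $v_j=\rot(\mathcal K_j)+(\text{upper bound}+m(j)+2)/2$-type shift. This turns the symmetric interval $[-s_j,s_j]$ into the stated interval starting at $m(j)+2$. Concretely, we would verify that the initial vector of the full path sends each coordinate to its minimal admissible value shifted by the rotation.

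For the converse, any $V$ with coordinates in the stated ranges determines admissible rotation numbers. Every choice of $\rot\in\{-s_j,\dots,s_j\}$ is realised by some choice of positive and negative stabilisations, so $V$ is realised. Distinct Stein structures give distinct initial vectors, by \cite{OlgaP} and the uniqueness of initial vectors in full paths that end correctly.

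The main obstacle is the bookkeeping for $G''_2$: tracking the framing of a vertex attached beyond the chain of $T$ trailing $-2$'s through the whole blow-down sequence. We would first work out the case $T=0$ and then induct on $T$, using that each extra $-2$ blow-down adds $2$ to the linking contribution.
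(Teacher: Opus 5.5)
Your proposal follows the same route as the paper. It pulls the rotation vector back through the Bodn\'ar--Plamenevskaya strict transform, computes $s_j$ from $\text{TB}$ and the blown-down framing, and uses the symmetry of $\{-s_j,\dots,s_j\}$, anchored at the canonical value $m(j)+2$, to read off the upper bounds. The one soft spot is the translation between $\rot(\mathcal K_j)$ and $v_j$: you write it in terms of the bound being proved, whereas it should be computed directly as the coordinate shift of the strict transform, namely $1$, $2+T$ and $d_1+d_2-1$ on $G''_1$, $G''_2$ and $G''$. Likewise, the $G''_1,G''_2$ components are unknots with $\text{TB}=-1$, not cables, and that is what your bounds actually use; the paper itself states this bookkeeping without further detail.
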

\begin{proof}
 Let $V$ be a vector as in the statement, then applying the strict transform, see \cite[Section 5]{BP}, to $V$ yields the vector in $\text{Char}(X_G)$ whose coordinates are untouched outside of $G'\cup \:G''\cup \:G''_1\cup \:G''_2$, and increased by $1,$ $2+T$ and $d_1+d_2-1$ in $G''_1,$ $G''_2$ and $G''$ respectively; moreover, the framings on the components are increased by $1,$ $2+T$ and $d_1d_2$ respectively (the ones in $G''$ become torus knots isotopic to $T_{d_2,d_1}$). It is then easy to check that each choice of $V$ corresponds to a choice of the rotation numbers in the surgery presentation of $X_G$.

 Suppose that $V$ is realised. Since $[V]$ ends correctly then $V$ must agree with $V_\text{can}$ on $G'$; moreover, again by using the strict transform as above we obtain that each coordinate $x$ of $j\in G''$ is increased to $x+d_1+d_2-1$ when we blow down. The value $x=m(j_i)+2$ (the one of $V_\text{can}$) is the lowest possible; therefore, we obtain \[m(j_i)+d_1+d_2+1=m(j_i)+2+(d_1+d_2-1)=-s_i\hspace{0.5cm}\text{ for }\hspace{0.5cm}i=3,...,\ell\:\] where the $j_i$'s are the vertices of $G$ that survive the blow-downs. We can now conclude by observing that the values of the rotation number are symmetric, which means the maximum is reached by \[-m(j_i)-d_1-d_2-1-(d_1+d_2-1)=-m(j_i)-2d_1-2d_2\:.\] The argument for the other vertices is similar.
\end{proof}

Let us consider a contact structure $\xi$ on $-Y$ such that $\widehat c(\xi)\in\widehat{HF}^\text{od}(Y,\s_\xi)$ is non-vanishing; as we discussed above, the condition on the parity is equivalent to the fact that $d_3(\xi)+d(Y,\s_\xi)$ is odd. We proved in \cite[Theorem 1.2]{CM-negative} that every negative-twisting structure on $Y$ has the same twisting number; the same is no longer true when we reverse the orientation as shown by Ghiggini and Van Horn-Morris in \cite[Proposition 2.7]{GvHM}. We introduce the following terminology: we say that $\overline{\text{tw}}(-Y)$ is the highest value among all the negative twisting numbers of tight structures on $-Y$; in the same way, we say that $\underline{\text{tw}}(-Y)$ is the lowest one. Note that since these are negative numbers, then the highest one actually has the smallest absolute value. We are going to ignore $\underline{\text{tw}}(-Y)$ in the remaining of the section; for this reason, we postpone the proof that this invariant is well-defined to Lemma \ref{lemma:A}.

\begin{prop}
 \label{prop:bigtw}
 Suppose that $\xi$ is any structure on $-Y$ induced by $(X,J)$ as described above. Then we have that \[\emph{tw}(-Y,\xi)=\overline{\emph{tw}}(-Y)=-1-\height[V_\emph{can}]\] and this number is either equal to $-d_1-d_2$ when the $S^3$-subgraph $(\Gamma^*)'$ of $\Gamma^*$ is the Seifert fibration of $T_{d_2,d_1}$ with $1\leq d_2\leq d_1$ coprime, or to $-1$ when $(\Gamma^*)'$ is empty. 
\end{prop}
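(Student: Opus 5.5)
Write $\xi=J\lvert_{-Y}$. The plan is to bound $\text{tw}(-Y,\xi)$ from above and below by the same number: an upper bound from Heegaard Floer theory, and a lower bound from an explicit Legendrian realisation of the regular fibre. After that we identify $\height[V_\text{can}]$ combinatorially and show that no tight structure reaches a larger negative twisting number.

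\emph{Upper bound.} By Lemma \ref{lemma:c}, $c^+(\xi)=T_{[V]}$ for a realised initial vector $V$ whose full path ends correctly. Hence $\widehat c(\xi)$ lies in the odd subgroup, and Proposition \ref{prop:neg_tw} applies with $\gamma_a=\gamma_b=[V]$. This gives
\[\text{tw}(-Y,\xi)<-\height[V],\qquad\text{that is,}\qquad \text{tw}(-Y,\xi)\leq-1-\height[V].\]
We then need $\height[V]=\height[V_\text{can}]$ for every realised $V$. Theorem \ref{teo:realised} says a realised $V$ agrees with $V_\text{can}$ on $(\Gamma^*)'$, and elsewhere its coordinates lie strictly inside the range where no step can be taken. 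So along the full path the only steps are the ones forced inside $(\Gamma^*)'$ and on the adjacent vertices $G''$, $G''_1$, $G''_2$, and the number of central steps does not depend on the free coordinates. I would check this by running the full path explicitly: it reproduces the blow-down sequence of $(\Gamma^*)'$, as in \cite[Remark 4.4]{CM-negative}. The height is then the number of central steps of the canonical full path of the torus knot subgraph, which is $d_1+d_2-1$ by \cite[Theorem 4.3]{CM-negative}. It is $0$ when $(\Gamma^*)'$ is empty, since then $e_0\leq-2$ and $V_\text{can}$ is both initial and terminal.

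\emph{Lower bound.} Inside the surgery diagram of $X$, the regular fibre $K$ corresponds to a meridian of the central vertex.
\begin{itemize}
\item If $(\Gamma^*)'$ is the fibration of $T_{d_2,d_1}$, the blow-downs turn $K$ into a Legendrian fibre of the fibration of $S^3$ by $(d_2,d_1)$ torus knots, realised in $\xist$. Its twisting relative to the fibration framing is $-d_1-d_2$, the maximal twisting of that fibration, and Legendrian surgery away from $K$ preserves this.
\item If $(\Gamma^*)'$ is empty, $K$ is a Legendrian meridian of the central unknot with twisting $-1$.
\end{itemize}
So $\text{tw}(-Y,\xi)\geq-d_1-d_2$, respectively $\geq-1$, and together with the upper bound we get equality with $-1-\height[V_\text{can}]$.

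\emph{Maximality.} It remains to show that $-1-\height[V_\text{can}]$ is the highest negative twisting number of any tight structure. For this I would use Theorem \ref{teo:Paolo}.
\begin{itemize}
\item When $e_0\leq-2$ there is nothing to prove, since $-1$ is always the highest.
\item When $e_0=-1$, suppose $-q>-d_1-d_2$, i.e. $q<d_1+d_2$, were a twisting number with numerators $p_i$. The best upper approximation condition together with $\sum p_i=q+n-2$ should force the legs containing $(\Gamma^*)'$ to define an $S^3$-subgraph larger than the maximal one, which is a contradiction. The approach is the Farey-neighbour argument from Proposition \ref{prop:tw} and \cite[Proposition 4.2]{CM-negative}.
\end{itemize}
I expect this maximality step, and the precise height computation in the presence of the vertices $G''_1,G''_2$ with the parameter $T$, to be the main obstacle. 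I would first handle it by reducing to the three-legged torus knot subgraph and quoting the analysis of \cite{CM-negative}.
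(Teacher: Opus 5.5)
Your argument is correct, but its central step is not the one the paper uses.

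\textbf{How the paper pins down the twisting number.} The paper gets an exact equality in one stroke. The Legendrian regular fibre $\mathcal K$ in the construction bounds a Lagrangian surface in $(X,J)$, so its transverse push-offs bound symplectic surfaces. Then \cite[Theorem 1.3]{AC} makes the slice-Bennequin bound sharp, $\text{SL}_\xi(\pm K)=2\tau_\xi(\pm K)-1$. This forces $\text{TB}_\xi(K)=\tau_\xi(K)+\tau_{\overline\xi}(K)-1$, and Theorem \ref{teo:tau} converts it into $\text{tw}(-Y,\xi)=-1-\height[V]$, with no reference to $d_1,d_2$.

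\textbf{How your route differs.} You sandwich instead:
\begin{itemize}
\item the non-sharp inequality (Proposition \ref{prop:neg_tw}) gives $\text{tw}(-Y,\xi)\leq-1-\height[V]$;
\item a direct framing count on the same Legendrian fibre (maximal $\tb=d_1d_2-d_1-d_2$ against fibration framing $d_1d_2$) gives $\text{tw}(-Y,\xi)\geq-d_1-d_2$;
\item the two bounds meet only after you import the combinatorial identity $\height[V]=\height[V_\text{can}]=d_1+d_2-1$.
\end{itemize}
Both routes rest on the same geometric input: $K$ must be realisable as a maximal-$\tb$ torus knot disjoint from the Legendrian surgery link, for every choice of stabilisations. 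This is the paper's ``by construction''. You should state it explicitly rather than treat it as automatic.

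\textbf{What each route buys.} Yours is more elementary, since it never needs sharpness of the slice-Bennequin inequality. The paper's yields the intrinsic Floer-theoretic identity $\text{tw}=-1-\height[V]$ independently of the torus-knot bookkeeping. It is also the argument reused in Proposition \ref{prop:A} for Massot's structure $\xi_0$, where only a symplectic filling is available and there is no explicit diagram in which to count framings. The maximality of $-d_1-d_2$ and the value of $\height[V_\text{can}]$ are handled the same way in both, by deferring to \cite[Proposition 4.2, Theorem 4.3 and Remark 4.4]{CM-negative}.

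\textbf{A small correction to your height discussion.} Along the full path of a realised $V$ there are no steps at all at vertices outside $(\Gamma^*)'$, including $G''$, $G''_1$ and $G''_2$. Those coordinates are only raised, by $2$ at a time, through steps at neighbouring vertices of $(\Gamma^*)'$. The upper ends of the ranges in Theorem \ref{teo:realised} are exactly such that they never reach $-m(j)$. For a vertex of $G''$, the $d_1+d_2-1$ central steps take $v_j$ from at most $-m(j)-2d_1-2d_2$ to at most $-m(j)-2$. This is why the step sequence, and hence the number of central steps, is the same as for $V_\text{can}$.
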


\begin{proof}
 We refer to \cite[Section 4]{CM-negative} for more details about this proof, as it similar to the one we did in the negative-definite case. The proof that $\overline{\text{tw}}(-Y)$ is determined by $d_1$ and $d_2$ is the same as in \cite[Proposition 4.2 and Theorem 4.3]{CM-negative}; moreover, that this number equals $-1-\height[V_\text{can}]$ follows as in \cite[Remark 4.4]{CM-negative}.

 By construction the regular fibre $K$ of $-Y$ bounds a Lagrangian surface in $(X,J)$ whose boundary is its Legendrian realisation $\mathcal K$; hence, there are transverse push offs of $\mathcal K$ and $-\mathcal K$ bounding a symplectic surface in $(X,J)$. We use \cite[Theorem 1.3]{AC} to write \[\text{SL}_\xi(\pm K)=\tb_\xi(\mathcal K)\mp\rot_\xi(\mathcal K)=2\tau_\xi(\pm K)-1\] which implies \[\text{TB}_\xi(K)=\tau_\xi(K)+\tau_{\overline\xi}(K)-1\] by \cite[Theorem 1.3]{CM-negative} and the symmetries of knot Floer homology. We can now apply Theorem \ref{teo:tau} and as in the proof of Proposition \ref{prop:neg_tw} we obtain \[\text{tw}(-Y,\xi)=-1-e_1^TQ_*^{-1}\left(\dfrac{V+V'}{2}\right)=-1-\height[V]\:,\] where $V'$ is the initial vector of $[-V]$ and $V$ the one such that $T_{[V]}=\widehat c(\xi)$ from Lemma \ref{lemma:c}.

 The fact that $\height[V]=\height[V_\text{can}]$ follows easily from Theorem \ref{teo:realised} and the definition of height in Section \ref{subsection:height}. This concludes the proof as it shows that $\text{tw}(-Y,\xi)=\overline{\text{tw}}(-Y)$.
\end{proof}

In Section \ref{section:six} we are going to show that the structures obtained by blowing down $\Gamma^*$ are the only ones whose twisting number is $\overline{\text{tw}}(-Y)$.
We now prove the following lemma, which is going to be useful in Section \ref{section:seven}.
\begin{lemma}
 \label{lemma:realised}
 Let $\xi$ be a contact structure on $-Y$ obtained by Legendrian surgery on a fibre of $(M,\xi_0)$, where $M$ has indefinite standard graph $G$ and $b_1(M)=0$. Suppose that their contact invariants are non-zero and satisfy \[c^+(\xi)=T_{[V_1]}+...+T_{[V_k]}\in HF_\emph{red}(Y,\s_\xi)\hspace{0.5cm}\text{ and }\hspace{0.5cm}c^+(\xi_0)=T_{[Z_1]}+...+T_{[Z_h]}\in HF_\emph{red}(-M,\s_{\xi_0})\:,\] where each $V_i\in\emph{Char}(\Gamma^*,\s_\xi)$ and $Z_j\in\emph{Char}(G,\s_{\xi_0})$ is initial and has full path ending correctly. If each $Z_j$ is a realised characteristic vector then the same is true for $V_i$ with $i=1,...,k$.
\end{lemma}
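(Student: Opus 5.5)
The plan is to compare the two contact invariants through the Legendrian surgery cobordism, and to use the explicit description of realised vectors in Theorem \ref{teo:realised}. Legendrian surgery on a fibre of $(M,\xi_0)$ gives a Stein cobordism $W$ from $M$ to $-Y$. By \cite{OSz-contact}, the map $F^+_{\overline W}$ sends $c^+(\xi)$ to $c^+(\xi_0)$. On the graph side, $W$ is a $2$-handle attached along a leaf of $G$; hence $\Gamma^*$ is obtained from $G$ by lengthening one leg (or changing one framing), and $P_{\Gamma^*}=P_G\cup W$. The first step is to express $F^+_{\overline W,\mathfrak w}$ in terms of full paths. Using the identification $T^+$ from the proof of Theorem \ref{teo:fullpath} and the composition law $F_{\overline{P_G}}\circ F_{\overline W}=F_{\overline{P_{\Gamma^*}}}$, the coefficient of $T_{[Z]}$ in $F^+_{\overline W}(T_{[V]})$ is nonzero exactly when $[V]$ contains a vector extending $Z$ by the coordinate prescribed by $\mathfrak w$. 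Here $\mathfrak w$ is the unique $\Spin^c$-structure realised by the Stein structure, with $c_1(\mathfrak w)$ given by the rotation number of the surgery knot.

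The second step is the combinatorial core. Each $V_i$ is initial and ends correctly. I would follow its full path, using the ordering by $\mathcal F$ in Remark \ref{remark:Alexander}, to show that its restriction to $G$ is an initial vector $Z_{j(i)}$ of a correctly ending full path appearing in $c^+(\xi_0)$. The new coordinate should then satisfy the bounds coming from the stabilisation count on the surgered fibre. I expect to argue by contradiction: if some $V_i$ failed the inequalities of Theorem \ref{teo:realised}, then either its restriction would fall outside the realised range for $G$, or the extra coordinate would exceed $|\rot|$ allowed by the stabilisations. In the first case, nonvanishing of $F^+_{\overline W}(T_{[V_i]})$ together with the coefficient description would force a non-realised $Z$ to appear in $c^+(\xi_0)$. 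In the second case, the pairing of $\widehat c(\xi)$ with the relevant class vanishes, so $T_{[V_i]}$ could not be a coordinate of $c^+(\xi)$. Either way we contradict the hypotheses.

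Finally, the realised condition is read off coordinatewise via the strict transform of \cite{BP}. Since the surgery is on a regular fibre, the new vertex lies either in $G''$ or outside $G'\cup G''$, so only the last lines of Theorem \ref{teo:realised} change. The coordinates on $G$ are inherited, which gives realisability of each $V_i$.

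The main obstacle is the second step. A full path in $\Gamma^*$ may pass through vectors whose restrictions leave $\mathcal B_0$ for $G$. It is therefore not automatic that the restriction of an initial $V_i$ is an initial correctly ending vector for $G$, and cancellations between several $T_{[V_i]}$ under $F^+_{\overline W}$ must be ruled out. I would try first to use Lemma \ref{lemma:can}, which gives positivity of $e_1^TQ_*^{-1}$, to show that distinct $V_i$ map to distinct $Z_j$ with strictly ordered $\mathcal F$, so no cancellation occurs.
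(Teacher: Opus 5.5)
Your skeleton (functoriality of $c^+$ under the Stein cobordism $W=P_{\Gamma^*}\setminus\mathring P_G$, and the composition law across $M$) matches the paper's. However, the two steps that actually prove the lemma are left open, and the tools you propose for them are not what makes them work.

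The obstacle you flag is settled by Heegaard Floer theory, not by tracing full paths or by the positivity in Lemma \ref{lemma:can}. Initiality of $Z:=V_i\lvert_G$ is immediate: the defining inequalities are vertex-wise and the old framings are unchanged. Since $b_1(M)=0$, the structure $\mathfrak u_i$ with $c_1(\mathfrak u_i)=V_i$ splits uniquely as $\mathfrak v\cup_M\mathfrak u_i\lvert_W$ with $c_1(\mathfrak v)=Z$. Because $T_{[V_i]}$ is a coordinate of $c^+(\xi)$,
\[1=F^+_{\overline{P_{\Gamma^*}},\mathfrak u_i}(c^+(\xi))=F^+_{\overline P_G,\mathfrak v}\big(F^+_{\overline W,\mathfrak u_i\lvert_W}(c^+(\xi))\big)\:.\]
By \cite[Proposition 3.3]{G-fillability} the inner term vanishes unless $\mathfrak u_i\lvert_W$ is the $\Spin^c$-structure of $J$, in which case it equals $c^+(\xi_0)$. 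Hence $\mathfrak u_i\lvert_W$ is forced and $F^+_{\overline P_G,\mathfrak v}(c^+(\xi_0))=1$. So $Z$ lies in some $[Z_j]$, and $Z=Z_j$ by uniqueness of initial vectors.

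The paper instead argues via functoriality together with the remark that distinct coordinates $T_{[V_a]}\neq T_{[V_b]}$ cannot have equal images under $F^+_{\overline W,J}$, because $\mathfrak v\mapsto\mathfrak v\cup_MJ$ is injective. Either way, what rules out cancellations is uniqueness of $\Spin^c$-gluing along $M$, not an $\mathcal F$-ordering.

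Your case (b) needs exactly this vanishing result, which you never invoke. It also pins the $W$-part of $\mathfrak u_i$ to a single value. The extra coordinate of $V_i$ is then determined by $Z$ and $J$ jointly, not by a rotation number bounded by a stabilisation count.

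Your Step 3 also does not yet give realisability, for two reasons.
\begin{enumerate}
\item It assumes the surgery is on a regular fibre. The lemma, and its uses in Section \ref{section:six} and Proposition \ref{prop:map}, mostly concern Legendrian push-offs of singular fibres. There the new vertex lengthens a leg and may be of type $G''_1$, $G''_2$ or a tail vertex.
\item Even granting that the coordinates on $G$ keep their ranges, you never show that the value of the new coordinate forced by $J$ lies in the range of Theorem \ref{teo:realised}.
\end{enumerate}
The paper avoids any coordinatewise check. $Z$ being realised means its strict transform is the first Chern class of a Stein structure $J_0$ on the blow-down $X_G$. The strict transform of $V_i$ then corresponds to $J_0\cup_MJ$ on $X_{\Gamma^*}$, which is precisely what it means for $V_i$ to be realised.

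If you keep the coordinatewise route, you must compute the new coordinate from two contributions: the rotation number of the Legendrian fibre in $(M,\xi_0)$, and $\mathfrak v$ evaluated on the part of the new vertex's sphere lying in $P_G$. You then have to compare the result with the stabilisation range. That computation is what is currently missing.
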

\begin{proof}
 The graph $\Gamma^*$ is obtained by adding a new vertex either at the end of a leg of $G$, if the fibre  is singular, or at the centre of $G$, if the fibre is regular. Suppose that $T_{[V_i]}$ is a coordinate of $c^+(\xi)$, then $\mathfrak u_i\in\Spin^c(P_{\Gamma^*})$ given by $c_1(\mathfrak u_i)=V_i$ can be decomposed uniquely along $M$, because $b_1(M)=0$, as $\mathfrak u_i=\mathfrak v\cup_MJ$, where $c_1(\mathfrak v)=Z$ is the initial vector obtained by restricting $V_i$ to $G$, and $J$ is the Stein structure given by the Legendrian surgery. Say $W:=P_{\Gamma^*}\setminus\mathring P_G$, then note that $\mathfrak u_i\lvert_W=J$ because by \cite[Proposition 3.3]{G-fillability} the only $\Spin^c$-structure $\mathfrak t$ on the cobordism $W$ such that $F^+_{\overline W,\mathfrak t}(c^+(\xi))\neq0$ is the one induced by $J$; in fact, we know that $F^+_{\overline W,\mathfrak u_i\lvert_W}(c^+(\xi))\neq0$ because otherwise $F^+_{\overline P_{\Gamma^*},\mathfrak u_i}(T_{[V_i]})=(F^+_{\overline P_G,\mathfrak v}\circ F^+_{\overline W,\mathfrak u_i\lvert_W})(T_{[V_i]})=0$ contradicting the results in Section \ref{section:two}. 

 If we knew that $T_{[Z]}$ is a coordinate of $c^+(\xi_0)$ then we could use the fact that the the first Chern classes of the Stein structures on the blow-down $X_{P_G}$ of $P_G$ are precisely the strict transform of realised characteristic vectors of $P_G$. Therefore, we would have that the strict transform of $V_i$ corresponds to a Stein structure on $X_{\Gamma^*}$ given by $J_0\cup_MJ$, where $J_0$ is determined by $Z$, thus implying that $V_i$ is also realised.

 We now prove that $T_{[Z]}$ is indeed a coordinate of $c^+(\xi_0)$. By functoriality we have that $F^+_{\overline W,J}(c^+(\xi))=c^+(\xi_0)$, thus the claim follows from the fact that if $T_{[V_a]}$ and $T_{[V_b]}$ are distinct coordinates of $c^+(\xi)$ then $F^+_{\overline W,J}(T_{^[V_a]})\neq F^+_{\overline W,J}(T_{[V_b]})$. In fact, if this were not the case then we would have two distinct characteristic vectors as first Chern class of a $\Spin^c$-structure on $P_{\Gamma^*}$, which is not possible.  
\end{proof}

\section{The classification for indefinite Seifert fibred spaces}
\label{section:six}
\subsection{Background} 
\label{subsection:background}
The starting point of our classification is the observation due to Ghiggini \cite{G-} and Massot \cite{Massot} that every negative-twisting structure on a Seifert fibred space over a sphere admits a convex decomposition into background and solid tori. The \emph{background} is the piece diffeomorphic to $\Sigma_{0,n}\times S^1$ where $\Sigma_{0,n}$ is an $n$-punctured sphere and $n$ is the number of singular fibres; the possible boundary slopes are determined from the Seifert constants through the Ghiggini-Massot's algorithm in Theorem \ref{teo:Paolo}, and their common denominator fixes the unique tight contact structure with twisting number equal to the negative of this denominator on $\Sigma_{0,n}\times S^1$, see \cite[Section 3]{G-} and \cite[Proposition 5.1]{CM-negative}.

\begin{lemma}
 \label{lemma:A}
 Every Seifert fibred space $M$ with $b_1(M)=0$ has only finitely many negative twisting numbers.
\end{lemma}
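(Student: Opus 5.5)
The plan is to bound the possible values of $q$ directly from Ghiggini-Massot's algorithm in Theorem \ref{teo:Paolo}, in the spirit of the proof of Proposition \ref{prop:tw}. If $e(M)<0$, the standard graph is negative-definite and \cite[Theorem 1.2]{CM-negative} already gives a unique negative twisting number. So we may assume $e(M)>0$. The value $-1$ contributes at most one twisting number, so it suffices to show that only finitely many $q>1$ admit integers $p_1,\dots,p_n$ satisfying the two conditions of Theorem \ref{teo:Paolo}.

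The key inequality comes from best upper approximations. Suppose $\frac{p_i}{q}$ is the best upper approximation of $r_i=\frac{b_i}{a_i}$. Then $r_i<\frac{p_i}{q}$, and $r_i$ lies between $\frac{p_i}{q}$ and its Farey predecessor. Whenever $q\geq a_i$, the fraction $r_i$ itself has denominator at most $q$, which gives the bound
\[\frac{p_i}{q}-r_i\leq\frac{1}{a_iq}.\]
This is the step I expect to need the most care. I would derive it from the standard fact that $\frac{p_i}{q}$ and $r_i$ are Farey neighbours once $q\geq a_i$, giving $a_ip_i-b_iq=1$; the derivation in Proposition \ref{prop:tw} shows exactly how this goes.

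Now sum over $i$ and use $p_1+\cdots+p_n=-e_0q+n-2$:
\[-e_0+\frac{n-2}{q}=\sum_i\frac{p_i}{q}\leq\sum_i r_i+\frac{1}{q}\sum_i\frac{1}{a_i}=-e_0+e(M)+\frac{1}{q}\sum_i\frac{1}{a_i}.\]
Rearranging gives
\[n-2-\sum_i\frac{1}{a_i}\leq q\,e(M).\]
This lower bound on $q$ is not the one we want, so I would use the opposite side as well. Since $r_i<\frac{p_i}{q}$ strictly and $a_ip_i-b_iq\geq1$, we get $\frac{p_i}{q}\geq r_i+\frac{1}{a_iq}$. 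Hence $\frac{p_i}{q}-r_i=\frac{1}{a_iq}$ exactly for $q\geq\max_i a_i$. Summing then yields
\[n-2=q\,e(M)+\sum_i\frac{1}{a_i},\]
which determines $q$ uniquely because $e(M)\neq0$.

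So every admissible $q$ either satisfies $q<\max_i a_i$, which leaves finitely many values, or equals the single value $\bigl(n-2-\sum_i 1/a_i\bigr)/e(M)$. In either case there are finitely many twisting numbers, which proves the lemma and shows that $\underline{\text{tw}}(M)$ is well defined.
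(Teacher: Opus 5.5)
Your proof is correct, but it gets finiteness by a different mechanism than the paper.

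The paper sets $c_i:=a_ip_i-b_iq$ and uses only that $c_i\geq1$, which follows from $r_i<\frac{p_i}{q}$. Summing against $p_1+\cdots+p_n=-e_0q+n-2$ gives $q\,e(M)=n-2-\sum_i\frac{c_i}{a_i}<n-2$. This is a uniform bound $q<\frac{n-2}{e(M)}$ for every twisting number, with no case distinction.

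You instead split into two regimes. For $q<\max_ia_i$ finiteness is trivial. For $q\geq\max_ia_i$, the Farey-neighbour argument forces $c_i=1$ on every leg, and the summed relation pins $q$ to the single value $\bigl(n-2-\sum_i\frac{1}{a_i}\bigr)/e(M)$.

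Note that the inequality $\frac{p_i}{q}\geq r_i+\frac{1}{a_iq}$ in your third paragraph holds for every $q$, not only for $q\geq\max_ia_i$. Summing it alone already gives $q\,e(M)\leq n-2-\sum_i\frac{1}{a_i}$, which is slightly sharper than the paper's bound. So for the lemma itself, the lower bound in your second paragraph, the Farey-neighbour step and the case split are all superfluous.

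What your route does buy is an extra fact: at most one twisting number $-q$ can have $q\geq\max_ia_i$, and its value is explicit. This is precisely the situation of the manifolds ``without tails'' in Subsection~\ref{subsection:A}. For example, your formula returns $q=17$ for $-\Sigma(2,5,9)$ and $q=223$ for $-\Sigma(3,4,47)$, matching the values computed in the paper.
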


\begin{proof}
 We do the proof in the case that $e(M)>0$, because for negative-definite manifolds we already know from \cite[Section 4]{CM-negative} that the twisting number is unique. The claim follows by finding an appropriate upper bound for $|\underline{\text{tw}}(M)|$ through the Ghiggini-Massot's algorithm, in terms of the negative continued fractions of the Seifert coefficients of $M$; more specifically, here we prove that if $-q$ is a negative twisting number then $q<\frac{n-2}{e(M)}$ where $n$ is the number of legs.  

 By definition for each leg there is a positive integer $p_i$ such that $\frac{p_i}{q}$ is the best upper approximation for $r_i$; equivalently, the number $p_i$ is the smallest integer such that $\frac{p_i}{q}>r_i$ and no fraction with denominator lower than $q$ is between them. Let $r_i=\frac{b_i}{a_i}$ as reduced fraction, then there is a unique integer $c_i$, with $1 \leq c_i \leq a_i$, such that $p_i = \frac{qb_i + c_i}{a_i}$; indeed, we have that $c_i$ is the smallest positive residue of $-qb_i\:\text{ mod }a_i$, and then $\frac{c_i}{a_i}\in(0,1]$.

 The twisting-number condition says $p_1 + \cdots + p_n = -e_0q + n - 2$. Using the expression above for each $p_i$ we obtain \[\sum_{i=1}^{n} \left( \frac{qb_i}{a_i} + \frac{c_i}{a_i} \right)= -e_0q + n - 2\hspace{0.5cm}\text{ which by rearranging becomes }\hspace{0.5cm}qe(M) = n - 2 - \sum_{i=1}^{n} \frac{c_i}{a_i}\:,\] after we substitute $e(M)=r_1+\cdots +r_n+e_0$ in it. Since indefiniteness is exactly $e(M)>0$ and each $\frac{c_i}{a_i}$ is positive by construction, we conclude that $q<\frac{n-2}{e(M)}$.
\end{proof}

The botany of the negative-twisting contact structures on an indefinite Seifert fibred space $-Y$ depends on its standard graph. For this reason we introduce the following definition: we say that $-Y$ is a  manifold of \emph{type B} when its standard graph $\Gamma^*$ contains one of the seven graphs in Figure \ref{Torus}, which means the standard graph of a Seifert fibred torus bundle over a circle appears as a subgraph of $\Gamma^*$; otherwise, we say that $-Y$ is a manifold of \emph{type A}. We recall that $-Y$ is not an $L$-space since the latter ones do not carry any negative-twisting structures.

The strategy of the classification is to construct some structures on $-Y$ whose contact invariant $c^+$ is non-vanishing and lives in $HF_\text{red}$; we saw in Proposition \ref{prop:neg_tw} that these are necessarily negative-twisting. The classification then follows using the usual scheme: based on the work of Honda \cite{Honda1,Honda2}, or equivalently Giroux \cite{Giroux1,Giroux2}, we find an upper bound via convex decompositions, then we show that it is realised by the ones we explicitly constructed.

The explicit value of the upper bound, for a given twisting number $-q$ is obtained as follows. For each leg take the positive reduced fraction $\frac{p_i}{q}$ to be the best upper approximation for  $r_i$. In the continued fraction expansion: if $-\frac{1}{r_i}=[m^i_1,...,m^i_{k_i}]$ then $-\frac{q}{p_i}=[m^i_1,...,m^i_{h_i-1},n^i_{h_i}]$ where either $h_i=k_i+1$ or $h_i\leq k_i$ and $n^i_{h_i}>m^i_{h_i}$. 
We have that the number of contact structures, up to isotopy, with twisting number $-q<-1$ is at most $T(1,q)\cdots T(n,q)$ where each $T(i,q)$ is equal to 1 when $h_i>k_i$ or to \begin{equation}(n^i_{h_i}-m^i_{h_i})\:\cdot\prod_{j=h_i+1}^{k_i}|m^i_j+1|\label{eq:upper1}\end{equation} when $h_i\leq k_i$. The number of contact structures with twisting number $-1$ is instead equal to \begin{equation}|e_0+1|\displaystyle\prod_{i=1}^n\displaystyle\prod_{j=1}^{k_i} |m_j^i+1|\label{eq:upper2}\end{equation} as already shown in the proofs of \cite[Theorem 1.1 and Proposition 5.1]{CM-negative} in the negative-definite case, and Theorem \ref{teo:classification0} in the singular case; same arguments hold also for $-Y$ by Proposition \ref{prop:bigtw}. 
More generally, this argument can be used to prove Proposition \ref{prop:main}, which says that all the negative-twisting structures, with the highest possible twisting number, are induced by a Stein structure on the 4-manifold obtained by blowing down the standard graph.

\begin{proof}[Proof of Proposition \ref{prop:main}]
 As mentioned above, it follows by Lemma \ref{lemma:blowdown}, Theorem \ref{teo:classification0} and Proposition \ref{prop:bigtw}, using the same arguments in \cite[Theorem 1.1 and Proposition 5.1]{CM-negative}. 
\end{proof}

Generally the described upper bound takes into account that the background admits a unique $(-q)$-twisting structure up to isotopy and counts all possible tight extensions to the glued-in solid tori, as has been described to more details in the proof of \cite[Proposition 5.1]{CM-negative}.

Below we separately analyse which twisting numbers (other than the highest) manifolds of type A and B admit, and classify the corresponding structures.

\subsection{Manifolds of type A}
\label{subsection:A}
Following the work in \cite{Massot,Bowden}, we start this subsection by recalling the properties of a contact structure which is tangent to the fibres of a Seifert fibred space $M$. The result below follows from \cite[Subsection 4.2, Lemma 7.2 and Proposition 8.9]{Massot}.

\begin{prop}
 \label{prop:transverse}
 Suppose that $\xi$ is a contact structure on a Seifert fibred space $M$; if $\xi$ is tangent to the fibres then $(M,\xi)$ has a (weak) symplectic filling $(W,\omega)$ and $\xi$ is negative-twisting. Furthermore, there is a Legendrian realisation of a regular fibre of $M$ which bounds a Lagrangian surface in $(W,\omega)$.   
\end{prop}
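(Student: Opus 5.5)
The statement is essentially a repackaging of results from Massot's work, so I will assemble the argument from three ingredients: a filling for contact structures tangent to the fibres, negative twisting, and a Lagrangian surface bounded by a Legendrian fibre.

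For the filling, I use Massot's description in \cite[Subsection 4.2]{Massot} of contact structures tangent to the fibres. Up to isotopy such a $\xi$ is transverse to the horizontal distribution of an orbifold circle bundle, so after passing to a suitable finite cover it becomes a connection-type structure. One can then build the filling $(W,\omega)$ as the associated disk orbibundle over the base orbifold. The symplectic form is $\omega=d(r^2\alpha)+\pi^*\omega_B$, suitably resolved at the orbifold points (alternatively via \cite[Lemma 7.2]{Massot}), and it dominates $\xi$ on the boundary. I expect this to give a weak filling rather than a strong one in general, which is all the statement claims.

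For negative twisting, the tangency condition gives it directly. If a regular fibre $K$ is tangent to $\xi$, then $K$ is Legendrian. Along $K$ the contact planes contain the fibre direction and rotate around it, so the contact framing differs from the fibration framing by a negative amount: the planes make a full left-handed turn relative to the product framing. This is \cite[Proposition 8.9]{Massot}. Any Legendrian isotopic to a fibre has contact framing at most this value, so $\text{tw}(M,\xi)<0$. Tightness comes from the filling.

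For the Lagrangian, I take a regular fibre $K$ sitting over a point $b$ of the base, and let $D$ be the fibre disk of the disk orbibundle over $b$. The restriction of $\omega$ to $D$ is $d(r^2\alpha)\lvert_D=2r\,dr\wedge d\theta$, which is not zero, so $D$ itself is not Lagrangian. I will therefore instead take a surface swept by a path in the base: the annulus over a path $\gamma$ in $B$ from $b$, with an appropriate twist. The main obstacle is making this Lagrangian while having it bound exactly the Legendrian fibre. I would first try to follow Massot's Lemma 7.2, choosing the surface so that $\alpha$ vanishes on the tangent directions, which makes the boundary Legendrian, and then cap it off. I expect to rely on the citation here rather than verify the construction in detail.
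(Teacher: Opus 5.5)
There is a genuine gap, and it comes from using the wrong model for the filling. The disk bundle with $\omega=\dd(r^2\alpha)+\pi^*\omega_B$, where $\alpha$ is a connection form, is the filling model for contact structures \emph{transverse} to the fibres. A structure tangent to the fibres stays tangent to the fibres on every finite cover, so it never becomes of connection type. Concretely, your $\omega$ does not dominate $\xi$. On $\{r=1\}$ one has $\omega\lvert_{TM}=\dd\alpha+\pi^*\omega_B$. Since $\alpha(\partial_\theta)=1$ and $\alpha$ is invariant, $\iota_{\partial_\theta}\dd\alpha=0$, so this form is basic and annihilates the fibre direction $\partial_\theta$. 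As $\partial_\theta\in\xi$, you get $\omega\lvert_\xi\equiv0$, so $(W,\omega)$ is not a weak filling. (In any case, a filling of a finite cover would not give a filling of $M$.) The same model is what makes the fibre disk symplectic in your last paragraph. The annulus over a path in $B$ is then only a hope: you never produce a surface whose boundary is the fibre alone, let alone a Lagrangian one.

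The missing idea is the cotangent-type picture from \cite{Massot} that the paper invokes. The form on the disk (orbi)bundle is of Liouville type rather than prequantisation type: near the boundary it is $\dd(r\alpha)$ (up to reparametrising $r$), with $\alpha$ a contact form for $\xi$ itself. Domination is then automatic, since $\omega\lvert_{TM}=\dd\alpha$. Moreover, the disk fibre over a regular point of $B$ is Lagrangian, exactly like a cotangent fibre. On the collar its tangent planes are spanned by $\partial_r$ and $\partial_\theta$, with $\alpha(\partial_r)=\alpha(\partial_\theta)=0$ because the fibres are Legendrian, and $\iota_{\partial_r}\dd\alpha=0$.

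This is precisely the paper's argument. The regular fibres of the singular disk bundle $D$ are Lagrangian disks bounded by Legendrian regular fibres. The resolution is a branched cover $c\colon W\to D$ preserving the regular fibres, so $K$ still bounds a Lagrangian disk in $(W,c^*\omega_D)$; no path in the base is needed.

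Finally, your rotation argument only shows that the Legendrian fibres of $\xi$ twist negatively relative to the fibration framing. The claim that every Legendrian knot isotopic to a regular fibre has twisting at most that value is the actual content of negative twisting. It does not follow from the local picture and, as in the paper, has to be taken from \cite{Massot}.
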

\begin{proof}
 Starting from the Seifert fibration $M\rightarrow B$, where the base $B$ is a symplectic 2-dimensional orbifold (of genus zero) with isolated cyclic singularities, one can consider the associated singular disk bundle $D\rightarrow B$ where each fibre is equipped with (a quotient of) the symplectic form $\frac{1}{2}\dd(r^2\dd\theta)$. As explained in \cite{Massot}, this is a symplectic orbifold $(D,\omega_D)$ which can be resolved to get the desired filling $W$; more specifically, the resolution gives a branched cover $c:W\rightarrow D$.
 
 The regular fibres of $D$ are Lagrangian disks whose boundary is Legendrian, and isotopic to a regular fibre $K$ of $M$. Since the regular fibres are preserved by the branched cover, we have that $K$ is fixed by $c$ and then it still bounds a Lagrangian surface in $(W,\omega:=c^*\omega_D)$ with the same boundary condition.
\end{proof}

Using Ghiggini-Massot's algorithm, the possible values of a negative twisting number (of a tight structure) on $-Y$ can be determined exactly through elementary number theory. This set of integers only depends on the Seifert coefficients of $-Y=M(e_0;r_1,...,r_n)$; moreover, from Lemma \ref{lemma:A} we also know that it is finite and thus we can identify the invariant $\underline{\text{tw}}(-Y)$, the lowest twisting number of $-Y$. We can pinpoint a special family of indefinite Seifert fibred spaces which can be considered minimal for a given background of $\underline{\text{tw}}(-Y)$; these manifolds allow us to determine the values of the twisting numbers of any other $-Y$ of type A. 

We introduce the following terminology: we say that a  manifold of type A is \emph{without tails} when it has at least two negative twisting numbers (a posteriori, we know there will be exactly two of them) and $|\underline{\text{tw}}(-Y)|>a_i$ for all $i=1,...,n$, where $r_i=\frac{b_i}{a_i}$ as a positive reduced fraction. A simple example of a manifold without tails is the oppositely oriented Brieskorn sphere $-\Sigma(2,5,9)$ whose standard graph is in Figure \ref{TypeA}.

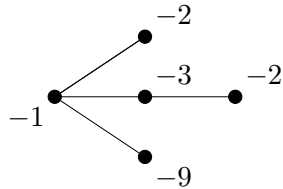
\begin{figure}[ht] 
    \begin{tikzpicture}[scale=0.8]
    \tkzDefPoints{0/0/A, 1.5/1/B, 1.5/0/D, 3/0/E, 1.5/-1/C}  
    \tkzDrawSegment(A,B)\tkzDrawSegment(A,D)\tkzDrawSegment(E,D)\tkzDrawSegment(B,A)\tkzDrawSegment(A,C)
    \tkzDrawPoints[fill,black,size=5](A,B,C,D,E)
     \tkzLabelPoint[below left](A){$-1$} 
     \tkzLabelPoint[above right](B){$-2$}\tkzLabelPoint[below right](C){$-9$}
     \tkzLabelPoint[above right](D){$-3$}\tkzLabelPoint[above right](E){$-2$}
       \end{tikzpicture}
     \caption{\smaller[1]{The manifold $-Y=-\Sigma(2,5,9)$, corresponding to $M(-1;\frac{1}{2},\frac{2}{5},\frac{1}{9})$. Computing the negative twisting numbers yields $|\underline{\text{tw}}(-Y)|=17>2,5,9$.}}
     \label{TypeA}
\end{figure}

\begin{lemma}
 \label{lemma:tails}
  Let $-Y$ be a  manifold of type A. Then either $-Y$ has a unique negative twisting number, or is without tails, or it is related to a manifold $M$ without tails such that:
  \begin{itemize}
      \item for any leg $i$ such that $|\underline{\emph{tw}}(-Y)|\leq a_i$ one has $-\frac{1}{r_i}=[m_1^i,...,m^i_{k_i}]$, while the corresponding leg of $M$ either has coefficient $-\frac{1}{r_i'}=[m^i_1,...,m^i_{l_i}]$ with $l_i<k_i$ or it is missing;
      \item if $-q$ is a twisting number of $-Y$ then the same is true for $M$.
  \end{itemize}
\end{lemma}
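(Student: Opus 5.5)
Let $q_0:=|\underline{\text{tw}}(-Y)|$. The plan is to construct $M$ from $-Y$ by truncating the legs that are too long compared with $q_0$, and to check through Ghiggini--Massot's algorithm (Theorem \ref{teo:Paolo}) that the truncation preserves every negative twisting number.

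Assume $-Y$ has at least two negative twisting numbers and is not without tails, so some leg $i$ satisfies $q_0\leq a_i$. For each such leg, write $-\frac{1}{r_i}=[m^i_1,\dots,m^i_{k_i}]$. Let $\frac{p_i}{q}$ be the best upper approximation of $r_i$ for a twisting number $-q$. As recalled in Subsection \ref{subsection:background}, $-\frac{q}{p_i}=[m^i_1,\dots,m^i_{h_i-1},n^i_{h_i}]$. Since $q\leq q_0\leq a_i$ we are in the case $h_i\leq k_i$. Set $l_i:=\min_q h_i-1$ over all negative twisting numbers $-q$ of $-Y$; this is a finite set by Lemma \ref{lemma:A}. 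Define $r_i'$ by $-\frac{1}{r_i'}=[m^i_1,\dots,m^i_{l_i}]$, and delete the leg when $l_i=0$. Leave the other legs and $e_0$ unchanged; when a leg is deleted, $n$ drops by one.

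The key number-theoretic step is this: for $q\leq q_0$, the best upper approximation with denominator $q$ of $r_i$ and of $r_i'$ coincide. Both numbers share the continued fraction prefix $[m^i_1,\dots,m^i_{h_i-1}]$, and $\frac{p_i}{q}$ lies in the Farey interval determined by that prefix. The map $r\mapsto$ (best upper approximation with denominator $q$) is constant on the set of $r$ whose continued fraction begins with the given prefix and falls just below $\frac{p_i}{q}$. Showing this requires the standard description of best upper approximations via the Stern--Brocot tree (the negative continued fraction convergents of $r$ from above). When the leg is deleted ($l_i=0$, i.e.\ $h_i=1$ for some $q$), we need instead $p_i=1$ for every such $q$. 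Then the condition $\sum p_i=-e_0q+n-2$ is preserved, because removing the leg lowers both $\sum p_i$ and $n-2$ by one. With this, any $(p_1,\dots,p_n)$ witnessing $-q$ on $-Y$ yields a witness on $M$, which gives the second bullet. The case $-q=-1$ is handled by the criterion $e_0\leq-2$, which is unchanged.

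It remains to show that $M$ is without tails. First, $M$ is still of type A, since truncating legs cannot create one of the subgraphs in Figure \ref{Torus}. It still has at least two twisting numbers by the previous step. We also need $|\underline{\text{tw}}(M)|>a_i'$ for every leg, where the denominators $a_i'$ of the truncated coefficients $r_i'$ are smaller than $q$ by the choice of $l_i$. However, $\underline{\text{tw}}(M)$ could in principle be lower than that of $-Y$, since new twisting numbers may appear. If it is lower, the inequality only improves, but $l_i$ must then be recomputed. I expect this to be the main obstacle: the truncation has to be iterated, or chosen minimally, so that the process stabilises. The approach I would try first is to take the truncation of maximal length satisfying $a_i'<q_0$. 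I would then use the bound $q<\frac{n-2}{e(M)}$ from the proof of Lemma \ref{lemma:A} to control the new twisting numbers, and argue that type A prevents a third twisting number, matching the a posteriori claim that there are exactly two.
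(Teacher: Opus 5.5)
Your proposal has a genuine gap, and it sits in the choice of the truncation length $l_i=\min_q h_i(q)-1$.

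\textbf{Why the minimum is the wrong extremum.} Let $L_q$ be the predecessor of $\frac{p_i}{q}$ in the Farey sequence $F_q$. Then three facts hold.
\begin{itemize}
\item $\frac{p_i}{q}$ is the best upper approximation of $x$ exactly when $x\in[L_q,\frac{p_i}{q})$.
\item $-1/L_q=[m^i_1,\dots,m^i_{h_i(q)-1}]$.
\item $L_q=\max\{x\in F_q: x\le r_i\}$.
\end{itemize}
Since $F_q\subset F_{q'}$ for $q\le q'$, we get $L_q\le L_{q'}$, so $h_i(q)$ is nondecreasing in $q$. Your minimum is therefore attained at the highest twisting number, and the truncation is too short for the lower ones.

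Your key step (``both numbers share the prefix $[m^i_1,\dots,m^i_{h_i-1}]$'') fails for every $q$ with $h_i(q)-1>l_i$. In that case the fraction $-1/[m^i_1,\dots,m^i_{l_i+1}]$ has denominator smaller than $q$ and lies in $(r_i',\frac{p_i}{q})$, so $\frac{p_i}{q}$ is no longer the best upper approximation of $r_i'$.

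\textbf{A concrete counterexample.} Take $-Y=M(-1;\frac12,\frac25,\frac{3}{26})$, with $e(-Y)=\frac1{65}$.
\begin{itemize}
\item Its twisting numbers are $-7$, with $(p_1,p_2,p_3)=(4,3,1)$, and $-17$, with $(9,7,2)$. There are no others: Ghiggini--Massot forces $\sum_i\{qr_i\}=2+\frac{q}{65}$, whence $q\le17$. So $q_0=17\le a_3=26$.
\item On the third leg $-\frac{26}{3}=[-9,-3]$, while $-\frac71=[-7]$ and $-\frac{17}{2}=[-9,-2]$. Hence $h_3(7)=1$ and $h_3(17)=2$.
\item Your rule deletes the third leg. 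But $p_3=2$ at $q=17$, so your own requirement ``$p_i=1$ for every such $q$'' fails.
\item The resulting $M(-1;\frac12,\frac25)$ is $S^3$ fibred by $T_{2,5}$. Its only twisting number is $-7$, so $-17$ is lost and $M$ is not without tails.
\end{itemize}
The correct truncation here is $M(-1;\frac12,\frac25,\frac19)=-\Sigma(2,5,9)$.

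\textbf{The fix (the paper's route).} Truncate with respect to $q_0$ alone: $l_i=h_i(q_0)-1$. Then $r_i'$ is the left Farey neighbour $L_{q_0}$ of $\frac{p_i}{q_0}$. Equivalently, this is the longest truncation with $a_i'<q_0$, which is what you suggest at the very end. It has to replace the $\min$, not patch the last step.

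By the nesting above, $L_q\le r_i'\le r_i<\frac{p_i}{q}$ for every twisting number $-q$ of $-Y$. So the best upper approximations, and hence the sums $\sum p_i$, are unchanged. When $h_i(q_0)=1$ one has $r_i<\frac1{q_0}\le\frac1q$, so $p_i=1$ for all $q$ and deleting the leg is harmless.

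The final step then needs no iteration, and the obstacle you anticipate is not real:
\begin{itemize}
\item on truncated legs $a_i'<q_0$, because $r_i'$ is a Farey neighbour of $\frac{p_i}{q_0}$ with $q_0>1$;
\item on untouched legs $a_i<q_0$ by construction;
\item $q_0$ is a twisting number of $M$, so $|\underline{\text{tw}}(M)|\ge q_0>a_i'$ on every leg.
\end{itemize}
New lower twisting numbers of $M$ only help, and nothing in the definition of ``without tails'' requires recomputing the truncation.
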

\begin{proof}
 It is clear that if $-Y$ is in neither of the first two categories then $\underline q=|\underline{\text{tw}}(-Y)|\leq a_i$ for at least one $i$. We first construct an $M$ such that its $i$-th leg has coefficient $r'_i$ as in the statement for every such $i$; then we prove that the set of the negative twisting numbers of $-Y$ is contained in the one of $M$; finally, we show that $M$ is also  of type $A$.

 We have already observed that $\frac{p_i}{\underline q}$ as a best upper approximation for $r_i=\frac{b_i}{a_i}$ with $\underline q<a_i$ has continued fraction expansion of the form $[m^i_1,...,m^i_{h_i-1},n^i_{h_i}]$ with $h_i\leq k_i$ and $n^i_{h_i}>m^i_{h_i}$. Taking $l_i=h_i-1$, where we interpret $0$ as no leg, gives us the desired truncation; note that in this way $r'_i$ is the biggest left Farey neighbour of $\frac{p_i}{\underline q}$ with denominator at most $\underline q$. Indeed, the rational number $\frac{p_i}{\underline q}$ is a best upper approximation also for $r'_i$  and so are all the numbers $[m^i_1,...,m^i_{j_i-1},s^i_{j_i}]$ with $j_i\leq h_i$ and $s^i_{j_i}>m^i_{j_i}$ (or $s^i_{j_i}>n^i_{j_i}$ if $j_i=h_i$). Since the absolute value $q$ of any twisting number of $-Y$ is a numerator of a continued fraction of the described form, the corresponding $\frac{P_i}{q}$ remains a best upper approximation for the truncated fraction $r'_i$. Consequently, since the $P_i$'s are the same with respect to $r_i$ and $r_i'$, so is their sum and then $-q$ is also a twisting number of $M$. 

 Since $-Y$ has at least two twisting numbers, by the argument above the same is true for $M$. Since $M$ cannot be of type B, as its standard graph is a subgraph of the one of $-Y$, it follows that it is of  type A and then without tails.
\end{proof}

We now analyse negative-twisting structures on a manifold without tails. For any $q>1$ we can analogously say that a manifold is without tails with respect to the twisting number $-q$ when $q>a_i$ for each $i$.

\begin{teo}
 \label{teo:A}
 If $-Y$ is a manifold of type A without tails then it has exactly two negative twisting numbers.   
\end{teo}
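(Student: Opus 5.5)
We will work purely number-theoretically, through Ghiggini--Massot's algorithm (Theorem \ref{teo:Paolo}). Since $-Y$ is without tails it has at least two negative twisting numbers; the highest is $\overline{\text{tw}}(-Y)$, and by Proposition \ref{prop:bigtw} it equals $-1$ or $-d_1-d_2$. So it is enough to show that there is at most one value $-q<\overline{\text{tw}}(-Y)$ satisfying the algorithm, and then to take $q=\underline q:=|\underline{\text{tw}}(-Y)|$, which is well defined by Lemma \ref{lemma:A}.

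Suppose that $-Q$ and $-q$ are two twisting numbers with $\overline q<q<Q$, where $\overline q=|\overline{\text{tw}}(-Y)|$, and that $q,Q>a_i$ for every $i$ (the no-tails hypothesis). Write the best upper approximations as $\frac{p_i}{q}$ and $\frac{P_i}{Q}$. We run the mediant argument from the proof of Proposition \ref{prop:tw}. Because $a_i<q$, the best upper approximation of $r_i=\frac{b_i}{a_i}$ with denominator $q$ is the unique fraction with $a_ip_i-b_iq=1$ and minimal $p_i$, namely $p_i=\frac{qb_i+c_i}{a_i}$ with $c_i\equiv -qb_i \pmod{a_i}$. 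The same holds for $Q$. Substituting into the sum conditions $\sum p_i=-e_0q+n-2$ and $\sum P_i=-e_0Q+n-2$, as in the proof of Lemma \ref{lemma:A}, gives
\[q\,e(-Y)=n-2-\sum_i\frac{c_i(q)}{a_i},\qquad Q\,e(-Y)=n-2-\sum_i\frac{c_i(Q)}{a_i}.\]
Best upper approximations with denominator larger than $a_i$ are Farey neighbours of $r_i$, so $a_ip_i-b_iq=1$, that is $c_i(q)=1$ for all $i$. Similarly $c_i(Q)=1$. Then both equations give the same right-hand side $n-2-\sum\frac{1}{a_i}$, which forces $q=Q$ because $e(-Y)>0$. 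Hence there is at most one twisting number with $q>\max a_i$.

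It remains to exclude intermediate twisting numbers with $\overline q<q\le \max_i a_i$. This is excluded directly, because the no-tails hypothesis only requires $\underline q>a_i$; an intermediate $q$ would still have to be compared with $\underline q$ through Farey neighbours. We handle it by comparing $-q$ with $-\underline q$ exactly as in the $e_0=-1$ case of Proposition \ref{prop:tw}. From $\frac{P_i}{\underline q}<\frac{p_i}{q}$ we get $\frac{P_i-p_i}{\underline q-q}\le r_i$. Summing and using both sum conditions yields $\sum r_i\ge -e_0$, which contradicts $e(-Y)>0$ unless equality holds everywhere, and the equality case leads back to the torus-bundle graphs of Figure \ref{Torus}. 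The same comparison between $\overline q$ and any $q$ is what produces $\overline{\text{tw}}$ as the top value.

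The main obstacle is this mediant inequality step. We have to check carefully that $\frac{P_i-p_i}{\underline q-q}\le r_i$ holds, which needs the best upper approximations to be nested appropriately. We also have to show that the equality case genuinely forces a Figure \ref{Torus} subgraph, which is excluded by type A. If the inequality direction fails for some legs, we would instead use the explicit formula $q\,e=n-2-\sum c_i/a_i$ and bound $\sum c_i/a_i$ from both sides.
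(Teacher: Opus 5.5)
Your second paragraph is correct. For $q>a_i$ the best upper approximation is a Farey neighbour of $r_i$, so $c_i(q)=1$ for every $i$. The identity $q\,e(-Y)=n-2-\sum_i c_i(q)/a_i$ from the proof of Lemma \ref{lemma:A} then shows that at most one twisting number has $q>\max_i a_i$.

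That only settles the easy range. The real content of the theorem is excluding an intermediate $-q$ with $\overline q<q\le\max_i a_i$, and there your argument fails. The inequality $\frac{P_i-p_i}{\underline q-q}\le r_i$ is itself fine, since $\frac{P_i-p_i}{\underline q-q}<\frac{P_i}{\underline q}$ and you can apply the best-approximation property, so the obstacle you flag is not the issue. The issue is that its conclusion is vacuous. Summing and using the two sum conditions gives $\sum_i r_i\ge -e_0$, which says $e(-Y)\ge 0$. That is perfectly consistent with $e(-Y)>0$, so there is no contradiction and no equality case to analyse. In Proposition \ref{prop:tw} the same computation works only because $e(Z)=0$ forces equality; here, equality would mean $e(-Y)=0$, which is already excluded.

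More fundamentally, no comparison of just two twisting numbers can work in the indefinite case. $\overline q$ and $\underline q$ are themselves two twisting numbers that genuinely coexist. Your fallback of bounding $\sum c_i/a_i$ from both sides is not yet an argument.

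What is missing is a three-level comparison that uses the fact that $q$ is sandwiched between $\overline q$ and $\underline q$. The paper encodes the nested Farey intervals
\[
\Bigl(r_i,\tfrac{p^1_i}{\underline q}\Bigr)\subset\Bigl(\tfrac{d_i}{c_i},\tfrac{p_i}{q}\Bigr)\subset\Bigl(\tfrac{u_i}{v_i},\tfrac{p^0_i}{\overline q}\Bigr)
\]
as $X_i=X_i^{(0)}M_i$ and $X_i^{(1)}=X_iN_i$, with non-negative matrices in $\mathrm{SL}_2(\Z)$. The three sum conditions then give
\[
\sum\tau_i=(n-2)(q-\overline q),\qquad \sum\tau'_i=(n-2)(\underline q-q),
\]
and, for the composite $M_iN_i$,
\[
\sum(\sigma'_i\tau_i+\tau'_i\pi_i)=(n-2)(\underline q-\overline q).
\]
Subtracting forces $\sigma'_i=\pi_i=1$, hence $\tau_i=(q-\overline q)/c_i$ and $\sum_i 1/c_i=n-2$ for the truncated coefficients $\frac{d_i}{c_i}$. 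Combined with the non-$L$-space condition (Theorem \ref{teo:criterion}), this shows that the graph truncated at level $q$ is one of the torus-bundle graphs of Figure \ref{Torus}, which contradicts type A. So the torus bundle comes from truncating at the intermediate level $q$, not from an equality case for $-Y$ itself.
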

\begin{proof}
 Let $-Y=M(e_0;r_1,...,r_n)$ with $r_i=\frac{b_i}{a_i}$ as a reduced fraction, and denote by $\Gamma^*$ its standard graph.
 Being of type A without tails, the manifold $-Y$ admits tight contact structures with at least two twisting numbers: the highest $\overline{\text{tw}}(-Y)$ from Section \ref{section:five} and the lowest $\underline{\text{tw}}(-Y)$ with $a_i<|\underline{\text{tw}}(-Y)|$ for $i=1,...,n$. 
 
 We assume to the contrary with the statement that there exists another twisting number $-q$. 
  If we write $\underline q=|\underline{\text{tw}}(-Y)|$, for the absolute value of the lowest twisting number, and $\overline q=|\overline{\text{tw}}(-Y)|$, for the absolute value of the highest one, we have strict inequalities $\overline q<q<\underline q$. 
  We will first describe how the best upper approximations for $r_i$ corresponding to the three twisting numbers are related to each other, and then prove that they cannot all three satisfy the twisting number relation appearing in Ghiggini-Massot's algorithm.
 
 We write $\frac{p_i^0}{\overline q},$ $\frac{p_i}{q},$ and $\frac{p_i^1}{\underline q}$ for the best upper approximations for $r_i$ when $i=1,...,n$. Note that when $\overline q=1$ we have that $p_1^0=-e_0-1$ and $p_i^0=1$ for $i=2,...,n$; in this way, the relation $p^0_1+...+p^0_n=-e_0\overline q+n-2$ is still satisfied, see \cite[Section 4]{CM-negative}.

 Since $-Y$ is without tails, the Seifert coefficient $r_i$ and the best upper approximations $\frac{p_i^1}{\underline q}$ are Farey neighbours. We describe a subgraph $G$ of $\Gamma^*$ as the truncation we obtain by looking at a manifold without tails with respect to $q>1$: the graph $G$ is determined by $M(e_0;\frac{d_1}{c_1},...,\frac{d_{n'}}{c_{n'}})$ where $\frac{d_i}{c_i}$ is the biggest left Farey neighbour with $c_i\leq q$ (when it is non-zero) of $\frac{p_i}{q}$; similarly, we define $\frac{u_i}{v_i}$ as a left Farey neighbour of $\frac{p_i^0}{\overline q}$. 
  
For each leg $i=1,...,n$ let
\[
X_i^{(0)}=
\begin{pmatrix}
p_i^{0} & u_i\\
\overline q & v_i
\end{pmatrix}
\qquad
X_i=
\begin{pmatrix}
p_i & d_i\\
q& c_i
\end{pmatrix}
\qquad
X_i^{(1)}=
\begin{pmatrix}
p_i^{1} & b_i\\
\underline q & a_i
\end{pmatrix}
\]
be the corresponding $\text{SL}_2(\mathbb{Z})$-matrices which satisfy
$X_i=X_i^{(0)}M_i$ and $X_i^{(1)}=X_iN_i$, where
\[
M_i=
\begin{pmatrix}
\sigma_i & \rho_i\\
\tau_i & \pi_i
\end{pmatrix}\hspace{0.5cm}\text{ and }\hspace{0.5cm}
N_i=
\begin{pmatrix}
\sigma_i' & \rho_i'\\
\tau_i' & \pi_i'
\end{pmatrix}
\]
have all entries non-negative and $\tau_i,\tau_i',\pi_i,\sigma_i'>0$ when either $e_0=-1,-2$ or $i>1$; in other words, the intervals of Farey neighbours $(r_i,\frac{p_i^1}{\underline q})\subset(\frac{d_i}{c_i},\frac{p_i}{q})\subset(\frac{u_i}{v_i},\frac{p_i^0}{\overline q})$ are nested. When $e_0\leq-3$ and $i=1$ we can prove directly that $\tau_1,\pi_1>1$; in fact, $\frac{p_1^0}{\overline q}=\frac{-e_0-1}{1}$ and $\frac{u_1}{v_1}=\frac{-e_0-2}{1}$ imply $\tau_1=-p_1-(e_0+1)q$ and $\pi_1=-d_1-(e_0+1)c_1$. 

At each level, the twisting number relation is
\[
\sum_{i=1}^n p_i^{0}=-e_0 \overline q+n-2\:,\hspace{0.5cm}\sum_{i=1}^n p_i=-e_0q+n-2\hspace{0.5cm}\text{ and }\hspace{0.5cm}\sum_{i=1}^n p_i^1=-e_0\underline q+n-2\:.\]

From the first column of $X_i=X_i^{(0)}M_i$ we obtain $p_i=\sigma_ip_i^{0}+\tau_iu_i$ and $q=\sigma_i\overline q+\tau_iv_i$, which can be rephrased as \[p_i=\dfrac{p_i^0}{\overline q}(q-\tau_iv_i)+\tau_iu_i=\dfrac{qp_i^0-\tau_i(v_ip_i^0-u_i\overline q)}{\overline q}=\dfrac{qp^0_i-\tau_i}{\overline q}\:.\] Summing over $i$, substituting in the twisting number relation, and repeating the same computation for $X_i^{(1)}=X_iN_i$, gives
\begin{equation}
\sum_{i=1}^n \tau_i=(n-2)(q-\overline q)\hspace{0.5cm}\text{ and similarly }\hspace{0.5cm}\sum_{i=1}^n \tau_i'=(n-2)(\underline q-q)\:.\label{eq:2}
\end{equation}

If we consider the composition $M_iN_i$ then the corresponding lower-left entry is $\sigma_i'\tau_i+\tau_i'\pi_i$, so the same computation gives
\begin{equation}
\sum_{i=1}^n \bigl(\sigma_i'\tau_i+\tau_i'\pi_i\bigr)
=(n-2)(\underline q-\overline q)\:, \label{eq:3}
\end{equation}
and subtracting Equations \eqref{eq:2} from Equation \eqref{eq:3} yields \[\sum_{i=1}^n \bigl((\sigma_i'-1)\tau_i+(\pi_i-1)\tau_i'\bigr)=0\:.\]

We have that $\tau_i,\tau_i',\pi_i,\sigma_i'>0$, thus $\sigma_i'=\pi_i=1$ for $i=1,...,n$. Now from $X_i=X_i^{(0)}M_i$, using $\pi_i=1$, the lower-right entry gives $c_i=\rho_i\overline q+v_i$ while the lower-left one gives \[q=\overline q\sigma_i+\tau_iv_i=\overline q(1+\rho_i\tau_i)+\tau_iv_i=\overline q+\tau_i(\rho_i\overline q+v_i)=\overline q+\tau_ic_i\:;\] hence, we showed that $\tau_i=\frac{q-\overline q}{c_i}$ for $i=1,...,n$. Plugging this into Equation \eqref{eq:2} and dividing by $q-\overline q>0$ now gives
\begin{equation}
  \dfrac{1}{c_1}+\cdots+\dfrac{1}{c_n}=n-2\:.
 \label{eq:6}
\end{equation}
 We can assume that each $c_i\geq2$ if we observe that the subgraph $G$ can have $n'\leq n$ legs; in fact, it is possible that some legs of $\Gamma^*$ disappear in the process, and in this case the coefficient $\frac{d_i}{c_i}$ would be $\frac{0}{1}$. Nonetheless, we note that $\frac{1}{c_i}=1$ and this reduces by 1 the right-hand side of Equation \eqref{eq:6} corresponding to the missing $i$-th leg. 

 To complete the proof, we now just have to determine all the possible subgraphs $G$ such that $\frac{1}{c_1}+...+\frac{1}{c_{n'}}=n'-2$ with $c_i\geq2$. We already solved this problem in the proof of Proposition \ref{prop:tw}, and we know that the only cases are $(2,3,6),$ $(2,4,4),$ $(3,3,3)$ and $(2,2,2,2)$. Since we want $Y$ (and then $-Y$) to not be an $L$-space, using Theorem \ref{teo:criterion} we conclude that: if $e_0=-1$ then $n'=3$ and the $d_i$'s are $(1,1,1)$; meanwhile, if $e_0\leq-2$ then \[2=n'-\sum_{i=1}^{n'}\dfrac{1}{c_i}=\sum_{i=1}^{n'}\left(1-\dfrac{1}{c_i}\right)\geq\sum_{i=1}^{n'}\dfrac{d_i}{c_i}\geq-e_0\geq2\:,\] because by construction the manifold presented by $G$ has two distinct negative twisting numbers and thus non-negative $e$, which forces $e_0=-2$ and $d_i=c_i-1$. This means that $\Gamma^*$ contains a subgraph isomorphic to the standard graph of a torus bundle, see Figure \ref{Torus}, and this is impossible because the manifold is of type A.
\end{proof}

\begin{prop}
 \label{prop:A}
  If $-Y$ is a  manifold of type A without tails then all the negative-twisting tight structures on $-Y$ are symplectically fillable, and they are distinguished by $c^+$ in $HF_\emph{red}(Y)$.
\end{prop}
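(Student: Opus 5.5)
By Theorem \ref{teo:A}, a manifold of type A without tails has exactly two negative twisting numbers, $\overline{\text{tw}}(-Y)=-\overline q$ and $\underline{\text{tw}}(-Y)=-\underline q$. The plan is to treat these two levels separately. For the highest level, Proposition \ref{prop:main} and Proposition \ref{prop:bigtw} already show that the structures are Stein fillable, hence symplectically fillable. Lemma \ref{lemma:c} shows that their invariants are the distinct functionals $T_{[V]}\in HF_\text{red}(Y)$ with $V$ realised. So all the work is at the lowest level $-\underline q$.

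\textbf{Upper bound at the lowest level.} Since $\underline q>a_i$ for every $i$, each $r_i$ and its best upper approximation $\frac{p^1_i}{\underline q}$ are Farey neighbours. In the notation of the continued-fraction count this means $h_i=k_i+1$, so every factor $T(i,\underline q)$ in Equation \eqref{eq:upper1} equals $1$. Convex surface theory therefore gives at most one tight structure with twisting number $-\underline q$. It remains to construct one such structure, show it is fillable, and show its invariant is non-zero in $HF_\text{red}(Y)$ and differs from the invariants at the top level.

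\textbf{Construction and fillability.} I would use Massot's construction of a contact structure tangent to the fibres. The twisting-number relation $\sum p^1_i=-e_0\underline q+n-2$, together with the Farey condition, should be exactly what is needed to realise a horizontal/transverse-type structure whose Legendrian regular fibre has twisting $-\underline q$. Proposition \ref{prop:transverse} then gives a weak symplectic filling $(W,\omega)$, shows the structure is negative-twisting, and shows the Legendrian regular fibre bounds a Lagrangian surface. By the uniqueness from the upper bound, this is the unique structure $\xi_\text{low}$ at level $-\underline q$. I expect the main obstacle here to be the precise identification that Massot's structure has twisting $-\underline q$ rather than some other value. I would first verify it with the Lagrangian-surface argument used in Proposition \ref{prop:bigtw}: it gives $\text{TB}=\tau_\xi(K)+\tau_{\overline\xi}(K)-1$, and hence $\text{tw}=-1-\height(\cdot)$ of the classes pairing with $c^+$.

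\textbf{Distinguishing by $c^+$.} A weak filling gives a non-vanishing twisted contact invariant. Because the filling has $b_2^+$ compatible with $-Y$ being a rational homology sphere, I would argue that the untwisted $c^+(\xi_\text{low})$ is non-zero. Its parity is odd, since it is constrained by Proposition \ref{prop:neg_tw} run in reverse together with Theorem \ref{teo:fullpath}. It therefore lies in $HF_\text{red}(Y)$ and can be written as $T_{[V_1]}+\cdots+T_{[V_k]}$. The twisting-number identity forces $\height([V_1]+[V_k])=\underline q-1$. By the final Remark of the introduction, a single $T_{[V]}$ with $V$ realised has height $\overline q-1\neq\underline q-1$, so $k\geq2$. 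Hence $c^+(\xi_\text{low})$ differs from every top-level invariant, and those top-level invariants are pairwise distinct.
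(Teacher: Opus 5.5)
Your overall plan follows the paper's. Theorem~\ref{teo:A} gives two levels. The top level is handled by Propositions~\ref{prop:main}, \ref{prop:bigtw} and Lemma~\ref{lemma:c}. The low structure is made fillable via Proposition~\ref{prop:transverse} and separated using the Lagrangian-fibre identity $\text{TB}_\xi(K)=\tau_\xi(K)+\tau_{\overline\xi}(K)-1$. Your uniqueness at level $-\underline q$ via $T(i,\underline q)=1$ in \eqref{eq:upper1} is a legitimate substitute for the uniqueness half of Massot's result.

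\textbf{Gap 1: the twisting number of Massot's structure.} The step you flag as the main obstacle is not closed, because your proposed check is circular. The formula $\text{tw}=-1-\height(\cdot)$ can only be evaluated once you know the coordinates of $c^+$. Your last paragraph pins those down by already assuming $\text{tw}=-\underline q$. A priori Massot's structure tangent to the fibres could be one of the top-level structures, with $c^+=T_{[V]}$ for a realised $V$, and nothing in your argument excludes this. The missing input is \cite[Theorem 8.7]{Massot} itself: when $\underline q>a_i$ for all $i$, the structure with twisting number $-\underline q$ is unique and tangent to the fibres. This is exactly what the paper cites, and fillability then follows from Proposition~\ref{prop:transverse}. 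Note also that you need the fibres to be Legendrian, as in Proposition~\ref{prop:transverse}; a ``horizontal/transverse'' structure is the wrong picture.

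\textbf{Gap 2: parity of $c^+$.} The claim that $c^+$ has odd parity does not follow from Proposition~\ref{prop:neg_tw}. That result only proves that odd parity implies negative twisting. The converse is precisely what this proposition has to supply, and Corollaries~\ref{cor:red} and \ref{cor:fillable} depend on it. Since $c^+(\xi_0)$ is a non-zero homogeneous element of $\Ker U$, you must rule out $c^+(\xi_0)=\Theta^+_{\s_{\xi_0}}$ directly. The paper does this with the same Lagrangian identity. In that case $\widehat c(\xi_0)=[W_1]+\cdots+[W_k]\in\widehat{HF}^{\text{ev}}(Y,\s_{\xi_0})$, the even-class formula \eqref{eq:tau} applies, and one gets $\text{tw}(-Y,\xi_0)=-1-e_1^TQ^{-1}\frac{W_k+W_1'}{2}\geq-1+\height[W_{\text{can}}]\geq-1$. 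This contradicts $\underline q>1$. Only after that does your (correct) exclusion of $c^+(\xi_0)=T_{[V]}$ with $V$ realised, via \eqref{eq:taumirror} and Proposition~\ref{prop:bigtw}, finish the argument.

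\textbf{Smaller points.}
\begin{itemize}
\item The equality $\text{SL}_{\xi_0}(\pm K)=2\tau_{\xi_0}(\pm K)-1$ cannot be quoted from the Stein case \cite[Theorem 1.3]{AC}, since the filling is only symplectic. The paper uses the slice-Bennequin inequality \cite[Theorem 4.1]{AC} together with its sharpness for symplectic surfaces \cite[Lemma 2.13]{EG}.
\item Non-vanishing of the untwisted $c^+$ comes from $-Y$ being a rational homology sphere, so the weak filling suffices for the untwisted invariant. It does not come from a $b_2^+$ consideration.
\item ``$k\geq2$'' overstates what you proved, since a single $T_{[V]}$ with $V$ not realised is not excluded. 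That conclusion is not needed here anyway.
\end{itemize}
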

\begin{proof}
 The manifold $-Y$ has exactly two negative twisting numbers because of Theorem \ref{teo:A}. Since $\underline q=|\underline{\text{tw}}(-Y)|>a_i$ for $i=1,...,n$, Massot in \cite[Theorem 8.7]{Massot} tells us that $-Y$ actually has a unique contact structure $\xi_0$ with twisting number $\underline q$, which is tangent to the fibres; hence, this structure is symplectically fillable because of Proposition \ref{prop:transverse}. 
 
 The fact that all the tight contact structures with twisting number $\overline q=|\overline{\text{tw}}(-Y)|$ are the ones in Proposition \ref{prop:bigtw}, which means they are induced by Stein structures on the blown down graph $X_{\Gamma^*}$, has been discussed in Proposition \ref{prop:main}. More specifically, we observe that the upper bound (for these structures) from convex surface theory is precisely equal to the number of realised characteristic vectors; we know that these vectors are in one to one correspondence with the contact structures we are considering by Lemma \ref{lemma:c}. Hence, the last claim holds when $\xi$ satisfies $\text{tw}(-Y,\xi)=\overline{\text{tw}}(-Y)$, and each contact invariant is of the form $T_{[V]}\in HF_\text{red}(Y)$ for a certain realised characteristic vector $V$. 
 
 To distinguish the invariant of the structure $\xi_0$, we use the fact that there exists a Legendrian knot $\mathcal K$, smoothly isotopic to a regular fibre of $-Y$, bounding a Lagrangian surface $\Sigma$ in a symplectic filling of $(-Y,\xi)$ from Proposition \ref{prop:transverse}. By the same argument in the proof of Propositions \ref{prop:neg_tw} and \ref{prop:bigtw} we have \[\text{TB}_{\xi_0}(K)=\tau_{\xi_0}(K)+\tau_{\overline\xi_0}(K)-1\:.\] Note that in this case there is an additional detail to discuss: the fact that $\text{SL}_{\xi_0}(\pm K)=2\tau_{\xi_0}(\pm K)-1$ does not follow just by \cite[Theorem 1.3]{AC}; the knot $\pm K$ is indeed the boundary of a symplectic surface $\Sigma^\pm$, but the filling is not necessarily Stein (Massot \cite{Massot} only guarantees is symplectic). The result still holds with little modifications: from \cite[Theorem 4.1]{AC} we still have the slice-Bennequin inequality \[\text{SL}_{\xi_0}(\pm K)\leq2g(\Sigma)-1-\Sigma\cdot\Sigma+c_1(\omega)(\Sigma^\pm)\] where $\omega$ is the symplectic form. This inequality is sharp because $\Sigma^\pm$ is symplectic; a proof of this fact can be found in \cite[Lemma 2.13]{EG}.

 Assume first that $c^+(\xi_0)=\Theta^+_{\s_{\xi_0}}$; since in this case $\widehat c(\xi_0)\in\widehat{HF}^\text{ev}(Y,\s_{\xi_0})$, we can compute $\tau_{\xi_0}(K):=-\tau_{\widehat c(\xi_0)}(K)$ from Theorem \ref{teo:tau}; namely, write $\widehat c(\xi_0)=[W_1]+\cdots+[W_k]$ where $\mathcal F(W_1)<\cdots<\mathcal F(W_k)$ are initial, and $\{[W_1],...,[W_t]\}$ is the canonical basis of $\widehat{HF}^\text{ev}(Y,\s_{\xi_0})$ given by the full paths. By definition of twisting number and conjugation we can then write
 \[\begin{aligned}-\underline q=\text{tw}(-Y,\xi_0)&=-\dfrac{1}{e(Y)}+\tau_{\xi_0}(K)+\tau_{\overline\xi_0}(K)-1=\\ &=-1-e_1^TQ^{-1}\left(\dfrac{W_k+W_1'}{2}\right)\geq-1+\height[W_\text{can}]\geq-1\:.\end{aligned}\] where $W_\text{can}\in\text{Char}(P_\Gamma,\s_{\xi_0})$ and $W_1'$ is the initial vector of $[-W_1]$.
 This is not possible by the assumptions we made. 

 Now we know that $c^+(\xi_0)\in HF_\text{red}(Y,\s_{\xi_0})$. Suppose that $c^+(\xi_0)=T_{[V]}$ for some realised $V$; then doing the same computation as above, but using Equation \eqref{eq:taumirror} instead,  yields $-\underline q=-1-\height[V_\text{can}]=\overline{\text{tw}}(-Y)$ by Proposition \ref{prop:bigtw}. This is again a contradiction; therefore, the invariant $c^+(\xi_0)$ is distinct from the one of all the other structures on $-Y$.
\end{proof}

We summarise the shrinking process that lead us to the family of manifolds without tails. We started from the set of all Seifert fibred spaces whose base orbifold is a sphere; among these we first restrict to the ones with indefinite standard graph, as we already classified the negative twisting structures in the other cases, see \cite[Theorem 1.1]{CM-negative} and Theorem \ref{teo:classification0}. As we mentioned in Remark \ref{remark:L}, we do not consider indefinite $L$-spaces because they do not carry such kind of structures. By examining the standard graph, we exclude manifolds of type B: the ones whose graph contains a subgraph isomorphic to one in Figure \ref{Torus}; we then compute the twisting numbers and observe that we do not need to consider manifolds with a unique twisting number: for these Proposition \ref{prop:main} shows that the classification is the one already given in Proposition \ref{prop:bigtw}. Finally, the condition on $\underline{\text{tw}}(-Y)$ identifies our minimal manifolds, the ones whose standard graph is simple enough to prove Theorem \ref{teo:A}; for a generic $-Y$ the same result requires Lemma \ref{lemma:tails}, this is included in Proposition \ref{prop:classificationA}.  

\begin{remark}
 \label{remark:A}
 The fact that $-Y$ has at most two negative twisting numbers can be extended to any  manifold of type A by applying Lemma \ref{lemma:tails}.   
\end{remark}

From Lemma \ref{lemma:tails} we know that $-Y$ can be obtained performing surgery on an $M$ without tails. This surgery can be done in the contact setting in all tight structures on $M$. As we have described the structures with the highest twisting number $\overline{\text{tw}}(-Y)$ for any $-Y$ already in Section \ref{section:five}, we restrict here to $M$ equipped with the unique structure $\xi_0$ with twisting number $\underline{\text{tw}}(M)$, which is tangent to the fibres by \cite[Theorem 8.7]{Massot}. Going from $M$ to $-Y$ we perform surgery on push-offs of its singular fibres. Using notation from the proof of Lemma \ref{lemma:tails}, we have that $M$ has Seifert coefficients $[m^i_1,...,m^i_{h_i-1}]$, while the corresponding best upper approximations equals $[m^i_1,...,m^i_{h_i-1},n^i_{h_i}]$ and the Seifert coefficient of $-Y$ to $[m^i_1,...,m^i_{h_i-1},m^i_{h_i},...,m^i_{k_i}]$ with $m_{h_i}^i>n_{h_i}^i$. This means that the contact framing of the $i$-th singular fibre in $(M,\xi_0)$ is exactly $n_{h_i}^i$ and as $m_{h_i}^i<n_{h_i}^i$ we can realise the desired surgery as Legendrian surgery. The obtained contact structures differ by the $\text{Spin}^c$-structures on the symplectic cobordism and therefore by contact invariant.

Comparing to the upper bound, we complete the classification realising that there are no other negative twisting structures on $-Y$.

\begin{prop}
 \label{prop:classificationA}
 Suppose that $-Y$ is a  manifold of type A. A negative-twisting tight contact structure on $-Y$ is either isotopic to one of the $\overline{\emph{tw}}(-Y)$-structures, obtained by complete blow-down of $\Gamma^*$, or to a $\underline{\emph{tw}}(-Y)$-structure, obtained by a negative contact surgery on (some) fibres of the corresponding manifold $M$ without tails equipped with its unique structure tangent to the fibres. In particular, all the negative-twisting structures on $-Y$ are symplectically fillable.    
\end{prop}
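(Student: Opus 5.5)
The plan is to combine Remark \ref{remark:A}, the constructions just described, and the upper bound from convex surface theory. By Remark \ref{remark:A} (Lemma \ref{lemma:tails} together with Theorem \ref{teo:A}), a manifold $-Y$ of type A has at most two negative twisting numbers, namely $\overline{\text{tw}}(-Y)$ and possibly $\underline{\text{tw}}(-Y)$. So the argument splits into these two twisting numbers.

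For $\overline{\text{tw}}(-Y)$ I would quote Proposition \ref{prop:main} and Proposition \ref{prop:bigtw}. These say the structures are exactly the Legendrian surgeries on Legendrian realisations of the complete blow-down $X_{\Gamma^*}$, and they are Stein fillable. If $-Y$ has a unique twisting number, this finishes the proof.

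If there are two twisting numbers, I would first apply Lemma \ref{lemma:tails} to obtain the manifold $M$ without tails, whose set of twisting numbers contains that of $-Y$. By \cite[Theorem 8.7]{Massot}, $M$ carries a unique structure $\xi_0$ with twisting number $\underline{\text{tw}}$, and it is tangent to the fibres. Proposition \ref{prop:transverse} then makes $\xi_0$ weakly symplectically fillable.
\begin{itemize}
\item As explained before the statement, in $(M,\xi_0)$ the $i$-th singular fibre has contact framing $n^i_{h_i}$, and $m^i_{h_i}<n^i_{h_i}$. So the chain of surgeries $[m^i_{h_i},\dots,m^i_{k_i}]$ extending each truncated leg can be realised by Legendrian surgery on stabilised Legendrian push-offs of the singular fibres.
\item Legendrian surgery preserves weak fillability, so every resulting structure is symplectically fillable.
\item I would check that these structures still have twisting number $\underline{\text{tw}}(-Y)$, since the background piece with its $(-\underline q)$-twisting is untouched.
\item Their contact invariants are non-zero, because Legendrian surgery maps $c^+$ functorially. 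They are distinguished by the $\Spin^c$-structures on the Stein cobordism, i.e.\ by the choices of rotation numbers. Each choice gives a distinct contact invariant, since \cite[Proposition 3.3]{G-fillability} shows that only the $\Spin^c$-structure of $J$ sends $c^+$ to the invariant downstairs non-trivially.
\end{itemize}

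The counting step is where I expect the main difficulty. The number of choices of stabilisations and rotation numbers on the extending chains is $\prod_i (n^i_{h_i}-m^i_{h_i})\prod_{j>h_i}|m^i_j+1|$. This has to be matched with the upper bound \eqref{eq:upper1} at $q=\underline q$, i.e.\ the count of tight extensions of the unique $(-\underline q)$-twisting background over the solid tori. The difficulty is the bookkeeping of the first coefficient $n^i_{h_i}-m^i_{h_i}$, which counts the stabilisation choices of the first surgered curve relative to the contact framing $n^i_{h_i}$; I would handle it by recognising both sides as tight-structure counts on the same solid tori with slopes $\frac{p_i}{\underline q}$ and $r_i$. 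The remaining part of the bound is the factor $T(i,\underline q)=1$ when $h_i>k_i$.

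Once the counts agree, and the constructed structures are pairwise distinct, the upper bound is attained. Hence every $\underline{\text{tw}}$-structure is one of the surgeries above, and all negative-twisting structures on $-Y$ are symplectically fillable.
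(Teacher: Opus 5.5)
Your proposal is correct and follows the paper's own argument essentially step for step. The $\overline{\text{tw}}(-Y)$-structures come from Proposition \ref{prop:main}. The remaining ones come from Legendrian surgery on stabilised push-offs of the singular fibres of Massot's tangent structure $(M,\xi_0)$; they are told apart by the $\Spin^c$-structures on the Stein cobordism and shown to be all of them by matching the convex-surface upper bound. Fillability is inherited from the Stein fillings and from the weak filling of $(M,\xi_0)$.

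Two small remarks. First, the counting step you single out as the main difficulty is immediate, because the number of stabilisation choices $(n^i_{h_i}-m^i_{h_i})\prod_{j>h_i}|m^i_j+1|$ is literally the factor $T(i,\underline q)$ in \eqref{eq:upper1}. Second, the untouched background by itself only gives $\text{tw}\geq\underline{\text{tw}}(-Y)$ for the surgered structures, not equality. It is nevertheless enough: these structures share that background and are pairwise distinct, so they realise every possible solid-torus datum, which forces every $\underline{\text{tw}}(-Y)$-structure to be one of them — and that is all the statement needs. The paper leaves this point implicit as well.
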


\begin{proof}
 We notice that the number of constructed structures in both families exactly agrees with predicted upper bound as computed in Subsection \ref{subsection:background}. Indeed, for $\overline{\text{tw}}(-Y)$-structures the proof is the same as in the negative definite case \cite[Proposition 5.1]{CM-negative}; for $\underline{\text{tw}}(-Y)$-structures, obtained by Legendrian surgery on tails, we directly see that there are exactly $(n^i_{h_i}-m^i_{h_i})\:\cdot\prod_{j=h_i+1}^{k_i}|m^i_j+1|$ choices of stabilisations.

 All structures are symplectically fillable: the $\overline{\text{tw}}(-Y)$-structures are Stein fillable as proved in Section \ref{section:five} and the $\underline{\text{tw}}(-Y)$-structures are obtained by Legendrian contact surgery from the tangent contact manifold $(M,\xi_0)$ which is symplectically fillable by Proposition \ref{prop:A}.
\end{proof}

Regarding the contact invariant of $\xi$, we can already prove the following result.

\begin{cor}
 \label{cor:red}
 If $\xi$ is a negative-twisting structure on the  manifold $-Y$ of type A then $c^+(\xi)\in HF_\emph{red}(Y,\s_\xi)$.  
\end{cor}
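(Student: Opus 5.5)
By Proposition \ref{prop:classificationA}, a negative-twisting structure $\xi$ on $-Y$ is of one of two kinds, and I would treat them separately.

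\emph{Case $\text{tw}(-Y,\xi)=\overline{\text{tw}}(-Y)$.} Here $\xi$ is induced by a Stein structure on the complete blow-down $X_{\Gamma^*}$. Lemma \ref{lemma:c} then gives $c^+(\xi)=T_{[V]}$ for a realised $V$ whose full path ends correctly. Its proof already shows that $c^+(\xi)\neq\Theta^+$, using $b_2^+(X_{\Gamma^*})=1$ and the vanishing of $F^\infty$. Hence $c^+(\xi)\in HF_\text{red}(Y,\s_\xi)$.

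\emph{Case $\text{tw}(-Y,\xi)=\underline{\text{tw}}(-Y)<\overline{\text{tw}}(-Y)$.} Now $\xi$ is obtained by Legendrian surgery on push-offs of singular fibres of $(M,\xi_0)$. Here $M$ is the manifold without tails from Lemma \ref{lemma:tails}, and $\xi_0$ is its unique structure tangent to the fibres. I would argue by contradiction, mirroring the proof of Proposition \ref{prop:A}.

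\begin{enumerate}
\item Suppose $c^+(\xi)$ is not in $HF_\text{red}$. Then $c^+(\xi)$ lies in the tower, so $\widehat c(\xi)\in\widehat{HF}^\text{ev}(Y,\s_\xi)$.
\item Write $\widehat c(\xi)=[W_1]+\cdots+[W_k]$ in terms of full paths of $\Gamma$ ending correctly, with $\mathcal F(W_1)<\cdots<\mathcal F(W_k)$.
\item Produce a Legendrian regular fibre that bounds a Lagrangian surface in a filling of $(-Y,\xi)$. The regular fibre of $M$ does so in the filling from Proposition \ref{prop:transverse}. Legendrian surgeries along singular-fibre push-offs are disjoint from it, so it still bounds a Lagrangian surface in the filling obtained by attaching Weinstein handles.
\item Apply the slice-Bennequin sharpness argument exactly as in Proposition \ref{prop:A} (via \cite[Theorem 4.1]{AC} and \cite[Lemma 2.13]{EG}). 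This gives $\text{TB}_\xi(K)=\tau_\xi(K)+\tau_{\overline\xi}(K)-1$.
\item Using Theorem \ref{teo:tau} for the even classes, obtain
\[\text{tw}(-Y,\xi)=-1-e_1^TQ^{-1}\Big(\tfrac{W_k+W_1'}{2}\Big)\geq -1 .\]
The inequality follows from Lemma \ref{lemma:can}, which makes all entries of $e_1^TQ^{-1}$ negative, combined with the height computation in Proposition \ref{prop:A}.
\item This contradicts $\text{tw}(-Y,\xi)<0$ (indeed $\leq\underline{\text{tw}}<-1$). Therefore $c^+(\xi)\in HF_\text{red}$.
\end{enumerate}

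An alternative route for this case is to use functoriality. The Legendrian surgery cobordism $W$ satisfies $F^+_{\overline W}(c^+(\xi))=c^+(\xi_0)$, and Proposition \ref{prop:A} shows that $c^+(\xi_0)\in HF_\text{red}(-M)$. If $c^+(\xi)$ were $\Theta^+$ up to reduced terms, one would check that the cobordism map sends the tower into the tower, at least in the relevant grading, giving a contradiction. However, the cobordism may have mixed $b_2$ signs, so I prefer the direct $\tau$ argument above.

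\emph{Main obstacle.} The delicate step is step 3, which the $\tau$ argument depends on. One must confirm that the Lagrangian fillability of the regular fibre survives the surgeries on the tails. Equivalently, one must check that in a weak filling the sharpness $\text{SL}=2\tau-1$ still holds for both orientations of $K$. I would handle this by noting that the surgeries are Weinstein handle attachments along knots disjoint from a Legendrian regular fibre. One also needs nonvanishing of $c^+(\xi)$, which comes from Proposition \ref{prop:A} together with naturality under Legendrian surgery.
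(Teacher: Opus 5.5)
Your first case, via Lemma \ref{lemma:c}, is fine. The $\tau$-argument you give for the $\underline{\text{tw}}$-structures, however, breaks down at step 5. Write $\overline W_1=-W_1'$ for the terminal vector of $[W_1]$, and recall that $\mathcal F$ increases by one exactly at central steps. Then
\[-e_1^TQ^{-1}\tfrac{1}{2}(W_k+W_1')=\tfrac{1}{2}e_1^TQ^{-1}(\overline W_1-W_k)=\mathcal F(\overline W_1)-\mathcal F(W_k).\]
For $k=1$ this equals $\height[W_1]\geq0$ and you do get your contradiction.

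For $k\geq2$ it fails. The Alexander intervals of distinct full paths ending correctly in the same $\Spin^c$-structure are disjoint (the negative-definite version of Remark \ref{remark:Alexander}). So the difference is at most $-1$, and you only obtain $\text{tw}(-Y,\xi)\leq-2$, which is compatible with $\text{tw}(-Y,\xi)=\underline{\text{tw}}(-Y)<-1$.

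Nothing in the hypothesis $c^+(\xi)=\Theta^+$ forces $k=1$. It only says that $\widehat c(\xi)$ is some lift of $\Theta^+$ to degree $d(Y,\s_\xi)$. Lifts are determined only modulo $\Ker\rho_*$, which in that degree is the image of $HF^+_{d-1}$ under the connecting map and is in general non-trivial. So $\widehat c(\xi)$ may have several full-path components.

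Lemma \ref{lemma:can} does not fix the sign either: $-e_1^TQ^{-1}$ has positive entries, but initial vectors have negative coordinates. Quoting the analogous chain in the proof of Proposition \ref{prop:A} does not help, since that chain is valid only in the same case $k=1$. The paper's proof of this corollary uses only the \emph{statement} of Proposition \ref{prop:A}. Your step 3, which you flagged as the main obstacle, is not where the argument fails.

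The route you set aside is exactly the paper's proof, and the worry about the signs of $b_2$ is unfounded. By commutativity, $F^+_{\overline W,J}\circ\pi=\pi\circ F^\infty_{\overline W,J}$. So if $c^+(\xi)=\Theta^+$ lies in the image of $HF^\infty(Y)$, then $c^+(\xi_0)=F^+_{\overline W,J}(c^+(\xi))$ lies in the image of $HF^\infty(-M)$ and in $\Ker U$. Hence $c^+(\xi_0)$ is either $0$ or $\Theta^+$. The first is excluded because $(M,\xi_0)$ is fillable, and the second by Proposition \ref{prop:A}.

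No sign control is needed. If $b_2^+(W)$ were positive, $F^\infty$ would vanish and you would get $c^+(\xi_0)=0$, again a contradiction. In fact $W$ is negative-definite by Novikov additivity: $P_{\Gamma^*}=P_G\cup_M W$, with $G$ the standard graph of $M$, and both plumbings have $b_2^+=1$.

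Combine this with Lemma \ref{lemma:c} for the $\overline{\text{tw}}(-Y)$-structures, and with Proposition \ref{prop:A} directly when $-Y$ has no tails. This proves the corollary without any Lagrangian or slice-Bennequin input.
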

\begin{proof}
 Again from Lemma \ref{lemma:c} we know that the claim holds when $\xi$ satisfies $\text{tw}(-Y,\xi)=\overline{\text{tw}}(-Y)$, while from Proposition \ref{prop:A} when $-Y$ is without tails. When $-Y$ has tails there is a cobordism map $F^+:HF^+(Y,\s_\xi)\rightarrow HF^+(-M,\s_{\xi_0})$ by Proposition \ref{prop:classificationA}, induced by Legendrian surgery, where $\xi_0$ is tangent to the fibres of a manifold $M$ without tails. It is a property of Heegaard Floer maps that if $c^+(\xi)=\Theta^+_{\s_\xi}$ then $F^+(c^+(\xi))=c^+(\xi_0)\neq0$, because $(M,\xi_0)$ is symplectically fillable, and $c^+(\xi_0)=\Theta^+_{\s_{\xi_0}}$, which is not possible by Proposition \ref{prop:A}.
\end{proof}

Suppose that $(-Y,\xi)$ is as in Proposition \ref{prop:classificationA}, and satisfies $\text{tw}(-Y,\xi)=\underline{\text{tw}}(-Y)<\overline{\text{tw}}(-Y)$. Since the structure is fillable, we have that $c^+(\xi)$ is non-zero; moreover, we have that $c^+(\xi)$ cannot be expressed as $T_{[V]}$ for any realised characteristic vector. 
We also know from the results in \cite{Massot} that if $-Y$ is without tails then $\xi$ is homotopic to Stein fillable transverse structures on $-Y$, sharing the same weak filling; then \cite[Remark 2.13]{G-fillability} implies that in this case $c^+(\xi)\neq T_{[V]}$ for any vector whose full path ends correctly. 
Lemma \ref{lemma:realised} and Corollary \ref{cor:red} then tell us that $c^+(\xi)=T_{[V_1]}+T_{[V_2]}+\cdots$ is in $HF_\text{red}(Y,\s_\xi)$, where $V_1$ and $V_2$ are distinct realised characteristic vectors, both inducing $\s_\xi$ on $-Y$ and such that $M(V_1)=M(V_2)$.

\subsection{Manifolds of type B}
\label{subsection:B}
For this type of Seifert fibred spaces there can be more than two possible negative twisting numbers; however, all the structures can be described using an explicit procedure which is based on the case of $-\Sigma(2,3,6k-1)$ done in \cite{GvHM} by Ghiggini and Van Horn-Morris. Their argument has been also applied in \cite{Tosun} for $-\Sigma(2,3,6k+1)$.

Let $\Gamma^*$ be the standard graph of $-Y$ of type B. Fix $G\subset\Gamma^*$ a subgraph isomorphic to one of a torus bundle in Figure \ref{Torus}. We say that $-Y$ is \emph{short} when for each leg of $\Gamma^*$ there is at most one vertex which is not in $G$. We now briefly sketch the procedure for a short manifold $-Y$ of type B; at the end of section we show that the general case follows easily. 

Given $M$ a Seifert fibred torus bundle over a circle, from \cite{Honda2} we know that the negative-twisting structures on $M$ are all obtained by adding Giroux torsion along a fibre, starting from the unique Stein fillable structure $\zeta_0$ obtained by blowing down the standard graph. For every $\s\in\Spin^c(-Y)$ we denote by $\xi_l$ for $l=1,...,k$ the structures on $(-Y,\s)$ with twisting number equal to $\overline{\text{tw}}(-Y)$; hence, the integer $k$ is the number of realised characteristic vectors inducing $\s$. We construct $\xi_{ab}$ for $1\leq a<b\leq k$ on $(-Y,\s)$, forming a pyramid of size $k$ as in Figure \ref{Pyramid}, as follows: 
add $b-a$ times Giroux torsion to $\zeta_0$, then $\xi_{ab}$ is gotten from the latter structure, which is weakly fillable \cite{DG}, by performing $(-1)$-contact surgery on an $\ell$-component link made by the vertices in $\Gamma^*\setminus G$.

Any graph $G$ in Figure \ref{Torus} represents a torus bundle $M$, which is $0$-surgery on a regular fibre of any Seifert fibred space $M_0$ obtained by removing one leg from $G$. We can then compute the size of the exceptional orbit in the resulting monodromy, as the regular fibre of $M_0$ is a knot with Seifert fibred complement and thus produces a periodic monodromy on the torus. The regular fibre of $M$ corresponds to a regular orbit; therefore, its size is determined by $\lcm(\mathfrak a_1,\mathfrak a_2,...)$ where the $\mathfrak a_i$'s are the denominators of the Seifert coefficients of $M$. These numbers are crucial for the procedure that we are going to describe; hence, we write them in Table \ref{Numbers}.

\begin{table}[ht]
\centering
\begin{NiceTabular}{|c|c|c|c|c|c|c|}[
  cell-space-top-limit=4pt,
  cell-space-bottom-limit=4pt
]
\hline
$M$ & 1st leg & 2nd leg & 3rd leg & 4th leg & Regular fibre & Monodromy \\
\hline
$M(-1;\frac{1}{2},\frac{1}{3},\frac{1}{6})$ & 3 & 2 & 1 & -- & 6 & $AB$  \\
$M(-2;\frac{1}{2},\frac{2}{3},\frac{5}{6})$ & 3 & 2 & 1 & -- & 6 & $(AB)^5$ \\
$M(-1;\frac{1}{2},\frac{1}{4},\frac{1}{4})$ & 2 & 1 & 1 & -- & 4 & $ABA$\\
$M(-2;\frac{1}{2},\frac{3}{4},\frac{3}{4})$ & 2 & 1 & 1 & -- & 4 & $(ABA)^3$\\
$M(-1;\frac{1}{3},\frac{1}{3},\frac{1}{3})$ & 1 & 1 & 1 & -- & 3 & $(AB)^2$\\
$M(-2;\frac{2}{3},\frac{2}{3},\frac{2}{3})$ & 1 & 1 & 1 & -- & 3 & $(AB)^4$\\
$M(-2;\frac{1}{2},\frac{1}{2},\frac{1}{2},\frac{1}{2})$ & 1 & 1 & 1 & 1 & 2 & $(AB)^3$ \\
\hline
\end{NiceTabular}
\caption{\smaller[1]{The size of the exceptional and regular orbits of the monodromy of $M$; we order the legs as in Figure \ref{Torus}. Note that $\text{MCG}(T^2)\simeq\text{SL}(2;\Z)$ is presented by $\langle A,B\:|\:(AB)^6=\Id,\:ABA=BAB\rangle$, where $A,B$ are positive Dehn twists along any pair of generators of $H_1(T^2;\Z)$.}}
\label{Numbers}
\end{table}  

Let $B=(b_1,...,b_{|\Gamma^*|})$ be the vector such that $b_l$ is either zero when $S_l\in G$, or the number in the corresponding column of Table \ref{Numbers} otherwise. Note that $\Gamma^*$ may have more legs than $G$; hence, this means that we may still have additional vertices which are attached to the centre of $\Gamma^*$ but that are not in $G$.  

\begin{lemma}
 \label{lemma:j}
 Let $-Y$ be of type B and short, and $V_1,...,V_k\in\emph{Char}(\Gamma^*,\s)$ the realised characteristic vectors. Assume that $\mathcal F(V_1)<\cdots<\mathcal F(V_k)$, then $V_{\frac{j+k+1}{2}}=(j+k-1)B+V_1$ for $j\in\{1-k,3-k,...,k-1\}$; moreover, one has that $M(V_1)=\cdots=M(V_k)$. 
\end{lemma}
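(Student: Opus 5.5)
The plan is to combine the explicit description of realised vectors in Theorem \ref{teo:realised} with the criterion of Proposition \ref{prop:spinc} for when two characteristic vectors restrict to the same $\Spin^c$-structure.

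First, since $-Y$ is short, each vertex of $\Gamma^*\setminus G$ is a single vertex. The vertices of $G$ lie in $(\Gamma^*)'\cup(\Gamma^*)''\cup(\Gamma^*)''_1\cup(\Gamma^*)''_2$ or have only one allowed value. So Theorem \ref{teo:realised} shows that a realised initial vector is fixed on $G$, equal to $V_\text{can}$ there. On each $S_l\notin G$ it has a coordinate ranging in an arithmetic progression of step $2$.

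Next I will fix one realised $V_1$ and ask which $V=V_1+2X$, with $X$ supported on $\Gamma^*\setminus G$, satisfy $Q_*^{-1}(V-V_1)\in 2\Z^{|\Gamma^*|}$. Equivalently, $X=Q_*C$ for an integral $C$. Since $X$ vanishes on $G$, the restriction of $C$ must satisfy $Q_GC|_G+(\text{incidence terms from }\Gamma^*\setminus G)=0$.

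The key step is to use that $Q_G$ is singular with one-dimensional kernel. This kernel is spanned by the class of the fibre of the torus bundle, i.e.\ the vector of multiplicities read off the columns of Table \ref{Numbers}. Its entries at the ends of the legs of $G$ are exactly the numbers $b_l$, and the value $6,4,3,2$ at the centre is the regular-fibre multiplicity. I expect $C|_G$ to be forced to be a multiple $s$ of this kernel vector. The entries of $X$ on the extra vertices then become $s\,b_l$ (for an extra vertex attached to the centre, $s$ times the regular-fibre entry). This matches the definition of $B$, giving $V=V_1+2sB$ for $s\in\Z$. Ordering by $\mathcal F$ uses Lemma \ref{lemma:can}, since every entry of $e_1^TQ_*^{-1}$ is positive and $B\geq0$: $\mathcal F$ increases with $s$, so the realised vectors in $\s$ are $V_1,V_1+2B,\dots,V_1+2(k-1)B$. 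With $j=2i-k-1$ this is the stated formula, up to carefully matching the normalisation in the indexing.

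The main obstacle is justifying that $C|_G$ must lie in $\Ker Q_G$ up to integrality. This requires showing that the boundary terms force $Q_GC|_G$ to be supported on the attaching vertices, and that integrality pins $s$ to integers. I would handle it leg by leg, using unimodularity of the chain blocks in $\Gamma^*\setminus G$ (each is a single vertex, hence direct). Finally, for the Maslov grading I will compute with Equation \eqref{eq:Maslov2}: $M(V_1+2sB)-M(V_1)=\frac{4s\,B^TQ_*^{-1}V_1+4s^2B^TQ_*^{-1}B}{4}$. Writing $2sB=Q_*C$ with $C|_G\in\Ker Q_G$, these terms reduce to $s\,C^TV_1+s^2C^TQ_*C/4$. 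I expect them to vanish using $V_1|_G=V_\text{can}|_G$ and the adjunction-type identity $\langle V_\text{can},\text{torus fibre}\rangle=-F\cdot F=0$ for the genus one fibre. I would check this directly on the seven graphs if the general argument stalls.
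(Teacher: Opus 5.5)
You have the same overall strategy as the paper: realised vectors agree with $Z_\text{can}$ on $G$, Proposition \ref{prop:spinc} reduces the problem to integral $C$ with $Q_*C$ supported on $\Gamma^*\setminus G$, the primitive kernel vector $R$ of $Q_G$ satisfies $Q_*\overline R=B$, the $\mathcal F$-ordering follows from positivity of $e_1^TQ_*^{-1}$, and the Maslov claim reduces to $Z_\text{can}^TR=0$.

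\textbf{The gap.} It lies exactly at the step you flag as the main obstacle, and the mechanism you suggest cannot close it. Linear algebra and integrality give only the solvability condition for $Q_GC|_G=-E\,C|_{\Gamma^*\setminus G}$, where $E$ is the incidence matrix. That condition is $R^TE\,C|_{\Gamma^*\setminus G}=\sum_l b_lc_l=0$. It forces $C|_{\Gamma^*\setminus G}=0$ only when there is a single extra vertex.

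With two or more extra vertices, your expected conclusion is false for $\Spin^c$-equivalence alone. Take $G=M(-2;\frac12,\frac12,\frac12,\frac12)$ and attach a $(-3)$-framed vertex to the ends of two legs. Then $-Y=M(-2;\frac35,\frac35,\frac12,\frac12)$ is short, of type B, and indefinite. Let $C$ be the integral vector equal to $1,-1$ on the two leg vertices of $G$ carrying the new vertices, $2,-2$ on the new vertices, and $0$ elsewhere. Then $Q_*C$ vanishes on $G$ and equals $-5,5$ on the new vertices. So $V_1+2Q_*C$ is characteristic, agrees with $V_1$ on $G$, and induces the same $\Spin^c$-structure. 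Yet $C|_G\notin\Ker Q_G$ and $Q_*C\notin\Z B$. Separately, a single vertex of framing $m(l)\le-2$ is not a unimodular block, so that remark does not help.

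\textbf{The missing ingredient.} You need the realised range on the extra vertices. Since both $V_1$ and $V=V_1+2X$ are realised, $|x_l|\le -m(l)-2$ for every $S_l\in\Gamma^*\setminus G$.

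Propagate the rows of $Q_*C=X$ from $S_l$ up the leg of $G$ to which it is attached, where $X$ vanishes, to the centre. This gives for the central entry
\[c_1=-(\mathfrak a\,m(l)+\mathfrak b)\,c_l+\mathfrak a\,x_l,\]
where $\frac{\mathfrak b}{\mathfrak a}$ is the Seifert coefficient of that leg of $G$ (take $\frac01$ if $S_l$ is attached to the centre).

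If $c_l\ge1$ this forces $c_1\ge 2\mathfrak a-\mathfrak b>0$; if $c_l\le-1$ it forces $c_1\le\mathfrak b-2\mathfrak a<0$. Since $c_1$ is the same for all $l$, the $c_l$ all share a sign. Then $\sum_lb_lc_l=0$ with $b_l>0$ gives $C|_{\Gamma^*\setminus G}=0$. Only now does $C|_G\in\Ker Q_G\cap\Z^{|G|}=\Z R$ follow, hence $X\in\Z B$.

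This is precisely the paper's argument. In the example above it is what excludes the entries $\mp5$, since the realised range there is $\{-1,1\}$. With this step inserted, the rest of your plan goes through. That includes the consecutiveness of the multiples of $B$, since the realised box is a product of step-$2$ progressions and $B\ge0$.
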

\begin{proof}
 There are finitely many cases, as we have seven torus bundles and all the possible values of $B$ are given by Table \ref{Numbers}. We can check by hand that there exists a unique primitive integral vector $R\in\Ker Q_G$ such that $r_l>0$ for any $S_l\in G$; moreover, we have $Q_*\overline R=B$ where $\overline R\lvert_G=R$ and $\overline R$ is zero on $\Gamma^*\setminus G$. If $U=(V_a-V_b)/2=mB$ for an $m\in\Z$ then $Q_*^{-1}U=m\overline R\in\Z^{|\Gamma^*|}$; conversely, if $W=Q_*^{-1}U\in\Z^{|\Gamma^*|}$ then one has $Q_GW\lvert_G+EW\lvert_{\Gamma^*\setminus G}=U\lvert_G=0$, where $E$ is the incidence matrix of $G$ and $\Gamma^*\setminus G$, thus assuming $W\lvert_{\Gamma^*\setminus G}=0$ gives that $W$ is a multiple of $\overline R$ and then $U$ is a multiple of $B$.
 
 Let us prove that $W$ is zero on $\Gamma^*\setminus G$. By assumption we have \[\sum_{l\in\Gamma^*\setminus G}b_lw_l=R^TEW\lvert_{\Gamma^*\setminus G}=0\:,\] thus by contradiction there should be an $l^\pm$ such that $w_{l^+}>0$ and $w_{l^-}<0$. Since $V_a,V_b$ are realised, we certainly have $|u_l|\leq -m(l)-2$ where $S_l\in\Gamma^*\setminus G$. Let $\frac{\mathfrak b}{\mathfrak a}$ be the Seifert coefficient of $G$ corresponding to the leg to which $S_l$ is attached, or $\frac{0}{1}$ when $S_l$ is attached to the centre; then the central coordinate of $W$ is equal to \[w_1=-(\mathfrak a\cdot m(l)+\mathfrak b)w_l+\mathfrak a\cdot u_l\hspace{0.5cm}\text{ for any }\hspace{0.5cm}S_l\in\Gamma^*\setminus G\:.\] Now, we have that $w_1\geq2\mathfrak a-\mathfrak b>0$ in the case of $w_{l^+}$ while $w_1\leq\mathfrak b-2\mathfrak a<0$ in the case of $w_{l^-}$, which is impossible. We can then write each $V_1,...,V_k$ as $V_{\sigma(j)}$ where $\sigma(j)=\frac{j+k+1}{2}$, while $|j|\leq k-1$ and $j+k\equiv1\text{ mod }2$. 
 
 To show that all the vectors have the same Maslov grading, we need that $V_{\sigma(j+1)}^TQ_*^{-1}V_{\sigma(j+1)}=V_{\sigma(j)}^TQ_*^{-1}V_{\sigma(j)}$, and then that $(V_{\sigma(j)}+B)^TQ_*^{-1}B=(V_{\sigma(j)}+B)^T\overline R=0$ for each $j$. Since each $V_{\sigma(j)}$ coincides with $Z_\text{can}$, the canonical vector of $G$, on $G$, we are only left to show that $Z_\text{can}^TR=0$ for each one of the seven possible $G$'s. The vector $Z_\text{can}$ induces an $\s_0$ on $M$ with torsion first Chern class, thus there exists a $Z$ such that $Q_GZ=Z_\text{can}$ and then $Z_\text{can}^TR=Z^TQ_GR=0$.
 Note that in \cite[Theorem 3.12]{G-} Ghiggini proves this result for $-\Sigma(2,3,6k-1)$ geometrically.
\end{proof}

We now need to specify a different notation in order to simplify the construction; we do this mainly because we often reference \cite{GvHM} and we prefer to keep the notation introduced by Ghiggini and Van Horn-Morris to avoid confusion for the reader. 
We write $M^{\mathbf n}$ where $\mathbf n=(n_1,...,n_\ell)$ for the manifold $-Y$ such that $n_l$ is the absolute value of the framing of the $l$-th vertex in $\Gamma^*\setminus G$; for each $\s\in\Spin^c(M^\mathbf n)$ we then denote $\xi_1,...,\xi_{k(\s,\mathbf n)}$ by $\eta^\mathbf n_{0,\:j}$ and we take $j$ from Lemma \ref{lemma:j} applied to $M^\mathbf n$. Note that we know from the previous subsection that $\eta^\mathbf n_{0,\:j}$ is obtained by Legendrian surgery on the blow-down of $\Gamma^*$ and $c^+(\eta^\mathbf n_{0,\:j})=T_{[V_{\sigma(j)}]}$ where $\sigma(j):=\frac{j+k(\s,\mathbf n)+1}{2}$. In addition, we denote the $\xi_{ab}$'s as $\eta^\mathbf n_{i,\:j}$ for integers $1\leq i=b-a<k(\s,\mathbf n)$ and $|j|\leq k(\s',\mathbf n')-1$ such that $j+k(\s',\mathbf n')\equiv1\text{ mod }2$, where $\mathbf n'=\mathbf n-iB$ and $\s'\in\Spin^c(M^{\mathbf n'})$ is induced by $V_1+iB$. 

\begin{remark}
 \label{remark:layer}
 It is easy to check that $k(\s',\mathbf n')$ coincides with $k(\s,\mathbf n)-i$. In addition, if $\iota$ is the maximal $i$ such that $\eta^\mathbf n_{i,j}$ exists then $k(\s',\mathbf n_*):=k(\s',\mathbf n-\iota B)=1$; in fact, from Lemma \ref{lemma:j} we would have $V_1,V_2$ realised by $\eta^{\mathbf n_*}_{0,j_1}$ and $\eta^{\mathbf n_*}_{0,j_2}$ on $M^{\mathbf n_*}$ such that $V_2-V_1=2B$, and then enough choice for a structure $\eta^\mathbf n_{\iota+1,j}$ on $M^\mathbf n$. This is consistent with the fact that the contact structures $\{\eta^\mathbf n_{i,j}\}$ form a pyramid of size $k(\s,\mathbf n)$ for each $\s\in\Spin^c(M^\mathbf n)$.  
\end{remark}

We observe that in \cite{GvHM} we have $\ell=|\Gamma^*\setminus G|=1$ and $-\mathbf n$ is equal to one less than the $\tb$-number of the Legendrian realisation of the vertex, while $j$ coincides with the rotation number. This correspondence would not hold directly here, as when we blow down, the coordinates of $V_j$, as well as the framing of the surgery, change according to the strict transform, see Theorem \ref{teo:realised}. This is the reason why we symmetrise $j$.

Denote by $(M,\zeta_i)$ the structure obtained by adding Giroux torsion $i$-times to $\zeta_0$. It is known that the contact invariant $\widehat c(\zeta_i)$ vanishes, while $\widehat c(\zeta_0)$ does not as $\zeta_0$ is Stein fillable. Since we want to use $\zeta_i$ to define the structure $\eta^\mathbf n_{i,\:j}$, we use the same strategy from \cite{GvHM} and work with $\widehat{HF}$ with twisted coefficients \cite{OSz-genus}. We refer to \cite{GvHM} for more details about the construction. 

Ozsv\'ath and Szab\'o define Heegaard Floer homology with twisted coefficients as a module over the group algebra $\F[H^1(M;\Z)]$; in particular, whenever $M$ is a rational homology sphere the action of the group algebra vanishes, meaning that this construction only gives more information when $b_1(M)>0$. In our case, we take $M$ as the torus bundle over the circle represented by $G$. The only module that we consider, as in \cite{GvHM}, is $\Lambda:=\F[H^1(M;\frac{\Z}{2})]$, which we identify with the ring of the Laurent polynomials $\F[t^{\frac{1}{2}},t^{-\frac{1}{2}}]$. We denote this homology group by $\underline{\widehat{HF}}(M)$.

It is proved \cite{OSz-contact} that there is a twisted contact invariant $\underline{\widehat c}(\zeta_i)$ which is defined as an equivalence class in $\underline{\widehat{HF}}(M,\s_{\zeta_i})$ up to multiplication for an invertible element $t^{\pm\frac{\ell}{2}}$ with $\ell\in\Z$; moreover, such an invariant is non-vanishing when a structure is weakly fillable, as $\zeta_i$ is. We denote by $\s_0$ the spin structure on $M$ which supports $\zeta_i$ for every $i\geq0$. We also recall that from \cite{OSz-genus} and \cite[Sections 3 and 4]{GvHM} we have Stein cobordisms $(V_\mathbf n,J_{\mathbf n})$ induced by the Legendrian surgery on a Legendrian realisation of the link $L\subset M$, represented by the vertices in $\Gamma^*\setminus G$, and $(W,J)$ from $(M,\zeta_{i+1})$ to $(M,\zeta_i)$ which reduces Giroux torsion by one, which give the following maps \[\underline{\widehat F}_{\overline V_\mathbf n,J_{\mathbf n}}:\widehat{HF}(-M^\mathbf n,\s)\longrightarrow\underline{\widehat{HF}}(-M,\s_0)\hspace{0.5cm}\text{ and }\hspace{0.5cm}\underline{\widehat F}_{\overline W,J}:\underline{\widehat{HF}}(-M,\s_0)\longrightarrow\underline{\widehat{HF}}(-M,\s_0)\] where 
\begin{equation}
 \underline{\widehat F}_{\overline W,J}(\alpha)=(t^{\frac{1}{2}}+t^{-\frac{1}{2}})\cdot\alpha\hspace{0.5cm}\text{ for any }\hspace{0.5cm}\alpha\in\underline{\widehat{HF}}(-M,\s_0)\:.\label{eq:value}
\end{equation} 
In addition, we have that $\underline{\widehat F}_{\overline V_\mathbf n,J_{\mathbf n}}(\widehat c(\eta^\mathbf n_{i,\:j}))=\underline{\widehat c}(\zeta_i)$ for every $j$ such that $|j|\leq k(\s,\mathbf n)-i-1$ and $j+k(\s,\mathbf n)\equiv i+1\text{ mod }2$, the invariant $\underline{\widehat c}(\zeta_0)$ is a generator of $\underline{\widehat{HF}}(-M,\s_0)$, and $\underline{\widehat F}_{\overline W,J}(\underline{\widehat c}(\zeta_i))=\underline{\widehat c}(\zeta_{i+1})$; moreover, according to the argument in \cite[Lemma 4.13]{GvHM}, the involution $\J$, which acts as $t\mapsto t^{-1}$, together with the fact that each $\zeta_i$ for $i\geq0$ is self-conjugate can be used to remove the ambiguity in the definition of the twisted cobordism maps in \cite{OSz-genus}. 

We can compute $\underline{\widehat{HF}}(M,\s_0)$ for each of the seven torus bundle over the circle. This is done in \cite[Lemma 4.9]{GvHM} for $M(-1,\frac{1}{2},\frac{1}{3},\frac{1}{6})\simeq S^3_0(T_{2,3})$ using the exact triangle with twisted coefficients \cite{OSz-genus}. The same argument applies here, we give a brief description in the following lemma.

\begin{lemma}
 \label{lemma:twisted}
 For each $M$ as above we have that $\underline{\widehat{HF}}(M,\s_0)\simeq\Lambda_*\oplus\Lambda_{*-1}$; moreover, the contact invariant $\underline{\widehat c}(\zeta_0)$ can be identified with $[1]\subset\{0\}\oplus\Lambda_{-*}\subset\underline{\widehat{HF}}(-M,\s_0)$. The values of the Maslov grading are listed in Table \ref{Homology}.    
\end{lemma}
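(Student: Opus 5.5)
We compute $\underline{\widehat{HF}}(M,\s_0)$ for each of the seven torus bundles with the twisted surgery exact triangle of \cite{OSz-genus}, following \cite[Lemma 4.9]{GvHM}. Each $M$ in Figure \ref{Torus} is $0$-surgery on the regular fibre $K_0$ of a Seifert fibred space $M_0$, obtained by removing one leg. Here $M_0$ is either a lens space or a negative-definite small Seifert fibred $L$-space. Concretely, it is $S^3$ with $K_0=T_{2,3}$, $M(-1;\frac12,\frac14)$, $M(-1;\frac13,\frac13)$, and their $e_0=-2$ counterparts. We take the triangle
\[
\underline{\widehat{HF}}(M_{-1})\longrightarrow\underline{\widehat{HF}}(M)\longrightarrow\widehat{HF}(M_0)\otimes\Lambda\longrightarrow\cdots,
\]
where $M_{-1}$ is the surgery with framing lowered by one. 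Both $M_0$ and $M_{-1}$ are rational homology spheres, so their twisted groups are untwisted tensored with $\Lambda$. For these graphs both are $L$-spaces (lens spaces or negative-definite Seifert $L$-spaces, by the $L$-space criterion, Theorem \ref{teo:criterion}, and Lemma \ref{lemma:2}). Hence in the spin structure restricting to $\s_0$ each contributes a single copy of $\Lambda$.

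The key step is to show that the map $\widehat{HF}(M_{-1})\otimes\Lambda\to\widehat{HF}(M_0)\otimes\Lambda$ vanishes in the relevant $\Spin^c$-structure. As in \cite{GvHM}, the twisted cobordism map is a sum $\sum_{\mathfrak v}\pm t^{\langle c_1(\mathfrak v),\cdot\rangle/2}$ over the $\Spin^c$-structures on the $2$-handle cobordism extending $\s_0$. These contributions come in conjugate pairs $t^{a}$ and $t^{-a}$ with equal gradings. We will use the full path description (Theorem \ref{teo:main}, Lemma \ref{lemma:1}) to identify which pairs are non-zero. The sum is then a non-zero multiple of $t^{1/2}+t^{-1/2}$ or a similar element of $\Lambda$ of the form $(1+t)t^{-1/2}$. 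Its cokernel and kernel over $\Lambda$ therefore have the same rank, and after tensoring with the fraction field the rank count forces $\underline{\widehat{HF}}(M,\s_0)$ to have $\Lambda$-rank $2$. An alternative that we expect to be more robust is to compare directly with Rustamov's untwisted computation, Proposition \ref{prop:zero}, together with the universal coefficient relation. The untwisted $\widehat{HF}(M,\s_0)$ has rank $4$ with two towers differing in degree by one. The twisted version then has rank $2$ over $\Lambda$, supported in the two correction-term degrees $d_{\text{ev}}$ and $d_{\text{od}}$. These degrees we compute from the Maslov formula for $b_1=1$ proved above, $M(V)=\frac{B^TQB+|G|-3}{4}$, applied to $V_{\text{can}}$ of $G$. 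This gives the entries of Table \ref{Homology}. Freeness over the PID $\Lambda$ follows because the triangle terms are free and the map has torsion-free cokernel.

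To identify the contact invariant, recall that $\zeta_0$ is Stein fillable via $X_G$. Hence $\underline{\widehat c}(\zeta_0)$ is primitive: it maps to the generator under the Stein cobordism map, which is an equivariant, hence $\Lambda$-linear, map onto $\Lambda$ up to a unit. Its degree is $-d_3(\zeta_0)$, which equals the degree of $[V_{\text{can}}]$ computed above. In $-M$ this is the degree of the summand we call $\{0\}\oplus\Lambda_{-*}$, so $\underline{\widehat c}(\zeta_0)=[1]$ up to the unit ambiguity $t^{\pm\ell/2}$. Self-conjugacy under $\J$, which acts by $t\mapsto t^{-1}$, then fixes the representative exactly as in \cite[Lemma 4.13]{GvHM}.

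The main obstacle is the case analysis over the seven graphs, in particular the sign and power bookkeeping of $t$ in the cobordism map. We would handle this uniformly by noting that the relevant characteristic vectors differ by multiples of $2B$, the kernel direction of Lemma \ref{lemma:j}.
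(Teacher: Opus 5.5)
There is a genuine gap: the triangle you choose does not have the properties you need, and the rank argument points the wrong way.

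\textbf{The triangle.} Lowering the framing does not give an $L$-space. For $M(-1;\frac12,\frac13,\frac16)=S^3_0(T_{2,3})$, your $M_{-1}$ is $S^3_{-1}(T_{2,3})=M(-1;\frac12,\frac13,\frac17)=\Sigma(2,3,7)$, whose $\widehat{HF}$ has rank $3$. Likewise, for the $(3,3,3)$ bundle you get $M(-1;\frac13,\frac13,\frac14)$, which is not an $L$-space either. So the claim that each outer vertex contributes one copy of $\Lambda$ fails. The paper instead takes $M''$ with the framing of the last leg \emph{increased} by one, for instance $S^3_{+1}(T_{2,3})=-\Sigma(2,3,5)$. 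There both $M'$ and $M''$ are $L$-spaces. A smaller point: when the removed leg has more than one vertex, $M$ is not an integral surgery on the regular fibre of $M_0$. For example, removing a $[-2,-2]$ leg of $M(-2;\frac23,\frac23,\frac23)$ leaves a $-\frac32$ surgery. In that case the triangle must be taken on the last vertex of the leg.

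\textbf{The rank argument.} Your key step is self-contradictory. You say the map between the two rank-one free terms must vanish, then conclude it is a nonzero multiple of $(1+t)t^{-1/2}$. But such a map would be injective with torsion cokernel $\Lambda/(1+t)$. Then $\underline{\widehat{HF}}(M,\s_0)$ would have $\Lambda$-rank $0$, not $2$, and would not be free. Rank $2$ and freeness require the map to be zero.

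The missing input is the paper's. In the twisted triangle, the map between the two rational homology sphere vertices is just $F\otimes\Lambda$, with no $t$-weights, where $F$ is the untwisted map \cite[Theorem 3.2]{GvHM}. Moreover $F=0$: in the untwisted triangle, $\dim\widehat{HF}(M,\s_0)$, computed from the full paths, equals the sum of the dimensions of the two $L$-space terms. Hence $\underline{\widehat{HF}}(M,\s_0)$ is an extension of free modules sitting in the two degrees, so it is free of rank $2$.

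\textbf{Your fallback via universal coefficients} does not close the gap either. First, $\widehat{HF}(M,\s_0)$ has rank $2$, not $4$. Second, setting $t=1$ cannot detect torsion summands prime to $1+t$, such as $\Lambda/(1+t+t^2)$. Third, a cyclic $(1+t)$-torsion summand also produces two generators in adjacent degrees. So rank $2$ over $\Lambda$ is not forced.

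Your identification of $\underline{\widehat c}(\zeta_0)$ by grading is fine once the module structure is established. Your primitivity argument, via the twisted Stein cobordism map hitting a unit, is a useful supplement to the paper's purely grading-based identification.
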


\begin{table}[t]
\centering
\begin{NiceTabular}{|c|c|c|c|c|}[
  cell-space-top-limit=4pt,
  cell-space-bottom-limit=4pt
]
\hline
$M$ & Full paths & $M(V)$ & $\widehat{HF}_*(M)$ & $\underline{\widehat{HF}}_*(M,\s_0)$ \\
\hline\hline
$M(-1;\frac{1}{2},\frac{1}{3},\frac{1}{6})$ & $(1,0,-1,-4)$ & $-\frac{1}{2}$ & $\F_{-\frac{1}{2}}\oplus\F_{-\frac{3}{2}}$ & $\Lambda_{-\frac{1}{2}}\oplus\Lambda_{-\frac{3}{2}}$  \\
\hline
$M(-2;\frac{1}{2},\frac{2}{3},\frac{5}{6})$ & $(0,0,0,0,0,0,0,0,0)$ & $\frac{3}{2}$ & $\F_{\frac{3}{2}}\oplus\F_{\frac{1}{2}}$ & $\Lambda_{\frac{3}{2}}\oplus\Lambda_{\frac{1}{2}}$ \\
\hline
\Block{2-1}{$M(-1;\frac{1}{2},\frac{1}{4},\frac{1}{4})$} & $(1,0,-2,-2)$ & $-\frac{1}{4}$ & \Block{2-1}{$\F_{\frac{1}{4}}\oplus\F_{-\frac{1}{4}}\oplus\F_{-\frac{3}{4}}\oplus\F_{-\frac{5}{4}}$} & \Block{2-1}{$\Lambda_{-\frac{1}{4}}\oplus\Lambda_{-\frac{5}{4}}$} \\
 & $(-1,0,0,4)$ & $-\frac{3}{4}$ & & \\
 \hline
\Block{2-1}{$M(-2;\frac{1}{2},\frac{3}{4},\frac{3}{4})$} & $(0,0,0,0,0,0,0,0)$ & $\frac{5}{4}$ & \Block{2-1}{$\F_{\frac{5}{4}}\oplus\F_{\frac{3}{4}}\oplus\F_{\frac{1}{4}}\oplus\F_{-\frac{1}{4}}$} & \Block{2-1}{$\Lambda_{\frac{5}{4}}\oplus\Lambda_{\frac{1}{4}}$} \\
 & $(-2,0,0,0,2,2,0,0)$ & $-\frac{1}{4}$ & & \\
\hline
\Block{3-1}{$M(-1;\frac{1}{3},\frac{1}{3},\frac{1}{3})$} & $(1,-1,-1,-1)$ & 0 & \Block{3-1}{$\F^2_{\frac{1}{3}}\oplus\F_0\oplus\F^2_{-\frac{2}{3}}\oplus\F_{-1}$} & \Block{3-1}{$\Lambda_0\oplus\Lambda_{-1}$} \\
 & $(-1,3,1,-1)$ & $-\frac{2}{3}$ & & \\
 & $(-1,3,-1,1)$ & $-\frac{2}{3}$ & & \\
\hline
\Block{3-1}{$M(-2;\frac{2}{3},\frac{2}{3},\frac{2}{3})$} & $(0,0,0,0,0,0,0)$ & 1 & \Block{3-1}{$\F_{1}\oplus\F^2_{\frac{2}{3}}\oplus\F_0\oplus\F^2_{-\frac{1}{3}}$} & \Block{3-1}{$\Lambda_1\oplus\Lambda_{0}$} \\
 & $(-2,0,0,0,2,2,0)$ & $-\frac{1}{3}$ & & \\
 & $(-2,0,2,0,0,2,0)$ & $-\frac{1}{3}$ & & \\
\hline
\Block{4-1}{$M(-2;\frac{1}{2},\frac{1}{2},\frac{1}{2},\frac{1}{2})$} & $(0,0,0,0,0)$ & $\frac{1}{2}$ & \Block{4-1}{$\F^4_{\frac{1}{2}}\oplus\F^4_{-\frac{1}{2}}$} & \Block{4-1}{$\Lambda_{\frac{1}{2}}\oplus\Lambda_{-\frac{1}{2}}$}  \\
 & $(-2,2,2,0,0)$ & $-\frac{1}{2}$ & &  \\
 & $(-2,2,0,2,0)$ & $-\frac{1}{2}$ & &  \\
 & $(-2,2,0,0,2)$ & $-\frac{1}{2}$ & &  \\
\hline
\end{NiceTabular}
\caption{\smaller[1]{The Heegaard Floer homology of the seven torus bundles in Figure \ref{Torus}. Note that $\widehat c(\zeta_0)=T_{[W_\text{can}]}$ where $W_\text{can}$ is the first vector for each $M$ in the table, all the other full paths contain loops. In addition, we have that $M(\widehat c(\zeta_0))=-d_3(\zeta_0)=-M(W_\text{can})$.}}
\label{Homology}
\end{table} 

\begin{proof}
 We use the full path algorithm, which holds for $M$ as we know from Section \ref{section:two}, to compute $\widehat{HF}(M)$ first, see Table \ref{Homology}. At this point we apply the exact triangle 
 \begin{equation*}
    \begin{tikzcd}
 \widehat{HF}(M')[t^{\frac{1}{2}},t^{-\frac{1}{2}}] \arrow[dr] & & \widehat{HF}(M'')[t^{\frac{1}{2}},t^{-\frac{1}{2}}] \arrow[ll,"F\otimes\Lambda" swap] \\
 & \underline{\widehat{HF}}(M) \arrow[ur] &  
\end{tikzcd}
\end{equation*}
 with both untwisted and twisted coefficients. The manifold $M''$ is obtained by increasing by one the framing of the last leg of $G$, while $M'$ is the one presented by the graph $G$ without the last leg; note that $M''$ and $M'$ are $L$-spaces. In the first case we obtain that the map $F$ is zero by counting the dimension of the groups; in addition, in the second case, where the corresponding map is $F\otimes\Lambda$ by \cite[Theorem 3.2]{GvHM}, this immediately implies that $\underline{\widehat{HF}}(M)\simeq\widehat{HF}(M)[t^{\frac{1}{2}},t^{-\frac{1}{2}}]$. We obtain the first claim by observing that the gradings of the other two maps are the same in both cases.

 Regarding the second claim, we know \cite{OSz-contact} that $M(\zeta_0)=-d_3(\zeta_0)$ because $\s_0$ is spin, and then $c_1(\s_0)$ is torsion; in particular, the contact invariant has a well-defined Maslov grading. It follows from the discussion above and the computation in Table \ref{Homology} that $\underline{\widehat c}(\zeta_0)$ lives in the unique summand of $\underline{\widehat{HF}}(-M,\s_0)$ with the correct grading.
\end{proof}

\begin{figure}[ht]  
 \def\svgwidth{0.57\textwidth}
        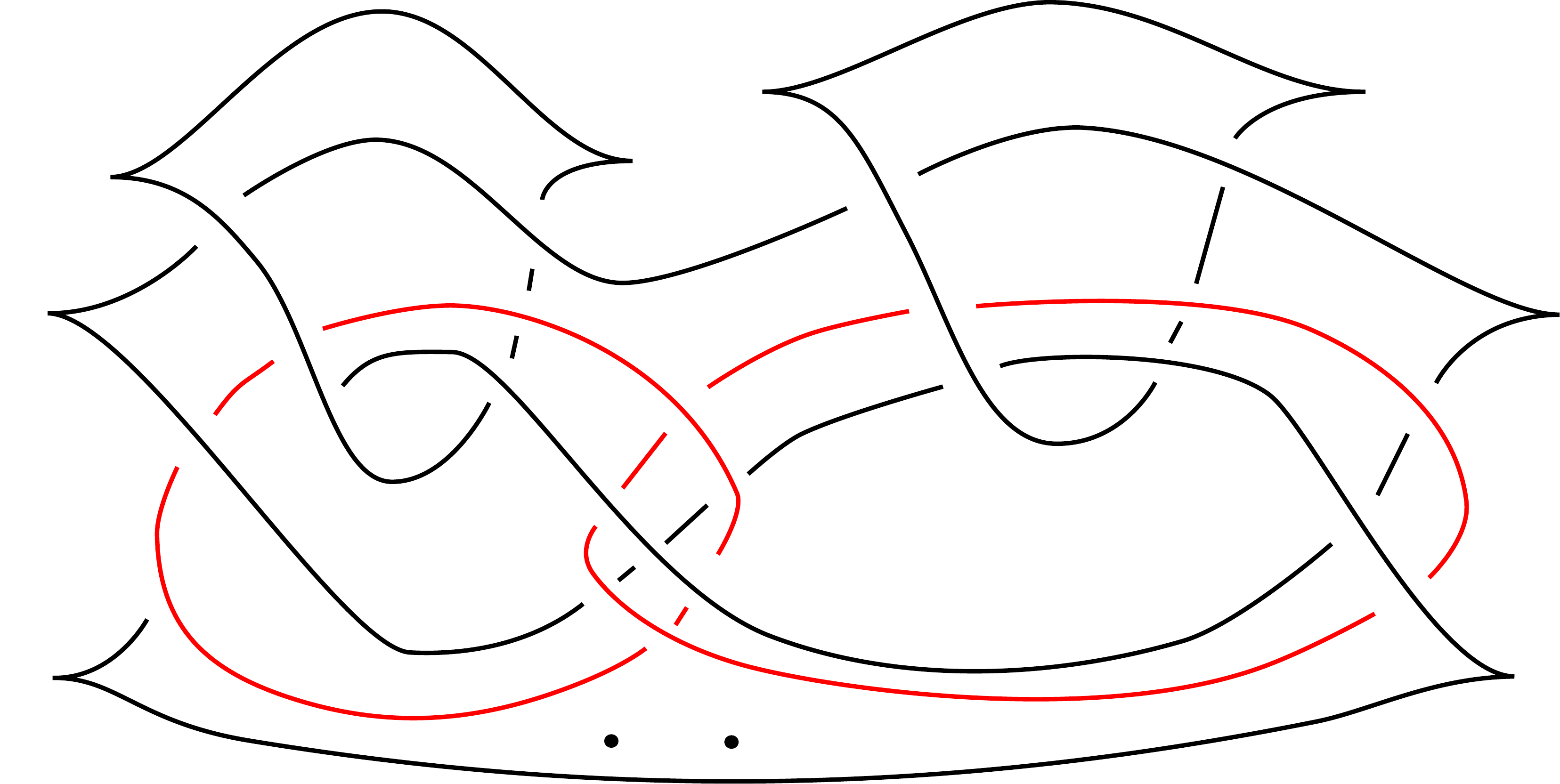
     \caption{\smaller[1]{The link $L$ in $(M,\zeta_i)$. Since the trefoil component is standard, Legendrian surgery on it gives the unique Stein fillable structure on $T^3$; moreover, we have that $\Sigma$ is a fibred surface in $\#^2S^1\times S^2$, and after capping it off it becomes a fibre of $T^3=T^2\times S^1\rightarrow S^1$. The manifold $(M,\zeta_i)$ is then obtained by performing $(\pm1)$-contact surgery on some push-off's of the curves $A,B\subset\Sigma$ along this fibration, according to the monodromies in Table \ref{Numbers}. The link $L$ is transverse to the toric fibres, and it is given by the corresponding orbits of the monodromy of $M$; one has that $|L\cap\Sigma|$ is the sum of the numbers in the columns of Table \ref{Numbers} where $L$ has a component.}}
     \label{TypeB}
 \end{figure}     

The manifold $(M,\zeta_0)$ is obtained from the Stein fillable contact structure $\xist$ on $T^3$, which can be thought as the cap off of a fibred surface $\Sigma$ in $\#^2S^1\times S^2$ as in Figure \ref{TypeB} and whose monodromy is the identity, by adding positive Dehn twists according to Table \ref{Numbers}. More specifically, we take the curves $A,B\subset \Sigma =T^2\setminus\mathring{D^2}$ whose Seifert framing is $-1$, and we push as many of them off as many Dehn twists we need, each one in a parallel fibre. In this way, each curve becomes an unknot with $\tb$-number equal to $-1$ in $(T^3,\xist)$, and Legendrian surgery on all of them gives $(M,\zeta_0)$. In order to obtain the structure $\zeta_i$ we need to add $i$ layers of Giroux torsion to $\zeta_0$; this is done as in \cite{GvHM} by composing the monodromy with $(AB^{-1}A)^4$ for each layer. Since here $B$ is a negative Dehn twist, in this case we perform $(+1)$-contact surgery. 

The Stein cobordism $(W,J)$ from $(M,\zeta_{i+1})$ to $(M,\zeta_i)$ which removes one layer of Giroux torsion consists of eight Legendrian surgeries, changing each block $AB^{-1}A$ of the  monodromy of $(M,\zeta_{i+1})$ to $ABA$. Since $(ABA)^4=(AB)^6=\Id$ in $\text{SL}(2;\Z)$, the resulting monodromy represents $(M,\zeta_i)$. Denote by $\mathcal C$ the $8$-component link mentioned above, we can do Legendrian surgery on the link $\mathcal L\cup \mathcal C$ in $(M,\zeta_{i+1})$ where $\mathcal L$ is a Legendrian realisation of $L$. In \cite[Figure 3]{GvHM} the link $\mathcal C$ is shown for $M\simeq S^3_0(T_{2,3})$, but Van Horn-Morris presents equivalent constructions for every torus bundle in his thesis, see \cite{vHM}; these are consistent with the description we give here. We then have the following commutative diagram
\begin{equation}
    \begin{tikzcd}
 \Lambda_{-d_3(\zeta_0)} & & \widehat{HF}(-M^{\mathbf m},\s') \arrow[ll,"\underline{\widehat F}_{\overline V_{\mathbf m},J_{\mathbf m}}" swap] \\
 \Lambda_{-d_3(\zeta_0)} \arrow[u,"\widehat F_{\overline W,J}"] & & \widehat{HF}(-M^\mathbf n,\s) \arrow[u,"\widehat F_{\overline W',J'}" swap] \arrow[ll,"\underline{\widehat F}_{\overline V_{\mathbf n},J_{\mathbf n}}"]
\end{tikzcd}\label{eq:diagram}
\end{equation} 
where $\s'$ is the $\Spin^c$-structure induced by the realised characteristic vector $V_1-B$ and $\mathbf m=\mathbf n+B$. 

We can see that the manifold appearing in the upper right corner is $M^{\mathbf m}$ in the following way. In \cite[Section 4]{GvHM} the link $\mathcal L\cup \mathcal C$ appears on a page of a genus one abstract open book decomposition $(S,\phi)$ for $M$, compatible with $\zeta_{i+1}$; more specifically, Ghiggini and Van Horn-Morris describe how to read the monodromy of the torus bundle, see Table \ref{Numbers}, from the given open book in \cite[Figure 3]{GvHM}. In their paper $\mathcal L$ is a knot; hence, it is drawn as a longitude of the (punctured) torus $S$. 
In our Figure \ref{TypeB} we have a similar presentation: we show a page of the fibration over $S^1$ of $M$ and $L$ is transverse to the fibres; therefore, in our case $\mathcal L$ is a Legendrian realisation of a link, and each component interacts with the monodromy curves in Figure \ref{Numbers}, and then to the corresponding ones on $S$ pictured in \cite[Figure 3]{GvHM}, as many times as prescribed by Table \ref{Numbers}. 
Therefore, as shown in \cite[Proposition 4.1]{GvHM}, after Legendrian surgery on $\mathcal C$ simplifying the monodromy increases the $\tb$-number of each component of $\mathcal L\subset(M,\zeta_{i+1})$ precisely by $b_l$ for any $S_l\in\Gamma^*\setminus G$, where $b_l$ is the corresponding coordinate of the vector $B$ in Lemma \ref{lemma:j}. 

\begin{remark}
 \label{remark:remark}
 From \cite[Lemma 4.12]{GvHM} we have that $\underline{\widehat F}_{\overline V_{\mathbf n},J_{\mathbf n}}(\eta^\mathbf n_{0,j})=t^{\frac{j}{2}}$ for every $|j|\leq k(\s,\mathbf n)-1$ and $j+k(\s,\mathbf n)\equiv1\emph{ mod }2$, and it maps each $\widehat c(\eta^\mathbf n_{i,j})$ to the contact invariant $\widehat c(\zeta_i)$, whose Maslov grading does not depend on $i$ because of Lemma \ref{lemma:twisted}. Since $\Spin^c$-cobordism maps have well-defined degree shift, we have that every structure $\eta^\mathbf n_{i,j}$ has the same $d_3$-invariant and induced $\Spin^c$-structure, and thus the same homotopy type. Furthermore, Lemma \ref{lemma:c} implies that $c^+(\eta^\mathbf n_{i,j})\in HF_\emph{red}(-M^\mathbf n,\s)$. 
\end{remark} 

Based on the previous discussion, we can now use an inductive argument to find how to express $\widehat c(\eta^\mathbf n_{i,j})$ in terms of the homology classes $\{T_{[V_1]},...,T_{[V_{k(\s,\mathbf n)}]}\}=\{\widehat c(\eta^\mathbf n_{0,j})\}$ in $\widehat{HF}^\text{od}(-M^\mathbf n,\s)$. Note that these elements are linearly independent from Theorem \ref{teo:fullpath}; moreover, as we mentioned in Subsection \ref{subsection:invariant}, there is a (canonical) isomorphism between $\widehat{HF}^\text{od}$ and $HF_\text{red}\cap\Ker U$.

\begin{cor}
 \label{cor:invariant}
 The contact invariant $\widehat c(\eta^\mathbf n_{i,j})$ is equal to $\widehat c(\eta^\mathbf n_{i-1,j-1})+\widehat c(\eta^\mathbf n_{i-1,j+1})$ in $\widehat{HF}(-M^\mathbf n,\s)$ for any $0<i<k(\s,\mathbf n)$ and $j$ such that $|j|\leq k(\s,\mathbf n)-i-1$ and $j+k(\s,\mathbf n)\equiv i+1\emph{ mod }2$.   
\end{cor}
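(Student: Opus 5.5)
We would argue by induction on $i$, following the scheme of \cite[Section 4]{GvHM} and combining the commutative diagram \eqref{eq:diagram} with the injectivity of the twisted maps on the relevant subspaces. The first step is to compute the images of all relevant classes under $\underline{\widehat F}_{\overline V_\mathbf n,J_\mathbf n}$. By Remark \ref{remark:remark} we have $\underline{\widehat F}_{\overline V_\mathbf n,J_\mathbf n}(\widehat c(\eta^\mathbf n_{0,j}))=t^{\frac j2}$. Since $\widehat c(\eta^\mathbf n_{i,j})$ is mapped to $\underline{\widehat c}(\zeta_i)$, and iterating Equation \eqref{eq:value} gives $\underline{\widehat c}(\zeta_i)=(t^{\frac12}+t^{-\frac12})^i\cdot\underline{\widehat c}(\zeta_0)$, the image of $\widehat c(\eta^\mathbf n_{i,j})$ is $(t^{\frac12}+t^{-\frac12})^i$ times a monomial $t^{\frac{j}{2}}$. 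The monomial is fixed using the $\J$-symmetry normalisation of \cite[Lemma 4.13]{GvHM}. The second step is to observe that the candidate right-hand side satisfies
\[\underline{\widehat F}_{\overline V_\mathbf n,J_\mathbf n}\big(\widehat c(\eta^\mathbf n_{i-1,j-1})+\widehat c(\eta^\mathbf n_{i-1,j+1})\big)=(t^{\frac12}+t^{-\frac12})^{i-1}\big(t^{\frac{j-1}2}+t^{\frac{j+1}2}\big),\]
which equals $(t^{\frac12}+t^{-\frac12})^i t^{\frac j2}$, the image of $\widehat c(\eta^\mathbf n_{i,j})$. Hence the two sides agree after applying the twisted map.

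To identify the monomial attached to $\eta^\mathbf n_{i,j}$ precisely, we would use the diagram \eqref{eq:diagram} with $\mathbf m=\mathbf n+B$. The structure $\eta^\mathbf n_{i,j}$ is obtained from $\eta^{\mathbf n-B}_{i-1,j}$-type data via the cobordism $W'$ that removes a Giroux torsion layer. More precisely, Legendrian surgery on $\mathcal C$ sends $\eta^{\mathbf n}_{i,j}$ on $M^{\mathbf n}$ to a structure of level $i-1$ on $M^{\mathbf n+B}$. By the inductive hypothesis, the invariant of that structure is known on the smaller level, and its image in $\Lambda$ is read off from $\underline{\widehat F}_{\overline V_\mathbf m,J_\mathbf m}$. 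Commutativity then pins down the image of $\widehat c(\eta^\mathbf n_{i,j})$, together with its $t$-exponent $\frac j2$, via Remark \ref{remark:layer} and Lemma \ref{lemma:j}. The index shift $k(\s',\mathbf n')=k(\s,\mathbf n)-i$ is what makes the parity condition on $j$ consistent.

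The final step is to deduce equality in $\widehat{HF}(-M^\mathbf n,\s)$ from equality of the images. By Remark \ref{remark:remark} all the $\widehat c(\eta^\mathbf n_{i,j})$ have the same grading and lie in the subspace $\widehat{HF}^\text{od}(-M^\mathbf n,\s)$, which corresponds to $HF_\text{red}\cap\Ker U$. We would then show that, restricted to the span of the $T_{[V_l]}$ in the relevant grading, $\underline{\widehat F}_{\overline V_\mathbf n,J_\mathbf n}$ is injective. The reason is that it sends the basis $\{T_{[V_{\sigma(j)}]}\}$ to the distinct monomials $t^{\frac j2}$, which are linearly independent over $\F$. This settles the claim provided we know that $\widehat c(\eta^\mathbf n_{i,j})$ lies in that span.

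That containment is the main obstacle. Showing it requires knowing that the homogeneous piece of $\widehat{HF}^\text{od}(-M^\mathbf n,\s)$ in this grading that maps nontrivially to $\Lambda$ is spanned exactly by the realised $T_{[V_l]}$, with no other full-path classes contributing. To handle it we would use Lemma \ref{lemma:realised} to argue that every coordinate of $\widehat c(\eta^\mathbf n_{i,j})$ is a realised vector. This works because the structure is obtained by Legendrian surgery from $\zeta_i$, and the Stein structure on $V_\mathbf n$ restricts the admissible $\Spin^c$-structures as in \cite[Proposition 3.3]{G-fillability}. Lemma \ref{lemma:j} then lists these realised vectors as exactly $V_1,\dots,V_k$ in the same grading.
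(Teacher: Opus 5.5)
Your overall scheme matches the paper's. You induct on $i$ and use Equation \eqref{eq:value} and Diagram \eqref{eq:diagram} to show that $\widehat c(\eta^{\mathbf n}_{i,j})+\widehat c(\eta^{\mathbf n}_{i-1,j-1})+\widehat c(\eta^{\mathbf n}_{i-1,j+1})$ is killed by $\underline{\widehat F}_{\overline V_{\mathbf n},J_{\mathbf n}}$. You then conclude by injectivity of this map on the span of the $T_{[V_l]}$, which holds because they go to distinct monomials $t^{\frac j2}$. Writing the image in closed form as $(t^{\frac12}+t^{-\frac12})^i t^{\frac j2}$, instead of feeding in the inductive hypothesis on $M^{\mathbf n-B}$, is a cosmetic difference.

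There are two small slips. First, Legendrian surgery on $\mathcal C$ takes $\eta^{\mathbf n}_{i,j}$ to a level-$(i-1)$ structure on $M^{\mathbf n-B}$, not on $M^{\mathbf n+B}$, so the diagram is applied with $\mathbf n-B$ in its bottom row; your preceding sentence has this right. Second, the $\J$-normalisation of \cite[Lemma 4.13]{GvHM} only removes the ambiguity of the twisted cobordism maps. The exponent $\frac j2$ comes from the diagram together with Remark \ref{remark:remark}.

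The genuine gap is the containment step. You correctly flag it as the crux, but you propose to settle it with Lemma \ref{lemma:realised}, which does not apply here. That lemma requires the manifold on which the Legendrian surgery is performed to have indefinite graph and $b_1=0$. It also requires a non-zero untwisted invariant $c^+(\xi_0)=T_{[Z_1]}+\cdots+T_{[Z_h]}$. Its proof splits $\mathfrak u_i=\mathfrak v\cup_M J$ uniquely along $M$, and tracks each coordinate $T_{[V_a]}$ of $c^+(\xi)$ through its non-zero image under $F^+_{\overline W,J}$ inside $c^+(\xi_0)$.

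Both ingredients fail for $(M,\zeta_i)$. The surgery is performed on the torus bundle $M$, which has $b_1(M)=1$, so the splitting of $\Spin^c$-structures along $M$ is no longer unique. Moreover $\widehat c(\zeta_i)=0$ for $i\geq1$, so the untwisted map sends $c^+(\eta^{\mathbf n}_{i,j})$ to zero and carries no information about its coordinates. This is precisely why one needs twisted coefficients in the first place.

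The paper closes this step differently. For a \emph{short} manifold of type B, every initial vector whose full path ends correctly is realised. Hence $\widehat{HF}^{\text{od}}(-M^{\mathbf n},\s)$ is spanned by $T_{[V_1]},\dots,T_{[V_{k(\s,\mathbf n)}]}$. Remark \ref{remark:remark} places every $\widehat c(\eta^{\mathbf n}_{i,j})$ in this odd subgroup, so containment is automatic, and your injectivity argument then finishes the proof.
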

\begin{proof}
 The proof is based on the one of \cite[Theorem 1.3]{GvHM}. Let $\mathbf o=\mathbf n-B$, and note that because we are assuming $i>0$ then $k(\s,\mathbf n)\geq2$ thus the manifold $M^\mathbf o$ exists. We start with $i=1$: from Equation \eqref{eq:value}, Diagram \eqref{eq:diagram} and Remark \ref{remark:remark} we have that \[\underline{\widehat F}_{\overline V_{\mathbf n},J_\mathbf n}(\widehat c(\eta^\mathbf n_{1,j}))=\underline{\widehat F}_{\overline W,J}(\underline{\widehat F}_{\overline V_{\mathbf o},J_{\mathbf o}}(\widehat c(\eta^{\mathbf o}_{0,j})))=\underline{\widehat F}_{\overline W,J}(t^{\frac{j}{2}})=(t^{\frac{1}{2}}+t^{-\frac{1}{2}})t^{\frac{j}{2}}=t^{\frac{j+1}{2}}+t^{\frac{j-1}{2}}\:.\] In the same way, we have that $\underline{\widehat F}_{\overline V_{\mathbf n},J_\mathbf n}(\widehat c(\eta^\mathbf n_{0,j\pm1}))=t^{\frac{j\pm1}{2}}$ and then \[\underline{\widehat F}_{\overline V_{\mathbf n},J_\mathbf n}(\widehat c(\eta^\mathbf n_{1,j})+\widehat c(\eta^\mathbf n_{0,j-1})+\widehat c(\eta^\mathbf n_{0,j+1}))=0\:.\] The fact that $\underline{\widehat c}(\zeta_1)$ is non-zero implies that $\widehat c(\eta^\mathbf n_{1,j})$ is non-zero. By Remark \ref{remark:remark}, and the fact that for a short manifold of type B any initial vector whose full path ends correctly is realised, we obtain that $\widehat c(\eta^\mathbf n_{1,j})$ is an element of the subgroup of $\widehat{HF}^\text{od}(M^\mathbf n,\s)$ generated by $T_{[V_1]},...,T_{[V_{k(\s,\mathbf n)}]}$. We conclude by observing that the map $\underline{\widehat F}_{\overline V_{\mathbf n},J_\mathbf n}$ is injective by construction on this subgroup.

 We finish with the inductive step when $i>1$: \[\begin{aligned}\underline{\widehat F}_{\overline V_{\mathbf n},J_\mathbf n}(\widehat c(\eta^\mathbf n_{i,j}))&=\underline{\widehat F}_{\overline W_,J}(\underline{\widehat F}_{\overline V_{\mathbf o},J_{\mathbf o}}(\widehat c(\eta^{\mathbf o}_{i-1,j})))=\\ &=\underline{\widehat F}_{\overline W_,J}(\underline{\widehat F}_{\overline V_{\mathbf o},J_{\mathbf o}}(\widehat c(\eta^\mathbf o_{i-2,j-1})))+\underline{\widehat F}_{\overline W_,J}(\underline{\widehat F}_{\overline V_{\mathbf o},J_{\mathbf o}}(\widehat c(\eta^\mathbf o_{i-2,j+1})))=\\ &=\underline{\widehat F}_{\overline V_{\mathbf n},J_\mathbf n}(\widehat c(\eta^\mathbf n_{i-1,j-1}))+\underline{\widehat F}_{\overline V_{\mathbf n},J_\mathbf n}(\widehat c(\eta^\mathbf n_{i-1,j+1}))\end{aligned}\] again by Diagram \ref{eq:diagram}. This leads to \[\underline{\widehat F}_{\overline V_{\mathbf n},J_\mathbf n}(\widehat c(\eta^\mathbf n_{i,j})+\widehat c(\eta^\mathbf n_{i-1,j-1})+\widehat c(\eta^\mathbf n_{i-1,j+1}))=0\] from which we conclude exactly as before.
\end{proof}

We finish the section by showing that the number of negative-twisting structures on any $-Y$ of type B equals the upper bound, see Subsection \ref{subsection:background}. We first need a lemma about the twisting numbers of manifolds of type B. Note that from Ghiggini-Massot's algorithm for each torus bundle $M=M(e_0;\frac{\mathfrak b_1}{\mathfrak a_1},...,\frac{\mathfrak b_{n'}}{\mathfrak a_{n'}})$ the twisting numbers are in an arithmetic progression of ratio $\mathfrak k:=\lcm(\mathfrak a_1,\mathfrak a_2,...)$; namely, one has $\text{tw}(M,\zeta_i)=1-(i+1)\mathfrak k$, and in particular $q$ is a twisting number if and only if $q\cdot\mathfrak b_j\equiv-1\text{ mod }\mathfrak a_j$ for each $j$. 

\begin{lemma}
 \label{lemma:twistingB}   
 For any $-Y=M(e_0;\frac{b_1}{a_1},...,\frac{b_n}{a_n})$ of type B we have that the negative twisting numbers of its tight structures are of the form $-\overline q-k\mathfrak k$, where $\overline q=|\overline{\emph{tw}}(-Y)|=\mathfrak t-1$ and $k$ is a non-negative integer. Furthermore, we have that the best upper approximations with denominator $\overline q+k\mathfrak k>1$ for $\frac{\mathfrak b_j}{\mathfrak a_j}$ and $\frac{b_j}{a_j}$ are the same.   
\end{lemma}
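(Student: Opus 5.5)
We work with Ghiggini--Massot's algorithm (Theorem \ref{teo:Paolo}) and the Farey-neighbour bookkeeping already used in the proofs of Proposition \ref{prop:tw} and Theorem \ref{teo:A}. Note that $\mathfrak t$ in the statement should be read as the highest twisting quantity coming from the subgraph $G$, so that $\overline q=|\overline{\text{tw}}(-Y)|$ is given by Proposition \ref{prop:bigtw}; it equals $d_1+d_2$, or $1$ when $e_0\leq-2$. We fix $G\subset\Gamma^*$ isomorphic to one of the seven graphs in Figure \ref{Torus}, with coefficients $\frac{\mathfrak b_j}{\mathfrak a_j}$. For $j$ outside $G$ we set $\frac{\mathfrak b_j}{\mathfrak a_j}=\frac{0}{1}$. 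Since $\Gamma^*$ extends $G$ leg by leg, each $\frac{b_j}{a_j}$ has a continued fraction beginning with that of $\frac{\mathfrak b_j}{\mathfrak a_j}$. Hence $\frac{b_j}{a_j}\le\frac{\mathfrak b_j}{\mathfrak a_j}$, and the interval between them lies inside the Farey interval adjacent to $\frac{\mathfrak b_j}{\mathfrak a_j}$ on the right.

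The first step proves the second claim. Let $\frac{p}{q}$ be the best upper approximation of $\frac{\mathfrak b_j}{\mathfrak a_j}$ with denominator $q$. We show that $\frac{p}{q}$ is also the best upper approximation of $\frac{b_j}{a_j}$ whenever $q$ is a candidate twisting denominator, i.e. $q=\overline q+k\mathfrak k$. For this we use the structure of best upper approximations as the numbers $[m_1,\dots,m_{h-1},n_h]$ with $n_h>m_h$, as in the proof of Lemma \ref{lemma:tails}. The point is that no fraction with denominator at most $q$ lies strictly between $\frac{b_j}{a_j}$ and $\frac{\mathfrak b_j}{\mathfrak a_j}$ unless $q$ is large compared with $a_j$. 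The congruence $q\mathfrak b_j\equiv-1\bmod\mathfrak a_j$ makes $\frac{\mathfrak b_j}{\mathfrak a_j}$ and $\frac{p}{q}$ Farey neighbours, and this forces the approximations to agree.

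The second step uses the twisting relation. Suppose $-q$ is a twisting number of $-Y$ with best upper approximations $\frac{P_j}{q}$. We mimic the matrix argument of Theorem \ref{teo:A} with the $\text{SL}_2(\Z)$ matrices $X_j$ built from $\frac{P_j}{q}$ and the left neighbour coming from $G$. Subtracting the relation for $\overline q$ gives $\sum_j\tau_j=(n-2)(q-\overline q)$, and as in Theorem \ref{teo:A} we get $\tau_j=\frac{q-\overline q}{c_j}$ with $c_j$ the denominators of the truncation. The truncation must be a torus-bundle subgraph satisfying Equation \eqref{eq:6}, and type B allows exactly this. Each $\tau_j$ is an integer, so $\mathfrak a_j\mid q-\overline q$ for all $j$, hence $\mathfrak k\mid q-\overline q$. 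This gives $q=\overline q+k\mathfrak k$; $k\geq0$ holds because $\overline q$ is the highest.

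The main obstacle is the first step. There we must show that the truncated coefficient $\frac{\mathfrak b_j}{\mathfrak a_j}$ is the relevant left Farey neighbour for every twisting denominator $q$, not merely the lowest one. It is also where the shortness-type assumptions and the legs attached directly to the centre (with $\frac{0}{1}$ in $G$) need separate care. Our first attempt would be a case check against Table \ref{Numbers}, using that the denominators $\mathfrak a_j$ are tiny (at most 6).
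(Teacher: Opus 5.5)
There is a genuine gap, and it is in your second step, not the first. The matrix $X_j$ with columns $(P_j,q)$ and $(\mathfrak b_j,\mathfrak a_j)$ lies in $\text{SL}_2(\Z)$ only if $P_j\mathfrak a_j-q\mathfrak b_j=1$. That means you would already need to know that $\frac{P_j}{q}$ is the right Farey neighbour of $\mathfrak r_j=\frac{\mathfrak b_j}{\mathfrak a_j}$, which is precisely the content of the lemma, so taking $c_j=\mathfrak a_j$ is circular.

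Independently, the per-leg formula $\tau_i=\frac{q-\overline q}{c_i}$ in Theorem~\ref{teo:A} does not follow from \eqref{eq:2} alone. It uses $\pi_i=1$, which was obtained by subtracting \eqref{eq:2} from \eqref{eq:3}, and that requires a \emph{third} twisting number $\underline q>q$ with nested Farey intervals. With only $\overline q$ and $q$ (and $q$ may well be the lowest), you have the single relation $\sum_j\tau_j=(n-2)(q-\overline q)$ and no divisibility on individual legs.

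Your first step depends on this. The Farey argument needs the congruence $q\mathfrak b_j\equiv-1\pmod{\mathfrak a_j}$, and even then it needs $r_j<\frac pq$, which is not automatic. For $-\Sigma(2,3,23)$ one has $\mathfrak r_3=\frac16$ and $r_3=\frac4{23}$, and $q=23\equiv\overline q\pmod 6$ gives $\frac pq=\frac4{23}=r_3$. So your claim ``whenever $q=\overline q+k\mathfrak k$'' is false as stated.

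Your inequality also goes the wrong way. Extending a Hirzebruch--Jung expansion increases the coefficient: $[-2]\mapsto[-2,-2]$ sends $\frac12$ to $\frac23$, and in Lemma~\ref{lemma:tails} the truncation is a left neighbour. So $r_j\geq\mathfrak r_j$, not $r_j\leq\mathfrak r_j$.

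That corrected inequality is exactly the missing idea. Since $\frac{p_j}{q}>r_j\geq\mathfrak r_j$, you get $p_j\geq\lfloor q\mathfrak r_j\rfloor+1=q\mathfrak r_j-\{q\mathfrak r_j\}+1$. Now sum over $j$, using $\sum_j\mathfrak r_j=-e_0$: the torus bundle has $e=0$ and the same central framing, and legs outside $G$ have $\mathfrak r_j=0$. This gives $\sum_jp_j\geq-e_0q+n-\sum_j\{q\mathfrak r_j\}\geq-e_0q+n-2$, because $\{q\mathfrak r_j\}\leq1-\frac1{\mathfrak a_j}$ and $\sum_j(1-\frac1{\mathfrak a_j})=2$ by \eqref{eq:6}.

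Ghiggini--Massot's relation forces equality everywhere. Equality in the fractional parts means $q\mathfrak b_j\equiv-1\pmod{\mathfrak a_j}$ for every $j$. Hence $-q$ is a twisting number of the torus bundle, so $q\equiv\overline q\pmod{\mathfrak k}$. Equality $p_j=\lfloor q\mathfrak r_j\rfloor+1$ then says $\frac{p_j}{q}$ is the right Farey neighbour of $\mathfrak r_j$, hence also its best upper approximation. Both claims come out of the same equality case, with no case check against Table~\ref{Numbers}.
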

\begin{proof}
 Set $\mathfrak r_j=\frac{\mathfrak b_j}{\mathfrak a_j}$ and $r_j=\frac{b_j}{a_j}$ for $j=1,...,n$ where $\mathfrak r_j=\frac{0}{1}$ for $j=n'+1,...,n$. Suppose that $-q$ is a twisting number for $-Y$; for any $j$ define $c_j(q)$ as the smallest positive integer such that $\mathfrak r_j<\frac{c_j(q)}{q}$, thus $p_j\geq c_j(q)$ where $p_j$ is the best upper approximation for $r_j$ with denominator $q$. If $\mathfrak r_j<\frac{a}{q}$ for some $a>0$ integer then $a\geq\lfloor q\cdot\mathfrak r_j\rfloor+1$; therefore, we have \[c_j(q)=\lfloor q\cdot\mathfrak r_j\rfloor+1=q\cdot\mathfrak r_j-\{q\cdot\mathfrak r_j\}+1\] and then \[\sum_{j=1}^np_j\geq\sum_{j=1}^nc_j(q)=q\sum_{j=1}^n\mathfrak r_j-\sum_{j=1}^n\{q\cdot\mathfrak r_j\}+n=-e_0q+n-\sum_{j=1}^n\{q\cdot\mathfrak r_j\}\geq-e_0q+n-2\:,\] where we are using the fact that  \[\sum_{j=1}^{n'}\{q\cdot\mathfrak r_j\}\leq\sum_{j=1}^{n'}\frac{\mathfrak a_j-1}{\mathfrak a_j}=2\hspace{0.5cm}\text{ for every }\hspace{0.5cm}q>0\] as each torus bundle $M$ satisfies Equation \eqref{eq:6}. One has that equality holds if and only if $-q$ is a twisting number for $M$; this can be checked with a simple computation because this is equivalent to $q\cdot\mathfrak b_j\equiv-1\text{ mod }\mathfrak a_j$ for each $j$. From Ghiggini-Massot's algorithm all the previous inequalities are sharp, and then $q$ is of the required form.

 Now let $\frac{\mathfrak p_j}{\overline q+k\mathfrak t}$ be the best upper approximation for $\mathfrak r_j$ and suppose that $r_j<\frac{\mathfrak p_j}{\overline q+k\mathfrak t}$, then it follows immediately that $\frac{\mathfrak p_j}{\overline q+k\mathfrak t}$ is also the best upper approximation for $r_j$; moreover, we have that $\overline q+k\mathfrak t$ is a twisting number of $-Y$ for the same reason as the case of manifolds of type A with tails in Lemma \ref{lemma:tails}. Conversely, if $r_j\geq\frac{\mathfrak p_j}{\overline q+k\mathfrak t}$ then there is no best upper approximation with given denominator.
\end{proof}

Note that it follows in the same way as \cite[Proposition 2.7]{GvHM} that $\text{tw}(-Y,\eta^\mathbf n_{i,j})=\text{tw}(M,\zeta_i)=1-(i+1)\mathfrak k$ whenever $-Y$ is short.

\begin{prop}
 \label{prop:classificationB}
 Suppose that $-Y$ is a manifold of type B. Then a negative-twisting tight contact structure on $-Y$ is isotopic to $\eta^\mathbf n_{i,j}$ for some $\s\in\Spin^c(-Y)$. Furthermore, such structures are all symplectically fillable and distinguished by their contact invariant $c^+\in HF_\emph{red}$.
\end{prop}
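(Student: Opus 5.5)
The argument runs through the usual upper-bound/realisation scheme. First I would treat the short case, and then reduce the general case to it by the truncation of tails used in Lemma \ref{lemma:tails}.

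\emph{Short case.} Fix $\s\in\Spin^c(-Y)$ and a twisting number $-q$. By Lemma \ref{lemma:twistingB} we have $q=\overline q+i\mathfrak k$ for some $i\geq0$, and the relevant best upper approximations are those of the torus bundle $M$. Hence the convex-surface upper bound from Subsection \ref{subsection:background} for twisting number $-q$ is obtained by counting the tight extensions to the glued-in solid tori. For the legs hit by $L$, this count equals the number of choices of stabilisations of $\mathcal L$ in $(M,\zeta_i)$ such that $\tb$ increases by $iB$. Using Lemma \ref{lemma:j} and Remark \ref{remark:layer}, I would check that for each $\s$ this number equals $k(\s,\mathbf n)-i$, which is exactly the number of structures $\eta^\mathbf n_{i,j}$ in the $i$-th row of the pyramid. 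Summing over $\s$ then shows that the constructed structures saturate the upper bound, once they are known to be pairwise non-isotopic.

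Distinctness comes from the contact invariants. By Corollary \ref{cor:invariant} and induction on $i$, we get
\[\widehat c(\eta^\mathbf n_{i,j})=\sum_{s=0}^{i}\binom{i}{s}\,\widehat c\big(\eta^\mathbf n_{0,\,j-i+2s}\big)\pmod 2 .\]
The $\widehat c(\eta^\mathbf n_{0,j})=T_{[V_{\sigma(j)}]}$ are linearly independent by Theorem \ref{teo:fullpath}. The outermost coordinates, $s=0$ and $s=i$, have coefficient $1$, so the pair $(j-i,j+i)$ is read off from $c^+$, and this determines $(i,j)$. Therefore distinct $\eta^\mathbf n_{i,j}$ have distinct invariants. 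These invariants are non-zero and lie in $HF_\text{red}$ by Remark \ref{remark:remark}.

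\emph{Fillability.} The structure $\zeta_i$ is weakly fillable \cite{DG}. The structure $\eta^\mathbf n_{i,j}$ is obtained from it by Legendrian surgery, i.e.\ by attaching a Stein cobordism, so it is weakly symplectically fillable. Tightness and negative twisting also follow from Proposition \ref{prop:neg_tw}, since $c^+\in HF_\text{red}$ is non-zero.

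\emph{General (non-short) case.} I would proceed as for type A manifolds with tails. Truncate every leg of $\Gamma^*$ beyond the first vertex outside $G$ to get a short manifold $M^{\mathbf n}$. By the second part of Lemma \ref{lemma:twistingB} the best upper approximations are unchanged. The remaining vertices are then obtained by Legendrian surgery on push-offs of singular fibres of $(M^\mathbf n,\eta^\mathbf n_{i,j})$, whose contact framing is the continued-fraction entry $n^i_{h_i}$. The number of choices of stabilisations gives the factor $\prod|m^i_j+1|$ appearing in \eqref{eq:upper1}, so the bound is again saturated.

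These structures remain fillable, since Legendrian surgery preserves weak fillability. They are distinguished by $c^+$ via Lemma \ref{lemma:realised}: the surgery cobordism maps coordinate-wise onto realised vectors, and distinct $\Spin^c$-structures on the Stein cobordism give distinct invariants, as in \cite[Proposition 3.3]{G-fillability}.

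The main obstacle is the exact matching of the count in the short case. It requires relating the rotation/stabilisation data on $\mathcal L\subset(M,\zeta_i)$, shifted by $iB$, to the index ranges of $j$, and verifying that no extensions are counted twice across different $\Spin^c$-structures. I would first verify this in the $-\Sigma(2,3,6k-1)$ case, following \cite{GvHM}, and then extend it leg by leg using Table \ref{Numbers}.
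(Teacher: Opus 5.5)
Your construction of the $\eta^{\mathbf n}_{i,j}$, the fillability argument, the distinctness argument and the reduction from long to short manifolds all agree with the paper. Your closed formula $\widehat c(\eta^{\mathbf n}_{i,j})=\sum_{s=0}^{i}\binom{i}{s}\widehat c(\eta^{\mathbf n}_{0,j-i+2s})$ is a correct unwinding of Corollary \ref{cor:invariant}, and it makes distinctness more explicit than the paper does.

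The gap is the step you yourself call the main obstacle, and it is the real content of the proposition. In the short case you never prove that the $\eta^{\mathbf n}_{i,j}$ exhaust the convex-surface bound $T(1,q)\cdots T(n,q)$ for $q=\overline q+i\mathfrak k$; without that there is no classification. You assert that the leg factors count stabilisations of $\mathcal L$ in $(M,\zeta_i)$, and that this count equals $k(\s,\mathbf n)-i$ ``for each $\s$''. Neither claim is justified, and checking $-\Sigma(2,3,6k-1)$ and then ``extending leg by leg'' is not an argument. The per-$\s$ formulation is also not meaningful as stated. The bound of Subsection \ref{subsection:background} is a single product over legs and does not see $\Spin^c$-structures. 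What must be shown is the global identity $\sum_{\s}\bigl(k(\s,\mathbf n)-i\bigr)_+=\prod_l T(l,\overline q+i\mathfrak k)$.

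The paper proves this by induction on the size of the largest pyramid. By Remark \ref{remark:layer}, the structures $\eta^{\mathbf n}_{i,j}$ (over all $\s$) are in bijection with the $\eta^{\mathbf o}_{i-1,j}$ on $M^{\mathbf o}$, where $\mathbf o=\mathbf n-B$. So it suffices to show that the bound for $-q$ on $M^{\mathbf n}$ equals the bound for $-(q-\mathfrak k)$ on $M^{\mathbf o}$. The base case is the $\overline{\text{tw}}$ count from Proposition \ref{prop:main}.

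That equality is a leg-by-leg continued-fraction check. By Lemma \ref{lemma:twistingB} the best upper approximations are those of the torus bundle. Hence passing from $(M^{\mathbf n},q)$ to $(M^{\mathbf o},q-\mathfrak k)$ replaces $m^i_{k_i}$ and $n^i_{k_i}$ by $m^i_{k_i}+b_i$ and $n^i_{k_i}+b_i$, leaving $n^i_{k_i}-m^i_{k_i}$ unchanged. In the degenerate case $-n^i_{k_i}=b_i+1$ one gets $-\frac{q-\mathfrak k}{p_i'}=[m^i_1,\dots,m^i_{k_i-1}+1]$, and the factor is $1\cdot(-m^i_{k_i}-b_i-1)=n^i_{k_i}-m^i_{k_i}$.

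Your direct route can also be completed, using two ingredients:
\begin{enumerate}
\item The same shift gives $T(l,\overline q+i\mathfrak k)=N_l-ib_l$, where $N_l=T(l,\overline q)$ is the number of realised values of the $l$-th coordinate outside $G$.
\item By Lemma \ref{lemma:j}, the realised vectors in a fixed $\s$ form a progression of step $2B$, and two realised vectors differing by $2iB$ always induce the same $\s$. Hence $\sum_\s(k(\s,\mathbf n)-i)_+$ is the number of pairs $(V,V+2iB)$ of realised vectors, which is $\prod_l(N_l-ib_l)_+$.
\end{enumerate}
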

\begin{proof}
 We recall hat each structure $\eta^\mathbf n_{0,j}$ is obtained by blowing down $\Gamma^*$. We start by assuming that $-Y=M^\mathbf n$ is short; hence we have that $\{\eta^\mathbf n_{i,j}\}$ is a pyramid for each $\s\in\Spin^c(M^\mathbf n)$ by Remark \ref{remark:layer}. We saw that any such structure is obtained by $(-1)$-contact surgery on a Legendrian knot in a weakly fillable manifold, namely $(M,\zeta_i)$; therefore, they are weakly fillable themselves, and then strongly fillable as $M^\mathbf n$ is a rational homology sphere. In addition, the fact that these structure have pairwise distinct $c^+$ can be seen from Corollary \ref{cor:invariant}, as each one can be expressed by a different linear combination of $\{\widehat c(\eta^\mathbf n_{0,j})\}$.

 We have to show that the number of the structures of the form $\eta^\mathbf n_{i,j}$, among every $\s$ on $M^\mathbf n$, coincides with the upper bound in Subsection \ref{subsection:background}. We first observe that the structures as $\eta^\mathbf n_{0,j}$ are precisely the ones with twisting number equal to $\overline{\text{tw}}(M^\mathbf n)$ and there are as many as the upper bound, see Propositions \ref{prop:main} and \ref{prop:A}, and \cite[Proposition 5.1]{CM-negative}. We proceed by induction on the maximal size of a pyramid of $M^\mathbf n$.

 If there is only one layer then the proof follows as said above. Suppose that there is more than one layer; then we consider each twisting number $-q$ separately. If $-q=\overline{\text{tw}}(M^\mathbf n)$ then we are done for the same reason. Since the number of the structures $\eta^\mathbf n_{i,j}$ coincides with the ones in $\eta^\mathbf o_{i-1,j}$ where $\mathbf o=\mathbf n-B$, and this holds for every $\Spin^c$-structure, we just have to prove that the upper bound for $-q$ on $M^\mathbf n$ equals the upper bound for $-q+\mathfrak k$ on $M^\mathbf o$; we then conclude by induction.

 From Subsection \ref{subsection:background} we know that we can compute the upper bound leg-by-leg (we only care about the ones where there is a vertex in $\Gamma^*\setminus G$) by comparing the resulting continued fractions for $r_i$ and $\frac{p_i}{q}$, for $M^\mathbf n$, and $r_i'$ and $\frac{p_i'}{q-\mathfrak k}$, for $M^\mathbf o$. From Lemma \ref{lemma:twistingB} one has that $\frac{p_i}{q}$ and $\frac{p_i'}{q-\mathfrak k}$ are also the best upper approximations for $\mathfrak r_i$, the $i$-th Seifert coefficient of the torus bundle $M$. 
 
 We assume $q-\mathfrak k>1$, the other case is similar. We have $-\frac{1}{r_i}=[m^i_1,...,m^i_{k_i}]$ and $-\frac{q}{p_i}=[m^i_1,...,m^i_{k_i-1},n^i_{k_i}]$ where $n^i_{k_i}>m^i_{k_i}$. Then one gets precisely $-\frac{1}{r_i'}=[m^i_1,...,m^i_{k_i}+b_{i}]$, where $b_{i}=\frac{\mathfrak k}{\mathfrak a_i}$ is the corresponding number in Table \ref{Numbers}, and \[-\frac{q-\mathfrak k}{p_i'}=\left\{\begin{aligned}&[m^i_1,...,m^i_{k_i-1},n^i_{k_{i}}+b_{i}]\hspace{0.5cm}\text{ if }-n^i_{k_i}>b_{i}+1\\ &[m^i_1,...,m^i_{k_i-1}+1]\hspace{1.28cm}\text{ if }-n^i_{k_i}=b_{i}+1\:.\end{aligned}\right.\] The contributions for the upper bound are $(n^i_{k_{i}}+b_{i})-(m^i_{k_i}+b_{i})=n^i_{k_{i}}-m^i_{k_i}$ and $[(m^i_{k_i-1}+1)-m^i_{k_i-1}]\cdot(-m^i_{k_{i}}-b_{i}-1)=n^i_{k_{i}}-m^i_{k_i}$ respectively, exactly as we wanted.

 If $-Y$ is not short then it is obtained by Legendrian contact surgery on (some) fibres of a short manifold of type B, exactly as in the case of manifolds of type A with tails. Hence, the claims in the statement are obtained with the same arguments of Proposition \ref{prop:classificationA} and Corollary \ref{cor:red}.
\end{proof}

It follows that the contact invariant of $\eta^{\mathbf n}_{i,j}$ is always a linear combination of $T_{[V_1]},...,T_{[V_{k(\s,\mathbf n)}]}$, the realised characteristic vectors in $\text{Char}(\Gamma^*,\s)$. In Section \ref{section:seven} we determine the coordinates of $c^+$ exactly, depending on the parity of $k(\s,\mathbf n)$.

\section{The correspondence with Heegaard Floer homology}
\label{section:seven}
From our exposition so far we deduce the following corollary for a $-Y$ whose standard graph $\Gamma^*$ has $b_2^+$ equal to 1. 

\begin{cor}
 \label{cor:fillable}
 Every negative-twisting structure $\xi$ on $-Y$ is symplectically fillable; moreover, negative-twisting structures are precisely the ones with $0\neq c^+(\xi)\in HF_\emph{red}(Y)$.  
\end{cor}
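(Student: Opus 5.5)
The plan is to assemble the classification results of Sections \ref{section:five} and \ref{section:six} and then add the converse direction from Proposition \ref{prop:neg_tw}. We may assume $-Y$ is not an $L$-space: otherwise $HF_\text{red}(Y)=0$, and by Remark \ref{remark:L} together with Theorem \ref{teo:criterion} there are no negative-twisting structures, so both claims are vacuous. We then split according to whether $-Y$ is of type A or type B.

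\textbf{Fillability.} Let $\xi$ be negative-twisting. If $\text{tw}(-Y,\xi)=\overline{\text{tw}}(-Y)$, Proposition \ref{prop:main} shows that $\xi$ comes from a Stein structure on $X_{\Gamma^*}$, so it is Stein fillable. Otherwise, Proposition \ref{prop:classificationA} (type A) or Proposition \ref{prop:classificationB} (type B) shows that $\xi$ is isotopic to one of the explicitly constructed structures, and those propositions already prove that these are symplectically fillable.

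\textbf{Forward direction of the characterisation.} Fillability gives $c^+(\xi)\neq0$. For membership in $HF_\text{red}$:
\begin{itemize}
\item when $\text{tw}=\overline{\text{tw}}$, Lemma \ref{lemma:c} gives $c^+(\xi)=T_{[V]}\in HF_\text{red}(Y)$;
\item in type A, Corollary \ref{cor:red} gives $c^+(\xi)\in HF_\text{red}(Y,\s_\xi)$;
\item in type B, Remark \ref{remark:remark} and Corollary \ref{cor:invariant} show that $c^+(\eta^\mathbf n_{i,j})$ is a combination of the $T_{[V_l]}$, hence lies in $HF_\text{red}$.
\end{itemize}
For non-short type B manifolds, the Legendrian surgery argument used in Corollary \ref{cor:red} transfers this statement from the short case. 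That transfer is the step I expect to need the most care. The argument should run as follows. Suppose $c^+(\xi)=\Theta^+$. The surgery cobordism map $F^+$ sends $c^+(\xi)$ to the invariant of the short structure, which is nonzero and lies in $HF_\text{red}$. But $F^+(\Theta^+)$ must lie in the image of $HF^\infty$, since $b_2^+$ of a Stein $2$-handle cobordism between rational homology spheres built this way is $0$. This is a contradiction.

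\textbf{Converse direction.} Suppose $0\neq c^+(\xi)\in HF_\text{red}(Y,\s_\xi)$. Since $-Y$ is not an $L$-space, Theorem \ref{teo:fullpath} together with the identification of $\Ker U\cap HF_\text{red}$ with $\widehat{HF}^\text{od}$ (Subsection \ref{subsection:invariant}) places $\widehat c(\xi)$ in the odd subgroup, and shows it is nonzero because $c^+(\xi)\in\Ker U$. Equivalently, $d_3(\xi)+d(Y,\s_\xi)$ is odd. Proposition \ref{prop:neg_tw} then gives that $\xi$ is negative-twisting. The point to check here is that $c^+$ nonzero in $\Ker U$ implies $\widehat c$ nonzero, which holds because $\rho^*$ carries $\widehat c$ to $c^+$.
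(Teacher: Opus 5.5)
Your proposal is correct and follows essentially the same route as the paper. The paper's proof just assembles Corollary \ref{cor:red} and Propositions \ref{prop:neg_tw}, \ref{prop:classificationA} and \ref{prop:classificationB}, and your transfer step for non-short type B manifolds is exactly the Corollary \ref{cor:red} argument that the paper invokes inside the proof of Proposition \ref{prop:classificationB}. The only remark is that your appeal to $b_2^+=0$ there is unnecessary: $F^+\circ\pi=\pi\circ F^\infty$ holds for any cobordism, so $F^+(\Theta^+)$ lies in the tower regardless.
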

\begin{proof}
 This follows from Corollary \ref{cor:red} and Propositions \ref{prop:neg_tw}, \ref{prop:classificationA} and \ref{prop:classificationB}. 
\end{proof}

\subsection{The full path determines the twisting numbers}
Let us take the set $\{V_0,...V_{S-1}\}\subset\text{Char}(\Gamma^*,\overline\s_\text{can})$ of the realised characteristic vectors such that $M(V_l)=M(V_\text{can})$ for $l=0,...,S-1$, where $\s_\text{can}\in\Spin^c(-Y)$ is induced by $V_\text{can}$. It follows from the discussion in Section \ref{section:five} that the $T_{[-V_l]}$'s are the contact invariants of the $\xi_l$'s in Proposition \ref{prop:bigtw} homotopic to the structure $\xi_\text{can}$; therefore, one has $\text{tw}(-Y,\xi_l)=\overline{\text{tw}}(-Y)$. We fix the convention that $V_0$ is the initial vector of $[-V_\text{can}]$, and we compute the values of the possible negative twisting numbers.

\begin{teo}
  \label{teo:twisting}
  There exists a tight structure $\xi$ on $-Y$ such that $\emph{tw}(-Y,\xi)=-q<\overline{\emph{tw}}(-Y)$ if and only if $q=1+\height([V_\emph{can}]+[-V_l])$ for $l=1,...,S-1$. In particular, there are $S$ integers which appear as the negative-twisting number of a contact structure on $-Y$.
\end{teo}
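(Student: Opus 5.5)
The plan is to prove the two directions separately. I will reduce everything to the $\Spin^c$-structure and grading of $\xi_\text{can}$, and then to the comparison between the Bennequin-type upper bound of Proposition \ref{prop:neg_tw} and the explicit constructions of Section \ref{section:six}.

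For the ``only if'' direction, let $\xi$ be tight with $\text{tw}(-Y,\xi)=-q<\overline{\text{tw}}(-Y)$. By Corollary \ref{cor:fillable}, $c^+(\xi)\in HF_\text{red}(Y)$ is non-zero. By Propositions \ref{prop:classificationA} and \ref{prop:classificationB}, together with Lemma \ref{lemma:realised}, it can be written as $T_{[V_a]}+\cdots+T_{[V_b]}$ with realised initial vectors and at least two terms. Here $V_a$ and $V_b$ are the coordinates of minimal and maximal Alexander filtration.

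I then repeat the computation in the proof of Proposition \ref{prop:neg_tw}. Since $\xi$ is fillable, the proof of Proposition \ref{prop:A} and Proposition \ref{prop:transverse} give a Lagrangian or symplectic surface bounded by a Legendrian regular fibre. So $\text{TB}_\xi(K)=\tau_\xi(K)+\tau_{\overline\xi}(K)-1$ holds with equality rather than as an inequality, and therefore
\[-q=\text{tw}(-Y,\xi)=-1-\height([V_a]+[V_b])\:.\]
It remains to move this pair into $\s_\text{can}$ with grading $M(V_\text{can})$. I would use Lemma \ref{lemma:j} in type B (all realised vectors in a pyramid differ by multiples of $B$) and the tails reduction of Lemma \ref{lemma:tails} in type A. Both show that the combined height depends only on the difference $(V_b-V_a)/2$, which is an integral multiple of the primitive direction. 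Translating by realised vectors, the same difference is realised by a pair of the form $(V_\text{can},-V_l)$, that is by $V_0$ and $V_l$ in $\text{Char}(\Gamma^*,\overline\s_\text{can})$. This yields $q=1+\height([V_\text{can}]+[-V_l])$ for some $l\geq1$.

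For the ``if'' direction, given $l\geq1$, I take the pair $(V_0,V_l)$ of realised vectors in the same grading and $\Spin^c$-structure. In type B this pair is the base of a pyramid, and $\eta_{l,j}$ (with $j$ the midpoint) has $c^+$ whose extreme coordinates are $T_{[-V_0]}$ and $T_{[-V_l]}$, by iterating Corollary \ref{cor:invariant}. In type A, with $S=2$, the additional $\underline{\text{tw}}$-structure of Proposition \ref{prop:classificationA} lies in the homotopy class of $\xi_\text{can}$ (discussion after Corollary \ref{cor:red}) and has two such coordinates. The same twisting-number computation then gives exactly $-1-\height([V_\text{can}]+[-V_l])$.

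Finally, for the count, $l\mapsto\height([V_\text{can}]+[-V_l])$ is strictly increasing: by Remark \ref{remark:Alexander} the $\mathcal F(V_l)$ lie in disjoint ordered intervals and $V_0$ is extremal by Lemma \ref{lemma:can}. So together with $\overline{\text{tw}}$ one gets $S$ distinct twisting numbers.

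The main obstacle is the ``only if'' step showing that an arbitrary lower-twisting structure, possibly in a different $\Spin^c$-structure or grading, produces a height already realised in the canonical class. I expect to handle it by reducing to the short or tailless model, where the realised vectors form an arithmetic progression in $B$ with $\mathcal F$ shifting linearly, and then matching progressions of the same length.
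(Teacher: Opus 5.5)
There is a genuine gap. It sits exactly where you expected, in moving a pair into the canonical class, and the device you propose closes it only in the short type B case.

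\textbf{The ``only if'' direction.} Take realised $V_a,V_b\in\text{Char}(\Gamma^*,\s_\xi)$ with $M(V_a)=M(V_b)$, and set $2U=V_b-V_a$. Translating the pair to start at $V_\text{can}$ keeps the $\Spin^c$-structure. However, the equal-grading condition changes from $U^TQ_*^{-1}(V_a+U)=0$ to $U^TQ_*^{-1}(V_\text{can}+U)=0$. These differ by $U^TQ_*^{-1}(V_a-V_\text{can})$, which has no reason to vanish when $\s_\xi\neq\s_\text{can}$. Realisedness of the translate is not automatic either.
\begin{itemize}
\item Only for short type B does Lemma \ref{lemma:j} force $U$ to be a multiple of $B$, with $Q_*^{-1}B=\overline R$ orthogonal to both $B$ and $V_\text{can}$.
\item In type A the realised vectors fill a multi-dimensional box (see Table \ref{Table1}), so there is no primitive direction to match progressions along. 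Lemma \ref{lemma:tails} is about best upper approximations, not about realised vectors.
\item Your equality $\text{TB}_\xi(K)=\tau_\xi(K)+\tau_{\overline\xi}(K)-1$ needs a Legendrian regular fibre bounding a Lagrangian or symplectic surface in a filling. Proposition \ref{prop:transverse} supplies this only for the structure tangent to the fibres. Nothing supplies it for the structures $\eta^{\mathbf n}_{i,j}$ with $i\geq1$, which are built from $(M,\zeta_i)$ carrying Giroux torsion; the paper takes their twisting number from Ghiggini--Van Horn-Morris instead. Without it, Proposition \ref{prop:neg_tw} only gives $q\geq 1+\height([V_a]+[V_b])$.
\item In the tailless case, Lemma \ref{lemma:realised} does not apply to $\xi_0$. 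Realisedness of its coordinates comes from Lemma \ref{lemma:final}, which already presupposes $\s_{\xi_0}=\s_\text{can}$.
\end{itemize}

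\textbf{The ``if'' direction.} In type A this is circular. You use $S=2$ and that the $\underline{\text{tw}}$-structure lies in the homotopy class of $\xi_\text{can}$. The discussion after Corollary \ref{cor:red} gives neither. The paper obtains $\s_0=\s_\text{can}$ in the proof of Proposition \ref{prop:map} by invoking Theorem \ref{teo:twisting} itself. Likewise $S\leq 2$ in type A is a consequence of the theorem together with Remark \ref{remark:A}, not an input. For manifolds with tails there are several $\underline{\text{tw}}$-structures in different $\Spin^c$-structures, and nothing singles out $\s_\text{can}$ in advance.

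\textbf{The missing idea.} The statement is purely arithmetic and needs no contact invariants. For a realised $V$, put $2U=V-V_\text{can}$ and $X=\frac{1}{2}Q_*^{-1}(V+V_\text{can})$. Then $V$ is one of the $V_l$ if and only if $X$ is integral and $U^TX=0$.
\begin{itemize}
\item Lemma \ref{lemma:orthogonal} (positivity of $(-Q_*^{(i)})^{-1}$) turns $U^TX=0$ into $u_1=0$ and $u^i_j(z^i_j-1)=0$.
\item The central row of $Q_*X=V_\text{can}+U$ becomes $\sum_i p_i=-e_0q+n-2$, with $p_i=z^i_1$ and $q=1-x_1$.
\item The Leg lemma identifies the leg rows with $\frac{p_i}{q}$ being the best upper approximation of $r_i$.
\end{itemize}
Via Proposition \ref{prop:twisting} and Theorem \ref{teo:Paolo}, this gives both directions at once. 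Each $V_l$ with $l\geq1$ produces a Ghiggini--Massot solution. Conversely, each twisting number $-q<\overline{\text{tw}}(-Y)$ produces $Z^{(i)},U^{(i)}$, hence $V=V_\text{can}+2U$. This $V$ automatically lies in $\overline\s_\text{can}$ with grading $M(V_\text{can})$, which is exactly the move into the canonical class you could not make. The cases with some $p_i=1$ are handled by induction on removed legs, with Lemma \ref{lemma:twisting} covering the torus-bundle subgraphs.

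Your final counting step, that distinct $l$ give distinct heights by Remark \ref{remark:Alexander} and Lemma \ref{lemma:can}, is fine.
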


In order to prove this result we need to reformulate both sides of the equivalence, which is the content of the following preliminary lemmate.

Let us first consider a realised characteristic vector $V$. We write $2U:=V-V_\text{can}$, and we define \[X:=\dfrac{1}{2}Q_*^{-1}(V+V_\text{can})=Q_*^{-1}(V_\text{can}+U)\] where $Q_*$ is the intersection matrix of $\Gamma^*$. Then \[V^TQ_*^{-1}V-V_\text{can}^TQ_*^{-1}V_\text{can}=(V-V_\text{can})^TQ_*^{-1}(V+V_\text{can})=4U^TX\:;\] hence, by Proposition \ref{prop:spinc} and Equation \eqref{eq:Maslov2}, the vector $V$ is one of the $V_l$'s introduced at the beginning of the subsection if and only if $U$ is in the right range (for $V$ to be a realised vector), $X\in\Z^{|\Gamma^*|}$ (which ensures that $V$ and $-V_\text{can}$ are the same in $\Spin^c(-Y)$), and $U^TX=0$ (so that $M(V)=M(V_\text{can})$). Moreover, when $l\in\{1,...,S-1\}$ we have \[x_1=e_1^TX=-\height([V_\text{can}]+[-V_l])<-\height[V_\text{can}]\leq0\:.\] 
For vertices on the $i$-th leg we define \[Z^{(i)}:=\mathbf{1}-X^{(i)} \in \mathbb{Z}^{k_i}\:,\] then from $Q_*X=V_{\text{can}}+U$ one gets for a single leg
\begin{equation}\label{eq:leg}
 -Q_*^{(i)} Z^{(i)} = U^{(i)} + qe_1 + e_{k_i}
\end{equation}
for $q:=1-x_1$.

\begin{lemma}
 \label{lemma:orthogonal}
 When $q>1$ we have that $U^TX=0$ if and only if $u_1=0$ and $u_j^{i}(z_j^{i}-1)=0$ for each leg and every $j=1,...,k_i$.  
\end{lemma}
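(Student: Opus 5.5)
The plan is to write $U^TX$ as a sum of terms of one sign, so that it vanishes only when every term vanishes. Split the vertices of $\Gamma^*$ into the central vertex $S_1$ and the $n$ legs, and write $U=(u_1,U^{(1)},\dots,U^{(n)})$ and $X=(x_1,X^{(1)},\dots,X^{(n)})$ accordingly. Using $x_1=1-q$ and $X^{(i)}=\mathbf 1-Z^{(i)}$, we get
\[U^TX=u_1(1-q)+\sum_{i=1}^n\sum_{j=1}^{k_i}u^i_j\,(1-z^i_j)=-\Big(u_1(q-1)+\sum_{i,j}u^i_j\,(z^i_j-1)\Big)\:.\]
It therefore suffices to show that every summand in the bracket is non-negative. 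Then $U^TX=0$ forces each of them to vanish. Since $q>1$, this gives $u_1=0$ and $u^i_j(z^i_j-1)=0$. The converse direction is immediate from the same identity.

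First I would check the signs of the $u$'s. Since $V$ is a realised initial vector, Theorem \ref{teo:realised} gives $v_j\geq m(j)+2$ for every vertex, so $U=\frac{1}{2}(V-V_\text{can})$ has non-negative integer entries. In particular $u_1\geq0$, and $u_1(q-1)\geq0$ because $q>1$.

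Next comes the key step: showing $z^i_j\geq1$ for every leg and every $j$. I would verify Equation \eqref{eq:leg} by restricting $Q_*X=V_\text{can}+U$ to the rows of the $i$-th leg. The first vertex of the leg is the only one adjacent to the centre, which contributes $x_1e_1=(1-q)e_1$. The row sums of the chain matrix give $Q_*^{(i)}\mathbf 1=(m(j)+2)_j-e_1-e_{k_i}$, and combining these yields $-Q_*^{(i)}Z^{(i)}=U^{(i)}+qe_1+e_{k_i}$. Now $-Q_*^{(i)}$ is the negative of the intersection matrix of a linear chain with framings at most $-2$. As observed in the proof of Lemma \ref{lemma:can}, it is an irreducible non-singular $M$-matrix, so its inverse has strictly positive entries. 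The right-hand side is non-negative and non-zero, since $q>0$ and it contains $e_{k_i}$. Hence every entry of $Z^{(i)}=(-Q_*^{(i)})^{-1}(U^{(i)}+qe_1+e_{k_i})$ is strictly positive.

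Finally, $X\in\Z^{|\Gamma^*|}$ because $V$ and $-V_\text{can}$ induce the same $\Spin^c$-structure (Proposition \ref{prop:spinc}), so each $z^i_j$ is a positive integer and hence $z^i_j\geq1$. Together with $u^i_j\geq0$, every term $u^i_j(z^i_j-1)$ is non-negative, which concludes the argument.

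The only delicate point is the positivity of $(-Q_*^{(i)})^{-1}$ together with the integrality of $Z^{(i)}$. Positivity alone only gives $z>0$; it is the integrality coming from the $\Spin^c$ condition that upgrades this to $z\geq1$.
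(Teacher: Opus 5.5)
Your proposal is correct and follows essentially the same route as the paper: it uses non-negativity of $U$, strict positivity of $(-Q_*^{(i)})^{-1}$ applied to Equation \eqref{eq:leg}, and integrality of $X$ to get $z^i_j\geq1$, followed by a termwise sign argument on $U^TX$. You also derive Equation \eqref{eq:leg} from the row sums of the chain matrix and check that its right-hand side is non-zero, two details the paper leaves implicit.
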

\begin{proof}
 Because the matrix $(-Q_*^{(i)})^{-1}$ has strictly positive entries (it is the inverse of an irreducible positive-definite $M$-matrix) and the right-hand side of Equation \eqref{eq:leg} has non-negative coordinates, we get $z^{i}_j>0$. Since $Z^{(i)}$ is integral, this implies $z_j^{i}\geq 1$ and then $x^{i}_j = 1 - z^{i}_j \leq 0$ for every vertex on every leg. 
 In addition, when $q>1$ one has $x_1=1-q<0$.

 Since the coordinates of $U$ are non-negative, while the ones of $X$ are non-positive, we have that $U^{T}X=0$ is equivalent to $u_1=0$ and $u_j^{i}x_j^{i}= 0$ for every $i$ and $j$.
\end{proof}

For each leg $i$ we define
\begin{equation*}\label{eq:pj}
p_i := z^{i}_1 = 1 - x_1^{i} 
\end{equation*}
where here the index 1 refers to the attachment vertex of the $i$-th leg. Then from the first row of $Q_*X=V_{\mathrm{can}}+U$, using $u_1=0$, we get
\[e_0x_1 + \sum_{i=1}^{n} x_1^{i} = e_0+2
\quad\text{ which is equivalent to }\quad
\sum_{i=1}^{n}(1-p_i)= e_0(1-x_1)+2\:,
\]
and then to
\begin{equation}\label{eq:sum}
\sum_{i=1}^{n} p_i = -e_0q + n - 2\:. 
\end{equation}

In addition, for each leg we write $r_i=\frac{b_i}{a_i}$ for its positive reduced fraction. A standard adjugate
computation on Equation \eqref{eq:leg} yields
\begin{equation}\label{eq:det}
 a_ip_i-b_iq \;=\; 1 + \bigl(D^{(i)}\bigr)^{T}U^{(i)} \:,
\end{equation}
where $D^{(i)} := \operatorname{adj}(-Q_*^{(i)})e_1$ has positive integer entries. This means $a_i p_i - b_iq>0$ and then
\begin{equation*}\label{eq:ineq}
 r_i=\frac{b_i}{a_i}<\frac{p_i}{q}\:.
\end{equation*}

Combining these observations with Lemma \ref{lemma:orthogonal}, in order for a $q$ obtained from the $V_l$'s to be a twisting number we need the following reinterpretation of the best upper approximation condition.

\begin{lemma}[Leg lemma]
 Assume $-Y=M(e_0;r_1,...,r_n)$ and fix $i\in\{1,...,n\}$.  Then for integers $1<p<q$ the fraction $\frac{p}{q}$ is the best upper approximation for $r_i$ if and only if there exist vectors $Z,U\in\Z^{k_i}$ such that 
\begin{enumerate}
    \item $z_1=p$;
    \item $0\leq u_j\leq \overline u_j$ where $2\overline u_j+m_j^i+2$ is the maximal value of the $j$-th coordinate in the $i$-th leg for a realised characteristic vector as in Theorem \ref{teo:realised};
    \item $AZ=U+qe_1+e_{k_i}$ where $-A:=Q_*^{(i)}$ is the intersection matrix of the $i$-th leg of $\Gamma^*$;
    \item $u_j(z_j-1)=0$
\end{enumerate}
for every $j=1,...,k_i$ where $k_i$ is the length of the $i$-th leg.
\end{lemma}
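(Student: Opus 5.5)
We reduce the statement to the chain of Farey neighbours (equivalently, to the continued fraction expansion) of $r_i=[m^i_1,\dots,m^i_{k_i}]$. The matrix $A=-Q_*^{(i)}$ is the tridiagonal matrix with diagonal $-m^i_j\geq2$ and off-diagonal $-1$. We read condition (3) row by row as a three-term recurrence $z_{j+1}=-m^i_jz_j-z_{j-1}-u_j$, with the conventions $z_0:=q$ coming from the $qe_1$ term and a boundary condition at $j=k_i$ coming from $e_{k_i}$. By the adjugate identity \eqref{eq:det}, any solution with $u_j\geq0$ satisfies $a_ip-b_iq=1+(D^{(i)})^TU\geq1$, so $\frac{p}{q}>r_i$ automatically. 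The content of the lemma is that the remaining constraints (2) and (4) force $\frac pq$ to be exactly the best upper approximation, and conversely that the best upper approximation produces such a pair $(Z,U)$.

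For the direction ($\Leftarrow$), we run the recurrence backwards from $z_{k_i}$. Condition (4) says that $u_j$ can be non-zero only where $z_j=1$. We first show that the set of indices with $z_j=1$ is a terminal segment $\{h,\dots,k_i\}$. Positivity of $A^{-1}$ gives $z_j\geq1$, and we use convexity of the recurrence: once $z_j=1$ and $z_{j+1}\geq1$, with $-m_j\geq2$, the sequence stays equal to $1$ only if the $u$'s absorb the excess. On the segment where $z\equiv1$, the equations give $u_j=-m^i_j-2$ for interior indices, and slightly different values at the boundary vertices $h$ and $k_i$; the bound (2), coming from Theorem \ref{teo:realised}, is exactly what allows these values. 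On the initial segment $j<h$ we have $u_j=0$, so $(q,p=z_1,z_2,\dots,z_h)$ is the standard numerator sequence of a truncated continued fraction $[m^i_1,\dots,m^i_{h-1},n]$ with $n=m^i_h+u_h+\cdots>m^i_h$. This says precisely that $-\frac qp=[m^i_1,\dots,m^i_{h-1},n^i_h]$ with $n^i_h>m^i_h$, or with $h=k_i+1$, which is the characterisation of best upper approximations recalled in Subsection \ref{subsection:background}.

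For the direction ($\Rightarrow$), we start from the best upper approximation and write $-\frac qp=[m^i_1,\dots,m^i_{h-1},n^i_h]$. We then define $z_j$ as the partial numerators for $j\leq h$, set $z_j=1$ for $j\geq h$, put $u_j=0$ for $j<h$, and let $u_h=n^i_h-m^i_h-\cdots$ and $u_j=-m^i_j-2$ for $j>h$, adjusted at the last vertex by the $e_{k_i}$ term. Checking (1), (3) and (4) is direct. For (2), we compare with the ranges in Theorem \ref{teo:realised}, distinguishing whether the leg meets $G'$, $G''$, $G''_1$ or $G''_2$.

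We expect the main obstacle to be the bookkeeping at the boundary vertices. This includes the vertex $h$, the last vertex $k_i$, and especially the legs touching the $S^3$-subgraph, where the upper bounds in Theorem \ref{teo:realised} are smaller ($-m-4$, $-m-6-2T$, $-m-2d_1-2d_2$). For those legs we would try to show that the constraint $p>1$ together with the twisting relation \eqref{eq:sum} forces $h$ to lie beyond the shortened range, so that the reduced bounds are never active. As a sanity check, we would first test this on the examples of Tables \ref{Table1} and \ref{Table2}.
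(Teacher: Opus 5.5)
Your recurrence analysis of condition (3) is a different route from the paper's induction via $\Phi(x)=a_1-\frac1x$, and it is correct. Take $z_0=q$ and the boundary value $z_{k_i+1}=1$ coming from $e_{k_i}$; positivity gives $z_j\geq1$. If $z_j=1<z_{j+1}$ for some $j\leq k_i$, then $u_l=0$ and $z_{l+1}\geq 2z_l-z_{l-1}>z_l$ for all $l>j$, which contradicts $z_{k_i+1}=1$. Hence $\{j : z_j=1\}$ is a terminal segment starting at some $h\geq2$, and:
\begin{itemize}
\item $u_j=0$ for $j<h$, $u_h=a_h-1-z_{h-1}$, and $u_j=a_j-2$ for $j>h$, including $j=k_i$ (no separate adjustment is needed there);
\item $-\frac qp=[m^i_1,\dots,m^i_{h-1},n_h]$ with $n_h=-z_{h-1}=m^i_h+u_h+1$.
\end{itemize}
In one respect this is more careful than the paper. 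Its induction applies the inductive hypothesis to $(p,a_1p-q)$, so it tacitly needs $a_1p-q>1$. Your description handles explicitly the case $z_j=1$, which is exactly where the non-zero $u_j$ live.

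The gap is the final step. You cite ``the characterisation of best upper approximations recalled in Subsection \ref{subsection:background}''. That subsection only asserts that a best upper approximation has the truncated form. Your direction ($\Leftarrow$) needs the converse, and this equivalence is really the number-theoretic content of the lemma.

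What is missing is the paper's reduction: for $p>1$, $\frac pq$ is the best upper approximation of $r$ if and only if $\Phi(\frac pq)=\frac{a_1p-q}{p}$ is the one of $r'=\Phi(r)$. One way to see this: being a best upper approximation means $\frac st\le r<\frac pq$ for the left Farey parent $\frac st$ of $\frac pq$, and $\Phi\in\mathrm{SL}_2(\Z)$ carries this parent to the parent of $\Phi(\frac pq)$ when $p>1$. Iterate this along the leg while $z_j>1$. Both directions then reduce to the following: $\frac1{z_{h-1}}$ is the best upper approximation of $\rho$, where $\frac1\rho=[a_h,\dots,a_{k_i}]$, if and only if $z_{h-1}\le a_h-1$, i.e.\ $u_h\geq0$. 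When $h=k_i+1$ they reduce instead to the Farey relation $a_{k_i}z_{k_i}-z_{k_i-1}=1$.

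With that added, your argument proves the lemma for the generic bound $\bar u_j=a_j-2$, since $u_h\le a_h-3$ and $u_j=a_j-2$ on the tail. Your plan for the reduced bounds cannot work inside this lemma, though. The lemma concerns one leg and an arbitrary $q$, so \eqref{eq:sum} is not available, and the forced value can genuinely exceed $\bar u_j$.

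Here is an example. Take the leg $[-3,-3,-5]$ prolonging the $[-3,-3]$ leg of the $S^3$-subgraph $M(-1;\frac38,\frac35)$ of $T_{5,8}$ (for instance with a third leg $[-40]$). Then $r=\frac{14}{37}$ and $\frac25$ is its best upper approximation. Conditions (1), (3), (4) force $Z=(2,1,1)$ and $U=(0,0,3)$. But the $(-5)$-vertex is adjacent to $G'$, so Theorem \ref{teo:realised} gives $\bar u_3\le2$.

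So the check for vertices in or next to $G'$ has to move to Theorem \ref{teo:twisting}, where $q>\overline q$ is available. The paper's own proof does not verify (2) beyond $u_1=0$ either.
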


Since the leg lemma concerns a single leg, we opt to skip the leg index $i$ in the notation for simplicity. We still describe the leg as a part of the standard graph in order to keep all the usual assumptions. 

We will prove the Leg lemma by induction on the length of the leg. Let $r_i'$ be the Seifert coefficient of the linear graph obtained by removing the first vertex from the $i$-th leg, then we have $-\frac{1}{r_i}=m_1^i+r_i'$. We set $a_j:=-m_j^i>0$ for the framings of the vertices in the $i$-th leg. The original and cut coefficient are then related by the Möbius transform $\Phi(x)=a_1-\frac{1}{x}$ which has inverse $\Phi^{-1}(y)=\frac{1}{a_1-y}$, as $r_i=\Phi^{-1}(r'_i)$. 

In particular, given $p>1$ we have that $\frac{p}{q}$ is the best upper approximation for $r_i$ if and only if $\frac{a_1p-q}{p}$ is for $r'_i$. Indeed, the value of $\Phi$ on a rational number is given by \[\Phi\left(\frac{k}{h}\right)=a_1-\frac{h}{k}
=\frac{a_1k-h}{k}\:,\] so the new denominator is $k$ and if we have the restriction $k\leq p$ then the new denominator is at most $p$; moreover, the restriction $h<q$ becomes a restriction on the new numerator $a_1k-h$ to be at least $a_1k-(q-1)$. The inequality $r_i<\frac{k}{h}<\frac{p}{q}$ is then equivalent to
\begin{equation*}
r'_i < \Phi\left(\dfrac{k}{h}\right) < \Phi\left(\dfrac{p}{q}\right)
= \dfrac{a_1p-q}{p}\:.
\end{equation*}

\begin{proof}[Proof of the Leg lemma]
 We proceed by induction on the length $k_i$ of the leg. Let us assume that $k_i=1$, then Property (3) becomes $a_1p=u_1+q+1$ while Property (4) is $u_1(p-1)=0$; this is equivalent to $u_1=0$ and then $a_1p-q=1$, which holds if and only if $r_i=\frac{1}{a_1}$ is a Farey neighbour of $\frac{p}{q}$. For such $r_i$ the latter condition is equivalent to be the best upper approximation.

 Now assume $k_i>1$: we start with the if implication. By the first row of Property (3)
 \begin{equation}\label{eq:firstrow}
   z_2 = a_1p - q - u_1\:;
 \end{equation}
 by Property (2) $0\le u_1\le \overline u_1$, which restricts $z_2$ to the interval
 \begin{equation*}\label{eq:z2interval}
  a_1p - q - \overline u_1 \;\leq\; z_2 \;\leq\; a_1p - q \:;
 \end{equation*}
 and from Property (4) we have that $z_1=p>1$ implies $u_1=0$ and $z_2=a_1p-q$.

Let $A'$ be the chain matrix for $a_2,\dots,a_{k_i}$, and write $Z'=(z_2,\dots,z_{k_i})$ and  $U'=(u_2,\dots,u_{k_i})$. Then rows $2,\dots,k_i$ of Property (3) are exactly
\begin{equation}\label{eq:Lprime}
A' Z' = U'+ p\,e_1 + e_{k_i-1} 
\end{equation}
so if Properties (1) -- (4) hold for $(q,p)$ on $A$ then they also hold for $(p,z_2)$ on $A'$ with the bounds $\overline u_2,\dots,\overline u_{k_i}$. We have $z_2=a_1p-q$, thus $\frac{z_2}{p}=\frac{a_1p-q}{p}=\Phi(\frac{p}{q})$. 
If there were a forbidden fraction $\frac{k}{h}$ in $(r_i,\frac{p}{q})$ with $h<q$ and $k\leq p$, then since $p>1$ its transform $\Phi(\frac{k}{h})$ would be a forbidden fraction in $(r'_i,\frac{z_2}{p})$ with denominator $k<p$, and numerator bounded appropriately; that contradicts the inductive hypothesis applied to the cut leg and fraction $\frac{z_2}{p}$. 

We continue by showing the only if implication. We prove the inductive step: assuming the claim is true for chains of length $k_i-1$ with coefficients $a_2,\dots,a_{k_i}$, we prove it for $a_1,\dots,a_{k_i}$. 
Since $p>1$, we have that $\frac{a_1p-q}{p}$ is the best upper approximation of $r_i'$. By inductive hypothesis we have $Z',U'\in\mathbb Z^{k_i-1}$ satisfying the four properties for the cut leg. Extending to the first coordinate, we set $z_1=p$ and $u_1=a_1p-q-z_2=0$ by Properties (1) and (3) from inductive hypothesis, which clearly fulfil all four conditions.
\end{proof} 

We state an alternative version of Ghiggini-Massot's algorithm, so that we can determine possible negative twisting numbers in a more useful way for our applications.

\begin{prop}
 \label{prop:twisting}
 If there exist positive integer numbers $p_1,...,p_n$ such that 
\begin{itemize}
 \item $p_1+...+p_n = -e_0q+n-2$;
 \item there are $Z^{(i)}$ and $U^{(i)}$ as in the Leg lemma when setting $p=p_i$ for $i=1,...,n$,
 \end{itemize} 
 then the Seifert fibred space $-Y=M(e_0;r_1,\dots,r_n)$ admits a tight contact structure with twisting number equal to $-q$. The converse is also true if each best upper approximation $\frac{p_i}{q}$ has $p_i>1$.
\end{prop}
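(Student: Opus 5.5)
The plan is to reduce the proposition to Ghiggini-Massot's algorithm (Theorem \ref{teo:Paolo}) by way of the Leg lemma. The Leg lemma is the only new ingredient. Everything else is the observation that the two bullet points in the proposition are literally the two conditions of Theorem \ref{teo:Paolo}, once ``best upper approximation'' is replaced by the existence of $Z^{(i)},U^{(i)}$.

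For the direct implication, take $p_1,\dots,p_n$ with $p_1+\cdots+p_n=-e_0q+n-2$, and for each leg take vectors $Z^{(i)},U^{(i)}$ satisfying Properties (1)--(4) with $p=p_i$.
\begin{itemize}
\item When $p_i>1$, the Leg lemma (if-direction) gives directly that $\frac{p_i}{q}$ is the best upper approximation for $r_i$.
\item When $p_i=1$, the Leg lemma does not apply as stated, so this case must be checked by hand. Equation \eqref{eq:det} still holds for any integral solution of Property (3), so $a_ip_i-b_iq=1+(D^{(i)})^TU^{(i)}>0$ and therefore $r_i<\frac{1}{q}$. Any fraction $\frac{k}{h}$ with $k\le 1$ and $h\le q$ satisfies $\frac{k}{h}\ge \frac1q$, so no such fraction lies in $(r_i,\frac1q)$. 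Hence $\frac1q$ is automatically the best upper approximation, and the argument here uses only $r_i<\frac1q$.
\end{itemize}
We also need $\frac{p_i}{q}<1$, which is immediate from $p_i<q$ (implicit in the Leg lemma setup). Together with the sum condition, Theorem \ref{teo:Paolo} then produces a tight structure with twisting number $-q$. Two small cases remain. If $q=1$, the sum condition forces $e_0\le-2$ because every $p_i\ge1$, so the last clause of Theorem \ref{teo:Paolo} applies. For $q>1$ the theorem applies directly.

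For the converse, suppose $-q$ is a twisting number and that every best upper approximation $\frac{p_i}{q}$ has $p_i>1$. Theorem \ref{teo:Paolo} supplies the $p_i$, satisfying both the sum relation and the best upper approximation condition. The only-if direction of the Leg lemma then gives, for each leg, vectors $Z^{(i)},U^{(i)}$ satisfying (1)--(4). This is exactly where the hypothesis $p_i>1$ is needed, since the inductive step of the Leg lemma requires $p>1$ in order to transfer the approximation through $\Phi$.

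The main point of care is the boundary case $p_i=1$ in the direct implication, together with checking that the bound in Property (2) does not interfere there. Since only $r_i<\frac1q$ is used, I expect this to go through without invoking Property (2) at all. I would first verify this positivity from \eqref{eq:det}, which needs only integrality and $U^{(i)}\ge0$.
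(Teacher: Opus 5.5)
Your proposal is correct and matches the paper's proof: reduce to Theorem \ref{teo:Paolo}, apply the Leg lemma in both directions when every $p_i>1$, and for $p_i=1$ use Equation \eqref{eq:det} to get $r_i<\frac{1}{q}$, then observe that no $\frac{k}{h}$ with $k\le 1$ and $h\le q$ lies in $(r_i,\frac{1}{q})$. Your separate check for $q=1$, where the sum condition forces $e_0\le-2$, is a small extra step the paper leaves implicit.
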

\begin{proof}
 It follows by Theorem \ref{teo:Paolo}. If $p_i>1$ for $i=1,...,n$ then we just need a comparison with the Leg lemma; if $p_i=1$ for some $i$ then the second item still gives us $r_i<\frac{1}{q}$, and we clearly see that there are no forbidden rationals in $(r_i,\frac{1}{q})$, as $h\leq q$ is equivalent to $\frac{1}{q}\leq\frac{1}{h}$.  
\end{proof}

We can now prove Theorem \ref{teo:twisting}, but first we need the following lemma to handle some specific cases which are not covered by Proposition \ref{prop:twisting}.

\begin{lemma}
 \label{lemma:twisting}    
 Consider a $-Y=M(e_0;r_1,...,r_n)$ such that $M=M(e_0;\frac{\mathfrak b_1}{\mathfrak a_1},...,\frac{\mathfrak b_{n-1}}{\mathfrak a_{n-1}})$ is a torus bundle as in Figure \ref{Torus} and $r_i=\frac{\mathfrak b_i}{\mathfrak a_i}$ for $i=1,...,n-1$. If $-q$ is a twisting number then we can find a realised characteristic vector $V$ such that, given $2X=Q_*^{-1}(V+V_\emph{can})$ and $2U=V-V_\emph{can}$, one has $X\in\Z^{|\Gamma^*|},$ $U^TX=0$ and $q=1-x_1=1-e_1^TX$. 
\end{lemma}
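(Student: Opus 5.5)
The plan is to construct the vector $V$ explicitly rather than through the Leg lemma. The Leg lemma is not available here because some best upper approximations $\frac{p_i}{q}$ on the legs of $-Y$ coming from the torus bundle $M$ may have $p_i=1$, which is the case Proposition \ref{prop:twisting} does not cover. I will work leg by leg, splitting into the $n-1$ legs of $G\subset\Gamma^*$ and the extra $n$-th leg, and then glue the pieces along the central vertex.

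\textbf{Step 1: the possible values of $q$.} By Lemma \ref{lemma:twistingB}, $-q$ is a twisting number of $-Y$ exactly when $q=\overline q+k\mathfrak k$ with $k\geq0$. For $i\leq n-1$ the best upper approximation $\frac{p_i}{q}$ is that of $\frac{\mathfrak b_i}{\mathfrak a_i}$, and $q\mathfrak b_i\equiv-1$ mod $\mathfrak a_i$. The case $k=0$ is the canonical one: I take $V=V_0$, the initial vector of $[-V_\text{can}]$, so that $q=\overline q=1+\height[V_\text{can}]$ by Proposition \ref{prop:bigtw}.

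\textbf{Step 2: the legs of $G$.} For $k>0$ I take $X|_G$ to be the appropriate lattice point along the kernel direction. Concretely, I start from $X_0=\frac12Q_*^{-1}(V_0+V_\text{can})$ and set $X=X_0-k\overline R$, where $\overline R$ is the extension by zero of the primitive vector $R\in\Ker Q_G$ from the proof of Lemma \ref{lemma:j}. On the torus-bundle legs I take $U=0$. Since $Q_*\overline R=B$ is supported only on $\Gamma^*\setminus G$, the equations $Q_*X=V_\text{can}+U$ remain satisfied on $G$ with $u=0$ there. The central coordinate becomes $x_1-kr_1$, and I expect $r_1=\mathfrak k$ from Table \ref{Numbers}, which would give $q=1-x_1$ as desired.

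\textbf{Step 3: the extra leg.} The only non-automatic condition is on the $n$-th leg, where $Q_*\overline R=B$ contributes $-kb_n$ at its attaching vertex. I must choose $U^{(n)}$ with $0\leq u_j\leq\overline u_j$ and $u_j(z_j-1)=0$ so that Equation \eqref{eq:leg} holds with $p_n$ equal to the numerator of the best upper approximation of $r_n$ of denominator $q$. Its existence comes from Ghiggini-Massot's algorithm (Theorem \ref{teo:Paolo}) together with $p_n=-e_0q+n-2-\sum_{i<n}p_i$. When $p_n>1$ I apply the Leg lemma directly. When $p_n=1$, i.e.\ $r_n<\frac1q$, I instead solve \eqref{eq:leg} by hand: I put $z_1=1$, which allows $u_1\neq0$, and take $u_1=a_1-q-z_2$ with $z_j=1$ further along the leg. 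The bounds on $u_1$ should follow from Theorem \ref{teo:realised} because $-m_1\geq q+1$ in this regime.

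\textbf{Step 4: gluing and verification.} With these choices, $V=V_\text{can}+2U$ is realised by Theorem \ref{teo:realised} and $X$ is integral. The condition $U^TX=0$ follows from Lemma \ref{lemma:orthogonal}, whose proof only uses the sign of $x_1$ and the complementarity $u_j(z_j-1)=0$; on $G$ it is trivial since $U|_G=0$.

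\textbf{Main obstacle.} I expect the hardest part to be checking that the admissible range of $U$ in the $p_n=1$ case fits inside the realised window. This window is $-m(j)-2d_1-2d_2$ or $-m(j)-6-2T$ when the extra leg is attached to $G''$ or $G''_i$. The first thing I would try is to compare these bounds against $\mathfrak k$, case by case over the seven graphs in Table \ref{Numbers}.
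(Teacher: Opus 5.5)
\textbf{Comparison with the paper.} Once its two halves are made consistent, your construction produces exactly the paper's vector. It has $U=0$ on $G$, $Z^{(n)}=\mathbf 1$ (so $X$ vanishes on the new leg) and $U^{(n)}=(a_1-q-1,a_2-2,\dots,a_{k_n}-2)$. In other words, $V$ equals $V_{\mathrm{can}}$ on $G$ and $(-m^n_1-2q,-m^n_2-2,\dots,-m^n_{k_n}-2)$ on the $n$-th leg.

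Where you genuinely differ is integrality. The paper solves $Q_GZ=Z_{\mathrm{can}}$ with $z_1=1-q$ and checks $z_1\equiv 1+\mathfrak b_i^{-1}$ mod $\mathfrak a_i$. You instead translate the integral $X_0$ by $k\overline R$, which gives integrality and $x_1=1-q$ at no cost (the central entry of $R$ is indeed $\mathfrak k$). That is a nice shortcut.

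\textbf{The gap.} As written, the proof has a hole at the central vertex. Step 2 sets $X=X_0-k\overline R$ globally, so on the $n$-th leg $X=X_0^{(n)}$. Step 3 then re-solves that leg with $Z^{(n)}=\mathbf 1$. The central row of $Q_*X=V_{\mathrm{can}}+U$ with $u_1=0$ couples the two halves. With $X|_G=X_0|_G-kR$ and $X^{(n)}=0$, this row holds if and only if $(X_0)^{(n)}_1=0$, the coordinate of $X_0$ at the vertex attaching the new leg. Equivalently, $\sum_{i<n}p_i=-e_0q+n-3$.

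Your step deriving $p_n$ from Ghiggini--Massot via $p_n=-e_0q+n-2-\sum_{i<n}p_i$ does not supply this. Your $p_i$ for $i<n$ are dictated by $X_0$, not chosen as best upper approximations, so nothing you wrote shows that the resulting $p_n$ equals $1$.

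\textbf{How to close it.} The fix is short. One route is
$$\mathfrak k\,(X_0)^{(n)}_1=B^TX_0=\overline R^{\,T}(V_{\mathrm{can}}+U_0)=R^TZ_{\mathrm{can}}=0,$$
using the identity in the proof of Lemma \ref{lemma:j}. This needs $U_0|_G=0$, which holds because Theorem \ref{teo:realised} pins every realised vector to $V_{\mathrm{can}}$ on $G$; you also use this silently in Step 2.

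Alternatively, $U^{(i)}=0$ forces $\mathfrak a_ip_i-\mathfrak b_iq=1$ by \eqref{eq:det}, so $p_i=\lfloor qr_i\rfloor+1$. The sharp count in the proof of Lemma \ref{lemma:twistingB}, with $c_n(q)=1$, then gives the required sum.

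\textbf{Smaller remarks.} The branch $p_n>1$ never occurs, since Lemma \ref{lemma:twistingB} gives $r_n<\frac1q$.

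Your ``main obstacle'' is also not one. The new leg hangs off the centre, so its first vertex lies in $G''$ (or in the generic class when $e_0=-2$), never in $G''_i$. Its window is $m^n_1+2\le v\le -m^n_1-2\overline q$. The value $-m^n_1-2q$ lies in it exactly when $\overline q\le q\le -m^n_1-1$, and the right-hand inequality follows from $r_n<\frac1q$. No case-by-case comparison with $\mathfrak k$ is needed.
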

\begin{proof}
 From Lemma \ref{lemma:twistingB} we have $q=\overline q+k\mathfrak k$ for $0\leq k\leq K$ where $\mathfrak k=\lcm(\mathfrak a_1,...,\mathfrak a_{n-1})$, $\overline q=|\overline{\text{tw}}(-Y)|$, and $K$ is a positive integer; moreover, the inequality $r_n<\frac{1}{q}$ holds. We set $V=V_\text{can}$ except for the coordinates of the $n$-th leg which are \[(-m_1^n-2\overline q-2k\mathfrak k,-m^n_2-2,...,-m^n_{k_n}-2)\] for $0\leq k\leq K$ where $-\frac{1}{r_n}=[m_1^n,...,m^n_{k_n}]$. Note that when $k=0$ the first coordinate on the $n$-th leg coincides with $-m_1^n-2\overline q$, while when $k=K$ it is equal to \[-m_1^n-2\overline q-2(q-\overline q)\geq-m_1^n+2(m_1^n+1)=m_1^n+2\] because $-m_1^n\geq\overline q+K\mathfrak k+1$, thus $V$ is realised by Theorem \ref{teo:realised}; moreover, one has $\frac{V+V_\text{can}}{2}=V_\text{can}$ everywhere except on the $n$-th leg where is equal to $(1-\overline q-k\mathfrak k,0,...,0)$, while $U$ is zero except on the $n$-th leg where is equal to \[(-m_1^n-1-\overline q-k\mathfrak k,-m_2^n-2,...,m^n_{k_n}-2)\:.\] Using the Schur complement and the fact from the proof of Lemma \ref{lemma:j} that for each $M$ there exists an integral vector $R\in\Ker Q_G$ such that $Z_\text{can}^TR=0$, where $G$ is the standard graph of $M$ and $Z_\text{can}$ its canonical vector, we can solve for $X$ and obtain \[X=Q_*^{-1}\dfrac{V+V_\text{can}}{2}=(Z,0,...,0)\] where $Q_GZ=Z_\text{can}$ such that $z_1=1-\overline q-k\mathfrak k$; hence, we have that $U^TX=0$. The first coordinate of $Z$ on the $i$-th leg is $1+\frac{(z_1-1)\mathfrak b_i-1}{\mathfrak a_i}$, while the remaining ones are then obtained recursively with integer coefficients; for this reason, the vector $Z$, and then $X$, is integral if and only if $z_1\equiv1+\mathfrak b_i^{-1}\text{ mod }\mathfrak a_i$ for $i=1,...,n-1$. Plugging in the value of $z_1$, this reduces to $\overline q\cdot\mathfrak b_i\equiv-1\text{ mod }\mathfrak a_i$ which is indeed satisfied as we mentioned before Lemma \ref{lemma:twistingB}.
 
 We now just need to observe that $e_1^TX=z_1=1-\overline q-k\mathfrak k=1-q$ and we are done.
\end{proof}

\begin{proof}[Proof of Theorem \ref{teo:twisting}]
 We write $\overline q=|\overline{\text{tw}}(-Y)|$.
 Suppose that $-q$ is defined by the height function. 
 Construct vectors $X$ and $U$ as described immediately after the statement of the theorem, take $q=1-x_1$, and set $p_i=1-x_1^{i}\geq1$. By Lemma \ref{lemma:orthogonal} we have $u_1=0$, thus Equation \eqref{eq:sum} holds; hence, by Proposition \ref{prop:twisting} we know that $q$ is obtained from Ghiggini-Massot's algorithm, and then is indeed the twisting number of a tight structure on $-Y$. In addition, when $U$ corresponds to $V_l$ for $l=1,...,S-1$ one has that $q\neq1+\height[V_\text{can}]$, and then $q\neq\overline q$ by Proposition \ref{prop:bigtw}.

 Suppose now that $-q<-1$ is a twisting number, to show that $q=1-x_1$ for some $V_l$ when $p_i>1$ for $i=1,...,n$ we use Proposition \ref{prop:twisting} to find vectors $Z^{(i)},U^{(i)}$ satisfying Equation \eqref{eq:leg} for each leg. In other words  \[-Q_*^{(i)} Z^{(i)} = U^{(i)} + qe_1 + e_{k_i}\hspace{0.5cm}\text{ and }\hspace{0.5cm}u^{i}_j\bigl(z^{i}_j-1\bigr)=0\] with $0\leq u_j^{i} \leq \overline u_j^{i}$ and  $z_j^{i}\geq 1$ for $j=1,...,k_i$. We have that $X^{(i)}= \mathbf{1}-Z^{(i)}$ has non-positive entries on the legs, and we set $u_1=0$ for the central coordinate. From Equation \eqref{eq:sum} this is exactly what is needed so that the first row of $Q_*X=V_{\mathrm{can}}+U$ holds, so we get an integer solution $U$ whose entries are in the right range; finally, Lemma \ref{lemma:orthogonal} gives $U^{T}X=0$. Now set $V=V_{\mathrm{can}}+2U$: it is realised by construction, and \[\dfrac{1}{2}Q_*^{-1}(V_{\mathrm{can}}+V)=Q_*^{-1}(V_{\mathrm{can}}+U)=X\in \mathbb{Z}^{|\Gamma^*|}\] with $x_1=1-q$. Hence, the vector $V$ has the same Maslov grading and induces the same $\Spin^c$-structure as $-V_\text{can}$; this means $V=V_l$ for some $l\in\{0,...,S-1\}$. Since the value of $x_1$ cannot be $-\height[V_\text{can}]$, otherwise $q=\overline q$ in light of Proposition \ref{prop:bigtw}, it has to be $-\height([V_\text{can}]+[-V_l])$ for some $l=1,...,S-1$.

 We need to prove the claim when $p_i=1$ for some $i$, note that we are assuming that $q\neq\overline q$. We cannot have $p_3=1$ if $n=3$: this is true because removing $p_i$ from Equation \eqref{eq:sum} gives that $q$ would be a twisting number for the lens space whose standard graph is $\Gamma^*$ without the 3rd leg, but by \cite[Proposition 4.2]{CM-negative} when $n=2$ there is only one twisting number (of a regular fibre of the fibration determined by the graph) and $(p_1,p_2,1)$ gives rise to $-\overline q$. We can then proceed by induction on the number of $p_i$'s equal to 1.

 From \cite[Proposition 4.2 and Theorem 4.3]{CM-negative} we know that if $p_i=1$ then the $i$-th leg is not contained in the $S^3$-subgraph of $\Gamma^*$, and as we remarked in the proof of Proposition \ref{prop:bigtw} their proofs also hold in the indefinite case. Removing the $i$-th leg yields an indefinite graph, except when $-Y$ is as in Lemma \ref{lemma:twisting}. To see this we consider the $n$-tuple $(p_1,p_2,1,...,1)$ which gives rise to $-\overline q$, and the same is true when we remove one 1. We know that if the new graph is negative-definite then there is only one twisting number \cite[Proposition 4.2]{CM-negative}, thus we conclude that $q=\overline q$ and this is impossible. If the new graph is singular and not as in Figure \ref{Torus} then there is still only one twisting number by Proposition \ref{prop:tw}.

 Say $G$ is obtained by removing the $n$-th leg where $p_n=1$, by induction there is a realised characteristic vector $V'$ such that $2X':=Q_G^{-1}(V'+V'_\text{can})\in2\Z^{|G|}$, one has $(V'-V'_\text{can})^TX=0$ and $q=1-x'_1=1-e_1^TX'$. We show that if we extend $X'$ to $X=(X',0,...,0)$ on the last leg then $V=2Q_*X-V_\text{can}$ satisfies all the properties we want. The integrality condition is automatic as $X'$ is integral, we have $x_1=e_1^TX=e_1^TX'=x_1'$, and $(V-V_\text{can})^TX=(V'-V'_\text{can})^TX'=0$. We need to show that $V$ is realised: its coordinates on the $n$-th leg are \[(2x'_1-m^n_1-2,-m^n_2-2,...,-m^n_{k_n}-2)\] so we just have to check the first one. Since the best upper approximation for $r_n$ is $\frac{1}{q}$, we have that $-m_1^n\geq q+1$; therefore, we conclude that \[-m_1^n-2\overline q\geq2x'_1-m^n_1-2=2(1-q)-m_1^n-2\geq m^n_1+2\] which is precisely the range of the possible values for $V$ to be realised, see Theorem \ref{teo:realised}. 
\end{proof}

This completes the proof that the full path algorithm determines the twisting numbers.

\begin{proof}[Proof of Theorem \ref{teo:twisting_numbers}]
 It follows from Theorem \ref{teo:twisting} and Proposition \ref{prop:bigtw}. When $\s_\xi$ is spin then \[-1-\height([V_\text{can}]+[-V_\text{can}])=-1+e_1^TQ_*^{-1}\left(\frac{V_\text{can}-(-V_\text{can})}{2}\right)=-1+e_1^TQ_*^{-1}V_\text{can}\] is a twisting number; the fact that it is the lowest one is a consequence of the total ordering of $\mathcal F$ on the full paths of $\text{Char}(\Gamma^*,\s_\xi)$ ending correctly, and Lemma \ref{lemma:can} which tells us that $V_\text{can}$ and $-V_\text{can}$ reach the extremal values of $\mathcal F$ among these vectors.
\end{proof}

\subsection{The full path determines the contact structures}
We now show that the total number of negative-twisting contact structures, up to isotopy, which we determined in Section \ref{section:six}, is the same as the number of unordered pairs of realised characteristic vectors sharing the same $\Spin^c$-structure and Maslov grading. 

The assumption of $-Y$ being indefinite is not needed for the result we are proving here, but we keep the notation to avoid confusion, as we know the result holds in the negative-definite case.

\begin{teo}
 \label{teo:contact}
 Suppose that $-Y=M(e_0;r_1,...,r_n)$ is a Seifert fibred space. Let $V_1$ and $V_2$ be two realised characteristic vectors and set $2X=Q_*^{-1}(V_1+V_2)$ and $2U=V_1-V_2$. Then the number of ordered pairs $(V_1,V_2)$ for which $X$ is integral, $U^TX=0$ and $-q=-1+e_1^TX$ is a twisting number of $-Y$ is equal to the number of negative-twisting structures on $-Y$, up to isotopy.
\end{teo}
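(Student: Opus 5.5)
I will reduce the count to a twisting-number-by-twisting-number and $\Spin^c$-by-$\Spin^c$ comparison, matching the explicit upper bounds from Subsection \ref{subsection:background} (which we showed are realised in Propositions \ref{prop:main}, \ref{prop:classificationA} and \ref{prop:classificationB}) with a count of pairs. The conditions on $(V_1,V_2)$ translate as in the discussion before Lemma \ref{lemma:orthogonal}: integrality of $X$ means, by Proposition \ref{prop:spinc}, that $V_1$ and $-V_2$ induce the same $\Spin^c$-structure, and $U^TX=0$ means $M(V_1)=M(V_2)$ by Equation \eqref{eq:Maslov2}. So ordered pairs are exactly pairs $(V_1,-V_2)$ in the same homotopy data, and $-1+e_1^TX=-1-\height([V_1]+[-V_2])$ (or $-1-\height[V_1]$ when $[V_1]=[-V_2]$).

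First, the diagonal-type pairs with $V_1=-V_2'$, where $V_2'$ is the initial vector of $[-V_2]$, i.e.\ pairs for which $[V_1]=\J[V_2]$. Each realised $V_1$ determines exactly one such partner, and by Proposition \ref{prop:bigtw} the resulting $q$ equals $|\overline{\text{tw}}(-Y)|$. These pairs are therefore in bijection with the realised vectors, hence with the structures of twisting number $\overline{\text{tw}}(-Y)$ by Lemma \ref{lemma:c} and Proposition \ref{prop:main}.

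The remaining pairs correspond to unordered pairs $\{V_i,V_j\}$ with $i\neq j$ in the same homotopy class, each counted once via the orientation $(V_i,V_j)\mapsto(V_i,-V_j')$. By Theorem \ref{teo:twisting}, applied after translating the base vector $V_\text{can}$ to an arbitrary realised vector, the value $q$ is a twisting number lower than $\overline{\text{tw}}$. The order issue needs care: $(V_1,V_2)$ and $(V_2,V_1)$ give the same $X$. I therefore expect ``ordered pairs'' to be compensated by the $\J$-symmetry, so that each unordered $\{[V_i],[V_j]\}$ is counted once and the diagonal once. I would fix this bookkeeping first.

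I then count structure by structure. For type B, Lemma \ref{lemma:j} says the $k$ realised vectors in a homotopy class lie on an arithmetic progression with step $2B$ and equal Maslov grading. This gives $\binom{k}{2}$ off-diagonal pairs, matching the pyramid $\{\eta_{i,j}\}$ of Proposition \ref{prop:classificationB}. The height of $[V_a]+[V_b]$ grows linearly in $b-a$ with step $\mathfrak k$, matching $\text{tw}(\eta_{i,j})=1-(i+1)\mathfrak k$. For type A, I expect at most one extra structure per $\Spin^c$-structure. So I must show that in each homotopy class at most one off-diagonal pair yields a genuine twisting number, and that the number of such classes equals the bound \eqref{eq:upper1} at $\underline{\text{tw}}$. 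I would do this leg by leg using the Leg lemma: the $U^{(i)}$ satisfying its conditions with $z_1=p_i$ are parametrised exactly by the factors $(n^i_{h_i}-m^i_{h_i})\prod_{j>h_i}|m^i_j+1|$, with $p_i=1$ legs handled as in Lemma \ref{lemma:twisting} and the end of the proof of Theorem \ref{teo:twisting}.

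The main obstacle is this type A counting. It requires showing that the correspondence $(V_1,V_2)\mapsto(X,U)$ is injective and surjective onto the leg solutions of the Leg lemma, with no extra pairs arising from noncanonical base vectors. I would first try to reduce to $V_2=V_\text{can}$ by a translation argument inside each $\Spin^c$-structure. If that fails, I would induct on tails through Lemma \ref{lemma:tails}, using that Legendrian surgery on tails multiplies both counts by the same stabilisation factor.
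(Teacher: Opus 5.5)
Your proposal has a genuine gap. The step that carries the theorem is deferred as ``the main obstacle'': for each twisting number $-q$, the admissible pairs must number exactly $T(1,q)\cdots T(n,q)$ as in \eqref{eq:upper1} (and \eqref{eq:upper2} for $q=1$), which is the number of structures by Section~\ref{section:six}. Moreover, the parametrisation you propose for this count is false.

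Solutions of the Leg lemma are offsets from the fixed vector $V_\text{can}$. Equation~\eqref{eq:leg} comes from $Q_*X=V_\text{can}+U$, and condition (4) comes from Lemma~\ref{lemma:orthogonal}, which needs $U\geq0$. So these solutions only see ordered pairs of the form $(V_\text{can}+2U,V_\text{can})$.

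For example, take $-Y=M(-1;\frac12,\frac25,\frac4{35})$. The number $-17$ is a twisting number with $(p_1,p_2,p_3)=(9,7,2)$. Since $-\frac{35}{4}=[-9,-4]$ and $-\frac{17}{2}=[-9,-2]$, you get $T(3,17)=2$, and the other two factors are $1$. But on the third leg the Leg lemma with $z_1=2$ forces $u_1=0$, $z_2=1$, $u_2=1$, which is a single solution. The two admissible pairs are $(R,V_\text{can})$ and $(V_\text{can},R)$, where $R$ is $V_\text{can}$ raised by $2$ on the last vertex of that leg. The anchored count misses the second.

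The same sign issue blocks the ``translation'' of Theorem~\ref{teo:twisting} to another base vector. For a general pair, $\frac12(V_1-V_2)$ has mixed signs, so $U^TX=0$ no longer splits coordinatewise.

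The type A/B analysis does not fill the hole either:
\begin{itemize}
\item ``At most one extra structure per $\Spin^c$-structure'' is Proposition~\ref{prop:map}, whose proof uses Theorem~\ref{teo:contact}, so you cannot take it as input.
\item For type B, Lemma~\ref{lemma:j} controls $V_b-V_a$, whereas the relevant $X$ is built from $V_a$ and $V_b'$. Which pairs of a pyramid base pass the conditions, and with which $q$, is therefore asserted rather than shown. Long manifolds are not addressed at all.
\end{itemize}

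The paper's proof needs no A/B split, and uses nothing from the classification beyond the fact that the convex-surface bound is attained. It fixes $q$ and writes an arbitrary admissible pair as $V_{1,2}=Q_*X\pm U$. It then uses \eqref{eq:sum}, \eqref{eq:det} and Lemma~\ref{lemma:W} to pin $Z^{(i)}$ to $W^{(i)}$ up to row $h_i$. The remaining freedom is counted row by row as $n^i_{h_i}-m^i_{h_i}$ and $|m^i_j+1|$, multiplied over legs, and summed over $q$.

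Your ordered/unordered worry is settled differently from what you suggest. Send $(V_1,V_2)$ to $\{V_1,V_2'\}$. The swap $(V_2,V_1)$ goes to the conjugate unordered pair, so it produces no double count. An off-diagonal $\{A,C\}$ has the two preimages $(A,C')$ and $(C,A')$. Their values of $q$ add up to $2+\height[A]+\height[C]=2\overline q$, where $\overline q=|\overline{\text{tw}}(-Y)|$, because every realised vector has height $\overline q-1$ (Proposition~\ref{prop:bigtw}). By Remark~\ref{remark:Alexander} neither value equals $\overline q$, so at most one preimage survives the twisting-number condition: the one with $q=1+\height([A]+[C])$. The diagonal $\{A,A\}$ has the single preimage $(A,A')$.

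Finally, there is a sign slip: $[V_1]=\J[V_2]$ means $V_1=V_2'$, not $V_1=-V_2'$ (the vector $-V_2'$ is the terminal vector of $[V_2]$).
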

By Proposition \ref{prop:spinc} we know that $X$ being integral is equivalent to $V_1$ and $-V_2$ inducing the same $\Spin^c$-structure on $-Y$, while by Equation \eqref{eq:Maslov2} that $M(V_1)=M(V_2)$; moreover, from the definition of height we have that $q\geq1$. We recall that the number of negative-twisting structures coincides with the upper bound from convex surface theory, which we explicitly write in Subsection \ref{subsection:background}; in particular, these structures are precisely the ones constructed in Subsections \ref{subsection:A} and \ref{subsection:B}, together with the ones induced by Stein structures on the blown down standard graph.

\begin{lemma}
 \label{lemma:W}
  Let $\frac{p}{q}$ be the best upper approximation for $r\in(0,1)\cap\Q$ for some $q>1$; expand $-\frac{q}{p}$ into the negative continued fraction $[n_1,...,n_{h}]$ and call $B$ the negative of its chain matrix. 
  Then there exists a positive integral vector $W$ such that $BW=qe_1$; moreover, one has $w_1=p$ and $w_h=1$.    
\end{lemma}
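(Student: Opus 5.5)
The plan is to solve the linear system $BW=qe_1$ explicitly by a backward recursion along the chain, rather than by inverting $B$. Write $-B$ for the chain matrix of $[n_1,\dots,n_h]$, so that $B$ is tridiagonal with diagonal entries $c_j:=-n_j\geq 2$ and off-diagonal entries $-1$. The equation $BW=qe_1$ then reads
\[c_1w_1-w_2=q,\qquad -w_{j-1}+c_jw_j-w_{j+1}=0\ \ (2\leq j\leq h-1),\qquad -w_{h-1}+c_hw_h=0\:.\]

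I would build $W$ from the bottom of the chain. Set $w_h:=1$, $w_{h-1}:=c_h$, and recursively $w_{j-1}:=c_jw_j-w_{j+1}$. Formally, also put $w_0:=c_1w_1-w_2$. By construction this satisfies every row except possibly the first, and the first row holds exactly when $w_0=q$.

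Next I would identify these integers with numerators and denominators of the partial continued fractions. The standard Hirzebruch--Jung recursion shows that $\frac{w_{j-1}}{w_j}=[c_j,c_{j+1},\dots,c_h]$ with the sign convention $-n_j=c_j$, that is, $-\frac{w_{j-1}}{w_j}=[n_j,\dots,n_h]$. Since every $c_j\geq2$, induction from the bottom gives $w_{j-1}-w_j=(c_j-1)w_j-w_{j+1}\geq w_j-w_{j+1}\geq\cdots\geq w_{h-1}-w_h=c_h-1\geq1$. Hence the sequence strictly increases going up the chain, all entries are positive integers, and consecutive entries are coprime, because each step is a unimodular change of pair. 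Taking $j=1$, the fraction $\frac{w_0}{w_1}$ is in lowest terms and equals $-[n_1,\dots,n_h]=\frac{q}{p}$. Since $\frac{p}{q}$ is a best upper approximation, it is reduced with $\gcd(p,q)=1$, so both fractions are reduced and therefore $w_0=q$ and $w_1=p$. This gives the first row, and thus $BW=qe_1$ with $W$ positive, $w_1=p$ and $w_h=1$.

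The only real point of care is the reducedness and sign bookkeeping: checking that the recursion really reproduces $-\frac{q}{p}$ with no common factor, so that one can read off $w_0=q$ and $w_1=p$ rather than merely proportional values. The case $h=1$ needs a separate remark, where $W=(1)$ and the claim is $c_1=q$, $p=1$; this is consistent because the expansion of $-\frac{q}{p}$ of length one forces $p=1$. Note that the best upper approximation hypothesis is used only to guarantee that $\frac{p}{q}$ is reduced and lies in $(0,1)$, so that all $n_j\leq-2$.
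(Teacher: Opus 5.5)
Your proof is correct and is essentially the paper's argument: both build $W$ from the three-term recursion $w_{j+1}=(-n_j)w_j-w_{j-1}$ along the chain. The paper runs it forward from $(w_0,w_1)=(q,p)$ and reads off $w_h=1$ from the continued fraction relation, whereas you run it backward from $w_h=1$, recover $(w_0,w_1)=(q,p)$ by reducedness, and in doing so make explicit the positivity of $W$ and the use of $\gcd(p,q)=1$, both of which the paper leaves implicit.
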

\begin{proof}
 If we define $w_0:=q$ and  $w_1:=p$ and then recursively \[w_{j+1} = (-n_j)\,w_j - w_{j-1}\:,\] the continued fraction relation ensures $w_{h} = 1$, while the row equations become:
 \begin{itemize}[leftmargin=0.8cm]
     \item for the first row: $(-n_1)w_1 - w_2 = q$;
     \item for the interior rows: $-w_{j-1} + (-n_j)w_j - w_{j+1} = 0$;
     \item for the last row: $-w_{h-1} + (-n_{h})w_{h} = 0$.
 \end{itemize}
 This is exactly $BW= q e_1$.   
\end{proof}

\begin{proof}[Proof of Theorem \ref{teo:contact}]
 Fix a twisting number $-q$; for each leg $i$ let $\frac{p_i}{q}$ be the best upper approximation for $r_i$, and let $-\frac{q}{p_i}=[n^i_1,...,n_{h_i}^i]$ and $-\frac{1}{r_i}=[m^i_1,...,m_{k_i}^i]$ be the corresponding negative continued fractions. 

 For an ordered pair $(V_1,V_2)$ contributing to the twisting number $-q$ we have $V_1=Q_*X+U$ and $V_2=Q_*X-U$. If we define $Z^{(i)}:=\mathbf 1-X^{(i)}$ as in the previous subsection, then on the $i$-th leg we have that Equation \eqref{eq:leg} holds. Moreover, the following properties are already established: $z_j^{i}\geq1,$ $0\leq u_j^{i}\leq \overline u_j^{i}$, the vertex-dependent bound from the realised definition, and $u_j^{i}\bigl(z_j^{i}-1\bigr)=0$ for every $i=1,...,n$ and $j=1,...,k_i$.

 For any given $q$ the vectors $Z^{(i)}$ and $U^{(i)}$ are determined by the best upper approximation up to the dependence given by Equation \eqref{eq:leg} from the Leg lemma. The count of admissible ordered pairs $(V_1,V_2)$ is then equal to the product of the numbers of the possible difference vectors $U^{(i)}$ on each leg.

 Let us compute the latter number for an arbitrary leg. Looking at the dependence condition $-Q^{(i)}Z^{(i)}-U^{(i)}=qe_1+e_{k_i}$, we use the auxiliary Lemma \ref{lemma:W} to express $qe_1$ as $-R^{(i)}W^{(i)}$, where $R^{(i)}$ is the chain matrix corresponding to $\frac{q}{p_i}$ and $W^{(i)}$ a positive integral vector with $w_1^i=p_i$ and $w_h^i=1$.

 We recall that $-\frac{q}{p_i}=[n^i_1,...,n_{h_i}^i]$ has $n^i_j=m_j^i$ for $j=1,...,h_i-1$ and either $h_i=k_i+1$ or $h_i\leq k_i$ with $m_{h_i}^i<n_{h_i}^i$. We check for possible values of $u^i_j$ row-by-row in $-Q^{(i)}Z^{(i)}-U^{(i)}=-R^{(i)}W^{(i)}+e_{k_i}$. Since $z_1^i=w_1^i=p_i$, we have $u_1^i=0$, and inductively $z_j^i=w_j^i$ for $j\leq h_i$ and $u_j^i=0$ for $j<h_i$. At the $h_i$-th row we finally have $z_{h_i}^i=w_{h_i}^i=1$ and for $u_{h_i}^i$ there are $m_j^i-n_j^i$ choices, while in all later tail coordinates there are $1-m_j^i$ of them. 

 Therefore, when $h_i=k_i+1$ the vectors $Z^{(i)}, U^{(i)}$ are uniquely determined, while for $h_i\leq k_i$ we set \[T(i,q)=(n_{h_i}^i-m_{h_i}^i)\cdot\prod_{j=h_i+1}^{k_i}|m_j^i+1|\] and this agrees exactly with the upper bound in Subsection \ref{subsection:background}. Because the choices on different legs are independent once $q$ is fixed, the total number of ordered pairs with twisting number $-q<-1$ is $T(1,q)\cdots T(n,q)$.

 When $q=1$, the interval condition is vacuous (there is no $h<1$), so the above description degenerates. In this case every vertex contributes its full number of values: the centre contributes for $\lvert e_0+1\rvert$, while each leg vertex $m^i_j$ contributes for $\lvert m^i_j+1\rvert$. The total is \[\lvert e_0+1\rvert\prod_{i=1}^{n}\prod_{j=1}^{k_i}\lvert m^i_j+1\rvert\:.\]
 Different twisting numbers give disjoint classes of vectors $X$. Thus the total number of ordered pairs is the sum of the fixed $q$-contributions, which is exactly our statement.
\end{proof}

We can find a correspondence between the sets of ordered and unordered pairs of realised characteristic vectors; namely, we have that if $V_1$ and $V_2'$ are as above then the vector $V_2$ is also realised, for symmetry reasons, where $V_2'$ is the initial vector of the full path $[-V_2]$. Therefore, the ordered pair $(V_1,V_2')$ is identified with $(V_1,V_2)$ (as an unordered pair), and then $V_1$ and $V_2'$ induce conjugated $\Spin^c$-structures on $-Y$ if and only if $V_1$ and $V_2$ induce the same one. We also know that by definition, see Section \ref{section:three}, one has \[-e_1^TQ_*^{-1}\left(\frac{V_1+V_2}{2}\right)=\left\{\begin{aligned}&\height([V_1]+[-V_2])\hspace{1.22cm}\text{ when }V_1\neq V_2'\\ 
&\height[V_1]=\height[V_2]\hspace{0.5cm}\text{ when }V_1=V_2'\:.\end{aligned}\right.\] 
The number of unordered pairs $(V_1,V_2)$ of realised characteristic vectors with same Maslov grading and $\Spin^c$-structure, which appears in the statement of Theorem \ref{teo:classification}, then also coincides with the number of negative-twisting structures.

\begin{proof}[Proof of Theorem \ref{teo:classification}]
 The correspondence follows from the discussion above and Theorem \ref{teo:contact}, while fillability follows from Corollary \ref{cor:fillable}.
\end{proof}

As we mentioned before, when $q=-\overline{\text{tw}}(-Y)$ then the upper bound coincides with the number of realised characteristic vectors. In this setting the corresponding ordered pairs are of the form $(V,V')$, while the unordered ones are of the form $(V,V)$.

\subsection{The contact invariant}
We start this subsection by showing that, when $-Y$ is type A, there is at most one contact structure $\xi$ such that $\text{tw}(-Y,\xi)<\overline{\text{tw}}(-Y)$ in each $\s\in\Spin^c(-Y)$; meanwhile, when $-Y$ is type B, there is at most one pyramid in each $\s\in\Spin^c(-Y)$. Later on, we determine exactly what are the coordinates of the contact invariant $c^+(\xi)\in HF_\text{red}(Y,\s)$ with respect to the canonical basis $\mathcal B=\{T_{[V_1]},...,T_{[V_t]}\}$ given by the full path algorithm. We recall that $\mathcal B$ can be identified with a basis of $\widehat{HF}^\text{od}(Y,\s)$, see Section \ref{section:two}, and thus we can also identify $c^+(\xi)$ with $\widehat c(\xi)$. 

We also recall the terminology that we introduced in Section \ref{section:six}. An indefinite Seifert fibred space $-Y=M(e_0;\frac{b_1}{a_1},...,\frac{b_n}{a_n})$ which is not an $L$-space can be either of type B, when its standard graph has a subgraph $G$ isomorphic to one of a torus bundle in Figure \ref{Torus}, or of type A otherwise. We say that a manifold of type B is short when there is at most one vertex not in $G$ on each leg; otherwise, we say it is long. We say that a manifold of type A is without tails when $|\underline{\text{tw}}(-Y)|>a_i$ for $i=1,...,n$; otherwise, if it has two distinct negative twisting numbers then we say it has tails. We proved in Lemma \ref{lemma:tails} and Proposition \ref{prop:classificationA} that when $-Y$ has tails any negative-twisting structure is obtained from a manifold $M$ of type A without tails, equipped with the contact structure tangent to the fibres, by doing Legendrian surgery on some new vertices attached to the standard graph of $M$. Similarly, we proved in Proposition \ref{prop:classificationB} that negative-twisting structures on a long manifold of type B are obtained from a short one through the same construction.

By Lemma \ref{lemma:realised} if $-Y$ is not of type A without tails then all the coordinates of the contact invariant, with respect to $\mathcal B$, correspond to realised characteristic vectors. When $-Y$ is without tails we prove that this is still true in the following lemma. For this reason, in the remaining of the section we only consider the subset of $\mathcal B$ spanned by realised characteristic vectors.

\begin{lemma}
 \label{lemma:final}
 Suppose that $-Y$ is an indefinite Seifert fibred space. If $V\in\emph{Char}(\Gamma^*,\s_\emph{can})$ is initial, its full path ends correctly, and it is such that $M(V)=M(V_\emph{can})$, then $V$ is realised.
\end{lemma}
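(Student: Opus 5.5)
The plan is to reuse the linear algebra set up before Theorem \ref{teo:twisting}, with $V$ in place of a realised vector and the conclusion reversed. First I reduce to the conjugate picture. Replace $V$ by the initial vector $V^\star$ of $[-V]$. This lies in $\text{Char}(\Gamma^*,\overline\s_\text{can})$, ends correctly, and has $M(V^\star)=M(V_\text{can})$ because $\J$ preserves the Maslov grading by Theorem \ref{teo:main}. The realised set is symmetric under $V\mapsto$ initial vector of $[-V]$, as noted after Theorem \ref{teo:contact}, so it suffices to treat either vector.

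Next I set up coordinates. Write $2U=V-V_\text{can}$ and $X=\frac12Q_*^{-1}(V+V_\text{can})$. Since $V$ is initial, $U$ has non-negative entries. The vectors $V$ and $-V_\text{can}$ induce the same $\Spin^c$-structure on $-Y$, because $V\in\text{Char}(\Gamma^*,\overline\s_\text{can})$ after the reduction above. Proposition \ref{prop:spinc} then gives $X\in\Z^{|\Gamma^*|}$, and Equation \eqref{eq:Maslov2} gives $U^TX=0$. Set $Z^{(i)}=\mathbf 1-X^{(i)}$ on each leg, so that Equation \eqref{eq:leg} holds. The positivity of $(-Q_*^{(i)})^{-1}$ then gives $z^i_j\geq1$, which is the argument of Lemma \ref{lemma:orthogonal}. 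If $q=1-x_1>1$, Lemma \ref{lemma:orthogonal} yields $u_1=0$ and $u^i_j(z^i_j-1)=0$. If $q=1$, then $V$ lies in the same full path as the conjugate of $V_\text{can}$, or is $V_\text{can}$ itself, by Remark \ref{remark:Alexander}, and Lemma \ref{lemma:can} settles that case.

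The main step, and the one I expect to be hardest, is the bound $0\leq u^i_j\leq\overline u^i_j$ from Theorem \ref{teo:realised}. This bound is exactly what being realised means. On legs away from $(\Gamma^*)'\cup(\Gamma^*)''$, being initial already gives $v_j\leq -m(j)$, and ending correctly excludes the boundary value $-m(j)$ at the relevant steps, which leaves $v_j\leq -m(j)-2$. At the vertices of $(\Gamma^*)'$, $(\Gamma^*)''$ and $(\Gamma^*)''_{1,2}$ the stronger bounds must come from the full path itself. There I plan to run the full path from $V$ explicitly. If a coordinate exceeded the realised range, say $v_j>-m(j)-2d_1-2d_2$ at a vertex of $(\Gamma^*)''$, then pushing through the $S^3$-subgraph would produce a vector with $|z_1|>-m(1)$ or a leg coordinate out of range. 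That would contradict $V$ ending correctly; this computation parallels the one behind $\height[V]=\height[V_\text{can}]$ in Proposition \ref{prop:bigtw}. Once the bound holds, the same row-by-row analysis as in the Leg lemma shows that $U$ lies in the admissible range. Consequently $V=V_\text{can}+2U$ is realised by Theorem \ref{teo:realised}.

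As a cross-check, and possibly as an alternative route, I will use counting. Theorem \ref{teo:contact} counts the pairs satisfying these constraints. Theorem \ref{teo:fullpath} gives that the initial vectors ending correctly in the fixed grading form a basis. If every such vector satisfies the leg constraints of the Leg lemma, then the two counts coincide, and this forces every such vector to be realised.
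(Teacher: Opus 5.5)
Your set-up matches the paper's: passing to $\overline\s_\text{can}$, the vectors $U$ and $X$, integrality of $X$, $U^TX=0$, and $u^i_j(z^i_j-1)=0$ via Lemma \ref{lemma:orthogonal}. The step you call the main one, however, has a genuine gap. The claim that for an initial vector ``ending correctly excludes the boundary value $-m(j)$'' is false. Moreover, your bound never uses the hypothesis $M(V)=M(V_\text{can})$, which is exactly where the content of the lemma lies.

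A concrete counterexample is the paper's own sample $-\Sigma(3,4,47)$, with coordinates ordered as in Table \ref{Table1}; the last coordinate is the $(-4)$-vertex at the end of the $(-12)$-leg. Take $V=(1,0,0,-2,x,4)$ with $x\in\{-10,-8,-6,-4\}$. This vector is initial. The step at the last vertex sends $4\mapsto-4$ and raises the $(-12)$-coordinate by $2$. The $S^3$-subgraph then runs exactly as for $V_\text{can}$, and its six central steps raise the $(-12)$-coordinate by another $12$. The path reaches the terminal vector $(-1,0,0,2,x+14,-4)$ without leaving $\mathcal B_0$, so $[V]$ ends correctly. Yet $v_6=4=-m(6)$ lies outside the realised range $\{-2,0,2\}$.

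These vectors are excluded from the lemma only because $M(V)\neq M(V_\text{can})$, so no argument from the full path alone can give $v_j\leq-m(j)-2$ on generic legs. Your counting cross-check does not repair this. Property (2) of the Leg lemma \emph{is} the realised range, and Theorem \ref{teo:contact} only counts pairs of realised vectors. Assuming the leg constraints therefore assumes the conclusion, as does the sentence ``once the bound holds, $U$ lies in the admissible range''.

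The missing idea is already in your hands. You have $u_j\geq0$ and $x_j\leq0$ for all $j$. For $x_1\leq0$ you need that $-V_\text{can}$ carries the largest value of $\mathcal F$ on full paths ending correctly, which follows from Lemma \ref{lemma:can} and monotonicity of $\mathcal F$ along full paths; your case split assumes $q\geq1$ silently. With these signs, $U^TX=0$ forces $u_kx_k=0$ for every $k$. So wherever $u_k>0$ you have $x_k=0$, and the $k$-th row of $Q_*X=V_\text{can}+U$ gives
\[u_k=\sum_{S_j\text{ adjacent to }S_k}x_j-m(k)-2\leq-m(k)-2\:,\qquad\text{that is}\qquad v_k=m(k)+2+2u_k\leq-m(k)-2\:.\]
This is the generic realised bound, and it is how the paper concludes.

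The paper handles vertices in or adjacent to $(\Gamma^*)'$ separately, using that the $S^3$-subgraph has a unique initial vector ending correctly. For vertices of $(\Gamma^*)''$, the same row identity also works if you keep the term $x_1=1-q$ and use $q\geq1+\height[V_\text{can}]=d_1+d_2$ from Proposition \ref{prop:bigtw}. This yields exactly $v_k\leq-m(k)-2d_1-2d_2$.
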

\begin{proof}
 We do the proof for a $V\in\text{Char}(\Gamma^*,\overline\s_\text{can})$, the given statement then follows by conjugation. Using the same terminology of the previous subsections, we write $X=\frac{1}{2}Q_*^{-1}(V+V_\text{can})\in\Z^{|\Gamma^*|}$ and $U=\frac{1}{2}(V-V_\text{can})\in\Z^{|\Gamma^*|}$; then $0=M(V)-M(V_\text{can})=U^TX$. Clearly, one has $u_j\geq0$ for $j=1,...,|\Gamma^*|$, while by the same argument for Equation \eqref{eq:leg} and Lemma \ref{lemma:orthogonal} we have $x_j\leq0$ for each $j$; hence, we need $u_jx_j=0$. Since $\Gamma^*$ is indefinite, we have that either $[V]=[-V_\text{can}]$, and then $V$ is realised, or $x_1=-\height([V]+[-V_\text{can}])<0$, implying that $u_1=0$.

 Whenever $u_j=0$ the coordinate $v_j$ of $V$ agrees with the corresponding one of $V_\text{can}$, which means $v_j=m(j)+2$. Assume that $u_k\neq0$, then we have that $x_k=0$ and the vertex $S_k$ is on a leg of $\Gamma^*$; moreover, we can also assume that such a vertex cannot be in, or connected, to the $S^3$-subgraph $(\Gamma^*)'$ because in this case $v_k$ is necessarily in the realised range in Theorem \ref{teo:realised}: this follows from the fact that the $S^3$-subgraph has a unique initial vector ending correctly.  

 For the remaining case, we argue that $U=-V_\text{can}+QX$ and then $u_k=-m(k)-2+e_k^TQX$; therefore, we have \[v_k=m(k)+2+2u_k=m(k)+2-2(m(k)+2)+2e_k^TQX=-m(k)-2+2e_k^TQX\:.\] Now, since $x_k=0$ and $x_j\leq0$ for $j=1,...,|\Gamma^*|$, we have that \[e_k^TQX=x_{k-1}+m(k)\cdot x_k+x_{k+1}=x_{k-1}+x_{k+1}\leq0\] where we are assuming that the ordering of the vertices is consecutive along each leg, and that $S_{k-1}$ could be taken as the central vertex. Then we can conclude that $v_k\leq-m(k)-2$ for any such vertex $S_k$, which means precisely that $V$ is realised.
\end{proof}

We specified in Subsection \ref{subsection:B} that structures may form pyramids. Consider \[-q_0=\overline{\text{tw}}(-Y)>-q_1>\cdots>\underline{\text{tw}}(-Y)\] the decreasing sequence of all the negative twisting numbers on $-Y$; then a pyramid of size $k$ is the family of all the negative-twisting structures $\xi_{ij}$ for $1\leq i\leq j\leq k$, in a fixed homotopy type, such that $\text{tw}(-Y,\xi_{ij})=-q_{j-i}$ for every $i,j$. In the terminology of the previous subsection, a pyramid of size $k$ is the family of all the realised characteristic vectors $V_1,...,V_k\in\text{Char}(-Y,\s)$ with fixed Maslov grading such that each unordered pair $(V_i,V_j)$ corresponds to a negative-twisting structure, in the sense of Theorem \ref{teo:classification}, and $\height([V_i]+[V_{i+1}])$ is the same for $i=1,...,k-1$. 

\begin{prop}
 \label{prop:map}
 Suppose that $-Y$ is an indefinite Seifert fibred space, if $-Y$ is of type A then every $\s\in\Spin^c(-Y)$ admits at most one structure with twisting number smaller than $\overline q=\overline{\emph{tw}}(-Y)$. Furthermore, if $-Y$ is of type $B$ then every $\s\in\Spin^c(-Y)$ supports exactly a single pyramid of size $k(\s)$.
\end{prop}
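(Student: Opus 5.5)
The strategy is to turn both claims into statements about the realised characteristic vectors in a fixed $\s\in\Spin^c(-Y)$, via the correspondence of Theorem \ref{teo:classification}, and then use the constructions of Section \ref{section:six}. By Theorem \ref{teo:classification}, structures with twisting number below $\overline q$ in $\s$ correspond to unordered pairs $(V_i,V_j)$, $i\neq j$, of realised vectors in $\text{Char}(\Gamma^*,\s)$ with $M(V_i)=M(V_j)$. So for type A it suffices to show that each $\s$ contains at most two realised vectors sharing a Maslov grading. For type B it suffices to show that all realised vectors in $\s$ with equal grading form one family $V_1,\dots,V_k$ whose consecutive heights agree.

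\textbf{Type B.} I would first reduce to the short case. For a long manifold every structure comes from a short one by Legendrian surgery on extra vertices (Proposition \ref{prop:classificationB}), and Lemma \ref{lemma:realised} shows that the coordinates of $c^+$ stay realised. For a short $-Y$, Lemma \ref{lemma:j} gives that the realised vectors in $\s$ are exactly $V_1+2mB$, $m=0,\dots,k-1$, and all have the same Maslov grading. Hence there is only one homotopy class in $\s$ supporting $\overline{\text{tw}}$-structures. The explicit construction $\eta^{\mathbf n}_{i,j}$ together with Remark \ref{remark:layer} then yields one pyramid of size $k(\s)$. Consecutive differences are all $2B$, and $e_1^TQ_*^{-1}B=e_1^T\overline R$, so the heights $\height([V_m]+[V_{m+1}])$ all coincide. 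The upper-bound count in Proposition \ref{prop:classificationB} rules out any further structures, so this pyramid is the unique one.

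\textbf{Type A.} Suppose $V_a,V_b,V_c$ are three distinct realised vectors in $\s$ with equal Maslov grading. Each pair determines a twisting number through $q=1-e_1^TX$, with $X$ as in Theorem \ref{teo:contact}. By Remark \ref{remark:A} a type A manifold has only the two twisting numbers $\overline q$ and $\underline q$, so every pair would have to realise $\underline q$. Remark \ref{remark:Alexander} says the filtration $\mathcal F$ strictly separates distinct full paths ending correctly. Ordering the three vectors by $\mathcal F$, the outer pair would then have strictly larger height than an inner pair, giving two distinct values of $q<\overline q$, a contradiction. Hence at most two such vectors exist, giving at most one structure below $\overline q$ in $\s$. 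A cleaner alternative is to argue as in the proof of Theorem \ref{teo:A}: three such vectors would produce three nested levels of Farey matrices, forcing Equation \eqref{eq:6} and hence a torus-bundle subgraph.

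\textbf{Main obstacle.} The hardest step is the strict monotonicity of heights used in the type A argument. It requires that the initial vectors of $[-V]$ are ordered oppositely, namely $\mathcal F(-Z')$ increasing along with $\mathcal F(Z)$. I would establish this through the identity $V'=-\overline V$ and the total ordering of the intervals $[a_V,b_V]$ in Remark \ref{remark:Alexander}.
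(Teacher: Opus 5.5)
Your type A argument has a genuine gap, at the sentence ``Each pair determines a twisting number through $q=1-e_1^TX$''. Two distinct realised vectors in $\s$ with equal Maslov grading only determine the integer $1+\height([V_i]+[V_j])$. Theorem \ref{teo:classification}, and the count in Theorem \ref{teo:contact}, attach a structure to the pair only when this integer \emph{is} the absolute value of a twisting number. You dropped that condition in your first paragraph, and in general it fails.

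In fact the statement you reduce to is false. In Table \ref{Table1}, $-\Sigma(3,4,47)$ is of type A and has a single $\Spin^c$-structure. Grading $-1$ carries three realised vectors $(1,0,0,-2,-6,y)$, $y=-2,0,2$. The pairs in gradings $15,11,5,3,1,-1$ have combined heights $198,174,126,102,78,30$, none of which equals $q-1$ for a twisting number $-q\in\{-7,-223\}$. Only the grading-$19$ pair gives a structure, the $(-223)$-structure.

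Even if your claim held, the reduction would not suffice, since pairs in several gradings of the same $\s$ could each produce a structure. What has to be shown is that each $\s$ contains at most one pair of height $|\underline{\text{tw}}(-Y)|-1$. Neither of your tools gives this. The monotonicity of heights follows from Remark \ref{remark:Alexander} and is not the real obstacle. The argument of Theorem \ref{teo:A} needs three distinct twisting numbers, not three vectors with the same one.

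The paper obtains this from contact-topological input rather than combinatorics.
\begin{itemize}
\item For type A without tails, Massot's theorem gives a unique structure on the whole manifold with $\text{tw}=\underline{\text{tw}}(-Y)$. It is tangent to the fibres and spin, so Theorem \ref{teo:twisting} and Lemma \ref{lemma:final} place it in $\s_\text{can}$ with $c^+=T_{[V_\text{can}]}+T_{[-V_\text{can}]}$.
\item Manifolds with tails, and long manifolds of type B, are handled by induction along the Legendrian surgery cobordism. The map $F^+_{\overline W,J}$ sends $T_{[V_i]}$ to $T_{[V_i']}$. It is injective on the realised classes of a fixed $\s$, since otherwise Plamenevskaya's uniqueness on $X_{\Gamma^*}$ fails. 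It also has well-defined Maslov and Alexander shifts. Together these exclude a realised vector of $\s$ lying outside the $\mathcal F$-range of a maximal pyramid. For type B, Lemma \ref{lemma:j} then forces such a vector to extend the pyramid.
\end{itemize}

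Your short type B case matches what the paper cites from Subsection \ref{subsection:B}. Your reduction of the long case, however, skips exactly this induction. Lemma \ref{lemma:j} is only available for short manifolds. Lemma \ref{lemma:realised} only says the coordinates of $c^+$ stay realised. It does not say that the realised vectors of a long manifold in one $\s$ share a grading or form a single pyramid.

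Finally, Theorem \ref{teo:contact} only matches counts per twisting number, summed over all $\Spin^c$-structures. So the $\s$-by-$\s$ form of Theorem \ref{teo:classification} you start from is essentially what Proposition \ref{prop:map} and Theorem \ref{teo:final} are needed to pin down.
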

\begin{proof}
 If $-Y$ is short of type B then the claim has been already proved in Subsection \ref{subsection:B}. If $-Y$ is of type A without tails then, by the results of Massot \cite{Massot} and the discussion in Subsection \ref{subsection:A}, we know there is a unique $\xi_0$ such that \[\text{tw}(-Y,\xi_0)=\underline q=\underline{\text{tw}}(-Y)<\overline q\]
 and $\xi_0$ is tangent to the fibres and supports a spin structure $\s_0$. 
 From Theorem \ref{teo:twisting} one has $\s_0=\s_\text{can}$ and then $c^+(\xi)=T_{[V_\text{can}]}+T_{[-V_\text{can}]}$ by Lemma \ref{lemma:final}. 

 Suppose that $-Y$ is not in one of these two families, we prove the claim by induction on the number of vertices we need to add to the standard graph of a manifold (either short or without tails) to obtain $-Y$; note that if $-Y$ is of type A and has only one negative twisting number then there is nothing to prove. 
 
 Consider the map $F^+_{\overline W,J}:HF^+(-Y,\s)\rightarrow HF^+(M',\s')$ induced by $(-1)$-contact surgery, which as we know maps $T_{[V_i]}$ to $T_{[V_i']}$ where $\mathcal C=\{V_1,...,V_{k(\s)}\}$ are the realised characteristic vectors in $\text{Char}(\Gamma^*,\s)$ while $V'_i$ is obtained by removing the coordinate of the new vertex from $V_i$. Notice that $F^+_{\overline W,J}$ is injective on $\mathcal C$ because then $X_{\Gamma^*}$ would admit a Stein structure $I$, extending $J$, such that $F^+_{\overline X_{\Gamma^*},I}(T_{[V_i]})=1$ for $i=1,2$, contradicting the usual result of Plamenevskaya \cite{OlgaP}.

 Say that $V_1,...,V_h$ such that $\mathcal F(V_1)<\cdots<\mathcal F(V_h)$ would form a maximal pyramid, and assume that there is a $V_{h+1}$ such that either $\mathcal F(V_{h+1})>\mathcal F(V_h)$ or $\mathcal F(V_{h+1})<\mathcal F(V_1)$. We consider first the case that $\s$ and $\s'$ supports negative-twisting tight structures with the same twisting numbers; then by induction $\mathcal F(V_1')<\mathcal F(V_{h+1}')<\mathcal F(V_h)$ but we now use the fact that $F^+_{\overline W,J}$ has well-defined Maslov and Alexander grading shift, see \cite{OSz-contact,OSz-genus,G-fillability,GvHM}, and we immediately have a contradiction. In the case $\s'$ supports a structure with an additional twisting number $q$, then Theorem \ref{teo:contact} and the same argument of the degree shifts of $F^+_{\overline W,J}$ imply that $q$ is also the twisting number of a contact structure which induces $\s$, and this is again not possible.

 Note that when $-Y$ is of type $B$ the argument above and Lemma \ref{lemma:j} also imply that $V_{h+1}$ should necessarily extend the pyramid whose base is made by $V_1,...,V_h$, contradicting the maximality that we assumed at the beginning. Therefore, we need to have $h=k(\s)$ and all the realised characteristic vectors form a pyramid of size $k(\s)$.  
\end{proof}

We proved that the Stein structures $\{J_i\}$ on the cobordism obtained by attaching new vertices on a standard graph, which extend a fixed $\Spin^c$-structure $\s'$, are in one-to-one correspondence with the choices of coordinates for the resulting characteristic vector to be realised. In addition, the $\Spin^c$-structures $J_i\lvert_{-Y}$, where the manifold $-Y$ is obtained this way, which support tight structures with at least two distinct negative twisting numbers are all distinct.  

\begin{teo}
 \label{teo:final}
 Suppose that $(-Y,\s)$ is an indefinite Seifert fibred space equipped with a $\Spin^c$-structure, and let $\xi$ be a negative-twisting structure on $-Y$ such that $\emph{tw}(-Y,\xi)<\overline{\emph{tw}}(-Y)$ and $\s_\xi=\s$. Denote by $V_1,...,V_{k(\s)}$ the realised characteristic vectors in $\emph{Char}(\Gamma^*,\s)$, in increasing order according to $\mathcal F$; then $0\neq c^+(\xi)\in HF_\emph{red}(Y)$ and we have that:
 \begin{enumerate}[label=\Alph*),leftmargin=0.8cm]
     \item if $-Y$ is of type A then $\xi$ is unique, up to isotopy, and $c^+(\xi)=T_{[V_1]}+T_{[V_{k(\s)}]}$;
     \item if $-Y$ is of type B then $\xi=\xi_{ij}$ for some $1\leq i<j\leq k(\s)$, and $c^+(\xi)=\alpha_iT_{[V_i]}+\alpha_{i+1}T_{V_{i+1}}+\cdots+\alpha_{j}T_{[V_j]}$ where $\alpha_l=\binom{j-i}{l-i}\emph{ mod }2$ for $l=i,i+1,...,j$.
 \end{enumerate}
 Furthermore, the twisting number is determined by \[\emph{tw}(-Y,\xi)=\left\{\begin{aligned}&-1-\height([V_1]+[V_{k(s)}])\hspace{0.5cm}\text{ in Case A}  \\ &-1-\height([V_i]+[V_j])\hspace{0.96cm}\text{ in Case B}\:.\end{aligned}\right.\]
\end{teo}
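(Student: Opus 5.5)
We split according to type, using the classification already established. Non-vanishing of $c^+(\xi)$ in $HF_\text{red}(Y)$ holds for every negative-twisting $\xi$ by Corollary \ref{cor:fillable}. By Lemma \ref{lemma:realised} when $-Y$ is not without tails, and by Lemma \ref{lemma:final} (after conjugating to $\s_\text{can}$) when it is, every coordinate of $c^+(\xi)$ in the basis $\mathcal B$ is a $T_{[V_l]}$ with $V_l$ realised. So in both cases we only need to identify coefficients inside the span of $T_{[V_1]},\dots,T_{[V_{k(\s)}]}$.

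\emph{Case A.} Uniqueness of $\xi$ in $\s$ is Proposition \ref{prop:map}. For the coordinates, first take $-Y$ without tails. There $\xi=\xi_0$ is tangent to the fibres, $\s=\s_\text{can}$, and the proof of Proposition \ref{prop:map} gives $c^+(\xi_0)=T_{[V_\text{can}]}+T_{[-V_\text{can}]}$. By Lemma \ref{lemma:can} these are the vectors with extremal $\mathcal F$, namely $V_1$ and $V_{k(\s)}$.

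For $-Y$ with tails we induct on the number of added vertices. We use the surgery cobordism map $F^+_{\overline W,J}$, which sends $T_{[V_l]}\mapsto T_{[V_l']}$. This map is injective on realised vectors, by the Plamenevskaya argument in Proposition \ref{prop:map}, and it has fixed Maslov and Alexander shifts. By naturality, $F^+_{\overline W,J}(c^+(\xi))=c^+(\xi')$, which by induction is $T_{[V_1']}+T_{[V_{k'}']}$. Injectivity and the ordering preserved by the constant Alexander shift should then force $c^+(\xi)=T_{[V_1]}+T_{[V_{k(\s)}]}$.

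The twisting number then follows from Theorem \ref{teo:classification}: the pair attached to $\xi$ must be $(V_1,V_{k(\s)})$. It is the only pair in $\s$, and the height of $[V_1]+[V_{k}]$ is the largest one available. Alternatively one can redo the $\tau$ computation of Proposition \ref{prop:A} with Equation \eqref{eq:taumirror}, which gives $\text{tw}=-1-\height([V_1]+[V_{k}])$.

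\emph{Case B.} For short $-Y$, Corollary \ref{cor:invariant} gives the recursion $\widehat c(\eta_{i,j})=\widehat c(\eta_{i-1,j-1})+\widehat c(\eta_{i-1,j+1})$, starting from $\widehat c(\eta_{0,j})=T_{[V_{\sigma(j)}]}$. This is Pascal's rule mod $2$. Translating the indices $(i,j)$ into the pair $(a,b)=(\text{left},\text{right})$ with $b-a=i$ yields $c^+(\xi_{ab})=\sum_l\binom{b-a}{l-a}T_{[V_l]}$ mod $2$.

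The twisting number is $1-(i+1)\mathfrak k$, by the remark after Lemma \ref{lemma:twistingB}. We match this with $-1-\height([V_a]+[V_b])$ using Lemma \ref{lemma:j}: there $V_b-V_a=2(b-a)B$ and $Q_*^{-1}B=\overline R$. The height difference should then be $(b-a)\mathfrak k$ times the central entry of $R$, together with $\overline q=\mathfrak k-1$ from Proposition \ref{prop:bigtw}. This needs a check of $e_1^T\overline R$ against Table \ref{Numbers}.

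For long $-Y$ we transport along the Legendrian surgery cobordism exactly as in Case A. Injectivity on realised vectors, together with the graded shifts, preserves the binomial coordinates and the heights. This also identifies $\xi$ with $\xi_{ij}$, via Proposition \ref{prop:map} and Theorem \ref{teo:classification}.

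\emph{Main obstacle.} The hardest step is the inductive transfer through $F^+_{\overline W,J}$, in both Case A with tails and long Case B. We must check that this map sends the extremal realised vectors of $\s$ to the extremal ones of $\s'$, and preserves the $\mathcal F$-ordering. I would attack this first, using the constant Alexander shift and the injectivity argument of Proposition \ref{prop:map}.
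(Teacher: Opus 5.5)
Your proposal follows the paper's proof essentially step by step. Non-vanishing comes from Corollary~\ref{cor:fillable}. Case~A rests on the without-tails identity $c^+(\xi_0)=T_{[V_\text{can}]}+T_{[-V_\text{can}]}$, transported through the injective, constant-shift surgery map of Proposition~\ref{prop:map}. Case~B comes from the Pascal recursion of Corollary~\ref{cor:invariant}, and the twisting number from the pair correspondence of Theorems~\ref{teo:contact} and~\ref{teo:classification}.

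The one thing to repair is your direct Case~B twisting-number check. The inputs $\overline q=\mathfrak k-1$ and $\text{tw}(\zeta_i)=1-(i+1)\mathfrak k$ are false when the torus-bundle subgraph is one of the three-legged graphs with $e_0=-2$. There $\overline q=1$ by Proposition~\ref{prop:bigtw}, while $\mathfrak k\in\{3,4,6\}$. Use instead $\text{tw}(\zeta_i)=\overline{\text{tw}}(-Y)-i\mathfrak k$, together with $\height[V_\text{can}]=\overline q-1$ and $\mathcal F(V_b)-\mathcal F(V_a)=(b-a)\,e_1^T\overline R$ from Lemma~\ref{lemma:j}. This reduces the match to $e_1^T\overline R=\mathfrak k$ (not ``$(b-a)\mathfrak k$ times the central entry''). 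That identity does hold for all seven graphs, since the central entry of $R$ is the regular-fibre orbit size listed in Table~\ref{Numbers}.
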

\begin{proof}[Proof of Theorem \ref{teo:final} and Proposition \ref{prop:classification}]
 We already saw in Corollary \ref{cor:fillable} that for indefinite Seifert fibred spaces $c^+$ is non-vanishing in $HF_\text{red}$ precisely for negative-twisting structures. The claim about the height function follows immediately by Theorem \ref{teo:contact}. The coordinates of the contact invariant are determined using the same cobordism map we used in the proof of Proposition \ref{prop:map}, and the fact that such a map is injective on the subgroup spanned by $V_1,...,V_{k(\s)}$, provided that we can identify them for manifolds that are either short or without tails. 
 
 For the type A case we did this in the proof of Proposition \ref{prop:map}, while for the type B one by Corollary \ref{cor:invariant} we have that $c^+(\xi_{ij})=c^+(\xi_{ij-1})+c^+(\xi_{i+1j})$ leading to the binomial coefficient formula in the statement. Hence, for each $\xi_{ij}$ the Alexander filtration of any coordinate $T_{[V_l]}$ of $c^+(\xi_{ij})$ is in the interval $[\mathcal F(V_i),\mathcal F(V_j)]$, with extremal points included, and the unordered pair $(V_i,V_j)$ is unique, in accordance to the correspondence in Theorem \ref{teo:contact}. 
\end{proof}

\section{Applications}
\label{section:eight}
\subsection{Brieskorn spheres} 
In this subsection we collect some results about Brieskorn spheres. We start by showing that every oppositely oriented Brieskorn sphere $-Y$, different from $-\Sigma(2,3,6k\pm1)$ for $k\geq1$, is of type A and that $-Y$ has at most one structure with twisting number smaller than $\overline{\text{tw}}(-Y)$. We need the following lemma.
\begin{lemma}
 \label{lemma:Brieskorn}
 If $-Y=-\Sigma(a_1,...,a_n)$ is an oppositely oriented Brieskorn sphere, and $\Gamma^*$ is its standard graph, then any Seifert fibred space $M$ whose standard graph is obtained by removing a vertex from $\Gamma^*$ is such that $e(M)\leq0$.   
\end{lemma}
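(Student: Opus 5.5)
The plan is to reduce the statement to a sign computation for the Euler number $e$, using the arithmetic of Brieskorn spheres. Write $Y=\Sigma(a_1,\dots,a_n)=M(e_0';\frac{b_1'}{a_1},\dots,\frac{b_n'}{a_n})$ with negative-definite standard graph $\Gamma$. Its Euler number is then $e(Y)=-\frac{1}{a_1\cdots a_n}$, which follows from the integral homology sphere condition $|H_1|=a_1\cdots a_n|e(Y)|=1$. The dual gives $-Y=M(e_0;r_1,\dots,r_n)$ with $r_i=\frac{b_i}{a_i}$ and
\[
e(-Y)=e_0+\sum_i r_i=\frac{1}{a_1\cdots a_n}.
\]
Only two kinds of vertices can be removed from $\Gamma^*$: the central vertex, or a vertex on a leg. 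Since $M$ is required to be presented by a standard (star-shaped) graph, I will interpret the removal of a leg vertex as truncating that leg, so that only the last vertex of a leg is removed. If an interior vertex is removed instead, the graph splits and the resulting manifold is a connected sum. In that case I would argue separately: each summand is presented by a linear chain or by a star-shaped graph whose leg coefficient has been truncated, and the same monotonicity argument applies to it.

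The central case is the truncation of the last vertex of the $i$-th leg. Write $-\frac{1}{r_i}=[m_1,\dots,m_k]$. Removing $m_k$ replaces $r_i$ by $r_i'$ with $-\frac{1}{r_i'}=[m_1,\dots,m_{k-1}]$, or by $0$ when $k=1$. The key fact is that $r_i'$ is the left Farey parent of $r_i$: writing $r_i'=\frac{b'}{a'}$, we have $r_i'<r_i$, $a'<a_i$ and $a_ib'-b_ia'=-1$, so
\[
r_i-r_i'=\frac{1}{a_ia'}.
\]
I would check this quickly by induction on the continued fraction, using the $\mathrm{SL}_2(\Z)$ matrix product. Consequently
\[
e(M)=\frac{1}{a_1\cdots a_n}-\frac{1}{a_ia'}.
\]
Since $a'<a_i$, it suffices to have $a'\le\prod_{j\neq i}a_j$, and this is where I expect the main obstacle to lie. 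I would try to exploit the integrality condition: the congruence $b_i\prod_{j\ne i}a_j\equiv\pm1 \pmod{a_i}$ (coming from $e_0\prod a_j+\sum_j b_j\prod_{l\ne j}a_l=1$) identifies $b_i$ as $\pm\bigl(\prod_{j\ne i}a_j\bigr)^{-1}$ mod $a_i$. Then $a'$ is determined by $a'b_i\equiv1\pmod{a_i}$, which gives $a'\equiv\pm\prod_{j\ne i}a_j\pmod{a_i}$. Because $0<a'<a_i$, this yields either $a'\le\prod_{j\ne i}a_j$ directly, or $a'=a_i-(\prod_{j\ne i}a_j\bmod a_i)$. In the latter case I would bound $a'$ by $a_i$ and compare with $\prod_{j\ne i}a_j$ using $n\ge3$ and pairwise coprimality. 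This subcase needs care; if the inequality fails, I would fall back to checking that $e(M)$ is computed correctly with the opposite sign convention for $b_i$.

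For the removal of the central vertex, $M$ becomes a connected sum of lens spaces, each given by a negative-definite chain. The natural convention is then $e(M)\le0$: every summand is negative-definite, so the statement holds trivially once $e$ is interpreted componentwise. A cleaner way to phrase this is to note that removing the centre produces a negative-definite graph, which is exactly the fact used earlier in Section \ref{section:two}.
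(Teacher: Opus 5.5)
Your route differs from the paper's and works, once you drop an unnecessary hedge.

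\textbf{The sign is forced.} The sign in your congruence is not ambiguous. You wrote $e_0\prod_j a_j+\sum_j b_j\prod_{l\neq j}a_l=1$, which is $e(-Y)=+\frac{1}{a_1\cdots a_n}$; it is positive because $\Gamma^*$ is indefinite. Reducing this modulo $a_i$ gives exactly $b_i\prod_{j\neq i}a_j\equiv 1 \pmod{a_i}$. Together with $a'b_i\equiv 1\pmod{a_i}$, which comes from $a_ib'-b_ia'=-1$, this yields $a'\equiv\prod_{j\neq i}a_j\pmod{a_i}$. Since $1\le a'\le a_i-1$, the number $a'$ is the least positive residue of $\prod_{j\neq i}a_j$, so $a'\le\prod_{j\neq i}a_j$. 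Hence $e(M)=\frac{1}{a_1\cdots a_n}-\frac{1}{a_ia'}\le 0$, with equality if and only if $\prod_{j\neq i}a_j<a_i$.

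\textbf{The ``latter case'' never occurs.} That sign belongs to $e=-\frac{1}{a_1\cdots a_n}$, i.e.\ to $Y$. Your fallback for it would not work anyway: bounding $a'$ by $a_i$ cannot give $a'\le\prod_{j\neq i}a_j$ when $a_i$ is much larger. For example, in $-\Sigma(2,3,11)=M(-1;\frac12,\frac13,\frac{2}{11})$ the third leg truncates to $\frac16$, so $a'=6=\prod_{j\neq3}a_j$, whereas $a'<a_3$ only gives $a'\le 10$. The central-vertex and interior-vertex discussion is superfluous. The lemma concerns deleting a terminal vertex of a leg, and that is the only case used later.

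\textbf{Comparison with the paper.} The paper works with the plumbing matrix rather than continued fractions. Since $-Y$ is an integral homology sphere, $Q_*^{-1}$ is integral. By the Schur complement, appending a vertex of framing $q_{ii}=e_i^TQ_*^{-1}e_i$ at the end of leg $i$ makes the graph singular, so $e=0$. By the cofactor formula, $q_{ii}=0$ exactly when deleting $S_i$ gives $e(M)=0$. Two facts then force $q_{ii}\ge 0$: $e$ increases when a leg is lengthened, and $e(-Y)>0$. When $q_{ii}>0$, the paper realises the framing $q_{ii}$ by a negative chain, obtaining a singular extension of $M$ and hence $e(M)<0$.

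\textbf{The two arguments coincide.} One checks that $q_{ii}=\bigl(\prod_{j\neq i}a_j-a'\bigr)/a_i$. So integrality of $q_{ii}$ is precisely your congruence, and $q_{ii}\ge0$ is precisely your bound $a'\le\prod_{j\neq i}a_j$. Your version is more explicit: it gives the exact value of $e(M)$ and the equality case. The paper's version avoids continued-fraction arithmetic and stays within the vertex-adding picture it reuses in Corollary \ref{cor:Brieskorn}.
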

\begin{proof}
 Since $-Y$ is an integral homology sphere, the matrix $Q_*^{-1}$ has integral entries. In addition, we have that the diagonal entries corresponding to the terminal vertex of any leg are non-negative; otherwise, if $q_{ii}:=e_i^TQ_*^{-1}e_i<0$ then adding a new vertex to the $i$-th leg with framing $q_{ii}$ would produce a graph $\widehat G$ such that $e(-Y)<e(M_{\widehat G})=0$, which is not possible because $\Gamma^*$ is indefinite.

 We now know that $q_{ii}\geq0$ for each vertex $S_i$ as above. If $q_{ii}=0$ then removing $S_i$ yields $e(M)=0$. Suppose that $q_{ii}>0$; we have that the graph $\widetilde G$, obtained by lowering the framing of $S_i$ by one and adding vertices with framings given by the negative continued fraction of $-\frac{q_{ii}}{q_{ii}-1}$, satisfies $e(M)<e(M_{\widetilde G})=0$.  
\end{proof}
\begin{proof}[Proof of Corollary \ref{cor:Brieskorn}]
 Suppose that $-Y$ is of type B, and denote by $G_0\subset\Gamma^*$ a subgraph isomorphic to one of a torus bundle in Figure \ref{Torus}. Since we cannot have $G_0=\Gamma^*$, we can choose a vertex at the end of a leg which is outside of $G_0$ and remove it; hence, by Lemma \ref{lemma:Brieskorn} we obtain a new subgraph $G$, which contains $G_0$, such that $0=e(M_{G_0})\leq e(M_G)\leq0$. This immediately implies that $G=G_0$, and then $\Gamma^*$ is obtained by adding a single vertex to the standard graph of a torus bundle. Checking by hand all the possible cases, we determine that the Seifert fibred spaces of this form which are Brieskorn spheres are $-\Sigma(2,3,6k\pm1)$ for $k\geq1$.

 If $-Y$ is of type A then, by Theorem \ref{teo:A} and Remark \ref{remark:A}, at most two negative twisting numbers are realised by tight structures. Suppose that there is a $\xi$ such that $\text{tw}(-Y,\xi)=\underline{\text{tw}}(-Y)$, then it is unique by Proposition \ref{prop:map} because $Y$ is an integral homology sphere.  
\end{proof}

We now show that there are only three Brieskorn spheres with a unique tight contact structure; namely, with the canonical orientation we only have $\Sigma(2,3,5)$ (the fact that it carries a unique tight structure is well-known and follows from the techniques in \cite{EH}), while with opposite orientation we have $-\Sigma(2,3,11)$ and $-\Sigma(2,3,7)$ (they both have only one tight structure by \cite{G-,Tosun}). We show that there are no other ones. 

\begin{proof}[Proof of Theorem \ref{teo:Brieskorn}]
 We first observe that a Brieskorn sphere with a unique fillable structure has blown down standard graph $X_G$ with either $-2$-framed unknots, or $0$-framed positive trefoils by Lemma \ref{lemma:Brieskorn}. We assume that $Y=\Sigma(a_1,...,a_n)$ is a canonically oriented Brieskorn sphere. 
 
 If $G=\Gamma$ is negative-definite then the only way we can have $e(Y)<0$ is when $e_0=-2$, there are three legs, and all the vertices of $\Gamma$ have framing $-2$. 
 Hence, we have $-Y=M(-1;\frac{1}{a},\frac{1}{b},\frac{1}{c})$ where $c>b>a\geq2$ are integers; we need that $a,b$ and $c$ are pairwise coprime and that $e(-Y)=\frac{1}{a}+\frac{1}{b}+\frac{1}{c}-1>0$: this can only happen when $a=2,$ $b=3,$ and $c=5$. 

 If $G=\Gamma^*$ is indefinite then the analysis is more involved. When $-Y$ is of type B then from Corollary \ref{cor:Brieskorn} and \cite{G-,Tosun} we only have the two manifolds in the statement, so we consider a $-Y$ of type A. Assume first that $e_0\leq-2$, then $n=3$ because otherwise we would have $M(-2;\frac{1}{2},\frac{1}{2},\frac{1}{2},\frac{1}{2})$ as subgraph; hence, we can write $Y=M(-1;\frac{1}{a},\frac{1}{b},\frac{1}{c})$ and $\frac{1}{a}+\frac{1}{b}+\frac{1}{c}-1=-\frac{1}{abc}$ with $a,b,c$ as above: this is only possible when $a=2,$ $b=3$ and $c=7$ which leads to a type B manifold. 

 Assume now that $e_0=-1$, we have that $X_{\Gamma^*}$ has only unknots with framing $-2$ otherwise either $M(-1;\frac{1}{2},\frac{1}{3},\frac{1}{6})$ would be a subgraph of $\Gamma^*$, or $Y=S_{d_1d_2-d_1-d_2-1}^3(T_{d_2,d_1})$ by Lemma \ref{lemma:Brieskorn} but this is not a Brieskorn sphere. We consider the possible $S^3$-subgraph $(\Gamma^*)'$: from what we said before this cannot be neither empty nor the fibration of any $T_{d_2,d_1}$ with $d_1,d_2>1$. If $(\Gamma^*)'$ coincides with the centre then $M(-1;\frac{1}{3},\frac{1}{3},\frac{1}{3})$ would be a subgraph; in the same way, if $(\Gamma^*)'$ coincides with $M(-1;\frac{1}{2})$ then $M(-1;\frac{1}{2},\frac{1}{4},\frac{1}{4})$ would be a subgraph. This is not possible as both correspond to torus bundles.

 The last case to consider is when $(\Gamma^*)'$ is $M(-1;\frac{n}{n+1})$ when $n\geq2$, and here we have $\widetilde G:=M(-1;\frac{n}{n+1},\frac{1}{n+3},\frac{1}{n+3})$ as a subgraph of $\Gamma^*$. If $\Gamma^*\neq\widetilde G$ then applying Lemma \ref{lemma:Brieskorn} we obtain a subgraph $G$ such that $0<e(M_{\widetilde G})\leq e(M_G)\leq0$, which is not possible; therefore, we conclude that $-Y=M_{\widetilde G}\simeq S^3_{-2,-2}(T_{2,2n+2})$ and this is never a Brieskorn sphere.
\end{proof}

Note that if an oppositely oriented Brieskorn sphere $-Y$ of type A had no tight structure with twisting number $\underline{\text{tw}}(-Y)$ (the structure tangent to the fibres as in \cite{Massot}) then from the proof of Corollary \ref{cor:Brieskorn} it should have a unique realised characteristic vector (that is $V_\text{can}$); hence, this implies that $-Y$ is as in Theorem \ref{teo:Brieskorn} which, conversely, does not mention any manifold of type A. This means that any Brieskorn sphere $-Y$ different from $-\Sigma(2,3,5)$ always carries a (unique) contact structure tangent to the fibres.

\subsection{Surgeries on torus knots}
In this subsection we prove our results about $S^3_r(T_{d_2,\pm d_1})$ where $r\in\Q$ and $1<d_2<d_1$ coprime. Namely, we first show that our classification results hold for these manifolds, and then we compute all the possible values of the negative twisting numbers for a tight structure on $S^3_r(T_{d_2,\pm d_1})$.

\begin{proof}[Proof of Corollary \ref{cor:surgeries}]
 It is well-known that any surgery on a torus link is a Seifert fibred space, except when the framing on a component is the exceptional slope (that is $\pm d_2d_1$); therefore, it can be presented by a star-shaped graph. When this graph is negative-definite then the classification is given by \cite[Corollary 5.3]{CM-negative}; otherwise, by Proposition \ref{prop:main} and Theorem \ref{teo:classification}.

 If the exceptional slope appears then the graph would have a leg which consists of a single vertex with framing zero; hence, the central vertex can be cancelled, and then the manifold is the connected sum of the lens spaces (possibly $S^1\times S^2$'s) represented by the remaining legs. It is a result of Colin \cite{Colin} that a tight structure decomposes along connected sums, and tight structures on lens spaces were already classified explicitly, see \cite{Honda1}.
\end{proof}

\begin{proof}[Proof of Proposition \ref{prop:Tsurgeries}]
 Surgeries on a positive torus knot when $d_1d_2-d_1-d_2\leq r<d_1d_2$ are $L$-spaces, and have normalised Seifert coefficients with $e_0\geq -1$; hence, they admit only zero-twisting structures. Meanwhile, all the others admit negative-twisting structures by \cite{LM-L,Wu}. From Section \ref{section:five}, once we have negative-twisting structures, we have those with the highest twisting number, which is computed, as for Proposition \ref{prop:bigtw}, to be $-d_1-d_2$ (when smaller than $-1$). 

 For $(d_2,d_1)\neq(2,3)$ with these restrictions, the manifold $S^3_r(T_{d_2,d_1})$ is Seifert fibred of type A; hence, there is at most one additional twisting number. It is of the form $q:=d_1+d_2+sd_1d_2$ because it is denominator of the best upper approximations for $r_1$ and $r_2$ simultaneously, and we get that the best upper approximation of $r_3$ is of the form $\frac{p_3}{q}=\frac{s+1}{d_1+d_2+sd_1d_2}$ as a reduced fraction by requiring that the three approximations sum up to $1+\frac{1}{q}$ (by Ghiggini-Massot's algorithm). Now for the possible values of the surgery coefficient $r$, we determine $r_3$ so that it has $\frac{p_3}{q}$ as the best upper approximation. 

 Surgeries on $T_{2,3}$ (when $r\in(0,1)$) are of type B: they are negative-definite for $r<0$, give the torus bundle $M(-1;\frac{1}{2},\frac{1}{3},\frac{1}{6})$ with infinitely many twisting numbers of the form $-5-6s$ for $r=0$, while for surgeries in $(0,1)$ we need that $-6+r<-6+\frac{1}{s}$ for $[-6,-s]$ to be a twisting number.

 Surgeries on negative torus knots when $r<-d_1d_2$ are also (indefinite) $L$-spaces and have no negative-twisting structures. All other surgeries have $e_0\leq-2$ and thus $-1$ as their highest twisting number. Suppose that $-d_1d_2+1\leq r\leq d_1d_2-1$; if we write $S^3_r(T_{d_2,d_1})$ as $M(-1;r_1,r_2,\frac{1}{d_1d_2-r})$, then $S^3_r(T_{d_2,-d_1})$ is $M(-2;1-r_1,1-r_2, 1-\frac{1}{d_1d_2+r})$. Now the possible denominators of the best upper approximations of $1-r_1$ and $1-r_2$ simultaneously take the form $q':=-d_1-d_2+d_1d_2l$, and the best upper approximation of the third coefficient is forced to be $\frac{p_3'}{q'}=1-\frac{l-1}{-d_1-d_2+d_1d_2l}$ and $l>2$ whenever $(d_2,d_1)\neq(2,3)$ by the sum-equal-to $2+\frac{1}{q'}$ condition. We conclude the proof for the given surgery coefficients by noticing the equivalence \[\frac{1}{d_1d_2-r}<\frac{s+1}{d_1+d_2+sd_1d_2}\hspace{0.5cm}\text{ if and only if }\hspace{0.5cm}1-\frac{1}{d_1d_2+r}<1-\frac{s+1}{-d_1-d_2+d_1d_2(s+2)}\ ,\] and the fact that the M\"obius transform \[\begin{pmatrix} 2d_1d_2-1&-1\\2d_1d_2&-1 \end{pmatrix}\ \in\ \text{SL}_2(\mathbb Z)\ , \] 
 which takes $r_3=\frac{1}{d_1d_2-r}$ to $r_3'=1-\frac{1}{d_1d_2+r}$, maps also $\frac{s+1}{d_1+d_2+sd_1d_2}$ to $1-\frac{s+1}{-d_1-d_2+d_1d_2(s+2)}$ and preserves the continued fraction relations that characterise the best upper approximations (see Subsection \ref{subsection:background}). In the case of $(d_2,d_1)=(2,3)$, we can check by hand that also the twisting number $-5$ on $S^3_r(T_{2,3})$ gives rise to the twisting number $-7$ on $S^3_r(T_{2,-3})$.

 When $-d_1d_2<r<-d_1d_2+1$ the manifold $M$ is negative-definite and then there is only $-1$ as twisting number. Assume $r>d_1d_2-1$ and that $q>1$ is such that $-q$ is a twisting number. We know from Ghiggini-Massot's algorithm that $p_3\leq q-1$ and then $1-\frac{1}{d_1d_2+r}<\frac{p_3}{q}<1-\frac{1}{q}$, which implies $q\geq d_1d_2+r+1>2d_1d_2$; moreover, by the proof of Lemma \ref{lemma:A} we have $q<\frac{1}{e(M)}=\frac{d_1d_2(d_1d_2+r)}{r}\leq2d_1d_2+1$ leaving no possible value for $q$. 
\end{proof}

We conclude the paper by showing how the computation in Proposition \ref{prop:Tsurgeries}, together with the explicit expression of the upper bound in Subsection \ref{subsection:background}, can be used to determine the number of negative-twisting structures on every surgery on a torus knot. We do the case of $M=S_r^3(T_{d_2,d_1})$, and we explain how to recover the same result for $S_r^3(T_{d_2,-d_1})$.

We first note that (when $\overline{\text{tw}}<-1$) the standard graph of $M$ has three legs and $e_0=-1$; moreover, the centre and the first two legs form the $S^3$-subgraph which coincides with the fibration of $T_{d_2,d_1}$. The set of realised characteristic vectors is then completely determined by the third leg, because from Theorem \ref{teo:realised} we know that the coordinates of such vectors are fixed on the $S^3$-subgraph. The framing of the third leg is $-d_1d_2+r$ whenever $r<d_1d_2-d_1-d_2$; note that when $d_1d_2-d_1-d_2<r<d_1d_2$ the manifold $M$ is an $L$-space with $e_0=-1$ which has no negative-twisting structures, see Remark \ref{remark:L}. We write $-d_1d_2+r$ as the negative continued fraction $[m_1,...,m_k]$, whose coefficients correspond to the framings of the vertices of the third leg.

Since we know from Proposition \ref{prop:bigtw} and \cite[Theorem 1.1]{CM-negative} that the number of contact structures with twisting number $\overline{\text{tw}}$ is equal to the number of realised characteristic vectors, we conclude that such a quantity is \[|m_1+d_1+d_2|\cdot|m_2+1|\cdots|m_k+1|\:,\] where the first term accounts for the restricted values of the coordinate of the vertex attached to the centre, as in Theorem \ref{teo:realised}. When $\overline{\text{tw}}=-1$ the claim follows from Equation \eqref{eq:upper2}. 

For the other twisting numbers we distinguish the case of a type B (when $(d_2,d_1)=(2,3)$) or a type A manifold. In the first case by Theorem \ref{teo:final} we just have to count how many pyramids appear; since $0<r<1$ and we have that $-6+r=[-6,-k-1,m_3,...,m_k]$, where $k$ is the size of the pyramid of the short manifold $M(-1;\frac{1}{2},\frac{1}{3},\frac{k+1}{6k+5})$, while the best upper approximation for the lowest twisting number is $\frac{k}{6k-1}$, we obtain that there are \[|m_3+1|\cdots|m_k+1|\hspace{0.5cm}\text{ pyramids of size }\hspace{0.5cm}k=|m_2+1|\:.\] In the second case by Theorem \ref{teo:final} we have to count the structure with the unique non-highest twisting number $-\underline q$. We computed in Proposition \ref{prop:Tsurgeries} that $-\underline q$ exists only when $0<r<d_1d_2-d_1-d_2$ and $-d_1d_2+r$ falls in the interval given by \[\left[[n_1,...,n_{h-1}],\:[n_1,...,n_h]=-\frac{d_1+d_2+sd_1d_2}{s+1}=-\frac{\underline q}{s+1}\right)\] for a positive $s$ when the fraction is reduced. Note that the left-most endpoint of the interval corresponds to the manifold without tails, while the right-most one to the best upper approximation (for each rational in the interval); hence, we have that the number we want is \[|m_h-n_h|\cdot|m_{h+1}+1|\cdots|m_k+1|\] as $m_i=n_i$ for $i=1,...,h-1$.

Finally, for $S^3_r(T_{d_2,-d_1})$ we just need to identify the standard graph: this has $e_0=-2$ (in the relevant cases), the first two legs as in the fibration of $T_{d_2,-d_1}$, and the third leg with framing $-\frac{d_1d_2+r}{d_1d_2+r-1}$. Everything that we said above can then be replicated.

\end{document}

%% file: TypeB.pdf_tex
\begingroup%
  \makeatletter%
  \providecommand\color[2][]{%
    \errmessage{(Inkscape) Color is used for the text in Inkscape, but the package 'color.sty' is not loaded}%
    \renewcommand\color[2][]{}%
  }%
  \providecommand\transparent[1]{%
    \errmessage{(Inkscape) Transparency is used (non-zero) for the text in Inkscape, but the package 'transparent.sty' is not loaded}%
    \renewcommand\transparent[1]{}%
  }%
  \providecommand\rotatebox[2]{#2}%
  \newcommand*\fsize{\dimexpr\f@size pt\relax}%
  \newcommand*\lineheight[1]{\fontsize{\fsize}{#1\fsize}\selectfont}%
  \ifx\svgwidth\undefined%
    \setlength{\unitlength}{1589.31284987bp}%
    \ifx\svgscale\undefined%
      \relax%
    \else%
      \setlength{\unitlength}{\unitlength * \real{\svgscale}}%
    \fi%
  \else%
    \setlength{\unitlength}{\svgwidth}%
  \fi%
  \global\let\svgwidth\undefined%
  \global\let\svgscale\undefined%
  \makeatother%
  \begin{picture}(1,0.50260007)%
    \lineheight{1}%
    \setlength\tabcolsep{0pt}%
    \put(0,0){\includegraphics[width=\unitlength,page=1]{TypeB.pdf}}%
    \put(0.4851949,0.02075381){\color[rgb]{0,0,0}\makebox(0,0)[lt]{\lineheight{1.25}\smash{\begin{tabular}[t]{l}$L$\end{tabular}}}}%
    \put(0.14004047,0.09584631){\color[rgb]{1,0,0}\makebox(0,0)[lt]{\lineheight{1.25}\smash{\begin{tabular}[t]{l}$A$\end{tabular}}}}%
    \put(0.95339263,0.13429734){\color[rgb]{1,0,0}\makebox(0,0)[lt]{\lineheight{1.25}\smash{\begin{tabular}[t]{l}$B$\end{tabular}}}}%
    \put(0.3562709,0.44235786){\color[rgb]{0,0,0}\makebox(0,0)[lt]{\lineheight{1.25}\smash{\begin{tabular}[t]{l}$(+1)$\end{tabular}}}}%
    \put(0.89106484,0.43837281){\color[rgb]{0,0,0}\makebox(0,0)[lt]{\lineheight{1.25}\smash{\begin{tabular}[t]{l}$(+1)$\end{tabular}}}}%
    \put(0,0){\includegraphics[width=\unitlength,page=2]{TypeB.pdf}}%
    \put(-0.0015498,0.25100748){\color[rgb]{0,0,0}\makebox(0,0)[lt]{\lineheight{1.25}\smash{\begin{tabular}[t]{l}$\Sigma$\end{tabular}}}}%
    \put(0.40422158,0.01668238){\color[rgb]{0,0,0}\makebox(0,0)[lt]{\lineheight{1.25}\smash{\begin{tabular}[t]{l}$\cdots$\end{tabular}}}}%
  \end{picture}%
\endgroup%